\documentclass[11pt]{article}

\usepackage{graphicx} 
\usepackage{amsmath,mathtools,amsthm,fancyhdr,amssymb,bbm, enumerate,float, mathrsfs}
\usepackage{tcolorbox}
\usepackage{appendix}
\tcbuselibrary{breakable}
\usepackage{listings}
\usepackage{color, comment}
\usepackage{accents}
\usepackage{hyperref}
\usepackage{indentfirst}
\usepackage{tablefootnote}
\hypersetup{
colorlinks,
linkcolor=blue,
anchorcolor=blue,
citecolor=blue
}
\allowdisplaybreaks[4]

\numberwithin{equation}{section}
\mathtoolsset{showonlyrefs,showmanualtags} 

\newcommand{\R}{\mathbb{R}}
\newcommand{\Hi}{\mathcal{H}}
\newcommand{\TMod}{\widetilde{\text{Mod}}}
\newcommand{\HMod}{\widehat{\text{Mod}}}
\newcommand{\Ni}{\mathcal{N}}
\newcommand{\Z}{\mathcal{Z}}
\newcommand{\E}{\mathcal{E}}
\newcommand{\T}{\mathcal{T}}
\newcommand{\tT}{\tilde{T}}
\newcommand{\tQ}{\tilde{Q}}
\newcommand{\tS}{\tilde{S}}
\newcommand{\tPsi}{\tilde{\Psi}}
\newcommand{\hPsi}{\hat{\Psi}}
\newcommand{\M}{\mathcal{M}}
\newcommand{\U}{\mathcal{U}}
\newcommand{\V}{\mathcal{V}}
\newcommand{\F}{\mathcal{F}}
\newcommand{\Si}{\mathcal{S}}
\newcommand{\lam}{\lambda}
\newcommand{\eps}{\epsilon}
\newcommand{\N}{\mathbb{N}}
\newcommand{\A}{\mathcal{A}}
\newcommand{\Oi}{\mathcal{O}}
\newcommand{\Li}{\mathcal{L}}
\newcommand{\B}{\mathcal{B}}
\newcommand{\J}{\mathcal{J}}
\newcommand{\Rmnum}[1]
{\uppercase\expandafter{\romannumeral#1}}
\newcommand{\dr}[1]{\left(#1\right)}

\newcommand{\scl}[2]{\left\langle #1\,,\, #2\right\rangle}
\newcommand{\brc}[1]{\left\{ #1 \right\}}
\newcommand{
\step
}[1]{\medskip\noindent\textbf{\underline{Step #1}:}}
\newcommand{
\case
}[1]{\medskip\noindent\underline{Case #1}:}
\newcommand{\smallcase}[1]{\medskip\noindent(#1)}

\newtheorem{proposition}{Proposition}[section]
\newtheorem{lemma}{Lemma}[section]
\newtheorem{theorem}{Theorem}[section]

\theoremstyle{definition}
\newtheorem{definition}{Definition}[section]
\newtheorem{remark}{Remark}[section]

\usepackage[
    left=20mm,
    right=20mm,
    top=25mm,
    bottom=25mm
]{geometry}

\title{On the Stability of Type II Blowup for the Keller-Segel System in High Dimensions}
\date{}
 \author{Thomas Y. Hou\footnote{Applied and Computational Mathematics, Caltech, Pasadena, CA. Email: \href{hou@cms.caltech.edu}{hou@cms.caltech.edu}.},\; 
 Xiang Qin\footnote{Applied and Computational Mathematics, Caltech, Pasadena, CA. Email: \href{xqin2@caltech.edu}{xqin2@caltech.edu}.},\; 
 Peicong Song\footnote{Applied and Computational Mathematics, Caltech, Pasadena, CA. Email: \href{psong2@caltech.edu}{psong2@caltech.edu}.},\;
 Zirui Wang\footnote{Department of Mathematics, Brown University, Providence, RI. Email: \href{zirui_wang1@brown.edu}{zirui\_wang1@brown.edu}.}
 }

\date{\vspace{-5ex}}

\begin{document}

\maketitle
\begin{abstract}
    We study finite-time blowup for the parabolic--elliptic Keller--Segel
system on $\R^d$ in dimensions $d\geq11$, where the problem is mass
supercritical. For every
integer $l\geq2$, we construct smooth radially symmetric solutions whose radial
mass variable concentrates the normalized stationary state $Q$ at a quantized
scale. Each blowup regime can be realized by solutions with nonnegative population density throughout their classical lifespan. More precisely, near the blowup time $T$,
\begin{equation*}
    u(t,r)=\frac{1}{\lambda^2(t)}\left[Q\left(\frac{r}{\lambda(t)}\right)
    +\epsilon\left(t,\frac{r}{\lambda(t)}\right)\right],
    \qquad
    \lambda(t)=c(T-t)^{\frac{l}{\gamma(d)}}(1+o(1)),
\end{equation*}
where $c>0$ and
$\gamma(d)=\frac12\bigl(d-2-\sqrt{(d-2)(d-10)}\bigr)$. The remainder
$\epsilon$ converges to zero in local $L^\infty$ norms and in a range of high-order homogeneous Sobolev
norms. Since $2l>\gamma(d)$, the concentration scale is strictly smaller than
the parabolic scale $\sqrt{T-t}$, and the resulting blowup is of
type~\Rmnum{2}. The $l$-th regime has exactly $l-1$ unstable radial modulation
directions and is stable within a codimension-$(l-1)$ class of suitably regular
radial initial data. The proof combines a generalized-kernel expansion driven
by the algebraic tail of $Q$, modulation analysis, coercive weighted high-order
energy estimates, and a finite-dimensional topological argument. This yields a
quantized hierarchy of stationary-state concentration rates for the
high-dimensional Keller--Segel flow.
\end{abstract}

\section{Introduction}
\label{sec:intro}

\subsection{Problem setup}\label{subsec:problem-setup}
We study the parabolic--elliptic Keller--Segel system on $\R^d$, $d\geq 11$:
\begin{equation}\label{equation:keller-segel}
    \left\{\begin{aligned}
        &\partial_t\rho=\Delta\rho-\nabla\cdot(\rho\nabla c),\\
        &\Delta c+\rho=0,\\
        &\rho(\cdot,0)=\rho_0\geq0.
    \end{aligned}\right.
\end{equation}
Here $\rho\geq0$ is the population density and $c$ is the quasistatic chemoattractant potential. The model originates in \cite{Patlak1953,KellerSegel1970}; its parabolic--elliptic reduction was introduced by J\"ager--Luckhaus \cite{JagerLuckhaus1992}, and comprehensive reviews are given in \cite{Horstmann2003,Horstmann2004}. Under the concentration scaling
\[
    \rho_\lambda(x,t)=\lambda^{-2}\rho\left(\frac{x}{\lambda},\frac{t}{\lambda^2}\right),
\]
the $L^{d/2}$ norm is invariant while the mass is multiplied by $\lambda^{d-2}$. Thus the problem is mass critical in $d=2$ and mass supercritical for $d\geq3$.

For radial solutions, writing $r=|x|$, we set
\[
    u(r,t)=\frac{1}{r^d}\int_0^r\rho(s,t)s^{d-1}ds,
    \qquad c_r=-ru,
    \qquad \rho=du+ru_r.
\]
Then \eqref{equation:keller-segel} is equivalent to the local scalar equation
\begin{equation}\label{equation:1d-keller-segel}
    \partial_t u=\partial_{rr}u+\frac{d+1}{r}\partial_r u+u(r\partial_r u+du).
\end{equation}
It inherits the scaling $u_\lambda(r,t)=\lambda^{-2}u(r/\lambda,t/\lambda^2)$. Regarding $u$ as a radial function on $\R^d$, a finite-time blowup at $T$ is of type~\Rmnum{1} when
\[
   \limsup_{t\uparrow T}(T-t)\|u(t)\|_{L^\infty(\R^d)}<+\infty,
\]
and of type~\Rmnum{2} otherwise; for results in the original variables we use the analogous convention for $\rho$. For every admissible integer $l$, we construct a quantized type~\Rmnum{2} regime and prove that it persists for a codimension-$(l-1)$ class of radial initial data.

\subsection{Background and related work}

\paragraph{Blowup for the Keller--Segel system.}
The distinction between type~\Rmnum{1} and type~\Rmnum{2} is strongly dimension dependent. In $d=2$, every finite-time blowup in the standard classical finite-mass class is necessarily of type~\Rmnum{2} \cite{YukiNaito2008}. The stationary state $\bar U(x)=8(1+|x|^2)^{-2}$ has mass $8\pi$, the threshold between global subcritical dynamics and finite-time collapse in the finite-second-moment free-energy class \cite{Blanchet2006,Blanchet2008}. The stable one-point regime above threshold concentrates exactly one copy of $\bar U$ and obeys
\begin{equation*}
\begin{aligned}
    \rho(t,x)\sim\frac{1}{\lambda^2(t)}
    \bar U\left(\frac{x-x^*(t)}{\lambda(t)}\right),\quad
    \lambda(t)=2e^{-\frac{2+\gamma_{\rm E}}{2}}\sqrt{T-t}\,
    e^{-\sqrt{\frac{|\log(T-t)|}{2}}}
    \bigl(1+o_{t\uparrow T}(1)\bigr),
\end{aligned}
\end{equation*}
where $\gamma_{\rm E}$ is the Euler--Mascheroni constant. Thus $\lambda(t)\ll\sqrt{T-t}$. This mechanism was constructed by Herrero--Vel\'azquez and Vel\'azquez \cite{Herrero1996,Velazquez2002}; Rapha\"el--Schweyer proved radial stability \cite{Raphael2014}, and Collot--Ghoul--Masmoudi--Nguyen obtained the exact constant and nonradial stability \cite{Ann.PDE22,CPAM2022}. Their spectral theory also produces unstable quantized rates, while radial universality and more elaborate multi-bubble dynamics are studied in \cite{Mizoguchi2022,BuseghinCollot2026,buseghin2023existencefinitetimeblowup,collot2024}.

For every $d\geq3$, there are backward self-similar type~\Rmnum{1} solutions. The known radial profile family is countably infinite for $3\leq d\leq9$ and contains at least two profiles for $d\geq10$ \cite{Herrero1998,Senba2005,Brenner1999,nguyen2026infinitely,collot2024stabilitytypeiselfsimilar}. Stability results of those type~\Rmnum{1} blowups can be found in \cite{Glogić2024,collot2024stabilitytypeiselfsimilar,li2025nonradialstabilityselfsimilarblowup}. Mizoguchi--Senba proved that for $3\leq d\leq9$, a nonnegative radial, radially nonincreasing solution is necessarily of type~\Rmnum{1} whenever its blowup set is not all of $\R^d$; in particular, this applies to finite-mass data \cite{MizoguchiSenba2011TypeI,Souplet2019}. This restriction on the data is essential: there are shrinking spherical-layer type~\Rmnum{2} blowups for $d\geq 3$ \cite{COLLOT2023collapsingRing} and collapsing ring type~\Rmnum{2} blowup for $d = 3$ \cite{ hou2026axisymmetrictypeiiblowup,delPino_JFA26}. For the centered stationary-state mechanism relevant here, Mizoguchi and Senba constructed positive radial type~\Rmnum{2} solutions for $d\geq11$ \cite{senba2006fast,MizoguchiSenba2007}. We resolve this branch into a quantized, codimensionally stable hierarchy. The remaining $d=10$ case is its repeated-root endpoint: we derive a formal logarithmic law, but leave its rigorous construction open. Thus our result and the formal endpoint law give a unified dimension-dependent type~\Rmnum{1}/type~\Rmnum{2} picture for radially centered stationary-state concentration, with rigorous construction in $d=10$ as the sole remaining gap.

\paragraph{Tail-driven type~\Rmnum{2} blowup in supercritical equations.}
A common pattern is seen most clearly by considering together the focusing semilinear heat and nonlinear Schr\"odinger equations
\begin{equation*}
    \partial_t v=\Delta v+|v|^{p-1}v,
    \qquad
    i\partial_t\psi+\Delta\psi+|\psi|^{p-1}\psi=0.
\end{equation*}
They share the stationary equation $\Delta Q+Q^p=0$ and the thresholds
\begin{equation*}
    p_S=\frac{d+2}{d-2},\qquad
    p_{\rm JL}(d)=
    \begin{cases}
        +\infty, & 3\leq d\leq10,\\[1mm]
        1+\dfrac{4}{d-4-2\sqrt{d-1}}, & d\geq11.
    \end{cases}
\end{equation*}
For generic $p>p_{\rm JL}$, the normalized positive state has a two-term algebraic tail $Q(r)=c_\infty r^{-a}-c_1r^{-\gamma_p}+o(r^{-\gamma_p})$, where $a=2/(p-1)$ and $\gamma_p>a$ is the smaller indicial root; write $\alpha_p=\gamma_p-a$. The slow scaling resonance generated by this tail yields, up to phase for NLS, the shared quantized pattern
\begin{equation*}
    w(t,r)\sim \lambda_l(t)^{-a}Q\left(\frac{r}{\lambda_l(t)}\right),
    \qquad
    \lambda_l(t)\sim c_l(T-t)^{l/\alpha_p},
    \qquad 2l>\alpha_p.
\end{equation*}
For the heat flow, this hierarchy was developed by Herrero--Vel\'azquez and made rigorous and classified under radial, nonresonance hypotheses in \cite{HerreroVelazquez1994Heat,Mizoguchi2004Heat,Mizoguchi2007,MatanoMerle2009}; resonant values, notably the Lepin exponent $p_L=1+6/(d-10)$, exhibit modified regimes \cite{Seki2020Lepin}. Nonnegative radial type~\Rmnum{2} blowup is excluded for $p_S<p<p_{\rm JL}$ \cite{MatanoMerle2004Heat}. Merle--Rapha\"el--Rodnianski obtained the analogous hierarchy for energy-supercritical NLS in $d\geq11$ for sufficiently large generic odd powers, transferring robust modulation and adapted-energy methods from the critical NLS theory \cite{MerleRaphaelRodnianski2015}. Parallel families occur for supercritical harmonic-map heat flow, wave maps, Yang--Mills heat flow, and semilinear wave equations \cite{GhoulIbrahimNguyen2019,GhoulIbrahimNguyen2018,Yi2025,BensouilahDuongGhoul2025,Collot2018}. Nonradial and geometric heat constructions are also known \cite{Collot2017,delpino2020newtypeiifinite,DelPinoMussoWeiZhou2026Tube}.

A striking endpoint feature is the replacement of the pure power law by logarithmic corrections. At $p=p_{\rm JL}$ the tail roots coalesce, and Seki constructed logarithmically corrected type~\Rmnum{2} heat blowup \cite{Seki2018JL}; the analogous NLS endpoint remains open. A different marginal endpoint is the seven-dimensional corotational harmonic-map heat flow, where the stable law $\lambda(t)\sim\sqrt{T-t}/|\log(T-t)|$ is now rigorous \cite{Ghoul2026}. Our repeated-root endpoint $d=10$ formally gives $\lambda(t)\sim c(T-t)^{l/4}|\log(T-t)|^{-l/[4(l-2)]}$ for $l>2$ and $\lambda(t) \sim\sqrt{T-t}\, \exp( -\frac12\sqrt{|\log(T-t)|}  )$ for $l=2$. Logarithmic rates also recur in critical problems: the planar Keller--Segel law above, the mass-critical NLS log--log regime \cite{Perelman2001,MerleNLSGAFA2003,MerleNLSAnnals2005,MerleNLSJAMS06}, and the energy-critical semilinear and harmonic-map heat flows \cite{Schweyer2012,RaphaelSchweyer2013,RaphaelSchweyer2014}. These examples reflect marginal scaling dynamics, but their mechanisms are model dependent. The present $d=10$ prediction originates from the coalescing stationary-tail and the degeneracy of the modulation dynamics.

\subsection{Main result}
\begin{theorem}[Type II blowup solutions to \eqref{equation:1d-keller-segel}]\label{theorem:main-theorem}
    Let $d\geq 11$, let $\gamma$ be defined in \eqref{defeq:gamma-Gamma} and fix an integer $l$ such that
    \begin{equation}
        2l>\gamma,\quad\text{or equivalently }l\geq 2.
    \end{equation}
    Then there exists a smooth radially symmetric solution $u$ to \eqref{equation:1d-keller-segel} that blows up in finite time $T>0$ and admits the decomposition
    \begin{equation}
        u(t,r)=\frac{1}{\lambda^2(t)}\left(Q\left(\frac{r}{\lambda(t)}\right)+\eps_{tot}\left(t,\frac{r}{\lambda(t)}\right)\right).
    \end{equation}
    Moreover, the scaling parameter satisfies
    \begin{equation}\label{equation:blowup-speed}
        \lam(t) = c(u_0)(T-t)^{\frac{l}{\gamma}}(1+o_{t\rightarrow T}(1)),\quad c(u_0)>0,
    \end{equation}
    where
\begin{equation}
u_0=u(0,\cdot)
\end{equation}
denotes the initial datum. Furthermore, we have
\begin{enumerate}
    \item [(i)] The blow-up regime is stable within a codimension-$(l-1)$ class of radial initial data.
    \item [(ii)] For every $B>0$, we have
    \begin{equation}\label{equation:stability-L-infinity-loc}
        \lim_{t\rightarrow T}\|\eps_{tot}\|_{L^{\infty}(|y|\leq B)}=0.
    \end{equation}
    \item[(iii)] We have
    \begin{equation}
        \lim_{t\rightarrow T}\|\nabla^{\sigma}\eps_{tot}\|_{L^2}=0,\quad \forall\sigma\in\left[\frac{d}{2}+2,\mathfrak{s}\right]
    \end{equation}
    where $\mathfrak{s}=\mathfrak{s}(l)$ is a sufficiently large Sobolev exponent satisfying
\begin{equation}
\mathfrak{s}(l)\rightarrow+\infty
\qquad\text{as }l\rightarrow+\infty.
\end{equation}
    
\end{enumerate}
\end{theorem}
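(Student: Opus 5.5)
The plan follows the Merle--Rapha\"el--Rodnianski scheme for energy-supercritical NLS and its heat-flow analogues, transplanted to the radial mass variable $u$ of \eqref{equation:1d-keller-segel}. Work in self-similar-type variables $y=r/\lambda$, $ds/dt=\lambda^{-2}$, and write $u=\lambda^{-2}v(s,y)$. The profile then satisfies
\[
\partial_s v - \frac{\lambda_s}{\lambda}\Lambda v = \mathcal{F}(v),
\qquad \Lambda v = 2v+y\partial_y v ,
\]
where $\mathcal{F}(v)=\partial_{yy}v+\frac{d+1}{y}\partial_y v+v(y\partial_y v+dv)$. The linearization about $Q$ is $\mathcal{L}$, with $\mathcal{L}\Lambda Q=0$.

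\emph{Step 1: generalized kernel.} Using the two-term tail of $Q$ (decay $y^{-2}$ plus a correction $y^{-2-\gamma}$, with $\gamma$ the smaller indicial root from \eqref{defeq:gamma-Gamma}), build $T_0=\Lambda Q$ and $T_{i+1}$ with $\mathcal{L}T_{i+1}=-T_i$. Since $\Lambda Q\sim y^{-2-\gamma}$, each inversion gains $y^2$, so $T_i\sim y^{2i-2-\gamma}$.

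\emph{Step 2: approximate profile.} Set $Q_b=Q+\sum_{i=1}^{L} b_iT_i+\sum_i S_i$, where the $S_i$ are nonlinear corrections, localized at $y\le B_1$ with $B_1$ a large multiple of $b_1^{-1/2}$. Choose the coefficients so that the error satisfies $\Psi_b=\mathrm{Mod}(t)\cdot\vec{\partial}+O(b_1^{L+1+\text{gain}})$, where the modulation equations read
\[
(b_i)_s+(2i-\gamma)b_1b_i-b_{i+1}=0,\qquad 1\le i\le L,\quad b_{L+1}=0,
\]
with $b_1=-\lambda_s/\lambda$. The localization is where the flux term with the $\gamma$-shift enters. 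These ODEs have the special solution $b_i^e=c_i s^{-i}$ for $i\le l$ and $b_i^e=0$ for $i>l$. The $c_i$ are fixed by $c_1=l/(2l-\gamma)$, and positivity of $c_1$ is exactly the condition $2l>\gamma$. Linearizing around $b^e$ gives eigenvalues $-1$ together with $l-1$ positive eigenvalues $\frac{k}{2l-\gamma}\cdot(\dots)$, which is the source of the $l-1$ unstable directions. Integrating $-\lambda_s/\lambda=c_1/s$ then yields $\lambda\sim s^{-c_1}$ and $T-t\sim s^{1-2c_1}$, hence $\lambda\sim(T-t)^{l/\gamma}$.

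\emph{Step 3: decomposition and modulation.} Decompose $v=Q_b+\varepsilon$. Fix $(\lambda,b)$ by orthogonality conditions $\langle\varepsilon,\mathcal{L}^{*i}\Phi_M\rangle=0$ for a localized direction $\Phi_M$. From these, derive $|\mathrm{Mod}|\lesssim b_1^{L+1}+\sqrt{\mathcal{E}_{2k}}$, with an improved bound for $b_L$ obtained through a localized numerator argument.

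\emph{Step 4: high-order energy, the main obstacle.} Control $\mathcal{E}_{2k}=\int|\mathcal{L}^k\varepsilon|^2$ with $2k$ of order $d/2+L$; this is where $\mathfrak{s}(l)\to\infty$ comes from. Three ingredients are needed.

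First, a coercivity estimate for $\mathcal{L}^k$ under the orthogonality conditions. It comes from iterated weighted Hardy inequalities, which require checking that the indicial exponents avoid the Hardy thresholds. In $d\ge11$ the roots are real and distinct, which makes this workable. The key point is that $\mathcal{L}$ is not self-adjoint, so I would first conjugate it to a symmetric operator, e.g. via $\mathcal{L}=$ weighted $\mathcal{A}^*\mathcal{A}$ factorization coming from $\Lambda Q$ positivity, with $\mathcal{A}$ a first-order operator.

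Second, an energy identity of the form
\[
\frac{d}{dt}\Bigl(\frac{\mathcal{E}_{2k}}{\lambda^{2(2k-\cdots)}}\Bigr)\lesssim \frac{b_1}{\lambda^{\cdots}}\Bigl(b_1^{2L+1+2\delta}+b_1^{\delta}\mathcal{E}_{2k}\Bigr).
\]
Here the dangerous commutator $[\partial_t,\mathcal{L}_\lambda]$ gives a term $b_1\mathcal{E}_{2k}$ with a constant that must be beaten by the scaling gain. I expect this to need a careful choice of $k$ and $L$ relative to $l$, possibly with an additional lower-order energy to handle the nonlocal term $v\,y\partial_y v$.

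Third, nonlinear terms, which are controlled by interpolation bounds derived from $\mathcal{E}_{2k}$ and a low energy $\mathcal{E}_\sigma$ with $\sigma$ slightly above $d/2$.

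\emph{Step 5: bootstrap and topological closing.} Close a bootstrap in which $b$ stays near $b^e$ and $\mathcal{E}_{2k}\lesssim b_1^{2L+2\delta}$. The stable directions and $\varepsilon$ are handled by Steps 3 and 4. For the $l-1$ unstable directions $V_2,\dots,V_l$, a Brouwer argument shows that some choice of initial unstable components keeps the bootstrap alive; if none did, the exit map would give a continuous retraction of a ball onto its boundary. Lipschitz dependence of this choice on the data then gives a codimension-$(l-1)$ set, which is (i).

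Finally, finite time follows from $\int\lambda^2\,ds<\infty$, and \eqref{equation:blowup-speed} from Step 2 together with the convergence $b\to b^e$. Statement (ii) follows because $\mathcal{E}_{2k}\to0$, $b\to0$, and the Sobolev embedding holds locally. Statement (iii) follows by interpolating $\|\nabla^\sigma\varepsilon\|_{L^2}$ between the low-regularity energy and $\mathcal{E}_{2k}$. Nonnegativity of the density is preserved by choosing initial data with $\rho_0\ge0$, since the maximum principle keeps $\rho$ nonnegative.
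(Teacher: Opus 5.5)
Your outline follows the paper's proof essentially step for step: the generalized kernel $T_k=(-1)^k\Li^{-k}\Lambda Q$ generated by the $y^{-2-\gamma}$ tail, the system $(b_k)_s+(2k-\gamma)b_1b_k-b_{k+1}=0$ with orbit $b_k^e=c_ks^{-k}$ and $l-1$ unstable eigenvalues, weighted energies $\int\M|\Li^k\eps|^2$ built on the symmetrizing factorization $\Li=\B^*\A$, the $B_0$-localized flux correction for $b_L$, and a Brouwer argument on the unstable modes. The one inference you do not justify is the ``Lipschitz dependence'' of the selected unstable parameters: Brouwer gives existence only, not even uniqueness, and existence is exactly the sense of codimension-$(l-1)$ stability the paper establishes, so drop that claim unless you add a separate contraction estimate on differences of trapped solutions.
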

\begin{remark}[Nonnegative density]
The solutions in Theorem~\ref{theorem:main-theorem} can be chosen with nonnegative
density $\rho(t,r)=du(t,r)+r\partial_r u(t,r)$ throughout
$0\le t<T$. Indeed, Lemma~\ref{lemma:asymptotic-Q} gives
\[
R_Q(y):=dQ(y)+yQ'(y)
       =(d-2)Q(y)+\Lambda Q(y)
       \gtrsim (1+y^2)^{-1}>0.
\]
For $\eta>0$ sufficiently small, the admissibility bounds in
Lemma~\ref{lemma:construction-Tk} and Proposition~\ref{proposition:approximate-profile}, together with the localization
\eqref{equation:construct-Q-local}, imply
\[
\sup_{y\ge0}
\frac{\big|(d+y\partial_y)(\widetilde Q_b-Q)(y)\big|}
     {R_Q(y)}
=o_{b_1\to0}(1),
\qquad |b_k|\lesssim b_1^k.
\]
We may take $\epsilon_0=0$ in Definition~\ref{definition:initial-data}. Since
$b_1(s_0)\asymp s_0^{-1}$, choosing $s_0$ sufficiently large
then gives $\rho_0\ge R_Q/2>0$, uniformly over the initial
unstable parameters used in Proposition~\ref{proposition:exist-solution-trapped}. Thus the
topological selection can be performed within this
positive-density family. Finally, \eqref{equation:keller-segel} yields
$\partial_t\rho-\Delta\rho+\nabla c\cdot\nabla\rho=\rho^2\ge0$,
so the parabolic maximum principle on every time interval
$[0,T']$, $T'<T$, preserves $\rho\ge0$.
\end{remark}

\subsection{Notations}
For each $d\geq11$, we let $\gamma, \Gamma$ to be the two roots of the equation $x^2-(d-2)x+2(d-2)=0$. In particular we have
\begin{equation}\label{defeq:gamma-Gamma}
    \gamma = \gamma(d)=\frac{d-2-\sqrt{(d-2)(d-10)}}{2},\quad\Gamma=\Gamma(d)=\frac{d-2+\sqrt{(d-2)(d-10)}}{2}.
\end{equation}
We note that $\gamma(d)$ is decreasing in $d$, with $\gamma(11) = 3$ and $\gamma(+\infty) = 2$. Hence $\gamma(d)\in (2,3]$ for $d\geq 11$. Then we define
\begin{equation}\label{defeq:h-delta}
    h=\left\lfloor\frac{1}{2}\left(\frac{d}{2}-\gamma\right)\right\rfloor\in\N,\quad \delta=\frac{1}{2}\left(\frac{d}{2}-\gamma\right)-h\in(0,1)
\end{equation}
where $\lfloor x \rfloor$ denotes the greatest integer below $x$. The integer $h$ records the critical weighted integrability threshold associated with the decay of $\Lambda Q$, so that the energy hierarchy naturally starts at the level $\E_{2(h+1)}$.

Let $L\gg 1$ be a sufficiently large even integer, we set
\begin{equation}\label{defeq:m-def}
    m=L+h+1.
\end{equation}
The largeness of $L$ is used to close the bootstrap for the top order
energy $\E_{2m}$. It ensures the sufficient decay needed to strictly improve all the energy
and stable-mode bootstrap bounds in Proposition~\ref{proposition:reduce-finite-dimension}.

For $b_1>0$ we define
\begin{equation}\label{defeq:B0-B1}
    B_0=\frac{1}{\sqrt{b_1}},\quad B_1=B_0^{1+\eta}
\end{equation}
where $\eta$ is chosen as a small enough parameter such that $0<\eta\ll1$.

We also introduce a nonincreasing cutoff function $\chi$ such that $\text{supp}\chi\subset[0,2]$ and $\chi\equiv1$ on $[0,1]$. For $M>0$ we define
\begin{equation}\label{defeq:chi-M}
    \chi_M(y)=\chi\left(\frac{y}{M}\right).
\end{equation}

For simplicity, we write
\begin{equation}\label{defeq;integrate-f}
    \int f:=\int_{0}^{+\infty}y^{d-1}f(y)dy
\end{equation}
for a function $f$ in $\R^d$ with radial symmetry. We note this definition is equivalent to integrating $f$ in $\R^d$. We also define the inner product for radial symmetric functions $f$ and $g$ as
\begin{equation}\label{defeq:inner-product}
    \langle f,g\rangle=\int fg=\int_{0}^{+\infty}y^{d-1}fgdy.
\end{equation}

Finally we introduce the repeatedly used differential operator $\Lambda$ as
\begin{equation}\label{defeq:Lambda}
    \Lambda f=y\partial_yf+2f.
\end{equation}

\subsection{Strategy of the proof}

At a structural level, the proof follows the modulation--energy--topology scheme developed for energy-supercritical harmonic-map heat flow in \cite{GhoulIbrahimNguyen2019}: one constructs an approximate finite-dimensional manifold, controls the exact flow near it by modulation and adapted energies, and finally selects the unstable parameters by a topological argument. The main task is to adapt this framework to the distinct linear and nonlinear structures of the Keller--Segel equation.\\[3pt]
\noindent\textbf{Step 1: Construction of the approximate profile.}
We work in the renormalized variables \eqref{def:change-of-variable-origin} and expand around the stationary state $Q$. Its far-field behavior
\[
    Q(y)=2y^{-2}-a_0y^{-2-\gamma}+O(y^{-4-\gamma})
\]
is decisive: the leading $y^{-2}$ tail is annihilated by the scaling operator, so the subleading exponent $\gamma$ governs the resonant dynamics. A first Keller--Segel-specific difficulty is that the linearized operator $\Li$ is not self-adjoint in the standard radial $L^2$ space. Motivated by the entropy symmetrization in \cite{Raphael2014}, we use the factorization $\Li=\B^*\A$ and construct the positive implicit weight $\M=\T_Q/\Lambda Q$, with respect to which $\Li$ is symmetric; see Lemma~\ref{lemma:structure-of-L} and \eqref{equation:adjoint-L}. This factorization also gives an explicit inverse of $\Li$ and hence the generalized kernel
\[
    T_k=(-1)^k\Li^{-k}\Lambda Q,\qquad
    T_k(y)\sim c_k^\infty y^{2k-2-\gamma},\qquad
    \Lambda T_k-(2k-\gamma)T_k=O(y^{2k-4-\gamma}).
\]
The last cancellation permits the recursive construction of $Q_b$ in Proposition~\ref{proposition:approximate-profile}. Since the profiles $T_k$ eventually grow at infinity, $Q_b$ is localized at the scale $B_1$ in Proposition~\ref{proposition:localize-Qb}. Cancelling its leading error yields the finite-dimensional system
\begin{equation*}
    (b_k)_s+(2k-\gamma)b_1b_k-b_{k+1}=0,\qquad
    -\frac{\lambda_s}{\lambda}=b_1,
\end{equation*}
whose explicit orbit $b_k^e=c_ks^{-k}$ in \eqref{defeq:bek-in-dynamic-system} produces the quantized rate $\lambda(t)\sim c(T-t)^{l/\gamma}$.
\\ [3pt]
\textbf{Step 2: Modulation and bootstrap.}
We decompose the exact solution as $v=\widetilde Q_b+\eps$ and determine $(\lambda,b_1,\ldots,b_L)$ by the weighted orthogonality conditions \eqref{relation:orthogonality-condition}. The compactly supported dual profile $\Phi_M$ is chosen so that \eqref{equation:cancellation-identity} isolates the generalized-kernel coordinates. We then bootstrap both the distance of $b$ from the explicit orbit and the hierarchy of weighted energies in Definition~\ref{definition:bootstrap-assumption}. Projecting the equation for $\eps$ gives the modulation system. Because the slow tails make the rough estimate for the top parameter $b_L$ too large, it is sharpened by adding a localized flux of the radiation at the scale $B_0$; this is the corrected coordinate appearing in Lemma~\ref{lemma:modulation-bound-improve}.
\\ [3pt]
\textbf{Step 3: Weighted energy estimates.}
The infinite-dimensional remainder is measured by
\[
    \E_{2k}=\int \M|\Li^k\eps|^2.
\]
Passing to the original variables, as in \eqref{defeq:w-eps-original-variable}, removes the nonperturbative scaling transport and exposes the dissipation furnished by the weighted factorization. The varying scale of the nonconstant weight nevertheless creates a sign-indefinite quadratic term in the top-order identity. To control it, we add lower-order time-dependent corrections to the Lyapunov functional and exploit the sharp almost-homogeneity cancellation
\[
    \frac{\Lambda\J+2\J}{\J}=O\left(\frac{1}{1+y^2}\right),\qquad \J=\M^{-1},
\]
proved in \eqref{equation:cancellation-J}. A second model-specific issue is the derivative quadratic nonlinearity $\Ni(\eps)=\eps(y\partial_y\eps+d\eps)$, which cannot be treated as a derivative-free semilinear remainder. Adapted coercivity and separate interpolation estimates near the origin and at infinity prevent derivative loss. These ingredients yield the Lyapunov inequalities of Proposition~\ref{proposition:energy-control}.
\\[3pt]
\textbf{Step 4: Topological closure.}
The estimates strictly improve all bootstrap bounds except the $l-1$ unstable coordinates. Proposition~\ref{proposition:reduce-finite-dimension} gives strict outward crossing.
If all initial points of $C=[-1,1]^{l-1}$ exited, their first exits would
define a continuous map $\Upsilon:C\to\partial C$.
The initial-face sign persists, so $\Upsilon(\xi)\neq-\xi$ on $\partial C$.
Brouwer applied to $-\Upsilon$ contradicts this.
For each fixed admissible radiation and remaining parameters, selecting
the unstable coordinates therefore yields a trapped solution.
This is the codimension statement; integrating the modulation law gives
the rate.

\begin{remark}[The critical case $d=10,\gamma=4$] In dimension $d=10$, the asymptotic behavior of $Q$ at the infinity admits an additional logarithmic correction as in Lemma~\ref{lemma:appendix-profile-Q}:
\begin{equation}
    Q(y) = \frac{2}{y^2} -c_{Q,1}\frac{\log y}{y^6} +c_{Q,2}y^{-6}+o(y^{-6}), \qquad c_{Q,1}>0,
\end{equation}
and hence
\begin{equation}
    T_k(y) = c_{T,k,1} y^{2k-6}\log y+c_{T,k,2}y^{2k-6}+o(y^{2k-6}). 
\end{equation}
Consequently, 
\begin{equation}
    \Lambda T_k = \left( 2k-4+\frac{1}{\log y} +o\left(\frac{1}{|\log y|}\right) \right)T_k.
\end{equation}
Evaluating this correction at the parabolic scale $B_0=b_1^{-1/2}$ (see \cite{Ghoul2026,Raphael2014}) formally leads to the refined modulation system
\begin{equation}\label{equation:dynamic-equation-log-correction}
    (b_k)_s+ \left( 2k-4+\frac{2}{|\log b_1|} \right)b_1b_k-b_{k+1}=0.
\end{equation}

For $l>2$, the ansatz 
\begin{equation}
    b_k(s)=\frac{c_k}{s^k}+\frac{d_k}{s^k\log s}+o\left(\frac{1}{s^k\log s}\right),\quad 1\leq k\leq l,\quad c_{l+1}=d_{l+1}=0
\end{equation}
gives $c_1=l/(2l-4),d_1=-2l/(2l-4)^2$, which leads to
\begin{equation}
    \lambda(t) = c(T-t)^{\frac l4} |\log(T-t)|^{-\frac{l}{4(l-2)}} \bigl(1+o_{t\to T}(1)\bigr).
\end{equation}

 The case $l=2$ is genuinely more critical and cannot be obtained by substituting $l=2$ into the preceding formula. Using the dynamic equation \eqref{equation:dynamic-equation-log-correction} by setting $b_3=0$, we can formally derive that
 \begin{equation}
     b_1(s)=\frac{\log s}{s}-\frac{\log\log s}{s}+O\dr{\frac{1}{s}},\quad b_2(s)=-2\left(\frac{\log s}{s}\right)^2+4\frac{\log s\log\log s}{s^2}+O\dr{\frac{|\log s|}{s^2}},
 \end{equation}
and hence predicts the stronger correction 
\begin{equation}
    \lambda(t) = c\sqrt{T-t}\, \exp\left( -\frac12\sqrt{|\log(T-t)|} \right) \bigl(1+o_{t\to T}(1)\bigr).
\end{equation}
 In particular, this is still a type-II regime. We emphasize that, in the $l>2$ setting, the logarithmic correction originates from the logarithmic term in the second-order asymptotic expansion of the steady state. This mechanism differs from \cite{Ghoul2026}, where the critical relation $2l=\gamma$ causes the leading-order coefficient in the modulation system to vanish. In the case $l=2$, both effects interact: the leading algebraic modulation dynamics degenerates, and the logarithmic correction inherited from the steady-state tail becomes the dominant contribution. This formally leads to a stretched-exponential correction to the self-similar rate, of the same qualitative form as the stable blow-up regime for the two-dimensional Keller-Segel system, see \cite{Raphael2014}.

 We do not treat the critical case d=10 in this paper, as its rigorous treatment requires a more delicate analysis of the asymptotic expansion and the associated modulation equations. 
 \end{remark}

\section{Construction of approximate blowup profile}\label{sec:profile}
In this section we construct the approximate solution to \eqref{equation:1d-keller-segel} using the same approach as in \cite{GhoulIbrahimNguyen2019}. This approach can also be found in \cite{Raphael2014,RaphaelSchweyer2013,RaphaelRodnianski2012,HillairetRaphael2012,MerleRaphaelRodnianski2015}. The key in our construction is the fact that the linearized operator $\Li$ around $Q$ admits an explicit inversion formula for $\Li^{-1}$.

Let us begin by introducing the change of variables,
\begin{equation}\label{def:change-of-variable-origin}
    u(r,t)=\frac{1}{\lam^2(t)}v(y,s),\quad y=\frac{r}{\lam(t)},\quad \frac{ds}{dt}=\frac{1}{\lam^2(t)},
\end{equation}
which leads to the dynamic rescaling equation
\begin{equation}\label{equation:dynamic-rescaling-eq}
    \partial_sv=\partial_{yy}v+\frac{d+1}{y}\partial_yv+v(y\partial_yv+dv)+\frac{\lambda_s}{\lambda}\Lambda v.
\end{equation}
Let $Q$ denote the unique steady state
solution (up to scaling) of the original equation \eqref{equation:1d-keller-segel}, as well as the leading part of the solution of \eqref{equation:dynamic-rescaling-eq}. In particular,
\begin{equation}\label{equation:steady-state}
    0=Q''+\frac{d+1}{y} Q'+Q(yQ'+dQ),\quad Q(0)=1,\quad Q'(0)=0.
\end{equation}
Our goal is to construct an approximate solution of \eqref{equation:dynamic-rescaling-eq} near $Q$. A natural approach is to linearize \eqref{equation:dynamic-rescaling-eq} around the steady state $Q$, which gives rise to the linearized operator $\Li$. We next collect the main properties of $\Li$ that will be used throughout the analysis. Before doing so, we introduce a class of admissible functions which encodes the prescribed asymptotic behavior at both the origin and infinity.
\begin{definition}[Admissible function]\label{definition:admisible-function}
    We say that a function $f\in C^{\infty}(\R_+)$ is admissible of degree $(p_1,p_2)\in\N\times\R$ if:
    \begin{enumerate}
        \item [(i)] $f$ admits a Taylor expansion of all orders at the origin,
        \begin{equation}
            f(y)=\sum_{k=p_1}^pc_ky^k+O(y^{p+1}).
        \end{equation}
        \item[(ii)] For $y\geq1$ and for all $k\in \N$, we have
        \begin{equation}
        |\partial_y^kf(y)|\lesssim y^{p_2-k}.
        \end{equation}
    \end{enumerate}
    We also say that $f$ is admissible of exact degree $(p_1,p_2)$ if in addition,
    \begin{equation}
        c_{p_1}\neq0,\quad\text{and}\quad f(y) = c_{\infty}y^{p_2}(1+o_{y\to\infty}(1)),\;\;c_\infty\neq 0.
    \end{equation}
    In this case we denote $f\sim (p_1,p_2)$.
\end{definition}
Let us now derive the main properties of $\Li$ in the following subsection.

\subsection{Structure of the linearized operator.} The structure of $\Li$ is at the heart of both the construction of the approximate solution and the derivation of the high-order Sobolev energy estimates. We start by analyzing the asymptotic behaviour of the steady state $Q$.
\begin{lemma}[Asymptotic behaviour of $Q$]\label{lemma:asymptotic-Q} Let $d\geq11$. There exists a unique solution $Q$ to \eqref{equation:steady-state}, which admits the following asymptotic behavior. For any $k\in\N$,
\begin{enumerate}
    \item [(i)] (asymptotic behaviour of $Q$)
    \begin{equation}
        Q(y)=\left\{
    \begin{aligned}
         & 1+\sum_{i=1}^kc_iy^{2i}+O(y^{2k+2}), && \text{as $y\rightarrow0$}\\
       & y^{-2}(2-a_0y^{-\gamma}+O(y^{-\gamma-2})), && \text{as $y\rightarrow+\infty$},
    \end{aligned}
    \right. 
    \end{equation}
    where $a_0>0$ and $\gamma$ is defined in \eqref{defeq:gamma-Gamma}.
    \item [(ii)] (degeneracy)
    \begin{equation}
        \Lambda Q>0,\quad \Lambda Q(y)=\left\{
    \begin{aligned}
         & 2+\sum_{i=1}^kc_i'y^{2i}+O(y^{2k+2}), && \text{as $y\rightarrow0$}\\
       & a_0\gamma y^{-2}(y^{-\gamma}+O(y^{-\gamma-2})), && \text{as $y\rightarrow+\infty$}.
    \end{aligned}
    \right. 
    \end{equation}
\end{enumerate}   
\end{lemma}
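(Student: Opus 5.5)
The plan is to pass to Emden--Fowler variables, which turns \eqref{equation:steady-state} into an autonomous planar system, and to read off every claim of Lemma~\ref{lemma:asymptotic-Q} from its phase portrait. Set $t=\log y$ and $w(t)=y^2Q(y)$. Then
\[
yQ'=y^{-2}(w_t-2w),\qquad y^2Q''=y^{-2}(w_{tt}-5w_t+6w),
\]
and \eqref{equation:steady-state} becomes
\[
w_{tt}+(d-4+w)w_t+(d-2)w(w-2)=0 .
\]
We also have $\Lambda Q=y^{-2}w_t$. So $\Lambda Q>0$ is the statement $w_t>0$, and the tail of $Q$ is the rate at which $w$ converges.

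The system has two equilibria. At $w=0$ the linearization $z''+(d-4)z'-2(d-2)z=0$ has roots $2$ and $-(d-2)$, so it is a saddle with a one-dimensional unstable manifold. At $w=2$, with $z=w-2$, the linearization is $z''+(d-2)z'+2(d-2)z=0$, whose roots are $-\gamma,-\Gamma$ from \eqref{defeq:gamma-Gamma}. For $d\ge11$ these are real and distinct, so $w=2$ is a stable node.

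\textbf{Existence, uniqueness and behaviour at the origin.} A smooth solution with $Q(0)=1$ corresponds to an orbit leaving $w=0$ as $w\sim e^{2t}$, that is, the branch $w>0$ of the unstable manifold. Time translation in $t$ is exactly the scaling of $Q$, so the normalization $Q(0)=1$ selects a unique solution. Alternatively, one solves the regular-singular ODE in $y$ by a contraction argument near $0$. Since the equation is invariant under $y\mapsto-y$, the solution is even and analytic near the origin, which gives the Taylor expansions of $Q$ and of $\Lambda Q=2+\dots$ at $y=0$.

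\textbf{Global behaviour via a Lyapunov function.} Take $E=\tfrac12 w_t^2+F(w)$ with $F(w)=(d-2)\bigl(\tfrac{w^3}{3}-w^2\bigr)$. Along the orbit,
\[
\frac{dE}{dt}=-(d-4+w)w_t^2\le 0
\]
as long as $w>0$. Since $E\to0$ as $t\to-\infty$ and $E$ then strictly decreases, the orbit cannot return to the saddle $w=0$. It stays bounded and, by LaSalle's principle, converges to $w=2$.

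\textbf{Main obstacle: the barrier.} The central difficulty is to show that $w$ never overshoots $2$, that $w_t>0$ for all $t$, and that the orbit enters the node along the slow eigendirection. Excluding a fast approach is what gives $a_0\neq0$. This is the Joseph--Lundgren non-intersection with the singular solution $2/y^2$.

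I would use the triangle
\[
D=\{0<w<2,\; 0<w_t<\gamma(2-w)\}.
\]
Near $t=-\infty$ the orbit lies in $D$, because $w_t\approx2w$ while $\gamma(2-w)\approx2\gamma$. The boundary pieces are handled as follows.
\begin{itemize}
\item On $\{w_t=0,\ 0<w<2\}$ we have $w_{tt}=(d-2)w(2-w)>0$, so the orbit cannot exit through this side.
\item On $\{w_t=\gamma(2-w)\}$, write $s=2-w$ and use $\gamma^2=(d-2)\gamma-2(d-2)$. A direct computation gives
\[
\frac{d}{dt}\bigl[w_t-\gamma(2-w)\bigr]=-(d-2-\gamma)s^2<0 ,
\]
so the vector field points strictly into $D$ (here $d-2-\gamma=\Gamma>0$).
\end{itemize}
Hence the orbit stays in $D$ forever. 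This yields $\Lambda Q>0$ and $w<2$. Moreover, a fast approach would satisfy $w_t\approx\Gamma(2-w)>\gamma(2-w)$, which is impossible inside $D$. The approach is therefore along the slow direction: $w=2-a_0e^{-\gamma t}+\dots$ with $a_0>0$.

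\textbf{Refining the asymptotics at infinity.} Apply the stable-manifold expansion at the hyperbolic node to $z=w-2$. The next contributions are $e^{-2\gamma t}$, coming from the quadratic term, and $e^{-\Gamma t}$. Since $2\gamma>\gamma+2$ (because $\gamma>2$) and $\Gamma=d-2-\gamma\ge 6>\gamma+2$, both are $O(e^{-(\gamma+2)t})$. This gives $Q=y^{-2}\bigl(2-a_0y^{-\gamma}+O(y^{-\gamma-2})\bigr)$. Differentiating, which is legitimate since the expansion holds for the smooth flow together with its derivatives, gives
\[
\Lambda Q=y^{-2}w_t=a_0\gamma\,y^{-2}\bigl(y^{-\gamma}+O(y^{-\gamma-2})\bigr).
\]
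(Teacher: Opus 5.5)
Your proposal is correct and follows essentially the paper's route. Both arguments use the substitution $x=\log y$, $q=y^2Q$ and a phase-plane trapping region; your triangle $0<w_t<\gamma(2-w)$ contains the paper's region $q(2-q)<p<\tfrac{\gamma}{2}q(2-q)$ and is simpler. Both then linearize at $q=2$ and exclude the fast decay $e^{-\Gamma x}$ using the same inequality $q'<\gamma(2-q)$: you compare with the slope of the strong stable direction, while the paper uses that $e^{\gamma x}(2-q)$ is increasing. One minor imprecision: at $d=11$ the resonance $\Gamma=2\gamma=6$ produces an extra $x e^{-6x}$ term rather than a pure $e^{-2\gamma x}$, but this term is still $O(e^{-(\gamma+2)x})$, so your tail expansions for $Q$ and $\Lambda Q$ are unaffected.
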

\begin{proof}
    The proof is left to Appendix~\ref{app:steady-state}.
\end{proof}
The structure of the linearized operator $\Li$ is given by the following lemma:
\begin{lemma}[Factorization of $\Li$]\label{lemma:structure-of-L}
    Let $d\geq11$ and define the first-order operators
    \begin{equation}\label{def:operator-A-B}
        \begin{gathered}
            \A\eps=-\Lambda Q\partial_y\left(\frac{\eps}{\Lambda Q}\right),\\
            \B^*\eps=\frac{1}{y^{d-1}\T_Q}\partial_y\left(y^{d-1}\T_Q\eps\right),
        \end{gathered}
    \end{equation}
    where $\T_Q$ is defined from the equation    \begin{equation}\label{def:TQ-equation}
        \left(\ln\frac{y^{d-1}\T_Q}{\Lambda Q}\right)'=\frac{d+1}{y}+yQ.
    \end{equation}
    Then it follows
    \begin{equation}\label{def:factorize-L}
    \begin{aligned}
        \Li\epsilon&=-\left[\partial_{yy}\eps+\left(\frac{d+1}{y}+yQ\right)\partial_y\eps+(y\partial_yQ+2dQ)\eps\right]\\
        &=- \frac{1}{y^{d-1}\T_Q}\partial_y\left(y^{d-1}\T_Q\Lambda Q\partial_y\left(\frac{\eps}{\Lambda Q}\right)\right)=\B^*\A\eps.
    \end{aligned}
    \end{equation}
    We can also write the operators as:
\begin{equation}\label{defeq:linear-operator-in-V}
    \begin{gathered}
        \Li\eps=\B^*\A\eps=-\partial_{yy}\eps+yV^1(y)\partial_y\eps+V^2(y)\eps,\\
        \B^*\eps=\partial_y\eps+yV^B(y)\eps,\quad\A\eps=-\partial_y\eps+yV^A(y)\eps.
    \end{gathered}
\end{equation}
\end{lemma}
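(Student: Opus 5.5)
The plan is a direct computation: first derive $\Li$ by linearizing \eqref{equation:dynamic-rescaling-eq} at $Q$, then check the factorization using the defining ODE \eqref{def:TQ-equation} for $\T_Q$ together with the fact that $\Lambda Q\in\ker\Li$.

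\textbf{Step 1: the linearized operator.} Write $v=Q+\eps$ in \eqref{equation:dynamic-rescaling-eq}, with the scaling term dropped. The linear part of $v(y\partial_y v+dv)$ is
\[
\eps(yQ'+dQ)+Q(y\eps'+d\eps)=yQ\,\eps'+(yQ'+2dQ)\eps .
\]
With the sign convention $\partial_s\eps=-\Li\eps+\dots$, this gives the first line of \eqref{def:factorize-L}.

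\textbf{Step 2: $\Lambda Q$ lies in the kernel.} The scaling family $Q_\mu(y)=\mu^{2}Q(\mu y)$ consists of stationary solutions of \eqref{equation:1d-keller-segel}. Differentiating in $\mu$ at $\mu=1$ gives $\Li\Lambda Q=0$. Lemma~\ref{lemma:asymptotic-Q}(ii) gives $\Lambda Q>0$, so the quotient $\eps/\Lambda Q$ is well defined.

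\textbf{Step 3: the factorization.} Set $W=y^{d-1}\T_Q\Lambda Q$. By \eqref{def:TQ-equation},
\[
(\ln W)'=(\ln(y^{d-1}\T_Q/\Lambda Q))'+2(\ln\Lambda Q)'=\frac{d+1}{y}+yQ+2\frac{(\Lambda Q)'}{\Lambda Q}.
\]
Put $\eps=\Lambda Q\, g$. Expanding the divergence form gives
\[
-\frac{1}{y^{d-1}\T_Q}\big(Wg'\big)'=-\Lambda Q\Big[g''+\Big(\frac{d+1}{y}+yQ+2\frac{(\Lambda Q)'}{\Lambda Q}\Big)g'\Big].
\]
Expanding $\Li(\Lambda Q\,g)$ with the Leibniz rule gives three pieces:
\begin{itemize}
\item the terms $g''\Lambda Q$ and $2(\Lambda Q)'g'$;
\item the term $(\frac{d+1}{y}+yQ)\Lambda Q\,g'$;
\item the term $g\,\Li\Lambda Q$, which vanishes by Step 2.
\end{itemize}
These match the divergence-form expression exactly. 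The identity $\B^*\A=\Li$ then follows from the definitions \eqref{def:operator-A-B}, because $y^{d-1}\T_Q\cdot\A\eps=-Wg'$.

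\textbf{Step 4: the potentials.} Expanding $\A$ gives
\[
\A\eps=-\eps'+\frac{(\Lambda Q)'}{\Lambda Q}\eps,\qquad\text{so}\qquad yV^A=\frac{(\Lambda Q)'}{\Lambda Q}.
\]
Expanding $\B^*$ gives
\[
yV^B=(\ln(y^{d-1}\T_Q))'=\frac{d+1}{y}+yQ+\frac{(\Lambda Q)'}{\Lambda Q}.
\]
Reading off the first line of \eqref{def:factorize-L} gives $yV^1=-\big(\frac{d+1}{y}+yQ\big)$ and $V^2=-(yQ'+2dQ)$.

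The form $yV(y)$ in \eqref{defeq:linear-operator-in-V} must make sense as a smooth function. This holds because $Q$ and $\Lambda Q$ are even, as shown by their Taylor expansions in Lemma~\ref{lemma:asymptotic-Q}. Hence $(\Lambda Q)'/\Lambda Q$ is odd and vanishes at $0$, so it equals $y$ times a smooth function. The singular term $(d+1)/y$ equals $y\cdot(d+1)/y^2$, so it is absorbed into $V^1$ and $V^B$ as a potential that is singular only at the origin.

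The only point needing care is that $\T_Q$ is determined by \eqref{def:TQ-equation} only up to a multiplicative constant. I would fix it by normalizing $y^{d-1}\T_Q/\Lambda Q$ at $y=1$, and then check that $\T_Q>0$ and is smooth near $0$, which holds since $\Lambda Q(0)=2$. There is no real obstacle beyond this bookkeeping.
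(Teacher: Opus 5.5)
Your proposal is correct and follows the paper's approach. The paper gives no separate proof and treats the lemma as exactly this direct computation: linearize at $Q$, use $\Li\Lambda Q=0$ and the ODE \eqref{def:TQ-equation} to match the divergence form, and read off the potentials $yV^A=(\Lambda Q)'/\Lambda Q$, $yV^B=(\ln(y^{d-1}\T_Q))'$, $V^1=-\frac{d+1}{y^2}-Q$, $V^2=-(yQ'+2dQ)$, which coincide with those recorded in the proof of Lemma~\ref{lemma:asymp-coeficients}. One small precision: $\T_Q=Cy^2\Lambda Q\exp\bigl(\int_0^y xQ(x)\,dx\bigr)$ vanishes like $y^2$ at the origin, so $\T_Q>0$ holds for $y>0$, which is all the argument needs.
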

\begin{remark}
    We further introduce the weight function
    \begin{equation}\label{def:MQ}
        \M_Q=\frac{\T_Q}{\Lambda Q}.
    \end{equation}
    From \eqref{def:TQ-equation} we obtain
    \begin{equation}\label{equation:integral-formula-M}
        \M_Q=Cy^2\exp\left(\int_{0}^{y}xQ(x)dx\right),
    \end{equation}
    which leads to $\M_Q>0$ for $y>0$.
    We also note the operators $\A$ and $\B^*$ are formally adjoint in the sense that
    \begin{equation}\label{equation:adjoint-A-B}
         \scl{\M_Q\A u}{w}=  \scl{\M_Q\B^* w}{u}     ,
    \end{equation}
   with the inner product defined in \eqref{defeq:inner-product}. This also shows
    \begin{equation}\label{equation:adjoint-L}
        \scl{\M_Q\Li u}{w}=  \scl{\M_Q\Li w}{u}. 
    \end{equation}
In what follows, for convenience we write
   \begin{equation}\label{def:coeef-M-J}
    \M := \M_Q,\quad \J:=\M^{-1}.
   \end{equation}
\end{remark}
We summarize below the asymptotic behaviour of the coefficient functions, which will be used repeatedly in the subsequent estimates.
\begin{lemma}[Admissibility of the functions]\label{lemma:asymp-coeficients}
    The steady state $Q$ and the related coefficient functions satisfy the following admissibility properties, with the exact corresponding admissible orders listed below.
    \begin{equation}\label{relation:admissibility-Q}
        \begin{gathered}
            Q\sim(0,-2),\quad\Lambda Q\sim(0,-\gamma-2)\\
            \T_Q\sim(2,2-\gamma),\quad\M_Q\sim(2,4),
        \end{gathered}
    \end{equation}
    and
    \begin{equation}\label{relation:admissibility-V}
        \begin{gathered}
            \left(yV^1+\frac{d+1}{y}\right)\sim(1,-1),\quad V^2\sim(0,-2)\\
            V^A\sim(0,-2),\quad y^2V^B\sim(0,0).
        \end{gathered}
    \end{equation}
    Furthermore we can compute the asymptotic behaviour
    \begin{equation}\label{property:decay-V}
        \begin{aligned}
            &V^A(y)=\left\{
    \begin{aligned}
         & c_1'+O(y^2), && \text{as $y\rightarrow0$}\\
       & -\frac{\gamma+2}{y^2}+O(y^{-4}), && \text{as $y\rightarrow+\infty$},
    \end{aligned}
    \right. \\
    &V^B(y)=\left\{
    \begin{aligned}
         & \frac{d+1}{y^2}+O(1), && \text{as $y\rightarrow0$}\\
       & \frac{d+1-\gamma}{y^2}+O(y^{-4}), && \text{as $y\rightarrow+\infty$}.
    \end{aligned}
    \right.
        \end{aligned}
    \end{equation}
\end{lemma}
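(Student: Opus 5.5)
The lemma is a bookkeeping consequence of Lemma~\ref{lemma:asymptotic-Q}, the integral formula \eqref{equation:integral-formula-M}, and the explicit factorization \eqref{def:factorize-L}. The one genuinely analytic input is upgrading the pointwise expansions of Lemma~\ref{lemma:asymptotic-Q} to the derivative bounds $|\partial_y^k f|\lesssim y^{p_2-k}$ required in Definition~\ref{definition:admisible-function}. I would do that first, and then read off every coefficient function by algebra.

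\textbf{Step 1: $Q$ and $\Lambda Q$.} At the origin, the analytic ODE \eqref{equation:steady-state} with $Q(0)=1$, $Q'(0)=0$ gives a smooth even Taylor expansion, so $Q\sim(0,\cdot)$ and $\Lambda Q\sim(0,\cdot)$, with $\Lambda Q(0)=2\neq0$. At infinity I would pass to $t=\log y$ and set $Q=y^{-2}(2-w)$. Then \eqref{equation:steady-state} becomes an autonomous second-order ODE for $w$. Its linearization at $w=0$ has characteristic polynomial $x^2-(d-2)x+2(d-2)$, whose roots are $\gamma,\Gamma$, and the nonlinearity is quadratic in $(w,w_t)$. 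Lemma~\ref{lemma:asymptotic-Q} gives $w=a_0e^{-\gamma t}+O(e^{-(\gamma+2)t})$. Standard stable-manifold, or variation-of-constants, estimates for this autonomous system then give $|\partial_t^j w|\lesssim e^{-\gamma t}$ for all $j$, and likewise for the remainder. Since $y\partial_y=\partial_t$, this yields $|\partial_y^k(Q-2y^{-2})|\lesssim y^{-2-\gamma-k}$ and $|\partial_y^k(\Lambda Q-a_0\gamma y^{-2-\gamma})|\lesssim y^{-4-\gamma-k}$. Hence $Q\sim(0,-2)$ and $\Lambda Q\sim(0,-\gamma-2)$ exactly. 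I expect this step to be the main obstacle, although it is routine. The point to watch is that derivative bounds must hold at every order, and the $\log y$ variable is what makes this uniform.

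\textbf{Step 2: $\M_Q$ and $\T_Q$.} From \eqref{equation:integral-formula-M}, $\M_Q=Cy^2\exp(\int_0^y xQ)$. Near $0$, $xQ$ is odd and smooth, so $\int_0^y xQ$ is even and smooth, and $\M_Q=Cy^2(1+O(y^2))$ with an even expansion, giving degree $2$ at the origin. At infinity, $xQ=2x^{-1}-a_0x^{-1-\gamma}+\dots$, so
\[
\int_0^y xQ\,dx=2\log y+c_*+\tfrac{a_0}{\gamma}y^{-\gamma}+O(y^{-\gamma-2}).
\]
Hence $\M_Q=C'y^4(1+O(y^{-\gamma}))$, and the derivative bounds follow from Step~1 by differentiating the exponential. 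This gives $\M_Q\sim(2,4)$. Then $\T_Q=\M_Q\Lambda Q\sim(2,4-\gamma-2)=(2,2-\gamma)$, since products of exact admissible functions are exact admissible with added degrees.

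\textbf{Step 3: the potentials.} Expanding $\A\eps=-\Lambda Q\,\partial_y(\eps/\Lambda Q)$ gives $yV^A=\partial_y\log\Lambda Q$. Because $\Lambda Q$ is even, positive and smooth, this equals $c_1'y+O(y^3)$ near $0$. At infinity it equals $-(\gamma+2)y^{-1}+O(y^{-3})$ by Step~1. This gives the stated expansion of $V^A$ and $V^A\sim(0,-2)$.

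Similarly $yV^B=\partial_y\log(y^{d-1}\T_Q)$, which by \eqref{def:TQ-equation} equals $\frac{d+1}{y}+yQ+\partial_y\log\Lambda Q$. Near $0$ this is $\frac{d+1}{y}+O(y)$. At infinity it is $\frac{d+1+2-(\gamma+2)}{y}+O(y^{-3})=\frac{d+1-\gamma}{y}+O(y^{-3})$. This gives the expansion of $V^B$, and $y^2V^B\sim(0,0)$ since $d+1\neq0$ and $d+1-\gamma\neq0$.

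Finally, comparing the first line of \eqref{def:factorize-L} with \eqref{defeq:linear-operator-in-V}, the coefficient of $\partial_y\eps$ gives $yV^1+\frac{d+1}{y}=\pm yQ$, with the sign fixed by the convention in \eqref{defeq:linear-operator-in-V}. Since $yQ$ is odd at the origin with nonzero linear coefficient and behaves like $2y^{-1}$ at infinity, this function is exactly of degree $(1,-1)$. The coefficient of $\eps$ gives $V^2=-(yQ'+2dQ)$, which equals $-2d$ at $0$ and $-(4d-4)y^{-2}(1+o(1))$ at infinity, so $V^2\sim(0,-2)$. All higher-derivative bounds are inherited from Steps~1--2 by the Leibniz rule.
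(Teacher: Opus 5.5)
Your proposal is correct and follows essentially the same route as the paper: $Q$ and $\Lambda Q$ come from Lemma~\ref{lemma:asymptotic-Q}, and $\M_Q$ and $\T_Q=\M_Q\Lambda Q$ come from integrating \eqref{def:TQ-equation}. The potentials come from the explicit formulas $V^1=-\frac{d+1}{y^2}-Q$ (so your $\pm$ is a minus), $V^2=-(y\partial_yQ+2dQ)$, $V^A=\frac{\partial_y\Lambda Q}{y\Lambda Q}$ and $V^B=\frac{\partial_y(y^{d-1}\T_Q)}{y^d\T_Q}$. Your $t=\log y$ argument upgrading the pointwise expansions to all-order derivative bounds is extra care the paper leaves implicit; for exactness of $V^A\sim(0,-2)$ you should also record $V^A(0)=c_1'=-\frac{2d}{d+2}\neq0$.
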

\begin{proof}
    The exact admissible orders of $Q$ and $\Lambda Q$ follow directly from Lemma~\ref{lemma:asymptotic-Q}. Integrating \eqref{def:TQ-equation} and using the fact that $\gamma>2$, we obtain
    \begin{equation}\label{equation:expression-infinity-M}
        \ln\left(y^{d-1}\M_Q\right)=\ln\left(\frac{y^{d-1}\T_Q}{\Lambda Q}\right)=(d+3)\ln y+\ln\left(\frac{\T_Q(1)}{\Lambda Q(1)}\right)+O\left(\frac{1}{y^2}\right),\quad y\rightarrow+\infty
    \end{equation}
    and
    \begin{equation}
        \ln\left(\frac{\T_Q}{y^2\Lambda Q}\right)=\ln\left(\frac{\T_Q(1)}{\Lambda Q(1)}\right)-\int_y^1xQdx,\quad y\rightarrow0.
    \end{equation}
    This yields the asymptotic behavior of $\T_Q$ and $\M_Q$. To show the asymptotic behaviour of $V^1,V^2,V^A$ and $V^B$, we obtain from \eqref{def:factorize-L} and \eqref{defeq:linear-operator-in-V} 
    \begin{equation}
        \begin{gathered}
            V^1=-\frac{d+1}{y^2}-Q,\quad V^2=-(y\partial_y Q+2dQ)\\
            V^A=\frac{\partial_y\Lambda Q}{y\Lambda Q},\quad V^B=\frac{\partial_y(y^{d-1}\T_Q)}{y^d\T_Q}.
        \end{gathered}
    \end{equation}
    Therefore the asymptotic behaviour follows from \eqref{relation:admissibility-Q}.
\end{proof}
\begin{remark}
    Since $\J=\M^{-1}$ and the asymptotic expression of $\M$ in \eqref{equation:expression-infinity-M}, we obtain $\M=cy^4+O(y^2)$ as $y\rightarrow+\infty$ with a constant $c$. Therefore the following cancellation holds as $y\rightarrow+\infty$:
    \begin{equation}\label{equation:cancellation-J}
    \frac{\Lambda \J+2\J}{\J}=O\left(\frac{1}{1+y^2}\right).
    \end{equation}
    This degeneracy will allow us to measure in a sharp way the size of tails at infinity, and will be used in our energy estimation.
\end{remark}
From \eqref{def:operator-A-B}, we see that the kernels of $\A$ and $\B^*$ are explicit:
\begin{equation}\label{relation:kernel-of-A-B}
    \begin{gathered}
        \A w=0,\quad\text{if and only if}\quad w\in\text{Span}(\Lambda Q),\\
        \B^* w=0,\quad\text{if and only if}\quad w\in\text{Span}\left(\frac{1}{y^{d-1}\T_Q}\right).
    \end{gathered}
\end{equation}
Thus we are able to compute $\Li^{-1}$ in an elementary two-step procedure by
inverting $\A$ and $\B^*$. In particular, this leads to the following.
\begin{lemma}[Inversion of $\Li$]\label{lemma:inversion-L}
    Let $f$ be a $C^{\infty}$ radially symmetric function. We define $w$ by
    \begin{equation}\label{relation:inverse-f}
         w :=-\Lambda Q\int_0^y\frac{\A_w(x)}{\Lambda Q(x)}dx,\quad\A_w :=\frac{1}{y^{d-1}\T_Q}\int_0^yf(x)\T_Q(x)x^{d-1}dx,
    \end{equation}
    Then it follows $\Li w=f$, and we define
    \begin{equation}
        w:=\Li^{-1}f.
    \end{equation}
\end{lemma}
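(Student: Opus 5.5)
The plan is to verify $\Li w=f$ directly from the factorization $\Li=\B^*\A$ of Lemma~\ref{lemma:structure-of-L}, inverting the two first-order operators one at a time. First I check that $w$ and $\A_w$ are well defined and smooth. Since $\T_Q\sim(2,2-\gamma)$ by Lemma~\ref{lemma:asymp-coeficients}, near the origin we have
\[
\int_0^y f\T_Q x^{d-1}dx=O(y^{d+2}),
\]
while the prefactor $y^{d-1}\T_Q$ behaves like $c\,y^{d+1}$. Hence $\A_w=O(y)$ is smooth at $0$, and more precisely it is odd-type with a Taylor expansion. Because $\Lambda Q>0$ on $[0,\infty)$ by Lemma~\ref{lemma:asymptotic-Q}(ii), the quotient $\A_w/\Lambda Q$ is integrable on $[0,y]$ for every finite $y$, so $w$ is well defined and $C^\infty$ on $(0,\infty)$ with $w=O(y^2)$ at the origin.

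Next I compute $\A w$. Using $\A\eps=-\Lambda Q\,\partial_y(\eps/\Lambda Q)$ and the formula $w/\Lambda Q=-\int_0^y \A_w/\Lambda Q$, the fundamental theorem of calculus gives $\partial_y(w/\Lambda Q)=-\A_w/\Lambda Q$. Therefore $\A w=\A_w$.

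Then I apply $\B^*$. By definition,
\[
y^{d-1}\T_Q\,\A_w=\int_0^y f\T_Q x^{d-1}dx .
\]
Differentiating this identity gives $\partial_y\bigl(y^{d-1}\T_Q\A_w\bigr)=y^{d-1}\T_Q f$. Dividing by $y^{d-1}\T_Q$ yields $\B^*\A_w=f$. Combining the two steps, $\Li w=\B^*\A w=f$ by \eqref{def:factorize-L}.

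The only delicate point is regularity at $y=0$: whether $w$, viewed as a radial function on $\R^d$, is smooth, and whether the identity holds there. I would handle this by expanding $f$ in its Taylor series at $0$. If $f$ is radially smooth, it is even in $y$, and $\T_Q$ is $y^2$ times an even function. Term-by-term integration then shows that $\A_w$ is odd and $w$ is even, with full Taylor expansions, so $w$ extends smoothly. The identity at $y=0$ follows by continuity. The choice of $0$ as the base point in both integrals is exactly what kills the singular homogeneous solutions $1/(y^{d-1}\T_Q)$ and the corresponding solution $\Lambda Q\int 1/(\Lambda Q\, y^{d-1}\T_Q)$, which are singular at the origin.
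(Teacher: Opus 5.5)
Your proof is correct and follows the paper's own route. The paper gives no separate argument beyond observing that the factorization $\Li=\B^*\A$ with explicit kernels lets one invert $\B^*$ to obtain $\A_w$ and then $\A$ to obtain $w$, which is exactly what you verify with the fundamental theorem of calculus; your regularity and parity check at the origin is a useful addition. One small correction to your closing remark: the singular directions ($1/(y^{d-1}\T_Q)$ and the resulting $\Gamma_Q$-type term) are removed by the base point $0$ of the \emph{inner} integral alone, whereas the base point of the outer integral only fixes the multiple of the regular kernel element $\Lambda Q$ in $w$.
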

We remark that ``the inverse of $\Li$" is not canonically defined, as $\Li$ has nontrivial kernels. However, here by convention we use $\Li^{-1}$ to represent the procedure \eqref{relation:inverse-f}.
\begin{remark}
    Given these properties we can compute the kernels of $\Li$. We can write two kernels as
    \begin{equation}\label{relation:kernel-Li}
        \Li w=0,\quad\text{if and only if}\quad w\in\text{Span}(\Lambda Q,\Gamma_Q),
    \end{equation}
    where we have
    \begin{equation}
        \A(\Gamma_Q)=\frac{1}{y^{d-1}\T_Q},\quad \Gamma_Q(y)=\Lambda Q\int_y^{+\infty}\frac{dx}{x^{d-1}\T_Q\Lambda Q}.
    \end{equation}
    Hence $\Gamma_Q$ admits the following asymptotic behavior,
    \begin{equation}
        \Gamma_Q(y)=\left\{
    \begin{aligned}
         & \frac{c_{0,\Gamma_Q}}{y^d}(1+O(y^2)), && \text{as $y\rightarrow0$}\\
       & c_{1,\Gamma_Q}y^{-d+\gamma}(1+O(y^{-2})), && \text{as $y\rightarrow+\infty$}.
    \end{aligned}
    \right. 
    \end{equation}
\end{remark}
We note $\Li$ naturally acts on the class of admissible functions in the following way:
\begin{lemma}[Action of $\Li$ and $\Li^{-1}$ on admissible functions]\label{lemma:act-L-admissible}
    Let $f$ be an admissible function of degree $(p_1,p_2)$. Then it follows: 
    \begin{enumerate}
        \item [(i)] $\Lambda f$ is admissible of degree $(p_1,p_2)$.
        \item [(ii)] If $p_1$ is even, then $\Li f$ is admissible of degree $(\max\{0,p_1-2\},p_2-2)$.
        \item [(iii)] If $p_2>-\gamma-4$, then $\Li^{-1}f$ is admissible of degree $(p_1+2,p_2+2)$.
    \end{enumerate}
\end{lemma}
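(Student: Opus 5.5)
The proof treats the three items separately, using the explicit formulas of Lemma~\ref{lemma:structure-of-L}, Lemma~\ref{lemma:inversion-L} and the admissibility data of Lemma~\ref{lemma:asymp-coeficients}. Throughout, a function admissible of degree $(p_1,p_2)$ is split into its behaviour near the origin, where we track Taylor expansions, and its behaviour for $y\geq1$, where we track the symbol-type bounds $|\partial_y^k f|\lesssim y^{p_2-k}$.

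\textbf{Item (i).} Since $\Lambda f=y\partial_y f+2f$, at the origin the Taylor series $\sum c_ky^k$ is sent to $\sum (k+2)c_ky^k$, so the lowest power is still at least $p_1$. For $y\geq1$ we have $\partial_y^k(y\partial_y f)=y\partial_y^{k+1}f+k\partial_y^kf$, and each term is $\lesssim y^{p_2-k}$. This case is immediate.

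\textbf{Item (ii).} Use the form $\Li f=-\partial_{yy}f+yV^1\partial_yf+V^2f$ from \eqref{defeq:linear-operator-in-V}.
\begin{itemize}
\item[] \emph{At infinity:} $yV^1=-(d+1)/y-yQ$, and $yQ\sim(1,-1)$ by \eqref{relation:admissibility-Q}, so $yV^1$ has symbol bounds of order $y^{-1}$. Also $V^2\sim(0,-2)$. By the Leibniz rule every term is $\lesssim y^{p_2-2-k}$ after $k$ derivatives.
\item[] \emph{At the origin:} the only singular piece is the radial Laplacian part $-\partial_{yy}f-\frac{d+1}{y}\partial_yf$. Acting on $y^k$ it gives $-k(k+d)y^{k-2}$, which vanishes for $k=0$. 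The regular pieces, $-yQ\partial_y f$ and $V^2f$, have smooth even coefficients. Hence no negative powers are produced, and the lowest power is at least $\max\{0,p_1-2\}$.
\end{itemize}
The evenness of $p_1$ (together with the odd/even structure of the smooth coefficients) is what the statement requires here. I would simply note that the radial Laplacian applied to a Taylor series with no $y^{-1}$-type obstruction returns a Taylor series.

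\textbf{Item (iii).} This is the substantive part; I would work directly from the two-step formula \eqref{relation:inverse-f}.

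\emph{First step: $\A_w$.} Near the origin, $\T_Q\sim(2,\cdot)$, so $\T_Qx^{d-1}\approx cx^{d+1}$. Then
\[
\int_0^y f\,\T_Q\,x^{d-1}\,dx=O(y^{p_1+d+2}),
\]
and dividing by $y^{d-1}\T_Q\approx y^{d+1}$ gives $\A_w=O(y^{p_1+1})$. The full Taylor expansion passes through term by term, since all the functions involved are smooth with Taylor expansions.

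At infinity, $x^{d-1}\T_Q\sim x^{d+1-\gamma}$, so the integrand is $\lesssim x^{p_2+d+1-\gamma}$. Its integral behaves like $y^{p_2+d+2-\gamma}$ provided $p_2+d+2-\gamma>0$; otherwise it is bounded, or logarithmic at the borderline. Dividing by $y^{d+1-\gamma}$ gives $\A_w\lesssim y^{p_2+1}$ in the growing case, and $y^{-d-1+\gamma}$ (up to a log) otherwise. Under $p_2>-\gamma-4$ and $d\geq11$ one checks that the bad alternative is dominated.

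\emph{Second step: $w$.} We have $w=-\Lambda Q\int_0^y \A_w/\Lambda Q$. Near the origin $\Lambda Q\approx2$, so $w=O(y^{p_1+2})$, again with full Taylor expansion.

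At infinity $1/\Lambda Q\sim y^{\gamma+2}$, so the integrand is $\lesssim y^{p_2+3+\gamma}$. The hypothesis $p_2>-\gamma-4$ is exactly the condition $p_2+3+\gamma>-1$. The integral therefore grows like $y^{p_2+4+\gamma}$, and multiplying by $\Lambda Q\sim y^{-\gamma-2}$ gives $w\lesssim y^{p_2+2}$. This is precisely where the hypothesis is used.

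\emph{Derivative bounds.} For $\partial_y^kw$ I would not differentiate the integrals naively. Instead, I would use the ODE
\[
\partial_y w=-\frac{\partial_y\Lambda Q}{\Lambda Q}\,w-\A_w,
\]
and similarly $\partial_y\A_w=f-yV^B\A_w$. Inductively, each derivative gains a factor $y^{-1}$ because the coefficients $yV^A$ and $yV^B$ are of order $y^{-1}$ with symbol bounds.

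\textbf{Main obstacle.} The hard step is the case analysis in the first integral at infinity, when $p_2+d+2-\gamma\le0$, i.e.\ $f$ decays fast. There $\A_w$ is only $O(y^{\gamma-d-1})$, possibly times a logarithm, not $O(y^{p_2+1})$, and one must verify that the resulting $w$ still satisfies $w\lesssim y^{p_2+2}$. I would handle this by comparing exponents: after the second integration this contribution is $\lesssim y^{\gamma-d+2}$ (or $y^{-\gamma-2}$ times a constant). I would then check against $y^{p_2+2}$ using $p_2>-\gamma-4$ and $d\ge11$, adjusting by absorbing a constant multiple of the kernel element $\Lambda Q$ where needed.
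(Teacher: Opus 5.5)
Your approach is the paper's. Its proof only says that (i)--(ii) are immediate from Definition~\ref{definition:admisible-function} and that (iii) follows by computing with \eqref{relation:inverse-f}, which is what you do. Your exponent bookkeeping is right, including the point that $p_2>-\gamma-4$ is exactly the condition for $\int_0^y \A_w/\Lambda Q$ to grow like $y^{p_2+\gamma+4}$ rather than converge, which would leave $w\approx c\,\Lambda Q$. The ODE route to the derivative bounds also works, but note the sign: $\partial_y w=yV^A w-\A_w$ with $yV^A=\partial_y\Lambda Q/\Lambda Q$.

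Two points need correcting.

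First, the ``main obstacle'' you single out cannot occur. From $p_2>-\gamma-4$ you get $p_2+d+2-\gamma>d-2-2\gamma=\Gamma-\gamma=\sqrt{(d-2)(d-10)}>0$. So the first integral is $O(y^{p_2+d+2-\gamma})$ for $y\ge1$, and $\A_w\lesssim y^{p_2+1}$ with no case analysis. This matters, because ``absorbing a multiple of $\Lambda Q$'' is not available to you: $\Li^{-1}$ is the fixed formula \eqref{relation:inverse-f}, and the lemma is about that particular $w$.

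Second, in (ii) your assertion that no negative powers are produced is false for the linear term. The operator $-\partial_{yy}-\frac{d+1}{y}\partial_y$ sends $y$ to $-(d+1)y^{-1}$. For example, $f=\chi(y)(1+y)$ has $p_1=0$, which is even, yet $\Li f$ is singular at the origin. So evenness of $p_1$ alone is not a sufficient hypothesis; this is an imprecision in the statement itself. What (ii) actually needs is that the coefficient of $y$ in the Taylor expansion vanishes, which holds for instance when the expansion contains only even powers. The even class is preserved by $\Lambda$, $\Li$, $\Li^{-1}$ and products, and it contains $\Lambda Q$, $T_k$ and $S_k$, so it covers every use of the lemma. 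You should state this assumption explicitly rather than appeal to ``odd/even structure''.
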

\begin{proof}
    Part (i) and (ii) are direct consequences of Definition~\ref{definition:admisible-function}. Part (iii) can be shown by simply computing over the integration formula of $\Li^{-1}$ \eqref{relation:inverse-f}.
\end{proof}
Then we construct the series $T_k$ and present its properties.
\begin{lemma}[Construction of the kernel of $\Li^k$]\label{lemma:construction-Tk}
    Consider the profiles
    \begin{equation}\label{defeq:definition-Tk}
        T_k=(-1)^k\Li^{-k}\Lambda Q,\quad k\in\N.
    \end{equation}
    Then it holds:
    \begin{enumerate}
        \item [(i)] $T_k$ is admissible of degree $(2k,2k-2-\gamma)$.
        \item [(ii)] $\Lambda T_k-(2k-\gamma)T_k$ is admissible of degree $(2k,2k-4-\gamma)$.
    \end{enumerate}
\end{lemma}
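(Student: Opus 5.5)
Both items will be proved by induction on $k$, using Lemma~\ref{lemma:act-L-admissible} and the explicit integral formula \eqref{relation:inverse-f} for $\Li^{-1}$.

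\textbf{Part (i).} The base case $k=0$ is $T_0=\Lambda Q$. By Lemma~\ref{lemma:asymp-coeficients}, $\Lambda Q$ is admissible of degree $(0,-2-\gamma)$. For the inductive step, assume $T_k$ has degree $(2k,2k-2-\gamma)$. Since $2k-2-\gamma>-\gamma-4$, Lemma~\ref{lemma:act-L-admissible}(iii) shows that $T_{k+1}=-\Li^{-1}T_k$ has degree $(2k+2,2k-\gamma)$.

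I will also record the exact leading tail $T_k=c_k^\infty y^{2k-2-\gamma}+O(y^{2k-4-\gamma})$, with derivative bounds. This follows by inserting the two-term expansions of $\T_Q$ and $\Lambda Q$ at infinity into \eqref{relation:inverse-f}. Note that $\T_Q y^{d-1}\sim y^{d+1-\gamma}$ with relative corrections of order $y^{-2}$, by \eqref{equation:expression-infinity-M}. Then:
\begin{itemize}
\item The inner integral $\int_0^y T_k\T_Q x^{d-1}dx$ has integrand $\sim x^{2k+d-1-2\gamma}$. Its exponent exceeds $-1$ because $d\ge 11$ and $\gamma\le3$. So the integral equals $c\,y^{2k+d-2\gamma}(1+O(y^{-2}))$, plus a constant; the constant is a lower-order term after division.
\item Dividing by $y^{d-1}\T_Q$ gives $\A_w\sim y^{2k+1-\gamma}$.
\item The outer integral $\int_0^y \A_w/\Lambda Q$ has integrand $\sim x^{2k+3}$, again with relative corrections $O(x^{-2})$. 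Multiplying by $\Lambda Q\sim y^{-2-\gamma}$ yields $y^{2k+2-2-\gamma}$ with relative error $O(y^{-2})$.
\end{itemize}
The point to check is that every correction is down by exactly $y^{-2}$ and never by something like $y^{-\gamma+2}$. This holds because $Q-2y^{-2}=O(y^{-2-\gamma})$ enters $\T_Q$ only through $\int xQ$. That gives $2\ln y+\text{const}+O(y^{-\gamma})$, and $O(y^{-\gamma})$ is smaller than $y^{-2}$.

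The integration constants generate terms $y^{-d+\gamma}$ (from $\Gamma_Q$-type pieces) and $\Lambda Q\cdot\text{const}\sim y^{-2-\gamma}$. Both are of lower order than $y^{2k-4-\gamma}$ once $k\ge1$. For the $\Lambda Q$ piece this is exactly the borderline $y^{2k-4-\gamma}$ when $k=1$, which is acceptable.

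\textbf{Part (ii).} It suffices to show $T_k=c_k^\infty y^{2k-2-\gamma}+R_k$ with $R_k$ admissible of degree $(2k,2k-4-\gamma)$. Since $\Lambda y^{p}=(p+2)y^p$, we get $(\Lambda-(2k-\gamma))(c_k^\infty y^{2k-2-\gamma})=0$ exactly. Applying Lemma~\ref{lemma:act-L-admissible}(i) to $R_k$ then finishes the tail estimate.

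Near the origin, a pure power cannot be used directly because it is not smooth at $y=0$. So I argue directly instead:
\begin{itemize}
\item $T_k$ is admissible with lowest Taylor order $2k$.
\item $\Lambda$ preserves Taylor order.
\item Hence $\Lambda T_k-(2k-\gamma)T_k$ has Taylor expansion starting at order $\ge 2k$.
\end{itemize}
For $y\ge1$, the derivative bounds $|\partial_y^j R_k|\lesssim y^{2k-4-\gamma-j}$ come from the inductive structure. Differentiating \eqref{relation:inverse-f} expresses $\partial_y T_{k+1}$ through $\A_w$ and $\Lambda Q$, whose expansions are term-by-term differentiable because the coefficient functions are admissible.

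\textbf{Main obstacle.} The main obstacle is the bookkeeping of the second-order tail: showing that the corrections to $T_k$ are $O(y^{2k-4-\gamma})$ uniformly in derivatives. An alternative I would try first is more structural. Commute $\Lambda$ with $\Li$: the identity $\Li\Lambda-\Lambda\Li = -2\Li + (\text{potential terms})$ has potential terms decaying like $y^{-2-\gamma}$, one order faster than $\Li$'s own potential. Using this, one can prove inductively that $S_k:=\Lambda T_k-(2k-\gamma)T_k$ satisfies $\Li S_{k+1}=-S_k+O(y^{2k-4-\gamma-\gamma})$. Lemma~\ref{lemma:act-L-admissible}(iii) then propagates the degree $(2k,2k-4-\gamma)$, starting from $S_0=\Lambda^2Q+\gamma\Lambda Q$, which is of degree $(0,-4-\gamma)$ by Lemma~\ref{lemma:asymptotic-Q}. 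The one remaining care point is the kernel ambiguity in $\Li^{-1}$ (the $\Lambda Q$ and $\Gamma_Q$ components). It is handled by the explicit formula and the decay of $\Lambda Q$ and $\Gamma_Q$.
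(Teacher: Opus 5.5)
Part (i) matches the paper. For (ii), your main route (a full tail expansion of every $T_k$) is genuinely different from the paper's. Your fallback is essentially the paper's proof: the commutator identity $\Li(\Lambda\eps)=\Lambda\Li\eps+2\Li\eps-y(\Lambda V^1)\partial_y\eps-(\Lambda V^2)\eps$, with $\Lambda V^1,\Lambda V^2=O(y^{-2-\gamma})$. The paper, however, starts that induction at $k=1$ from an explicit expansion of $T_1$, while you start at $k=0$.

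\textbf{The gap.} Both of your versions break at the first step $T_0\to T_1$.

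\emph{Direct route.} Your exponents are off by two. In fact $\A_w\sim y^{2k-1-\gamma}$, so the outer integrand $\A_w/\Lambda Q$ behaves like $x^{2k+1}$, not $x^{2k+3}$. For $k=0$ it is $\sim x$, and a relative correction of size $x^{-2}$ then integrates to $\log y$. The expansion $\Lambda Q=a_0\gamma y^{-2-\gamma}+O(y^{-4-\gamma})$ from Lemma~\ref{lemma:asymptotic-Q} that you invoke does not exclude such a correction. You are right that $Q-2y^{-2}$ enters $\M$ only through $\int xQ$, with an $O(y^{-\gamma})$ remainder. But $\T_Q=\M\,\Lambda Q$ also carries the correction of $\Lambda Q$ itself. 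So from your inputs you only get $T_1=c\,y^{-\gamma}+O(y^{-2-\gamma}\log y)$. Since $(\Lambda-(2-\gamma))(y^{-2-\gamma}\log y)=-2y^{-2-\gamma}\log y+y^{-2-\gamma}$, item (ii) would then fail at $k=1$.

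\emph{Commutator route from $k=0$.} Lemma~\ref{lemma:asymptotic-Q} gives only $S_0=(\Lambda+\gamma)\Lambda Q=O(y^{-4-\gamma})$. This is exactly the borderline $p_2=-\gamma-4$ excluded in Lemma~\ref{lemma:act-L-admissible}(iii). There the outer integrand is $\sim x^{-1}$, and $\Li^{-1}$ produces $\Lambda Q\log y$.

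\textbf{The fix.} You need the finer tail from Appendix~\ref{app:steady-state}: $Q=y^{-2}(2-a_0y^{-\gamma}+O(y^{-2\gamma}))$ for $d>11$, with an extra $y^{-8}\log y$ term when $d=11$. This gives $\Lambda Q=a_0\gamma y^{-2-\gamma}(1+O(y^{-\gamma}\log y))$ and $\M=c\,y^4(1+O(y^{-\gamma}))$, with no relative $y^{-2}$ term.
\begin{itemize}
\item The $k=0$ outer integrand becomes $\frac{x}{d-2\gamma}\bigl(1+O(x^{-\gamma}\log x)\bigr)$. Its correction is integrable because $\gamma>2$.
\item Hence $T_1=c\,y^{-\gamma}+C_1\Lambda Q+O(y^{-2\gamma}\log y)=c\,y^{-\gamma}+O(y^{-2-\gamma})$.
\item Equivalently, $S_0=O(y^{-2-2\gamma}\log y)$ lies off the borderline.
\end{itemize}

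From there either route closes. In the direct route, for $k\ge1$ a relative $x^{-2}$ correction to $x^{2k+1}$ integrates to $O(y^{2k})$ with no log. In the commutator route, Lemma~\ref{lemma:act-L-admissible}(iii) applies with $p_2=2k-4-\gamma>-\gamma-4$. The paper's own base case, the ``explicit computation'' of $T_1$, tacitly relies on the same refined tail.

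\textbf{A smaller point in the direct route.} The derivative bounds on $R_k$ do not follow from admissibility of the coefficients, which only gives $|\partial_y^j f|\lesssim y^{p_2-j}$. You need symbol-type bounds on $\Lambda Q-a_0\gamma y^{-2-\gamma}$ and $\M-c\,y^4$, which can be obtained from the autonomous ODE for $q$.
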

\begin{proof}
    Part (i) is straightforward from the admissibility of $\Lambda Q$ in \eqref{relation:admissibility-Q} and Lemma~\ref{lemma:act-L-admissible}.

    For part (ii) we prove it by induction. For $k=1$, we explicitly compute using Lemma~\ref{lemma:inversion-L} to get
    \begin{equation}
        \partial_y^iT_1(y)=c_{T,i}y^{-\gamma-i}+O(y^{-2-\gamma-i}),\quad i\in\N,\quad y\rightarrow+\infty.
    \end{equation}
    Therefore we proceed by noticing $\partial_y^i\Lambda f=\Lambda\partial_y^if+i\partial_y^if$,
    \begin{equation}
        \partial_y^i[\Lambda T_1-(2-\gamma) T_1]=\Lambda\partial_y^iT_1-(2-\gamma-i)\partial_y^iT_1=O(y^{-\gamma-i-2}).
    \end{equation}
    Hence we deduce that $\Lambda T_1-(2-\gamma)T_1$ is admissible of degree $(2,-\gamma-2)$. We now assume the claim holds for $k\geq1$, that is $\Lambda T_k-(2k-\gamma)T_k$ is admissible of degree $(2k,2k-4-\gamma)$. We shall use the following identity:
\begin{equation}
\Li(\Lambda\eps)=\Lambda(\Li\eps)+2\Li\eps-y(\Lambda V^1)\partial_y\eps-(\Lambda V^2)\eps,
\end{equation}
which follows from a direct computation. We now use definition \eqref{defeq:definition-Tk} to write
\begin{equation}\label{equation:induction-relation-lamTk}
   \begin{aligned}
       \Li(\Lambda T_{k+1}-(2k+2-\gamma)T_{k+1})=-\Lambda T_k+(2k-\gamma)T_k-y(\Lambda V^1)\partial_yT_{k+1}-(\Lambda V^2)T_{k+1}.
   \end{aligned}
\end{equation}
We notice that $T_{k+1}$ is admissible of degree $(2k+2,2k-\gamma)$. Then from \eqref{relation:admissibility-V} we derive that
\begin{equation}
    -y(\Lambda V^1)\partial_yT_{k+1}-(\Lambda V^2)T_{k+1}=O\left(y^{2k+2}\right),\quad y\rightarrow0,
\end{equation}
and
\begin{equation}
    -y(\Lambda V^1)\partial_yT_{k+1}-(\Lambda V^2)T_{k+1}=O\left(y^{2k-2\gamma-2}\right),\quad y\rightarrow+\infty.
\end{equation}
Together with the induction hypothesis, we deduce that the right-hand side of \eqref{equation:induction-relation-lamTk} is admissible of degree $(2k,2k-4-\gamma)$. The conclusion then follows by part (iii) of Lemma~\ref{lemma:act-L-admissible}.
\end{proof}

\subsection{Construction of blowup profile} In this subsection we use the structure of the linear operators to construct the approximate blowup profile. To begin, we first introduce the concept of homogeneous admissible function.
\begin{definition}[Homogeneous admissible function]\label{def:homogeneous-admissible}
    For $j=(j_1,\dots,j_L)\in\N^L$, we say that a function $f(b,y)$ with $b=(b_1,\dots,b_L)$ is homogeneous of degree $(p_1,p_2,p_3)\in\N\times\R\times\N$ if it is a finite linear combination of monomials
    \begin{equation}
        \tilde{f}(y)\prod_{k=1}^Lb_k^{j_k},
    \end{equation}
    where $\tilde{f}(y)$ is admissible of degree $(p_1,p_2)$ and 
    \begin{equation}
        \sum_{k=1}^Lkj_k=p_3.
    \end{equation}
    We set
    \begin{equation}
        \deg(f):=(p_1,p_2,p_3).
    \end{equation}
\end{definition}
We are now able to construct our approximate blowup profile.
\begin{proposition}[Approximate profile]\label{proposition:approximate-profile}
    Let $L\gg 1$ be an integer and $M>0$ be a large enough universal constant. Then there exists a small enough universal constant $b^*(M,L)$ such that the following holds true. For a $C^1$ map
    \begin{equation}
        b=(b_1,\dots,b_L):[s_0,s_1]\mapsto[-b^*,b^*]^L,
    \end{equation}
    with a priori bounds
    \begin{equation}
        0<b_1<b^*,\quad |b_k|\lesssim b_1^k,\quad 2\leq k\leq L.
    \end{equation}
    Then there exists a profile
    \begin{equation}\label{equation:construct-Q}
        Q_{b(s)}(y)=Q(y)+\sum_{k=1}^Lb_k(s)T_k(y)+\sum_{k=2}^{L+2}S_k(b,y)=Q(y)+\Theta_{b(s)}(y)
    \end{equation}
    satisfying the following properties:
    \begin{enumerate}
        \item [(i)] Approximate equation. The constructed profile satisfies
        \begin{equation}\label{equation:profile-equation-approximate}
            \partial_sQ_b-\partial_{yy}Q_b-\frac{d+1}{y}\partial_yQ_b-Q_b(y\partial_yQ_b+dQ_b)+b_1\Lambda Q_b=\Psi_b+\textnormal{Mod}(t)
        \end{equation}
        with the modulation defined as
        \begin{equation}\label{equation:Mod-term-1}
            \textnormal{Mod}(t)=\sum_{k=1}^L[(b_k)_s+(2k-\gamma)b_1b_k-b_{k+1}]\left[T_k+\sum_{j=k+1}^{L+2}\frac{\partial S_j}{\partial b_k}\right],
        \end{equation}
        where we use the convention $b_j=0$ for $j\geq L+1$.
        \item [(ii)] Estimate on the profiles. We have $S_1=0$ and the profiles $S_k$ for $2\leq k\leq L+2$ are homogeneous with
        \begin{equation}
            \begin{gathered}
                \deg(S_k)=(2k,2k-4-\gamma,k)\\
                \frac{\partial S_k}{\partial b_j}=0,\quad 2\leq k\leq j\leq L.
            \end{gathered}
        \end{equation}
        \item [(iii)] Estimate on the error term $\Psi_b$. For all $0\leq k\leq L$, we have the following estimate by recalling the definition of $h,\delta,m,\eta,B_1$ in \eqref{defeq:h-delta}, \eqref{defeq:m-def} and \eqref{defeq:B0-B1}:
        \begin{equation}\label{estimate:psib-bound}
            \int_{y\leq2B_1}\M|\Li^{h+k+1}\Psi_b|^2+\int_{y\leq 2B_1}\frac{\M|\Psi_b|^2}{1+y^{4(h+k+1)}}\lesssim b_1^{2k+4+2(1-\delta)-C_L\eta}
        \end{equation}
        with a constant $C_L$ depends only on $L$
        .We also have the improved local bound
        \begin{equation}\label{estimate:psib-local-bound}
            \int_{y\leq2M}\M|\Li^{h+k+1}\Psi_b|^2\lesssim M^Cb_1^{2L+6}.
        \end{equation}
    \end{enumerate}
\end{proposition}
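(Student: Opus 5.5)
We follow the recursive scheme of \cite{GhoulIbrahimNguyen2019,MerleRaphaelRodnianski2015}. First we plug the ansatz \eqref{equation:construct-Q} into the left-hand side of \eqref{equation:profile-equation-approximate} and expand around $Q$. Using \eqref{equation:steady-state}, the left-hand side becomes
\[
\partial_s\Theta_b+\Li\Theta_b+b_1\Lambda Q+b_1\Lambda\Theta_b-\Ni(\Theta_b),\qquad \Ni(\Theta)=\Theta(y\partial_y\Theta+d\Theta).
\]
Here $\Li$ is the operator of Lemma~\ref{lemma:structure-of-L}, with the sign convention that $\Li$ is minus the linearization.

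The $b$-linear part is handled with $\Li T_k=-T_{k-1}$ (with $T_0=\Lambda Q$). This follows from \eqref{defeq:definition-Tk} and Lemma~\ref{lemma:inversion-L}. For the transport term we write $b_1\Lambda T_k=(2k-\gamma)b_1T_k+b_1[\Lambda T_k-(2k-\gamma)T_k]$. Then $\sum_k b_kT_k$ contributes
\[
\sum_{k}[(b_k)_s+(2k-\gamma)b_1b_k-b_{k+1}]T_k
+\sum_k b_1b_k[\Lambda T_k-(2k-\gamma)T_k],
\]
and the $T$-terms form the leading part of $\textnormal{Mod}(t)$. The remaining terms are collected by homogeneity degree $j$ in $b$. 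They are $b_1b_{j-1}[\Lambda T_{j-1}-(2j-2-\gamma)T_{j-1}]$, the quadratic terms $-\Ni$ of products $b_ib_kT_iT_k$ with $i+k=j$, and the terms $b_1\Lambda S_{j-1}$, $\Ni$-cross terms and $\partial_s S_{j-1}$ generated by lower $S_i$. In $\partial_sS_i$ we replace $(b_k)_s$ by $\big[(b_k)_s+(2k-\gamma)b_1b_k-b_{k+1}\big]-(2k-\gamma)b_1b_k+b_{k+1}$. The bracket goes into $\textnormal{Mod}(t)$ with coefficient $\partial S_i/\partial b_k$. The remainder is homogeneous of degree $i+1$ in the $b$-count, since $b_1b_k$ and $b_{k+1}$ both have weight $k+1$. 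Calling $\Sigma_j$ the collected degree-$j$ source, we define $S_j=-\Li^{-1}\Sigma_j$ for $2\le j\le L+2$, with $S_1=0$.

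Degrees are checked by induction using Lemma~\ref{lemma:act-L-admissible} and Lemma~\ref{lemma:construction-Tk}(ii). The term $b_1b_{j-1}[\Lambda T_{j-1}-\dots]$ has degree $(2j-2,2j-6-\gamma,j)$. Quadratic terms $T_iT_k$ decay like $y^{2j-4-2\gamma-2}$ at infinity, which is better because $\gamma>2$. Applying $\Li^{-1}$, which is allowed since $p_2>-\gamma-4$, gives $\deg S_j=(2j,2j-4-\gamma,j)$. The structural claim $\partial S_k/\partial b_j=0$ for $k\le j$ holds because degree-$k$ sources only involve $b_i$ with $i<k$, apart from $b_1$-factors. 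One checks this case by case: $b_{j-1}$ always appears multiplied by $b_1$. The leftover $\Psi_b$ consists of all terms of homogeneity $\ge L+3$: truncated $\Ni$ products, $b_1\Lambda S_{L+2}$, $b_{L+1}=0$ boundary terms, and $\partial_sS_{L+1},\partial_sS_{L+2}$ remainders. Each is a homogeneous function of degree $(\cdot,\,2j-4-\gamma,\,j)$ with $j\ge L+3$, or $(\cdot,\,2(L+2)-2-\gamma,\,L+3)$ for the $b_{L+1}$-type terms.

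For (iii), each monomial $b^{J}f$ with $f$ of degree $(p_1,p_2)$ and $|J|=j$ is estimated on $y\le 2B_1$. By admissibility, $\Li^{h+k+1}f\lesssim y^{p_2-2(h+k+1)}$ for $y\ge1$, and $\M\sim y^4$ there. This gives
\[
\int_{1\le y\le 2B_1}\M|\Li^{h+k+1}f|^2\lesssim B_1^{\,2p_2-4(h+k+1)+d+4}
\]
when the exponent is positive. Here $d-4h=2\gamma+4\delta$ by \eqref{defeq:h-delta}. Inserting $p_2=2j-4-\gamma$ and $|b^J|\lesssim b_1^{j}$, with $B_1=b_1^{-(1+\eta)/2}$, the total power of $b_1$ is
\[
2j-(1+\eta)\big(2j-4-\gamma-2k+\gamma+2\delta\big)\ \ge\ 2k+4+2(1-\delta)-C_L\eta.
\]
The worst case is the terms with the slowest tail, namely the degree $p_2=2j-2-\gamma$ terms coming from the $b_{L+1}$ truncation. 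For these one must exploit that $\Li^{k}$ annihilates the leading $T$-structure, or else include them in Mod through the convention $b_{L+1}=0$; we will check that the exponent bookkeeping leaves the loss $2(1-\delta)$. The weighted $L^2$ term in \eqref{estimate:psib-bound} is treated identically. Near the origin everything is smooth and bounded, so that region is harmless. The local bound \eqref{estimate:psib-local-bound} is immediate, since on $y\le 2M$ each term is bounded by $M^C b_1^{L+3}$.

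\textbf{Main obstacle.} We expect the main difficulty to be twofold. First, one must verify that no $b_{L+1}$-type or $\partial_sS$ remainder of homogeneity exactly $L+2$ with growing tail survives in $\Psi_b$; otherwise only $b_1^{2k+2}$ would be obtained. Second, the precise matching of the $\delta$-threshold exponents must be tracked through the induction.
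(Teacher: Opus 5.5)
Your expansion around $Q$, the use of $\Li T_k=-T_{k-1}$, the rewriting of $\partial_sS_i$ through the $\textnormal{Mod}$ brackets, the definition $S_j=-\Li^{-1}\Sigma_j$ and the degree induction for (i)--(ii) follow the paper and are fine. There is one slip: $T_iT_{j-i}$ and $T_i\,y\partial_yT_{j-i}$ decay like $y^{2j-4-2\gamma}$, not $y^{2j-6-2\gamma}$. This still beats the source tail $y^{2j-6-\gamma}$, precisely because $\gamma>2$.

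The gap is in (iii), exactly where you place the ``main obstacle''. You give the surviving terms of homogeneity $j\ge L+3$ the tail $y^{2j-4-\gamma}$. You then assert $2j-(1+\eta)(2j-4-2k+2\delta)\ge 2k+4+2(1-\delta)-C_L\eta$. But the left side equals $2k+4-2\delta-O(\eta)$, which is short of the right side by exactly $2$. So with the tails you state, the argument gives only $b_1^{2k+2+2(1-\delta)-C\eta}$, losing the full factor $b_1^2$. Your hypothetical $b_{L+1}$-truncation terms of degree $(\cdot,2L+2-\gamma,L+3)$ would fail the same way, and you leave both points to ``we will check''.

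The missing step is the exact identification of $\Psi_b$. For every $2\le j\le L+2$, $S_j$ cancels the entire homogeneity-$j$ source. In particular $S_{L+2}$ absorbs $b_1\Lambda S_{L+1}$ and the $\partial_sS_{L+1}$ remainder, which you wrongly list in $\Psi_b$ even though they have homogeneity $L+2$. With $b_{L+1}=0$, the $k=L$ bracket $(b_L)_s+(2L-\gamma)b_1b_L$ goes entirely into $\textnormal{Mod}(t)$, and $T_{L+1}$ never enters. So there are no truncation terms at all, and one has exactly
\[
\Psi_b=E_{L+2}+R,\qquad E_{L+2}=b_1\Lambda S_{L+2}-\sum_{j=1}^{L}\big[(2j-\gamma)b_1b_j-b_{j+1}\big]\frac{\partial S_{L+2}}{\partial b_j},
\]
with $R$ the quadratic terms of homogeneity $\ge L+3$. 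The terms of $E_{L+2}$ gain one unit of homogeneity but keep the tail $y^{2L-\gamma}$ of $S_{L+2}$. The remainder $R$ has leading degree $(2L+6,2L+2-2\gamma,L+3)$. Hence $\deg\Psi_b=(2L+4,2L-\gamma,L+3)$: a homogeneity-$j$ error carries the source-type tail $y^{2j-6-\gamma}$, two powers of $y$ better than your profile-type $y^{2j-4-\gamma}$. Using $d-2\gamma-4h=4\delta$, this gives
\[
\int_{y\le 2B_1}\M|\Li^{h+k+1}\Psi_b|^2\lesssim b_1^{2L+6}\int_{y\le 2B_1}y^{4(L-k+\delta)-1}\,dy\lesssim b_1^{2L+6}B_1^{4(L-k+\delta)}=b_1^{2k+4+2(1-\delta)-2(L-k+\delta)\eta}.
\]
This is the claim with no slack in the $\eta$-independent part of the exponent, which is why the tail must be pinned down exactly. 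The weighted term $\M|\Psi_b|^2/(1+y^{4(h+k+1)})$ and the local bound then follow as you indicate.
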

\begin{proof}
    We proceed in three steps. First we rewrite the formula of the error term.

    \step{1} Expansion of $\Psi_b$. We derive from \eqref{equation:construct-Q}, \eqref{equation:profile-equation-approximate} and \eqref{equation:Mod-term-1}
    \begin{align}
         &\partial_sQ_b-\partial_{yy}Q_b-\frac{d+1}{y}\partial_yQ_b-Q_b(y\partial_yQ_b+dQ_b)+b_1\Lambda Q_b\\
            &=b_1\Lambda Q+\partial_s\Theta_b-\partial_{yy}\Theta_b-(\frac{d+1}{y}+yQ)\partial_y\Theta_b-(y\partial_yQ+2dQ)\Theta_b+b_1\Lambda \Theta_b\\
            &\quad -[Q_b(y\partial_yQ_b+dQ_b)-Q(y\partial_yQ+dQ)-yQ\partial_y\Theta_b-(y\partial_yQ+2dQ)\Theta_b]\\
            &:= A_1+A_2.
    \end{align}
    Using the expression of $\Theta_b$ \eqref{equation:construct-Q} and the definition of $T_k$ in \eqref{defeq:definition-Tk}, we obtain
    \begin{align}
        A_1&=b_1\Lambda Q+\sum_{k=1}^L[(b_k)_sT_k+b_k\Li T_k+b_1b_k\Lambda T_k]+\sum_{k=2}^{L+2}[\partial_sS_k+\Li S_k+b_1\Lambda S_k]\\ &=\sum_{k=1}^L[(b_k)_sT_k-b_{k+1}T_k+b_1b_k\Lambda T_k]+\sum_{k=2}^{L+2}[\partial_sS_k+\Li S_k+b_1\Lambda S_k]\\ &=\sum_{k=1}^L[(b_k)_s-b_{k+1}+(2k-\gamma)b_1b_k]T_k+\sum_{k=1}^L[\Li S_{k+1}+\partial_sS_k+b_1b_k[\Lambda T_k-(2k-\gamma)T_k]+b_1\Lambda S_k]\\
    &\quad+[\mathcal{L}S_{L+2}+\partial_sS_{L+1}+b_1\Lambda S_{L+1}]+[\partial_s S_{L+2}+b_1\Lambda S_{L+2}].
    \end{align}
    Since we have
    \begin{equation}
        \partial_sS_k=\sum_{j=1}^L(b_j)_s\frac{\partial S_k}{\partial b_j}=\sum_{j=1}^L[(b_j)_s+(2j-\gamma)b_1b_j-b_{j+1}]\frac{\partial S_k}{\partial b_j}-\sum_{j=1}^L[(2j-\gamma)b_1b_j-b_{j+1}]\frac{\partial S_k}{\partial b_j},
    \end{equation}
    we can write
    \begin{equation}
        A_1=\text{Mod}(t)+\sum_{k=1}^{L+1}[\Li S_{k+1}+E_k]+E_{L+2},
    \end{equation}
    where the terms $E_k$ are defined as for $k=1,\dots,L$,
    \begin{equation}\label{defeq:ek-in-a1}
         E_k=b_1b_k[\Lambda T_k-(2k-\gamma)T_k]+b_1\Lambda S_k-\sum_{j=1}^{k-1}[(2j-\gamma)b_1b_j-b_{j+1}]\frac{\partial S_k}{\partial b_j},
    \end{equation}
    for $k=L+1,L+2$,
    \begin{equation}
         E_k=b_1\Lambda S_k-\sum_{j=1}^{L}[(2j-\gamma)b_1b_j-b_{j+1}]\frac{\partial S_k}{\partial b_j}.
    \end{equation}
    To compute the nonlinear term $A_2$, we first observe
    \begin{equation}\label{defeq:A2}
        A_2=-\Theta_by\partial_y\Theta_b-d\Theta_b^2.
    \end{equation}
    Therefore we can write
    \begin{equation}
         A_2=\sum_{i=2}^{L+2}P_i+R,
    \end{equation}
    where
    \begin{equation}\label{defeq:Pi-in-A2}
        \begin{aligned}
            P_i=&\sum_{j=1}^{i-1}c_{1,i,j}b_jb_{i-j}T_jT_{i-j}+c_{2,i,j}b_jT_jS_{i-j}+c_{3,i,j}S_jS_{i-j}\\
    &+\sum_{j=1}^{i-1}c_{4,i,j}b_jb_{i-j}T_jy\partial_yT_{i-j}+c_{5,i,j}b_j(T_jy\partial_yS_{i-j}+y\partial_yT_jS_{i-j})+c_{6,i,j}S_jy\partial_yS_{i-j}
        \end{aligned}
    \end{equation}
    and
    \begin{equation}\label{defeq:R-in-A2}
        \begin{aligned}
            R=&\sum_{i=L+3}^{2L+4}\left(\sum_{j=1}^{i-1}c_{1,i,j}b_jb_{i-j}T_jT_{i-j}+c_{2,i,j}b_jT_jS_{i-j}+c_{3,i,j}S_jS_{i-j}\right)\\
    &+\sum_{i=L+3}^{2L+4}\left(\sum_{j=1}^{i-1}c_{4,i,j}b_jb_{i-j}T_jy\partial_yT_{i-j}+c_{5,i,j}b_j(T_jy\partial_yS_{i-j}+y\partial_yT_jS_{i-j})+c_{6,i,j}S_jy\partial_yS_{i-j}\right).
        \end{aligned}
    \end{equation}
    Here we set $T_j=0$ for $j\geq L+1$ and $S_j=0$ for $j\geq L+3$. We mention here that all terms involving powers of $b$ greater than or equal to $L+3$ are collected in $R$, which therefore constitutes a perturbative higher-order remainder. In conclusion, we have
    \begin{equation}\label{equation:rewrite-psib}
        \Psi_b=\sum_{k=1}^{L+1}\left[\mathcal{L}S_{k+1}+E_k+P_{k+1}\right]+E_{L+2}+R.
    \end{equation}

    \step{2} Construction of $S_k$. From \eqref{equation:rewrite-psib} we construct the profile $S_k$ as
    \begin{equation}\label{defeq:construct-inverse-Sk}
        S_1=0,\quad S_k=-\Li^{-1}F_k,\quad2\leq k\leq L+2,
    \end{equation}
    where
    \begin{equation}
        F_k=E_{k-1}+P_{k},\quad2\leq k\leq L+2.
    \end{equation}
    If $F_k$ is homogeneous with
    \begin{equation}\label{relation:degree-fk}
        \deg(F_k)=(2k-2,2k-6-\gamma,k),\quad 2\leq k\leq L+2
    \end{equation}
    and
    \begin{equation}\label{relation:smaller-fk}
        \frac{\partial F_k}{\partial b_j}=0,\quad 2\leq k\leq j\leq L+2,
    \end{equation}
    then it follows from Lemma~\ref{lemma:act-L-admissible} that $S_k$ is homogeneous of degree $(2k,2k-4-\gamma,k)$ for $2\leq k\leq L+2$. This leads to part (ii) of Proposition~\ref{proposition:approximate-profile}. We now give the proof of \eqref{relation:degree-fk} and \eqref{relation:smaller-fk}. Let us prove by induction on $k$.

    For $k=2$, we compute from \eqref{defeq:ek-in-a1} and \eqref{defeq:Pi-in-A2} to get
    \begin{equation}
        F_2 = E_1+P_2=b_1^2(\Lambda T_1-(2-\gamma)T_1)+b_1^2(c_{1,1,1}T_1^2+c_{4,1,1}T_1y\partial_y T_1).
    \end{equation}
    Then from Lemma~\ref{lemma:construction-Tk} we obtain $\Lambda T_1-(2-\gamma)T_1$ is admissible of degree $(2,-2-\gamma)$ and $c_{1,1,1}T_1^2+c_{4,1,1}T_1y\partial_y T_1$ is admissible of degree $(4,-2\gamma)$. Hence we can deduce that $F_2$ is homogeneous with degree $\deg(F_2)=(2,-2-\gamma,2)$.

    Now we suppose the claim holds for $k$ and we aim to prove the case for $k+1$. Using again \eqref{defeq:ek-in-a1} and \eqref{defeq:Pi-in-A2} we have for $k=1,\dots,L$,
    \begin{equation}
        F_{k+1}=E_k+P_{k+1}=b_1b_k[\Lambda T_k-(2k-\gamma)T_k]+b_1\Lambda S_k-\sum_{j=1}^{k-1}[(2j-\gamma)b_1b_j-b_{j+1}]\frac{\partial S_k}{\partial b_j}+P_{k+1}.
    \end{equation}
    From Lemma~\ref{lemma:construction-Tk} and induction hypotheses we obtain
    \begin{equation}
        \deg\left(b_1b_k[\Lambda T_k-(2k-\gamma)T_k]+b_1\Lambda S_k\right)=(2k,2k-4-\gamma,k+1).
    \end{equation}
    The induction hypotheses also shows $\partial S_k/\partial b_j$ is homogeneous of degree $(2k,2k-4-\gamma,k-j)$, which leads to
    \begin{equation}
        \deg\left(b_1b_k[\Lambda T_k-(2k-\gamma)T_k]+b_1\Lambda S_k-\sum_{j=1}^{k-1}[(2j-\gamma)b_1b_j-b_{j+1}]\frac{\partial S_k}{\partial b_j}\right)=(2k,2k-4-\gamma,k+1).
    \end{equation}
    For $P_{k+1}$ we recall the definition in \eqref{defeq:Pi-in-A2}. Using again Lemma~\ref{lemma:construction-Tk} and induction hypotheses, we can get the terms
    \begin{equation}
        T_jT_{k+1-j}, T_jS_{k+1-j},S_jS_{k+1-j},T_jy\partial_yT_{k+1-j},T_jy\partial_yS_{k+1-j},y\partial_yT_jS_{k+1-j},S_jy\partial_yS_{k+1-j}
    \end{equation}
    are admissible of degree $(2k+2,2k-2-2\gamma)$. Therefore
    \begin{equation}
        \deg(P_{k+1})=(2k+2,2k-2-2\gamma,k+1),
    \end{equation}
    and this concludes the case for $k+1$ when $k=1,\dots,L$. For $k=L+1,L+2$, we can follow the same procedure and conclude the proof of part (ii).

    \step{3} We now give the local bounds for error term $\Psi_b$. From \eqref{equation:rewrite-psib} and \eqref{defeq:construct-inverse-Sk} we can rewrite $\Psi_b$ as
    \begin{equation}
        \Psi_b=E_{L+2}+R.
    \end{equation}
    From step 2 we derive $E_{L+2}$ is homogeneous of degree $(2L+4,2L-\gamma,L+3)$. Then we use Lemma~\ref{lemma:construction-Tk}, part (ii) of Proposition~\ref{proposition:approximate-profile} and the definition of $R$ \eqref{defeq:R-in-A2} to show that $R$ has leading degree $(2L+6,2L+2-2\gamma,L+3)$. Therefore we obtain
    \begin{equation}
        \deg(\Psi_b)=(2L+4,2L-\gamma,L+3),
    \end{equation}
    and this leads to the following estimate for all $0\leq k\leq L$
    \begin{align}
        \int_{y\leq2B_1}\mathcal{M}|\mathcal{L}^{h+k+1}\Psi_b|^2&\lesssim b_1^{2L+6}\int_{y\leq2B_1}|y^{2L-\gamma-2(h+k+1)}|^2y^{d-1+4}dy\\
        &\lesssim b_1^{2L+6}\int_{y\leq2B_1}y^{4(L-k+\delta)-1}dy\\
        &\lesssim b_1^{(2L+6)-2(L-k+\delta)(1+\eta)}\lesssim b_1^{2k+4+2(1-\delta)-C_L\eta},
    \end{align}
    where we use Lemma~\ref{lemma:act-L-admissible} and the relation $d-2\gamma-4h=4\delta$. Similarly, the control of the term $\M|\Psi_b|^2/(1+y^{4(h+k+1)})$ is obtained with the exact same lines as above, which proves \eqref{estimate:psib-bound}. Finally, the local bound \eqref{estimate:psib-local-bound} follows exactly from the homogeneous degree of $\Psi_b$.
\end{proof}
In the following proposition we localize the profile $Q_b$ to avoid the growth of tails in the region $y\geq2B_1\gg B_0$.
\begin{proposition}[Localization of profile]\label{proposition:localize-Qb}
    We assume the a priori bound
    \begin{equation}\label{relation:assumption-b1s}
        |(b_1)_s|\lesssim b_1^2
    \end{equation}
    and all assumptions in Proposition~\ref{proposition:approximate-profile}. Then we construct the localized profile
    \begin{equation}\label{equation:construct-Q-local}
        \tQ_{b(s)}(y)=Q(y)+\sum_{k=1}^Lb_k\tT_k+\sum_{k=2}^{L+2}\tS_k,\quad \tT_k=\chi_{B_1}T_k,\quad\tS_k=\chi_{B_1}S_k        
    \end{equation}
    which satisfies the equation
    \begin{equation}\label{equation:profile-equation-approximate-local}
        \partial_s\tilde{Q}_b-\partial_{yy}\tilde{Q}_b-\frac{d+1}{y}\partial_y\tilde{Q}_b-\tilde{Q}_b(y\partial_y\tilde{Q}_b+d\tilde{Q}_b)+b_1\Lambda \tilde{Q}_b=\tilde{\Psi}_b+\chi_{B_1}\textnormal{Mod}(t).
    \end{equation}
    Then the error term $\tPsi_b$ satisfies the following bounds:
    \begin{enumerate}
        \item [(i)] Sobolev bounds. For all $0\leq k\leq L-1$,
        \begin{equation}\label{estimate:cut-psib-bound-1}
        \begin{aligned}
            \int \M|\Li^{h+k+1}\tPsi_b|^2+\int\frac{\M|\A\Li^{h+k}\tPsi_b|^2}{1+y^2}+\int\frac{\M|\Li^{h+k}\tPsi_b|^2}{1+y^4}+\int\frac{\M|\tPsi_b|^2}{1+y^{4(h+k+1)}}\\
            \lesssim b_1^{2k+2+2(1-\delta)-C_L\eta},
        \end{aligned}
        \end{equation}
        and
        \begin{equation}\label{estimate:cut-psib-bound-2}
            \begin{aligned}
            \int \M|\Li^{h+L+1}\tPsi_b|^2+\int\frac{\M|\A\Li^{h+L}\tPsi_b|^2}{1+y^2}+\int\frac{\M|\Li^{h+L}\tPsi_b|^2}{1+y^4}+\int\frac{\M|\tPsi_b|^2}{1+y^{4(h+L+1)}}\\
            \lesssim b_1^{2L+2+2(1-\delta)(1+\eta)}.
        \end{aligned}
        \end{equation}

        \item [(ii)] Local bounds. For $M\leq B_1$ and $0\leq k\leq L$ we have
        \begin{equation}\label{estimate:cut-psib-bound-local-1}
            \int_{y\leq 2M}\M|\Li^{h+k+1}\tPsi_b|^2\lesssim M^Cb_1^{2L+6},
        \end{equation}
        and
        \begin{equation}\label{estimate:cut-psib-bound-local-2}
            \int_{y\leq2B_0}\M|\Li^{h+k+1}\tPsi_b|^2+\int_{y\leq2B_0}\frac{\M|\tPsi_b|^2}{1+y^{4(h+k+1)}}\lesssim b_1^{2k+4+2(1-\delta)-C_L\eta}.
        \end{equation}
    \end{enumerate}
\end{proposition}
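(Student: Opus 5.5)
We first write $\tPsi_b$ explicitly. Set $\tilde\Theta_b=\chi_{B_1}\Theta_b$. Substitute $\tQ_b=Q+\tilde\Theta_b$ into the left-hand side of \eqref{equation:profile-equation-approximate-local} and use \eqref{equation:profile-equation-approximate} on the region where $\chi_{B_1}\equiv1$. This gives
\begin{equation}
\tPsi_b=\chi_{B_1}\Psi_b+(\partial_s\chi_{B_1})\Theta_b+b_1(\Lambda\chi_{B_1}-2\chi_{B_1})\Theta_b\cdot 0+\Big[\Li(\chi_{B_1}\Theta_b)-\chi_{B_1}\Li\Theta_b\Big]+b_1 y\partial_y\chi_{B_1}\,\Theta_b+\Big[\chi_{B_1}A_2(\Theta_b)-A_2(\tilde\Theta_b)\Big],
\end{equation}
where $A_2(\Theta)=-\Theta y\partial_y\Theta-d\Theta^2$ is as in \eqref{defeq:A2}. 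Also $b_1\Lambda Q$ is not localized, but $b_1\Lambda Q$ is already cancelled by $\Li(b_1T_1)$ only inside. So we keep the extra term $(1-\chi_{B_1})b_1\Lambda Q$ in the bookkeeping: it is supported in $y\ge B_1$ and decays like $b_1y^{-2-\gamma}$. The terms are then of four types.
\begin{enumerate}[(a)]
\item The localized error $\chi_{B_1}\Psi_b$, which is controlled by \eqref{estimate:psib-bound} and \eqref{estimate:psib-local-bound}.
\item Commutator terms $-2\partial_y\chi_{B_1}\partial_y\Theta_b-(\Delta\chi_{B_1}+yQ\partial_y\chi_{B_1})\Theta_b$, together with $b_1y\partial_y\chi_{B_1}\Theta_b$ and $(\partial_s\chi_{B_1})\Theta_b$. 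These are supported in $B_1\le y\le 2B_1$.
\item The far-field term $(1-\chi_{B_1})b_1\Lambda Q$.
\item The nonlinear localization defect $\chi_{B_1}(1-\chi_{B_1})(\cdots)$, also supported in $B_1\le y\le2B_1$.
\end{enumerate}

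The local bounds (ii) are then immediate. For $M\le B_1$ and on $y\le 2B_0\ll B_1$, only (a) survives, apart from the support of (c) for $M$ near $B_1$. There (c) contributes at most $b_1^2B_1^{-C}$, which is negligible. So \eqref{estimate:cut-psib-bound-local-1} and \eqref{estimate:cut-psib-bound-local-2} follow from Proposition~\ref{proposition:approximate-profile}(iii).

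For (i) we estimate the annulus terms using the homogeneous degrees $\deg(b_kT_k)=(2k,2k-2-\gamma,k)$ and $\deg(S_k)=(2k,2k-4-\gamma,k)$. On $y\sim B_1$, with $|b_k|\lesssim b_1^k$, each term satisfies $|\partial_y^j (b_kT_k)|\lesssim b_1^k y^{2k-2-\gamma-j}$. Since $y\le 2B_1$ and $b_1^k y^{2k}\lesssim b_1^{-k\eta}$ up to $B_1$ powers, the dominant term is $k=1$. The factor $\partial_s\chi_{B_1}=-\frac{y\partial_y\chi}{B_1}\cdot\partial_sB_1$ satisfies $|\partial_s B_1/B_1|\lesssim |(b_1)_s|/b_1\lesssim b_1$ by \eqref{relation:assumption-b1s}. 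Hence every type-(b) term behaves like $b_1\cdot b_1^{k}y^{2k-2-\gamma}$ on the annulus. We apply $\Li^{h+k+1}$; each $\Li$ costs $y^{-2}$ by Lemma~\ref{lemma:act-L-admissible}, and $\M\sim y^4$. For the $b_1T_1$ contribution this gives
\begin{equation}
\int\M|\Li^{h+k+1}(\text{b})|^2\lesssim b_1^2\int_{B_1}^{2B_1}y^{2(-\gamma-2(h+k+1))+4+d-1}dy\lesssim b_1^2 B_1^{4\delta-4k-4}.
\end{equation}
Here we used $d-2\gamma-4h=4\delta$. The result is $b_1^{2+(2k+2-2\delta)(1+\eta)}$, which matches \eqref{estimate:cut-psib-bound-1} after absorbing $\eta$-losses into $C_L\eta$. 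The higher $b_jT_j$ and $S_j$ contributions give extra factors $(b_1B_1^2)^{j-1}=b_1^{-(j-1)\eta}$, which are also absorbed. Term (c) gives the identical power, $\int_{y\ge B_1} b_1^2 y^{4\delta-4k-5}dy$. The weighted lower-order quantities with $\A\Li^{h+k}$, $\Li^{h+k}$ and $\M|\tPsi_b|^2/(1+y^{4(h+k+1)})$ scale identically.

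The main obstacle is the endpoint \eqref{estimate:cut-psib-bound-2}, where no $C_L\eta$ loss is allowed. For $k=L$ the inner error (a) gives $b_1^{2L+4+2(1-\delta)-C_L\eta}$, which beats $b_1^{2L+2+2(1-\delta)(1+\eta)}$ since $2> C_L\eta+2(1-\delta)\eta$. For the annulus terms, the dominant contribution is the $T_1$/$\Lambda Q$ tail, which gives exactly $b_1^{2}B_1^{-4L-4+4\delta}=b_1^{2+(2L+2-2\delta)(1+\eta)}$. This is better than the claim by the factor $b_1^{2L\eta}$. That margin is what absorbs the $b_1^{-(j-1)\eta}$ losses from the higher $T_j,S_j$, since $j\le L+2$ keeps the loss below $2L\eta$. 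We therefore need to check carefully that the worst higher terms, with $j=L$ and $S_{L+2}$, still fit within this margin; if necessary we use $(j-1)\eta<(2L+2-2\delta)\eta-2(1-\delta)\eta$.
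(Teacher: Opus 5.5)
Your decomposition of $\tPsi_b$ is the paper's: $\chi_{B_1}\Psi_b$, the tail $b_1(1-\chi_{B_1})\Lambda Q=\hPsi_b^1$, the nonlinear and $yQ\partial_y\chi_{B_1}$ defect $\hPsi_b^2$, and the cutoff commutators $\hPsi_b^3$. Since \eqref{estimate:cut-psib-bound-1} tolerates a $C_L\eta$ loss, your power counting is enough there. One small point on the local bound: for $B_1/2<M\le B_1$, the annulus terms (b) and (d) also enter $\{y\le 2M\}$, not only (c). This is harmless only because $M^C$ then absorbs everything for $C$ large.

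The genuine gap is the endpoint \eqref{estimate:cut-psib-bound-2}. It permits no $\eta$-loss, you flag it as the main obstacle, but you do not close it, and your bookkeeping there is wrong. First, the $b_1T_1$ commutator is not the size of the $\Lambda Q$ tail. On $B_1\le y\le 2B_1$ it is $b_1\cdot b_1T_1\sim b_1^2y^{-\gamma}\sim b_1^{1-\eta}y^{-2-\gamma}$. Its contribution is therefore $b_1^4B_1^{4\delta-4k}=b_1^{2-2\eta}B_1^{4\delta-4k-4}$, not $b_1^2B_1^{4\delta-4k-4}$. Your display has prefactor $b_1^2$ instead of $b_1^4$, and $\int_{B_1}^{2B_1}y^{4\delta-4k-1}dy\sim B_1^{4\delta-4k}$, not $B_1^{4\delta-4k-4}$. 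Second, your closing argument compares an amplitude loss $(j-1)\eta$ with the energy margin $2L\eta$, letting $j$ run to $L+2$. In the squared norm the loss is $2(j-1)\eta$, which exceeds $2L\eta$ at $j=L+2$. So the argument fails as written.

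Here is the correct count. The cutoff coefficients of size $O(b_1)$ are $\partial_s\chi_{B_1}$ (by \eqref{relation:assumption-b1s}) and $b_1y\partial_y\chi_{B_1}$. The others ($\partial_y^2\chi_{B_1}$, $y^{-1}\partial_y\chi_{B_1}$, $\partial_y\chi_{B_1}\partial_y$, $yQ\partial_y\chi_{B_1}$) are $O(B_1^{-2})=O(b_1^{1+\eta})$. Acting on $b_jT_j$ with $1\le j\le L$, the $O(b_1)$ coefficients give
\[
b_1^{2+2j}\int_{B_1}^{2B_1}y^{4j-4L-5+4\delta}\,dy\lesssim b_1^{2L+2+2(1-\delta)(1+\eta)}\,b_1^{2(L-j)\eta}.
\]
So the $j=L$ term saturates \eqref{estimate:cut-psib-bound-2} exactly. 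There is no $b_1^{2L\eta}$ room: the dominant contribution is $b_L\tT_L$ hit by the cutoff, not the $\Lambda Q$ tail, which itself gains $b_1^{2L\eta}$.

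The bound closes only for the following reasons:
\begin{itemize}
\item The $T_j$ stop at $j=L$, with $|b_j|\lesssim b_1^j$.
\item The $O(B_1^{-2})$ coefficients gain $b_1^{2(L-j+1)\eta}$.
\item Each $S_j$, $2\le j\le L+2$, decays $y^{-2}$ faster than $b_jT_j$ at the same $b$-homogeneity, so it contributes at most $b_1^{2-2\eta}$ times the target.
\item Terms quadratic in $\Theta_b$ are lower order by a positive power of $b_1$.
\end{itemize}
This term-by-term count is the paper's treatment of $\hPsi_b^1,\hPsi_b^2,\hPsi_b^3$, and your proposal is missing it. It is exactly the loss-free bound used in Step 5 of the proof of Proposition~\ref{proposition:energy-control}.
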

\begin{proof}
    We first compute from \eqref{equation:construct-Q-local}
    \begin{align}
        \partial_s\tilde{Q}_b&-\partial_{yy}\tilde{Q}_b-\frac{d+1}{y}\partial_y\tilde{Q}_b-\tilde{Q}_b(y\partial_y\tilde{Q}_b+d\tilde{Q}_b)+b_1\Lambda \tilde{Q}_b\\
        &=\chi_{B_1}\left[\partial_sQ_b-\partial_{yy}Q_b-\frac{d+1}{y}\partial_yQ_b-Q_b(y\partial_yQ_b+dQ_b)+b_1\Lambda Q_b\right]\\
        &\quad+\Theta_b\left[\partial_s\chi_{B_1}-\left(\partial_{yy}\chi_{B_1}+\frac{d+1}{y}\partial_y\chi_{B_1}\right)+b_1\Lambda\chi_{B_1}\right]-2\partial_y\chi_{B_1}\partial_y\Theta_b+b_1(1-\chi_{B_1})\Lambda Q\\
        &\quad-\left[\tQ_b(y\partial_y\tQ_b+d\tQ_b)-Q(y\partial_yQ+dQ)-\chi_{B_1}(Q_b(y\partial_yQ_b+dQ_b)-Q(y\partial_yQ+dQ))\right].
    \end{align}
    Recalling \eqref{equation:profile-equation-approximate}, we can write
    \begin{equation}
        \tPsi_b=\chi_{B_1}\Psi_b+\hPsi_b,
    \end{equation}
    where
    \begin{align}
        \hPsi_b&=\underbrace{b_1(1-\chi_{B_1})\Lambda Q}_{\hPsi_b^1}\\
        &\quad-\underbrace{\left[\tQ_b(y\partial_y\tQ_b+d\tQ_b)-Q(y\partial_yQ+dQ)-\chi_{B_1}(Q_b(y\partial_yQ_b+dQ_b)-Q(y\partial_yQ+dQ))\right]}_{\hPsi_b^2}.\\
        &\quad +\underbrace{\Theta_b\left[\partial_s\chi_{B_1}-\left(\partial_{yy}\chi_{B_1}+\frac{d+1}{y}\partial_y\chi_{B_1}\right)+b_1\Lambda\chi_{B_1}\right]-2\partial_y\chi_{B_1}\partial_y\Theta_b}_{\hPsi_b^3}.
    \end{align} 
    For the estimate  of the term $\chi_{B_1}\Psi_b$, we follow the exact same steps as in the proof of \eqref{estimate:psib-bound} and \eqref{estimate:psib-local-bound}. This leads to the bounds in \eqref{estimate:cut-psib-bound-1}, \eqref{estimate:cut-psib-bound-2}, \eqref{estimate:cut-psib-bound-local-1} and \eqref{estimate:cut-psib-bound-local-2}. Since all terms are supported in the region $B_1\leq y\leq2B_1$ except $\hPsi_b^1$, the local bounds \eqref{estimate:cut-psib-bound-local-1} and \eqref{estimate:cut-psib-bound-local-2} follow from \eqref{estimate:psib-bound} and \eqref{estimate:psib-local-bound}. We are therefore left to estimate the term $\hPsi_b$. Using the asymptotic behavior of $(1-\chi_{B_1})\Lambda Q$ from Lemma~\ref{lemma:asymptotic-Q} and Lemma \ref{lemma:act-L-admissible}, we obtain
    \begin{equation}
        \int\M|\Li^{h+k+1}\hPsi_b^1|^2\lesssim \int_{y\geq B_1}\frac{y^4y^{d-1}}{y^{4(h+k+1)+2(\gamma+2)}}dy\lesssim b_1^{2k+2+2(1-\delta)(1+\eta)+2k\eta}
    \end{equation}
    for all $0\leq k\leq L$, where we use the identity $d-2\gamma-4h=4\delta$ and the definition of $B_1$ in \eqref{defeq:B0-B1}. For the nonlinear term $\hPsi_b^2$, we can write
    \begin{equation}\label{equation:derive-cut-error-nonlinear-rewrite}
    \begin{aligned}
        &Q_b(y\partial_yQ_b+dQ_b)-Q(y\partial_yQ+dQ)\\
        &=\Theta_b(y\partial_y\Theta_b+d\Theta_b)+yQ\partial_y\Theta_b+(y\partial_yQ+2dQ)\Theta_b.
    \end{aligned}
    \end{equation}
    Note again $\hPsi_b^2$ is supported in the region $B_1\leq y\leq2B_1$, thus we only need to estimate \eqref{equation:derive-cut-error-nonlinear-rewrite} on this support. By recalling the degree of admissibility of $T_k$ and $S_k$ in Lemma~\ref{lemma:construction-Tk} and Proposition~\ref{proposition:approximate-profile}, we estimate for $y\geq B_1$ and $\forall j\in\N$,
    \begin{equation}
        |\partial_y^j\Theta_b|\lesssim \sum_{k=1}^Lb_1^ky^{2k-2-\gamma-j}\mathbf{1}_{y\geq B_1}.
    \end{equation}
    Therefore from \eqref{equation:derive-cut-error-nonlinear-rewrite} and the degree of admissibility of $Q$ in Lemma~\ref{lemma:asymptotic-Q}, we obtain
    \begin{equation}
        |\partial_y^j\hPsi_b^2|\lesssim \sum_{k=1}^Lb_1^ky^{2k-4-\gamma-j}\mathbf{1}_{B_1\leq y\leq2B_1}\lesssim\frac{b_1}{y^{\gamma+j+2}}\sum_{k=1}^Lb_1^{-(k-1)\eta}\mathbf{1}_{B_1\leq y\leq2B_1}.
    \end{equation}
    This leads to the following bound for all $0\leq k\leq L$,
    \begin{equation}
        \begin{aligned}
            \int\M|\Li^{h+k+1}\hPsi_b^2|^2&\lesssim b_1^2\sum_{j=1}^Lb_1^{-2(j-1)\eta}\int_{B_1\leq y\leq2B_1}\frac{y^4y^{d-1}}{y^{4(h+k+1)+4+2\gamma}}dy\\
            &\lesssim b_1^{2k+2+2(1-\delta)(1+\eta)}\sum_{j=1}^Lb_1^{(2k-2j+2)\eta}.
        \end{aligned}
    \end{equation}
    To control $\hPsi_b^3$, we first observe
    \begin{equation}
        |\partial_s\chi_{B_1}|\lesssim\frac{|(b_1)_s|}{b_1}\frac{y}{B_1}\mathbf{1}_{B_1\leq y\leq2B_1}\lesssim b_1\mathbf{1}_{B_1\leq y\leq2B_1},
    \end{equation}
    where we use the assumption \eqref{relation:assumption-b1s}. This yields for all $0\leq k\leq L$,
    \begin{equation}
        \begin{aligned}
            \int\M|\Li^{h+k+1}\hPsi_b^3|^2&\lesssim b_1^2\sum_{j=1}^Lb_1^{2j}\int_{B_1\leq y\leq2B_1}\frac{y^4y^{d-1}}{y^{4(h+k+1)+4+2\gamma-4j}}dy\\
            &\lesssim b_1^{2k+2+2(1-\delta)(1+\eta)}\sum_{j=1}^Lb_1^{(2k-2j)\eta}.
        \end{aligned}
    \end{equation}
    Gathering the bounds above, we get
    \begin{equation}
        \int\M|\Li^{h+k+1}\hPsi_b|^2\lesssim b_1^{2k+2+2(1-\delta)(1+\eta)}\sum_{j=1}^Lb_1^{(2k-2j)\eta}\lesssim b_1^{2k+2+2(1-\delta)(1+\eta)+2\eta(k-L)}.
    \end{equation}
    Finally, the bound of the terms
    \begin{equation}
       \frac{\M|\A\Li^{h+k}\tPsi_b|^2}{1+y^2},\quad\frac{\M|\Li^{h+k}\tPsi_b|^2}{1+y^4},\quad\frac{\M|\tPsi_b|^2}{1+y^{4(h+k+1)}}
    \end{equation}
    is obtained along the exact same lines as above. This concludes the proof.
\end{proof}

\subsection{Dynamical system} From the construction of the approximate profile, we can obtain the dynamic system for $b=(b_1,\dots,b_L)$ by setting the $\text{Mod}(t)$ to zero:
\begin{equation}\label{equation:dynamic-system-b}
    (b_k)_s+(2k-\gamma)b_1b_k-b_{k+1}=0,\quad1\leq k\leq L.
\end{equation}
The behavior of this system determines the corresponding
blowup rate. Systems of this form also arise in a number of related
blowup problems and have been studied extensively; see, for example,
\cite{MerleRaphaelRodnianski2015,GhoulIbrahimNguyen2019}. We now recall the result from
\cite{GhoulIbrahimNguyen2019}.
\begin{lemma}[Solution of the dynamic system]
    Let $\frac{1}{2}\gamma<l\ll L$, $l\in\N$ and the sequence
    \begin{equation}\label{defeq:ck-in-dynamic-system}
    \begin{gathered}
        c_1=\frac{l}{2l-\gamma},\quad c_{k+1}=-\frac{\gamma(l-k)}{2l-\gamma}c_k,\quad 1\leq k\leq l-1,\\
        c_{k+1}=0,\quad k\geq l.
    \end{gathered}
    \end{equation}
    Then the explicit choice
    \begin{equation}\label{defeq:bek-in-dynamic-system}
        b_k^e(s)=\frac{c_k}{s^k},\quad 1\leq k\leq L
    \end{equation}
    forms an exact solution to \eqref{equation:dynamic-system-b}. Moreover, if we linearize around this solution as
    \begin{equation}\label{equation:perturb-bk}
        b_k(s)=b^e_k(s)+\frac{\U_k(s)}{s^k}
    \end{equation}
    with the notation $\U=(\U_1,\dots,\U_l)^T$. Then for $1\leq k\leq l-1$, we have
    \begin{equation}\label{equation:linearized-dynamic-system}
        \begin{gathered}
            (b_k)_s+(2k-\gamma)b_1b_k-b_{k+1}=\frac{1}{s^{k+1}}\left[s(\U_k)_s-(A_l\U)_k+O(|\U|^2)\right],\\
            (b_l)_s+(2l-\gamma)b_1b_l=\frac{1}{s^{l+1}}\left[s(\U_l)_s-(A_l\U)_l+O(|\U|^2)\right],
        \end{gathered}
    \end{equation}
    where
    \begin{equation}
        A_l=(a_{i,j})_{1\leq i,j\leq l}\quad \text{with}\quad\left\{
    \begin{aligned}
         & a_{1,1}=\frac{\gamma(l-1)}{2l-\gamma}-(2-\gamma)c_1, && \\
       & a_{i,i}=\frac{\gamma(l-i)}{2l-\gamma}, && 2\leq i\leq l,\\
       & a_{i,i+1}=1, && 1\leq i\leq l-1,\\
       &a_{i,1}=-(2i-\gamma)c_i,&&2\leq i\leq l,\\
       &a_{i,j}=0,&& \text{otherwise}.
    \end{aligned}
    \right. 
    \end{equation}
    Thus $A_l$ is diagonalizable:
    \begin{equation}\label{defeq:diag-Al}
        A_l=P_l^{-1}D_lP_l,\quad D_l=\text{diag}\left\{-1,\frac{2\gamma}{2l-\gamma},\frac{3\gamma}{2l-\gamma},\dots,\frac{l\gamma}{2l-\gamma}\right\}.
    \end{equation}
\end{lemma}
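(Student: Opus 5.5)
The proof is a direct computation in three parts: verify the explicit orbit, linearize around it, and identify the spectrum of $A_l$.

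\textbf{Exact solution.} Substituting $b_k=c_ks^{-k}$ into \eqref{equation:dynamic-system-b}, every term scales like $s^{-k-1}$, so the system reduces to the algebraic relations
\[
-kc_k+(2k-\gamma)c_1c_k-c_{k+1}=0 .
\]
With $c_1=l/(2l-\gamma)$ we get
\[
(2k-\gamma)c_1-k=\frac{2kl-\gamma l-2kl+\gamma k}{2l-\gamma}=-\frac{\gamma(l-k)}{2l-\gamma},
\]
which is exactly the recursion \eqref{defeq:ck-in-dynamic-system}. For $k=l$ this factor vanishes. Hence $c_{l+1}=0$ is consistent, and $c_k=0$ for $k>l$ solves the remaining equations trivially.

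\textbf{Linearization.} Insert \eqref{equation:perturb-bk}. Then
\[
(b_k)_s=-kc_ks^{-k-1}+s^{-k-1}\bigl(s(\U_k)_s-k\U_k\bigr),
\]
and
\[
b_1b_k=s^{-k-1}\bigl(c_1c_k+c_1\U_k+c_k\U_1+\U_1\U_k\bigr).
\]
Collecting terms, the zeroth order cancels by the previous step. The linear part of the $k$-th equation is
\[
s(\U_k)_s+\bigl[(2k-\gamma)c_1-k\bigr]\U_k+(2k-\gamma)c_k\U_1-\U_{k+1}.
\]
Since $(2k-\gamma)c_1-k=-\gamma(l-k)/(2l-\gamma)$, this equals $s(\U_k)_s-(A_l\U)_k$ with
\[
a_{k,k}=\frac{\gamma(l-k)}{2l-\gamma},\qquad a_{k,k+1}=1,\qquad a_{k,1}=-(2k-\gamma)c_k .
\]
For $k=1$ the diagonal and first-column contributions merge into $a_{1,1}$ as stated; note the paper's sign convention there should be checked, since the combined coefficient is $-(2-\gamma)c_1\cdot 2+1$, which one can rewrite in the displayed form. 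The quadratic term is $(2k-\gamma)\U_1\U_k=O(|\U|^2)$. For $k=l$ we simply drop $b_{l+1}$, which gives the second line of \eqref{equation:linearized-dynamic-system}.

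\textbf{Spectrum.} This is the main obstacle: showing that $A_l$ has the eigenvalues
\[
-1,\ \frac{2\gamma}{2l-\gamma},\ \dots,\ \frac{l\gamma}{2l-\gamma}.
\]
These values are pairwise distinct, since $-1<0$ while all the others are positive and increasing. Distinctness gives diagonalizability immediately, so it suffices to compute the characteristic polynomial.

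The matrix is lower Hessenberg: upper bidiagonal with diagonal $a_{k,k}$ and superdiagonal $1$, plus a nonzero first column. For such a matrix, expanding $\det(\mu-A_l)$ along the first column gives
\[
\prod_{i=1}^l(\mu-a_{ii})-\sum_{i\ge2}a_{i,1}\prod_{j>i}(\mu-a_{jj})\cdot(\text{sign factors from the superdiagonal}).
\]
Rather than expanding this directly, I would proceed as follows.

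\emph{Eigenvalue $-1$.} I would exhibit the eigenvector through the time-translation symmetry. The family $b^e_k(s+s_1)$ is a family of exact solutions, and differentiating in $s_1$ gives
\[
\U_k=-kc_k\,s^{-1}.
\]
Plugging this into the linearized equation $s\U_s=A_l\U$ yields $A_l(kc_k)_k=-(kc_k)_k$.

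\emph{Eigenvalues $i\gamma/(2l-\gamma)$, $i=2,\dots,l$.} I would argue by triangular structure on a complementary invariant quotient. Alternatively, following \cite{GhoulIbrahimNguyen2019}, I would compute $\det(\mu-A_l)$ by induction on $l$, using the recursion for $c_k$ to telescope the first-column contributions; the identity $\sum$ of the eigenvalues equals the trace serves as a consistency check.

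I expect this determinant identity to be the only nontrivial part. The remaining steps are bookkeeping, and since the system coincides with that of \cite{GhoulIbrahimNguyen2019} with their $\gamma$ replaced by ours, their computation transfers verbatim.
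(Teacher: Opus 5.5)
Your first two steps are correct, and they go further than the paper: the paper gives no proof of this lemma and simply recalls it from \cite{GhoulIbrahimNguyen2019}. Your hedge on $a_{1,1}$ is unnecessary. Since $(2-\gamma)c_1-1=-\frac{\gamma(l-1)}{2l-\gamma}$, your coefficient $1-2(2-\gamma)c_1$ is exactly the displayed $\frac{\gamma(l-1)}{2l-\gamma}-(2-\gamma)c_1$. The time-translation argument is also correct. Differentiating $b^e(s+s_1)$ in $s_1$ gives $A_l(kc_k)_k=-(kc_k)_k$, and $c_1\neq0$ ensures this vector is nonzero.

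For the positive eigenvalues, however, you give no argument. The ``triangular structure on a complementary invariant quotient'' does not materialize. If you quotient by $(kc_k)_k$ along $\{x_1=0\}$, the induced map on $(x_2,\dots,x_l)$ is again an upper bidiagonal matrix plus a full first column, so no triangularity appears. On the only nontrivial point you therefore rest on the citation, exactly as the paper does. If you want the lemma self-contained, finish the first-column expansion you wrote down.

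Set $\theta=\frac{\gamma}{2l-\gamma}$ and $\mu_i=\theta(l-i)$. Write $A_l=U+fe_1^{T}$, where $U$ is upper bidiagonal with diagonal $(\mu_i)$ and superdiagonal $1$, and $f_j=-(2j-\gamma)c_j$. Then
\[
\det(\mu-A_l)=\prod_{i=1}^{l}(\mu-\mu_i)+\sum_{j=1}^{l}(2j-\gamma)c_j\prod_{i=j+1}^{l}(\mu-\mu_i),
\qquad
(2j-\gamma)c_j=\bigl(j-\theta(l-j)\bigr)(-\theta)^{j-1}\frac{(l-1)!}{(l-j)!},
\]
where the second formula follows from the recursion for $c_k$. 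Put $\mu=\theta x$ and $(x)_n=x(x-1)\cdots(x-n+1)$. The claim $\det(\mu-A_l)=(\mu+1)\prod_{i=2}^{l}(\mu-i\theta)$ then becomes, after dividing by $\theta^{l-1}$,
\[
\theta\,(x)_l+\sum_{j=1}^{l}(-1)^{j-1}\bigl(j-\theta(l-j)\bigr)\frac{(l-1)!}{(l-j)!}\,(x)_{l-j}=(1+\theta x)\,(x-2)_{l-1}.
\]
Both sides are affine in $\theta$, so it suffices to match the two coefficients.

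The $\theta^0$ coefficient is the Chu--Vandermonde expansion $(x-2)_{l-1}=\sum_{k}\binom{l-1}{k}(x)_k(-2)_{l-1-k}$, using $(-2)_n=(-1)^n(n+1)!$.

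The $\theta^1$ coefficient is $x$ times the expansion $(x-2)_{l-1}=\sum_{k}\binom{l-1}{k}(x-1)_k(-1)_{l-1-k}$, using $(-1)_n=(-1)^n n!$ and $x(x-1)_k=(x)_{k+1}$.

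The roots are pairwise distinct, so diagonalizability follows, as you said.
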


\section{The bootstrap argument}\label{sec:bootstrap}
In this section, we set up the bootstrap arguments at the heart of the proof of Theorem~\ref{theorem:main-theorem}. We will first set up an explicit equation of the linearization of the problem. We then specify the class of initial data and introduce the preliminary bootstrap bounds for the resulting dynamical flow. Finally, we derive the modulation equations that will be used in the estimates of the higher-order weighted Sobolev norms in Section~\ref{sec:energy-control}.

\subsection{Setup of the equation}
Let the-space time renormalized variables be
\begin{equation}
     y=\frac{r}{\lambda(t)},\quad s(t)=s_0+\int_0^t\frac{1}{\lambda^2(\tau)}d\tau.
\end{equation}
We use the notation:
\begin{equation}
    f_{\lambda}(t,r)=\frac{1}{\lambda^2}f(s,y)
\end{equation}
which shows
\begin{equation}\label{relation:f-f-lambda}
    \partial_tf_{\lambda}=\frac{1}{\lambda^2}\left[\partial_sf-\frac{\lambda_s}{\lambda}\Lambda f\right]_{\lambda}.
\end{equation}
Recall \eqref{def:change-of-variable-origin}, we have
\begin{equation}
    u(t,r)=v_{\lambda}(s,y).
\end{equation}
Now we decompose the renormalized solution $v$ as
\begin{equation}\label{defeq:decompose-v-u}
    v(s,y)=\left(\tQ_{b(s)}+\eps\right)(s,y),\quad u(t,r)=\frac{1}{\lambda^2(t)}\left(\tQ_{b(t)}+\eps\right)\left(t,\frac{r}{\lambda(t)}\right)
\end{equation}
where $\tQ_{b(s)}$ is constructed in Proposition~\ref{proposition:localize-Qb} and the modulation parameters
\begin{equation}
    \lambda(s)>0,\quad b(s)=(b_1(s),\dots,b_L(s))
\end{equation}
are determined from the $L+1$ orthogonality conditions:
\begin{equation}\label{relation:orthogonality-condition}
    \langle \epsilon,\M\Li^k\Phi_{M}\rangle=0,\quad 0\leq k\leq L.
\end{equation}
Here we choose $\Phi_M$ as
\begin{equation}\label{defeq:PhiM}
    \Phi_M=\sum_{k=0}^L c_{k,M}\mathcal{L}^{k}(\chi_M\Lambda Q),
\end{equation}
with
\begin{equation}\label{defeq:c_k_M}
    c_{0,M}=1,\quad c_{k,M}=(-1)^{k+1}\frac{\sum_{j=0}^{k-1}c_{j,M}\langle \M\mathcal{L}^{j}(\chi_{M}\Lambda Q),T_k\rangle}{\langle\chi_{M}\M\Lambda Q,\Lambda Q\rangle},\quad 1\leq k\leq L.
\end{equation}
This leads to the nondegeneracy
\begin{equation}
    \langle \M\Phi_{M},\Lambda Q\rangle=\langle\chi_M\M\Lambda Q,\Lambda Q\rangle\gtrsim M^{d-2\gamma},
\end{equation}
and the cancellation
\begin{equation}
    \langle \M\Phi_M,T_k\rangle =0.
\end{equation}
In particular, we have
\begin{equation}\label{equation:cancellation-identity}
    \langle\M\Li^iT_k,\Phi_M\rangle=(-1)^k\langle\chi_M\M\Lambda Q,\Lambda Q\rangle\delta_{i,k},\quad 0\leq i,k\leq L.
\end{equation}

Using dynamic rescaling equation \eqref{equation:dynamic-rescaling-eq}, we obtain
\begin{equation}\label{equation:dynamic-eps}
    \partial_s\eps-\frac{\lam_s}{\lambda}\Lambda \eps+\Li\eps=-\tPsi_b-\HMod-\Hi(\epsilon)+\Ni(\epsilon)\equiv \F,
\end{equation}
where we have the small linear term
\begin{equation}\label{defeq:small-linear-H}
    \Hi(\eps)=-\eps(y\partial_y(\tQ_b-Q)+2d(\tQ_b-Q))-(\tQ_b-Q)y\partial_y\eps,
\end{equation}
and the nonlinear term
\begin{equation}\label{defeq:nonlinear-N}
    \mathcal{N}(\epsilon)=\epsilon(y\partial_y\epsilon+d\epsilon),
\end{equation}
and the modulation term is defined as
\begin{equation}\label{equation:def-tMod}
    \HMod=-\left(\frac{\lambda_s}{\lambda}+b_1\right)\Lambda\tQ_b+\chi_{B_1}\text{Mod}.
\end{equation}

We also need to consider the perturbation in the original variables
\begin{equation}\label{defeq:w-eps-original-variable}
     w(t,r)=\epsilon_{\lambda}(s,y),\quad u(t,r)=\frac{1}{\lambda^2(t)}\tQ_{b(t)}\left(\frac{r}{\lambda(t)}\right)+w(t,r).
\end{equation}
Then it follows from \eqref{relation:f-f-lambda} and \eqref{equation:dynamic-eps} that $w$ satisfies
\begin{equation}\label{equation:dynamic-w-origin}
    \partial_t w+\mathcal{L}_{\lambda}w=\frac{1}{\lambda^2}\mathcal{F}_{\lambda},
\end{equation}
with the rescaled linearized operators in the original variables
\begin{equation}\label{defeq:linear-operator-lambda}
    \begin{gathered}
        \Li_{\lam}=\B^*_{\lam}\A_{\lam}=-\partial_{rr}+rV^1_{\lam}\partial_r+V^2_{\lam},\\
    \B^*_{\lam}=\partial_r+rV^B_{\lam},\quad\A_{\lam}=-\partial_r+rV^A_{\lam}.
    \end{gathered}
\end{equation}
Here the weighted functions are defined in \eqref{defeq:linear-operator-in-V}.

\subsection{Bootstrap argument}
We use the smooth radial family $u_0=Q_{b_0}+\varepsilon_0$ of Definition~3.1, with
$\varepsilon_0\in C^\infty\cap L^\infty$, $\|\varepsilon_0\|_{\dot H^\sigma}\ll 1$
for $d/2\leq \sigma\leq m$, and finite $\mathcal{E}_{2k}$, $h+1\leq k\leq m$.
The local flow is taken in the infinite-mass class
\[
    \|u\|_X
    :=
    \sup_{r\geq 0}
    \left(
        \langle r\rangle^2 |u(r)|
        +
        \langle r\rangle^3 |u_r(r)|
    \right)
    <\infty.
\]
Indeed, on $\mathbb{R}^{d+2}$, \eqref{equation:1d-keller-segel} reads
$u_t=\Delta u+u x\cdot\nabla u+du^2$.
The heat-kernel bounds with polynomial weights give a contraction for its Duhamel
formula in $C([0,t_1];X)$ (the one-derivative bound is
$O((t-\tau)^{-1/2})$), hence uniqueness and continuous dependence.
Commuting the equation and applying weighted energy estimates preserves the finite
$\mathcal{E}_{2k}$ and gives continuous dependence in these norms.
Lemma~\ref{lemma:inter-bound-origin} and Lemma~\ref{lemma:inter-bound-far} and the bootstrap bound $\|u\|_X$ on every finite rescaled-time
interval; \eqref{estimate:modulation-bound-L-1} keeps $\lambda$ bounded above and away from zero there.
The flow therefore continues while the bootstrap holds. Thus the decomposition defined in \eqref{defeq:decompose-v-u}
\begin{equation}\label{equation:decompose-u-smalltime}
    u(t,r)=\frac{1}{\lambda^2(t)}\left(\tQ_{b(t)}+\eps\right)\left(t,\frac{r}{\lambda(t)}\right)
\end{equation}
is uniquely defined on a small time interval $[0,t_1]$. In fact, we can use the implicit function theorem to show the existence of the decomposition in \eqref{equation:decompose-u-smalltime}. We can compute explicitly
\begin{equation}
    \left.\frac{\partial}{\partial\lambda}\left(\tQ_{b(t)}\right)_{\lam},\frac{\partial}{\partial b_1}\left(\tQ_{b(t)}\right)_{\lam},\dots,\frac{\partial}{\partial b_L}\left(\tQ_{b(t)}\right)_{\lam}\right|_{\lam=1,b=0}=(-\Lambda Q,T_1,\dots,T_L),
\end{equation}
which implies the nondegeneracy of the Jacobian
\begin{equation}
    \left|\left\langle \frac{\partial}{\partial(\lam,b_j)}\left(\tQ_{b(t)}\right)_{\lam},\M\Li^i\Phi_M \right\rangle_{1\leq j\leq L,0\leq i\leq L}\right|_{\lam=1,b=0}=\left|\langle\chi_M\M\Lambda Q,\Lambda Q\rangle\right|^{L+1}>0.
\end{equation}
We then want to show that the perturbation $\eps$ remains small in the energy topology. In particular, we will consider the following weighted energy norms:
\begin{equation}\label{defeq:weighted energy}
    \E_{2k}=\int\M|\Li^k\eps|^2,\quad h+1\leq k\leq m.
\end{equation}
We now describe our choice of the set of initial data.
\begin{definition}[Description of the open set of initial data]\label{definition:initial-data}
    Define
    \begin{equation}\label{defeq:definition-Vl}
        \V=P_l\U
    \end{equation}
    where $\U_k=s^kb_k-c_k,\;1\leq k\leq l$ with $c_k$ defined in \eqref{defeq:ck-in-dynamic-system}, and $P_l$ refers to the diagonalization of $A_l$ in \eqref{defeq:diag-Al}. We let $\Oi$ be the open set of radial initial data of the form
    \begin{equation}
        u_0(r)=\left(\tQ_{b_0}+\eps_0\right)\left(r\right)
    \end{equation}
    satisfying the orthogonality condition \eqref{relation:orthogonality-condition} with the initial rescaled time $s_0\geq1$. That is, at the time $s_0$ we have
    \begin{equation}
        u(s_0,r)=u_0,\quad b(s_0)=b_0
    \end{equation}
    and the normalization
    \begin{equation}
        \lam(s_0)=1.
    \end{equation}
    Moreover, the initial data satisfy the following
    \begin{enumerate}
        \item [(i)] Smallness of the initial perturbation of the modes. For the unstable modes
        \begin{equation}
            |s_0^{\frac{\eta}{2}(1-\delta)}\V_k(s_0)|<1,\quad 2\leq k\leq l,
        \end{equation}
        and for the stable modes
        \begin{equation}
            |s_0^{\frac{\eta}{2}(1-\delta)}\V_1(s_0)|<1,\quad |b_k(s_0)|<s_0^{-\frac{100l(2k-\gamma)}{2l-\gamma}},\quad l+1\leq k\leq L.
        \end{equation}
        \item [(ii)] Smallness of the initial data.
        \begin{equation}
            \sum_{k=h+1}^{m}\E_{2k}(s_0)<s_0^{-\frac{100Ll}{2l-\gamma}}.
        \end{equation}
    \end{enumerate}
\end{definition}
Under the initial smallness condition, we can now assume the bootstrap bounds on $[s_0,\infty)$.
\begin{definition}[Bootstrap assumption]\label{definition:bootstrap-assumption}
    For a large constant $K\geq1$, we assume the bootstrap for $s\geq s_0$
    \begin{align}
    &|\V_k(s)|
    \leq 100s^{-\frac{\eta}{2}(1-\delta)},
    && 1\leq k\leq l,
    \\
    &|b_k(s)|
    \leq s^{-k},
    && l+1\leq k\leq L,
    \\
    &\E_{2m}(s)
    \leq K s^{-(2L+2(1-\delta)(1+\eta))},
    \\
    &\E_{2k}(s)
    \leq
    \left\{
    \begin{aligned}
        &Ks^{-\frac{l}{2l-\gamma}(4k-d)},\\
        &s^{-2(k-h-1)-2(1-\delta)+K\eta},
    \end{aligned}
    \right.
    &&
    \begin{aligned}
        &h+1\leq k\leq l+h,\\
        &l+h+1\leq k\leq m-1.
    \end{aligned}\label{defeq:bootstrap-assumption}
\end{align}
\end{definition}
\begin{remark}
    For a fixed time $s$, we can define $\Si_K(s)$ as the set of all $(b_1(s),\dots,b_L(s),\eps(s))$ such that all bounds in \eqref{defeq:bootstrap-assumption} hold.
\end{remark}
\begin{remark}
    The rest of this section, together with Section~\ref{sec:energy-control}, is devoted to deriving the main dynamical estimates needed to close the bootstrap argument. As is customary in a bootstrap scheme, unless otherwise stated, we assume throughout these sections that the bootstrap assumptions hold.
\end{remark}

\subsection{Modulation equations} We begin by deriving the modulation bounds for parameters $(b,\lam)$ driven by the orthogonality \eqref{relation:orthogonality-condition}. We will also repeatedly use the coercivity of the operator $\Li$ in Lemma~\ref{lemma:coercive-bound-eps}. We first present the estimates for $\Phi_M$ defined in \eqref{defeq:PhiM}.
\begin{lemma}[Estimates for $\Phi_M$]\label{lemma:estimate-PhiM}
    We have the following estimates
    \begin{equation}
        \begin{gathered}
            |c_{k,M}|\lesssim M^{2k},\quad 0\leq k\leq L\\
        \int\M|\Phi_M|^2\lesssim M^{d-2\gamma},\quad \int\M|\Li\Phi_M|^2\lesssim M^{d-2\gamma-4}.
        \end{gathered}
    \end{equation}
\end{lemma}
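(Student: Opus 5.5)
Since $\chi_M\Lambda Q$ is smooth, compactly supported in $y\le 2M$, and admissible of degree $(0,-\gamma-2)$ with the cutoff only acting for $M\le y\le 2M$, the plan is to prove three pointwise bounds and then integrate. The bounds are
\[
|c_{k,M}|\lesssim M^{2k},\qquad |\Li^j(\chi_M\Lambda Q)(y)|\lesssim \mathbf 1_{y\le 2M}\,\langle y\rangle^{-\gamma-2-2j}\ \ (j\ge1),\qquad |\Li^{j}(\chi_M\Lambda Q)|\lesssim M^{-\gamma-2-2j}\mathbf 1_{M\le y\le 2M}\ \ (j\ge 1).
\]
The second and third bounds follow together. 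Because $\Li\Lambda Q=0$, for $j\ge1$ we have $\Li^j(\chi_M\Lambda Q)=\Li^{j-1}[\Li,\chi_M]\Lambda Q$. The commutator $[\Li,\chi_M]f=-\chi_M''f-2\chi_M'f'+yV^1\chi_M'f$ is supported in $M\le y\le2M$. On that annulus, derivatives of the cutoff cost $M^{-1}$, each $\partial_y$ of $\Lambda Q$ costs $y^{-1}$, and $yV^1\sim y^{-1}$ by Lemma~\ref{lemma:asymp-coeficients}. Hence $|\Li^j(\chi_M\Lambda Q)|\lesssim M^{-\gamma-2-2j}\mathbf 1_{M\le y\le 2M}$, and the further applications of $\Li^{j-1}$, with coefficients from \eqref{relation:admissibility-V}, keep the same scaling.

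\textbf{Step 1 (coefficients).} We use induction on $k$ in \eqref{defeq:c_k_M}. The denominator is $\langle\chi_M\M\Lambda Q,\Lambda Q\rangle\simeq M^{d-2\gamma}$, since $\M|\Lambda Q|^2y^{d-1}\sim y^{d-1-2\gamma}$ and $d-2\gamma>0$.

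For the numerator, the $j=0$ term is $\langle \M\chi_M\Lambda Q,T_k\rangle$. By Lemma~\ref{lemma:construction-Tk}, $T_k\sim y^{2k-2-\gamma}$, so the integrand is $\lesssim y^{4+d-1-\gamma-2+2k-2-\gamma}$ and the term is $\lesssim M^{d-2\gamma+2k}$.

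For $j\ge1$, integrate by parts using the symmetry \eqref{equation:adjoint-L}. Alternatively, estimate directly with the annulus bound:
\[
|\langle\M\Li^j(\chi_M\Lambda Q),T_k\rangle|\lesssim M^{4+d}M^{-\gamma-2-2j}M^{2k-2-\gamma}=M^{d-2\gamma+2k-2j}.
\]
Combined with the inductive bound $|c_{j,M}|\lesssim M^{2j}$, each $j$-term is $\lesssim M^{d-2\gamma+2k}$. Dividing by the denominator gives $|c_{k,M}|\lesssim M^{2k}$.

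\textbf{Step 2 ($\Phi_M$ bound).} The $k=0$ piece is $\chi_M\Lambda Q$. It gives $\int\M|\chi_M\Lambda Q|^2\lesssim\int_{y\le 2M}y^{d+3-4-2\gamma}\,dy\lesssim M^{d-2\gamma}$, where the origin is harmless because $\M\sim y^2$ there.

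For $k\ge1$, the piece $c_{k,M}\Li^k(\chi_M\Lambda Q)$ is supported on the annulus. Its contribution is $\lesssim M^{4k}\cdot M^{4+d}M^{-2\gamma-4-4k}=M^{d-2\gamma}$.

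\textbf{Step 3 ($\Li\Phi_M$ bound).} Here $\Li\Phi_M=\sum_k c_{k,M}\Li^{k+1}(\chi_M\Lambda Q)$, and every term is supported on the annulus. The $k$-th term contributes $\lesssim M^{4k}M^{d}M^{-2\gamma-4-4k}=M^{d-2\gamma-4}$.

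\textbf{Main obstacle.} The delicate point is the claim that $\Li^j$ acting on the annular piece gains exactly $M^{-2}$ per application. Since $\Li$ has coefficients $\partial_{yy}$, $yV^1\partial_y\sim y^{-1}\partial_y$, and $V^2\sim y^{-2}$, on $y\sim M$ each term is homogeneous of order $-2$; I would verify this using the admissibility in \eqref{relation:admissibility-V}. Everything else is routine bookkeeping of powers of $M$.
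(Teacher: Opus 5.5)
Your proof is correct and follows the paper's argument. Both use induction on $k$ in \eqref{defeq:c_k_M} with denominator $\simeq M^{d-2\gamma}$, then sum the pieces of $\Phi_M$ and $\Li\Phi_M$ using $|c_{k,M}|\lesssim M^{2k}$. The only difference is how the numerator is bounded: the paper moves $\Li^j$ onto $T_k$ via the $\M$-symmetry, so that $\Li^jT_k=(-1)^jT_{k-j}$ can be integrated over $y\le 2M$, whereas you use your annular pointwise bound; your commutator identity $\Li^j(\chi_M\Lambda Q)=\Li^{j-1}[\Li,\chi_M]\Lambda Q$ also makes explicit the bound $\int\M|\Li^j(\chi_M\Lambda Q)|^2\lesssim M^{d-2\gamma-4j}$, which the paper uses without comment.
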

\begin{proof}
    We prove by induction on $k$. Since $c_{0,M}=1$, the case holds for $k=0$. Assume the bound holds for $k$, now we consider $k+1$. We recall the definition \eqref{defeq:c_k_M} to compute
    \begin{equation}
        \begin{aligned}
            |c_{k+1,M}|&\lesssim\frac{1}{M^{d-2\gamma}}\sum_{j=0}^k M^{2j}\int|\chi_M\M\Lambda Q\Li^j(T_{k+1})|\\
            &\lesssim\frac{1}{M^{d-2\gamma}}\sum_{j=0}^k M^{2j}\int_{y\leq M}\frac{y^{d-1}y^4}{y^{\gamma+2}}y^{2k-2j-\gamma}dy\lesssim M^{2(k+1)},
        \end{aligned}
    \end{equation}
    where we use the degree of admissibility of $\Lambda Q, T_k$ in Lemma~\ref{lemma:asymp-coeficients} and Lemma~\ref{lemma:construction-Tk}. Therefore we have
    \begin{equation}
        \int\M|\Phi_M|^2\lesssim\int\M|\chi_M\Lambda Q|^2+\sum_{j=1}^L|c_{j,M}|^2\int\M|\Li^j(\chi_M\Lambda Q)|^2\lesssim M^{d-2\gamma},
    \end{equation}
    and
    \begin{equation}
        \int\M|\Li\Phi_M|^2\lesssim\sum_{j=0}^L|c_{j,M}|^2\int\M|\Li^{j+1}(\chi_M\Lambda Q)|^2\lesssim M^{d-2\gamma-4}.
    \end{equation}
\end{proof}
Now we compute the modulation equations.
\begin{lemma}[Control of the modulation parameters]\label{lemma:modulation-bound-rough}
    There following bounds hold:
    \begin{equation}\label{estimate:modulation-bound-L-1}
        \sum_{k=1}^{L-1}|(b_k)_s+(2k-\gamma)b_1b_k-b_{k+1}|+\left|b_1+\frac{\lambda_s}{\lambda}\right|\lesssim b_1^{L+1+(1-\delta)(1+\eta)},
    \end{equation}
    and
    \begin{equation}\label{estimate:modulation-bound-L-rough}
        |(b_L)_s+(2L-\gamma)b_1b_L|\lesssim \frac{\sqrt{\mathcal{E}_{2m}}}{M^{2\delta}}+b_1^{L+1+(1-\delta)(1+\eta)} .
    \end{equation}
\end{lemma}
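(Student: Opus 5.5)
We project the $\eps$-equation \eqref{equation:dynamic-eps} onto the directions $\M\Li^i\Phi_M$, $0\le i\le L$, and use the orthogonality conditions \eqref{relation:orthogonality-condition} to eliminate the time derivative. Set
\begin{equation}
D(s):=\Bigl|b_1+\frac{\lambda_s}{\lambda}\Bigr|+\sum_{k=1}^{L}\bigl|(b_k)_s+(2k-\gamma)b_1b_k-b_{k+1}\bigr|.
\end{equation}
Since $\langle\eps,\M\Li^i\Phi_M\rangle=0$ for all $s$ and $\Phi_M$ is time independent, we obtain
\begin{equation}
\langle \partial_s\eps,\M\Li^i\Phi_M\rangle=0 .
\end{equation}
By the symmetry \eqref{equation:adjoint-L},
\begin{equation}
\langle\Li\eps,\M\Li^i\Phi_M\rangle=\langle\eps,\M\Li^{i+1}\Phi_M\rangle .
\end{equation}
This vanishes for $i\le L-1$. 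For $i=L$ it is the one term that does not cancel.

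The main contribution is $\langle\HMod,\M\Li^i\Phi_M\rangle$. Recall $\HMod=-(\lambda_s/\lambda+b_1)\Lambda\tQ_b+\chi_{B_1}\textnormal{Mod}$. On $\mathrm{supp}\,\Phi_M\subset\{y\le 2M\}\subset\{y\le B_1\}$ we have $\tQ_b=Q_b$. There we expand $\Lambda Q_b=\Lambda Q+O(b_1)$ and $\textnormal{Mod}=\sum_k[\cdots]_k(T_k+\sum_j\partial_{b_k}S_j)$. Applying \eqref{equation:cancellation-identity}, the $i$-th projection equals
\begin{equation}
(-1)^i\langle\chi_M\M\Lambda Q,\Lambda Q\rangle\,[\text{$i$-th modulation quantity}]+O\bigl(M^Cb_1D(s)\bigr),
\end{equation}
where the $0$-th quantity is $-(\lambda_s/\lambda+b_1)$. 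The corrections come from $\partial S_j/\partial b_k$, from $b\cdot\Lambda T_k$ and $b\cdot\Lambda S_j$, and are bounded using Lemma~\ref{lemma:estimate-PhiM} and the admissibility degrees. The system is triangular with diagonal $\gtrsim M^{d-2\gamma}$, hence invertible for $b_1$ small.

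It remains to bound the other terms of $\F$ paired with $\M\Li^i\Phi_M$. For the error term $\tPsi_b$, the local bound \eqref{estimate:cut-psib-bound-local-1} together with Cauchy--Schwarz gives $M^Cb_1^{L+3}$. This is better than $b_1^{L+1+(1-\delta)(1+\eta)}$ since $(1-\delta)(1+\eta)<2$. For the linear term $\Lambda\eps$, we write
\begin{equation}
\frac{\lambda_s}{\lambda}\Lambda\eps=-b_1\Lambda\eps+\Bigl(\frac{\lambda_s}{\lambda}+b_1\Bigr)\Lambda\eps .
\end{equation}
The terms $\Hi(\eps)$ and $\Ni(\eps)$ are also integrated over $y\le 2M$. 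There we use the local control of $\eps$ by $\E_{2m}$ (or by a lower energy $\E_{2k}$) through the coercivity Lemma~\ref{lemma:coercive-bound-eps}, weighted Hardy estimates, and $|\tQ_b-Q|\lesssim b_1$ on compact sets. This gives bounds of the form $M^C\bigl(b_1\sqrt{\E_{2m}}+\E_{2m}+b_1D\bigr)$. The bootstrap bound $\E_{2m}\le Ks^{-(2L+2(1-\delta)(1+\eta))}$, with $s^{-1}\sim b_1$ from the control of $\V$, yields
\begin{equation}
b_1\sqrt{\E_{2m}}\lesssim b_1^{L+2+(1-\delta)(1+\eta)}.
\end{equation}
This is acceptable. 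We then absorb the $b_1D$ terms and conclude \eqref{estimate:modulation-bound-L-1} for the indices $0\le i\le L-1$.

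For $i=L$, there is the extra term $\langle\Li\eps,\M\Li^L\Phi_M\rangle$. We control it with \eqref{equation:adjoint-L} by placing the powers of $\Li$ on $\eps$:
\begin{equation}
\langle\M\Li^{m}\eps,\Li^{L+1-m}\Phi_M\rangle .
\end{equation}
Since $m=L+h+1$, this needs care. Instead we pair as $\langle\M\Li^{L+1}\eps,\Phi_M\rangle$ and use a weighted Cauchy--Schwarz with the coercivity bound
\begin{equation}
\int_{y\le 2M}\M\frac{|\Li^{L+1}\eps|^2}{y^{4h}}\lesssim\E_{2m}.
\end{equation}
Combined with $\int\M|\Phi_M|^2y^{4h}\lesssim M^{d-2\gamma+4h}$, this gives
\begin{equation}
|\langle\M\Li^{L+1}\eps,\Phi_M\rangle|\lesssim M^{(d-2\gamma)/2+2h}\sqrt{\E_{2m}} .
\end{equation}
Dividing by the normalization $M^{d-2\gamma}$ and using $d-2\gamma-4h=4\delta$ gives exactly $M^{-2\delta}\sqrt{\E_{2m}}$, which is \eqref{estimate:modulation-bound-L-rough}.

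The main obstacle is this last weighted Hardy/coercivity step, namely controlling $\Li^{L+1}\eps$ locally by $\E_{2m}$ with the sharp power of $M$. I would try it first via the coercivity lemma, which is applicable thanks to the orthogonality conditions.
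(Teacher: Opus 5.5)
Your proposal is correct and follows the paper's proof essentially step by step. The shared steps are: projecting \eqref{equation:dynamic-eps} onto $\M\Li^i\Phi_M$; isolating each modulation quantity through \eqref{equation:cancellation-identity} up to $O(M^Cb_1D)$; bounding $\tPsi_b$, $\Hi(\eps)$ and $\Ni(\eps)$ locally; using weighted Cauchy--Schwarz on the single surviving term $\langle\M\Li^{L+1}\eps,\Phi_M\rangle$ to get $M^{-2\delta}\sqrt{\E_{2m}}$ via $d-2\gamma-4h=4\delta$; and absorbing $M^Cb_1D$ after summing over all $0\le i\le L$ (that sum must include the $i=L$ estimate, since $D$ contains the $b_L$ quantity).

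Two small corrections to the last step. Use the weight $1+y^{4h}$ rather than $y^{4h}$, since the coercivity only gives $y^{-2}$ at the origin. Also, the $M^{-2\delta}$ gain is genuine not because of the orthogonality conditions: bounding $\int\M|\Li^{L+1}\eps|^2/(1+y^{4h})$ by $\int\M|\Li^{h}\Li^{L+1}\eps|^2$ needs no orthogonality at all, because $\Lambda Q$ and $T_1,\dots,T_{h-1}$ lie outside this weighted space when $\delta>0$. Its constant is therefore independent of $M$, whereas the orthogonality-based constant $C(M)$ would cancel the gain.
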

\begin{proof}
    We first take the inner product of \eqref{equation:dynamic-eps} with $\Li^L\Phi_M$ and use the orthogonality condition \eqref{relation:orthogonality-condition} to obtain
    \begin{equation}
        \langle\M\HMod,\Li^L\Phi_M\rangle=-\langle\M\tilde{\Psi}_b,\Li^L\Phi_M\rangle-\langle\M\Li^{L+1}\eps,\Phi_M\rangle-\left\langle-\frac{\lambda_s}{\lambda}\Lambda\eps+\mathcal{H}\eps-\mathcal{N}\eps,\M\Li^L\Phi_M\right\rangle.
    \end{equation}
    We define
    \begin{equation}
        D(t)=\left|b_1+\frac{\lambda_s}{\lambda}\right|+\sum_{k=1}^L|(b_k)_s+(2k-\gamma)b_1b_k-b_{k+1}|,
    \end{equation}
    with $b_{L+1}=0$. From the definition \eqref{equation:Mod-term-1} and \eqref{equation:def-tMod}, we can compute by recalling the cancellation \eqref{equation:cancellation-identity}
    \begin{equation}
        \langle\M\widehat{\text{Mod}},\Li^L\Phi_M\rangle=(-1)^L\langle\M\Lambda Q,\Phi_M\rangle[(b_L)_s+(2L-\gamma)b_1b_L]+O(M^Cb_1D(t)),
    \end{equation}
    where we use the fact that $\Phi_M$ is compactly supported in $y\leq2M$. Then we estimate the error term from \eqref{estimate:cut-psib-bound-local-1} and Lemma~\ref{lemma:estimate-PhiM} to obtain
    \begin{equation}
        |\langle\M\tilde{\Psi}_b,\Li^L\Phi_M\rangle|\lesssim \left(\int_{y\leq2M}\M|\Li^{L}\tPsi_b|^2\right)^{\frac{1}{2}}\left(\int_{y\leq2M}\M|\Phi_M|^2\right)^{\frac{1}{2}}\lesssim M^Cb_1^{L+3}.
    \end{equation}
    We now apply the coercivity bounds in Lemma~\ref{lemma:coercive-bound-eps} with the bootstrap assumptions to estimate
    \begin{equation}
        |\langle\M\Li^{L+1}\eps,\Phi_M\rangle|\lesssim M^{2h}\left(\int\M\frac{|\Li^{L+1}\eps|^2}{1+y^{4h}}\right)^{\frac{1}{2}}\left(\int\M|\Phi_M|^2\right)^{\frac{1}{2}}\lesssim M^{2h+\frac{d}{2}-\gamma}\sqrt{\E_{2m}},
    \end{equation}
    and
    \begin{equation}
        \left|\left\langle-\frac{\lambda_s}{\lambda}\Lambda\eps+\mathcal{H}\eps-\mathcal{N}\eps,\M\Li^L\Phi_M\right\rangle\right|\lesssim M^Cb_1(\E_{2m}+\sqrt{\E_{2m}}+D(t))\lesssim M^Cb_1(\sqrt{\E_{2m}}+D(t)).
    \end{equation}
    Gathering the above bounds we obtain
    \begin{equation}\label{estimate:proof-modulation-bL}
        |(b_L)_s+(2L-\gamma)b_1b_L|\lesssim \frac{\sqrt{\mathcal{E}_{2m}}}{M^{2\delta}}+b_1^{L+1+(1-\delta)(1+\eta)}+M^Cb_1D(t).
    \end{equation}
    
    To proceed the terms $b_k$ for $1\leq k\leq L-1$, we take the inner product of \eqref{equation:dynamic-eps} with $\Li^k\Phi_M$ and use the orthogonality condition \eqref{relation:orthogonality-condition} to obtain the following for $1\leq k\leq L-1$,
    \begin{equation}
        \langle\M\HMod,\Li^k\Phi_M\rangle=-\langle\M\tilde{\Psi}_b,\Li^k\Phi_M\rangle-\left\langle-\frac{\lambda_s}{\lambda}\Lambda\eps+\mathcal{H}\eps-\mathcal{N}\eps,\M\Li^k\Phi_M\right\rangle.
    \end{equation}
    Estimating by following the steps for $b_L$, we have
    \begin{equation}\label{estimate:proof-modulation-bk}
        |(b_k)_s+(2k-\gamma)b_1b_k-b_{k+1}|\lesssim b_1^{L+1+(1-\delta)(1+\eta)}+M^Cb_1(\sqrt{\E_{2m}}+D(t)).
    \end{equation}
    Similarly we take the inner product of \eqref{equation:dynamic-eps} with $\Phi_M$ to imply
    \begin{equation}\label{estimate:proof-modulation-lam}
        \left|b_1+\frac{\lambda_s}{\lambda}\right|\lesssim b_1^{L+1+(1-\delta)(1+\eta)}+M^Cb_1(\sqrt{\E_{2m}}+D(t)).
    \end{equation}
    Taking the sum of \eqref{estimate:proof-modulation-bL}, \eqref{estimate:proof-modulation-bk} and \eqref{estimate:proof-modulation-lam} we get
    \begin{equation}
        D(t)\lesssim M^C\sqrt{\E_{2m}}+b_1^{L+1+(1-\delta)(1+\eta)},
    \end{equation}
    which concludes the proof.
\end{proof}
The bounds for $b_L$ above shows
\begin{equation}
      |(b_L)_s+(2L-\gamma)b_1b_L|\lesssim  b_1^{L+(1-\delta)(1+\eta)}
\end{equation}
from the bootstrap assumptions. This is not good enough to close the expected one
\begin{equation}
    |(b_L)_s+(2L-\gamma)b_1b_L|\ll b_1^{L+1}.
\end{equation}
This implies a term which oscillates in time should be considered to improve the bound. In particular we claim the following lemma.
\begin{lemma}[Improved control for $b_L$]\label{lemma:modulation-bound-improve}
    We have the following
    \begin{equation}\label{estimate:modulation-bound-improve}
        \left|(b_L)_s+(2L-\gamma)b_1b_L+\frac{d}{ds}\left\{\frac{\langle\M\Li^L\eps,\chi_{B_0}\Lambda Q\rangle}{\langle\M\Lambda Q,\chi_{B_0}\Lambda Q\rangle}\right\}\right|\lesssim\frac{1}{B_0^{2\delta}}[C(M)\sqrt{\E_{2m}}+b_1^{L+1+(1-\delta)-C_L\eta}].
    \end{equation}
\end{lemma}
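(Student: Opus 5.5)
We follow the standard scheme of \cite{MerleRaphaelRodnianski2015,GhoulIbrahimNguyen2019}: we project the $\eps$-equation onto $\M\chi_{B_0}\Lambda Q$ after applying $\Li^L$, instead of onto the compactly supported $\Phi_M$. The point of the larger radius $B_0$ is that the time derivative of $\eps$ is no longer killed by orthogonality. We therefore absorb it into the time-derivative correction. The large radius $B_0=b_1^{-1/2}$ is what produces the gain $B_0^{-2\delta}$.

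\emph{Step 1: the projected identity.} We apply $\Li^L$ to \eqref{equation:dynamic-eps} and pair with $\M\chi_{B_0}\Lambda Q$. Using $\Li\Lambda Q=0$ and the symmetry \eqref{equation:adjoint-L}, the term $\langle\M\Li^{L+1}\eps,\chi_{B_0}\Lambda Q\rangle=\langle\M\Li^L\eps,\Li(\chi_{B_0}\Lambda Q)\rangle$ only involves the commutator $\Li(\chi_{B_0}\Lambda Q)$. This is supported in $B_0\le y\le 2B_0$ and is of size $B_0^{-2}y^{-\gamma-2}$. The time derivative gives
\[
\frac{d}{ds}\langle\M\Li^L\eps,\chi_{B_0}\Lambda Q\rangle-\langle\M\Li^L\eps,\partial_s(\chi_{B_0})\Lambda Q\rangle,
\]
where $|\partial_s\chi_{B_0}|\lesssim b_1\mathbf 1_{B_0\le y\le2B_0}$ by $|(b_1)_s|\lesssim b_1^2$. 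We then divide by $\langle\M\Lambda Q,\chi_{B_0}\Lambda Q\rangle\sim B_0^{d-2\gamma}$; this grows since $d>2\gamma$. The derivative of this normalization produces a term $\frac{(b_1)_s}{b_1}\cdot$(quotient), which is lower order.

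\emph{Step 2: the modulation term.} We compute $\langle\M\Li^L\HMod,\chi_{B_0}\Lambda Q\rangle$. Since $\Li^LT_k=(-1)^k\Li^{L-k}\Lambda Q$ vanishes for $k<L$ and $\Li^LT_L=(-1)^L\Lambda Q$, only the $b_L$ line survives. It contributes exactly $(-1)^L[(b_L)_s+(2L-\gamma)b_1b_L]\langle\M\Lambda Q,\chi_{B_0}\Lambda Q\rangle$. The remaining contributions come from the $\partial S_j/\partial b_k$ terms (admissible, with extra $b_1$ powers) and from $(\lambda_s/\lambda+b_1)\Lambda\tQ_b$. These are controlled by $D(t)\,b_1^{c}B_0^{C}$ with a net gain, and $D(t)$ is bounded by Lemma~\ref{lemma:modulation-bound-rough}. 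We must check that $B_0$-growth versus $b_1$ powers comes out favorable; this uses $\deg S_j$ and the estimate \eqref{estimate:modulation-bound-L-1} for $k\le L-1$.

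\emph{Step 3: error terms.} For $\tPsi_b$ we use Cauchy--Schwarz with \eqref{estimate:cut-psib-bound-local-2} at $k=L-h-1$ or the weighted version, against $\int_{y\le2B_0}\M|\Lambda Q|^2\sim B_0^{d-2\gamma}$. After normalization this gives $B_0^{-(d/2-\gamma)}b_1^{L+2+(1-\delta)-C\eta}$. Since $d/2-\gamma=2h+2\delta$ and $B_0^{-2h}=b_1^h$, this is within the target $B_0^{-2\delta}b_1^{L+1+(1-\delta)-C_L\eta}$. For the linear flux term and the $\Lambda\eps$, $\Hi$, $\Ni$ terms, we use the coercivity Lemma~\ref{lemma:coercive-bound-eps}, bounding $\int\M|\Li^L\eps|^2/(1+y^{4(h+1)})\lesssim C(M)\E_{2m}$. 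Pairing with functions localized at $y\sim B_0$ with weight decay gives $C(M)\sqrt{\E_{2m}}\,B_0^{2h+2}\cdot B_0^{-2}\cdot B_0^{-(d/2-\gamma)}\cdot B_0^{\dots}$, which must land at $B_0^{-2\delta}\sqrt{\E_{2m}}$. The nonlinear term is quadratic and absorbed using the bootstrap bounds.

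The main obstacle is the exact power counting in Step 3 for the commutator term $\langle\M\Li^L\eps,\Li(\chi_{B_0}\Lambda Q)\rangle$ and for $\Lambda\eps$. Here one must choose precisely which weighted norm of $\Li^L\eps$ (equivalently $\Li^{m-h-1}\eps$) to use so that the loss exactly matches $B_0^{-2\delta}$ via $d-2\gamma-4h=4\delta$. We would first try Hardy-type coercivity from $\E_{2m}$ down $h+1$ levels, with weight $y^{-4(h+1)}$ on the annulus $y\sim B_0$.
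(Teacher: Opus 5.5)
Your proposal is correct and follows essentially the same route as the paper. You apply $\Li^L$, pair with $\M\chi_{B_0}\Lambda Q$, extract $(-1)^L[(b_L)_s+(2L-\gamma)b_1b_L]\langle\M\Lambda Q,\chi_{B_0}\Lambda Q\rangle$ from $\HMod$, and bound every other term by Cauchy--Schwarz on the annulus against $\int\M|\Li^L\eps|^2/(1+y^{4h+4})\lesssim C(M)\E_{2m}$. The power count you left open closes as $B_0^{2h+2}\cdot B_0^{-2}\cdot B_0^{-(d/2-\gamma)}=B_0^{-2\delta}$. For $\langle\M\Li^{L+1}\eps,\chi_{B_0}\Lambda Q\rangle$ the paper uses Cauchy--Schwarz directly with weight $1+y^{4h}$ instead of moving one $\Li$ onto the cutoff, which is equivalent.

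One arithmetic slip: the unweighted $\tPsi_b$ bound at $k=L-h-1$ gives $B_0^{-(d/2-\gamma)}b_1^{L-h+1+(1-\delta)-C\eta}=B_0^{-2\delta}b_1^{L+1+(1-\delta)-C_L\eta}$, not $B_0^{-(d/2-\gamma)}b_1^{L+2+(1-\delta)-C\eta}$. This matches the target exactly rather than with room to spare. The exponent $L+2+(1-\delta)$ arises only in the weighted version, whose prefactor is $B_0^{2-2\delta}$ and which lands on the same bound.
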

\begin{proof}
    We first commute \eqref{equation:dynamic-eps} with $\Li^L$ and take the inner product with $\chi_{B_0}\Lambda Q$ to get
    \begin{align}
        &\langle\M\Lambda Q,\chi_{B_0}\Lambda Q\rangle\left\{\frac{d}{ds}\left[\frac{\langle\M\Li^L\eps,\chi_{B_0}\Lambda Q\rangle}{\langle\M\Lambda Q,\chi_{B_0}\Lambda Q\rangle}\right]-\langle\M\Li^L\eps,\chi_{B_0}\Lambda Q\rangle\frac{d}{ds}\left[\frac{1}{\langle\M\Lambda Q,\chi_{B_0}\Lambda Q\rangle}\right]\right\}\\
        &=-\langle\M\Li^{L}\widehat{\text{Mod}},\chi_{B_0}\Lambda Q\rangle+\langle\M\Li^{L}(\mathcal{N}\eps-\mathcal{H}\eps),\chi_{B_0}\Lambda Q\rangle+\frac{\lambda_s}{\lambda}\langle\M\Li^{L}\Lambda\eps,\chi_{B_0}\Lambda Q\rangle\\
        &\quad-\langle\M\Li^{L}\tilde{\Psi}_b,\chi_{B_0}\Lambda Q\rangle+\langle\M\Li^L\eps,\partial_s(\chi_{B_0})\Lambda Q\rangle-\langle\M\Li^{L+1}\eps,\chi_{B_0}\Lambda Q\rangle. \label{equation:proof-refine-bL-innerproduct}
    \end{align}
    The $\HMod$ term in the right-hand side of \eqref{equation:proof-refine-bL-innerproduct} contains the modulation equation for $b_L$. Specifically we write from \eqref{equation:Mod-term-1} and \eqref{equation:def-tMod}
    \begin{align}
        &|\langle\M\Li^{L}\widehat{\text{Mod}},\chi_{B_0}\Lambda Q\rangle-(-1)^L\scl{\M\Lambda Q}{\chi_{B_0}\Lambda Q}[(b_L)_s+(2L-\gamma)b_1b_L]|\\
            &\lesssim\sum_{k=1}^L|(b_k)_s+(2k-\gamma)b_1b_k-b_{k+1}|\left|\scl{\M\sum_{j=k+1}^{L+2}\frac{\partial\tS_j}{\partial b_k}}{\Li^L(\chi_{B_0}\Lambda Q)}\right|\\
            &\quad+\left|\frac{\lam_s}{\lam}+b_1\right|\left|\scl{\M\Lambda \tilde{\Theta}_b}{\Li^L(\chi_{B_0}\Lambda Q)}\right|.\label{equation:proof-tmod-expansion}
    \end{align}
    Here we use the fact that $\Li(\Lambda Q)=0,\Li^LT_k=0$ for $1\leq k\leq L-1$, and $\Li^LT_L=(-1)^L\Lambda Q$.
    Recalling the admissibility of degree in our construction in Proposition~\ref{proposition:approximate-profile}, we derive the bounds for $y\sim B_0$:
    \begin{equation}
        |\Lambda \Theta_b|\lesssim b_1y^{-\gamma},\quad\sum_{j=k+1}^{L+2}\left|\frac{\partial S_j}{\partial b_k}\right|\lesssim\sum_{j=k+1}^{L+2}b_1^{j-k}y^{2(j-2)-\gamma}\lesssim b_1y^{2k-2-\gamma}.
    \end{equation}
    Together with the bounds in Lemma~\ref{lemma:modulation-bound-rough}, we derive the bound for the right-hand side of \eqref{equation:proof-tmod-expansion}
    \begin{align}
        &\sum_{k=1}^L|(b_k)_s+(2k-\gamma)b_1b_k-b_{k+1}|\left|\scl{\M\sum_{j=k+1}^{L+2}\frac{\partial\tS_j}{\partial b_k}}{\Li^L(\chi_{B_0}\Lambda Q)}\right|\\
        &\qquad\qquad+\left|\frac{\lam_s}{\lam}+b_1\right|\left|\scl{\M\Lambda \tilde{\Theta}_b}{\Li^L(\chi_{B_0}\Lambda Q)}\right|\\
        &\lesssim (C(M)\sqrt{\E_{2m}}+b_1^{L+1+(1-\delta)(1+\eta)})b_1\int_{B_0\leq y\leq2B_0}\frac{y^4y^{2L-2-\gamma}y^{d-1}}{y^{2L+2+\gamma}}dy\\
        &\lesssim (C(M)\sqrt{\E_{2m}}+b_1^{L+1+(1-\delta)(1+\eta)})b_1B_0^{d-2\gamma}.
    \end{align}

    Now we turn to the bounds for all remaining terms in \eqref{equation:proof-refine-bL-innerproduct}. We recall from the asymptotic behaviour of $\Lambda Q$ in Lemma~\ref{lemma:asymptotic-Q} that
    \begin{equation}\label{estimate:M-LambdaQ-LambdaQ}
        B_0^{d-2\gamma}\lesssim\scl{\M\Lambda Q}{\chi_{B_0}\Lambda Q}\lesssim B_0^{d-2\gamma}.
    \end{equation}
    For the second term in the left-hand side of \eqref{equation:proof-refine-bL-innerproduct}, we estimate
    \begin{equation}\label{estimate:M-Lieps-LambdaQ}
        |\langle\M\Li^L\eps,\chi_{B_0}\Lambda Q\rangle|\lesssim B_0^{2h+2}|\scl{\M\Lambda Q}{\chi_{B_0}\Lambda Q}|^{\frac{1}{2}}\left(\int\frac{\M|\Li^L\eps|^2}{1+y^{4h+4}}\right)^{\frac{1}{2}}\lesssim B_0^{\frac{d}{2}-\gamma+2h+2}\sqrt{\E_{2m}}
    \end{equation}
    by using the coercivity bounds. Therefore we can write
    \begin{equation}
        \begin{aligned}
            \left|\langle\M\Li^L\eps,\chi_{B_0}\Lambda Q\rangle\frac{d}{ds}\left[\frac{1}{\langle\M\Lambda Q,\chi_{B_0}\Lambda Q\rangle}\right]\right|&\lesssim\frac{|\langle\M\Li^L\eps,\chi_{B_0}\Lambda Q\rangle|}{\langle\M\Lambda Q,\chi_{B_0}\Lambda Q\rangle^2}\left|\frac{(b_1)_s}{b_1}\right|\int_{B_0\leq y\leq2B_0}\M|\Lambda Q|^2\\
        &\lesssim b_1\frac{B_0^{\frac{d}{2}-\gamma+2h+2}\sqrt{\E_{2m}}}{B_0^{2d-4\gamma}}B_0^{d-2\gamma}\lesssim \frac{\sqrt{\E_{2m}}}{B_0^{2\delta}},
        \end{aligned}
    \end{equation}
    where we used the relation $d=4h+4\delta+2\gamma$ and the definition of $B_0$.

    For other terms in the right-hand side of \eqref{equation:proof-refine-bL-innerproduct}, we first use Cauchy-Schwarz and the fact $\Li(\Lambda Q)=0$ to obtain
    \begin{equation}
        \begin{aligned}
            |\langle\M\Li^L\eps,\partial_s(\chi_{B_0})\Lambda Q\rangle|&\lesssim\left|\frac{(b_1)_s}{b_1}\right|\left(\int_{B_0\leq y\leq2B_0}(1+y^{4h+4})\M|\Lambda Q|^2\right)^{\frac{1}{2}}\left(\int\frac{\M|\Li^L\eps|^2}{1+y^{4h+4}}\right)^{\frac{1}{2}}\\
            &\lesssim b_1 B_0^{\frac{d}{2}-\gamma+2h+2}\sqrt{\E_{2m}}\lesssim B_0^{\frac{d}{2}-\gamma+2h}\sqrt{\E_{2m}},
        \end{aligned}
    \end{equation}
    \begin{equation}
        |\langle\M\Li^{L+1}\eps,\chi_{B_0}\Lambda Q\rangle|\lesssim\left(\int(1+y^{4h})\M|\chi_{B_0}\Lambda Q|^2)\right)^{\frac{1}{2}}\left(\int\frac{\M|\Li^{L+1}\eps|^2}{1+y^{4h}}\right)^{\frac{1}{2}}\lesssim B_0^{\frac{d}{2}-\gamma+2h}\sqrt{\E_{2m}}
    \end{equation}
    and
    \begin{align}
        \left|\frac{\lambda_s}{\lambda}\langle\M\Li^{L}\Lambda\eps,\chi_{B_0}\Lambda Q\rangle\right|&\lesssim b_1\left(\int(1+y^{4(L+h)+2})\M|\Li^L(\chi_{B_0}\Lambda Q)|^2\right)^{\frac{1}{2}}\left(\int\frac{\M|\partial_y\eps|^2}{1+y^{4(L+h)+2}}\right)^{\frac{1}{2}}\\
            &\quad+b_1\left(\int(1+y^{4(L+h)+2})\M|\Li^L(\chi_{B_0}\Lambda Q)|^2\right)^{\frac{1}{2}}\left(\int\frac{\M|\eps|^2}{1+y^{4(L+h+1)}}\right)^{\frac{1}{2}}\\
            &\lesssim B_0^{\frac{d}{2}-\gamma+2h}\sqrt{\E_{2m}}.
    \end{align}
    For the error term we estimate by using \eqref{estimate:cut-psib-bound-local-2}:
    \begin{equation}
        \begin{aligned}
        |\langle\M\Li^{L}\tilde{\Psi}_b,\chi_{B_0}\Lambda Q\rangle|&\lesssim\left(\int(1+y^{4(L+h+1)})\M|\Li^L(\chi_{B_0}\Lambda Q)|^2\right)^{\frac{1}{2}}\left(\int\frac{\M|\tPsi_b|^2}{1+y^{4(L+h+1)}}\right)^{\frac{1}{2}}\\
        &\lesssim B_0^{\frac{d}{2}-\gamma+2h+2}b_1^{L+2+(1-\delta)-C_L\eta}.
        \end{aligned}        
    \end{equation}
    Similarly we can estimate
    \begin{align}
        |\langle\M\Li^{L}(\mathcal{N}\eps-\mathcal{H}\eps),\chi_{B_0}\Lambda Q\rangle|&\lesssim\int\M|\Hi\eps\Li^L(\chi_{B_0}\Lambda Q)|+\int\M|\Ni\eps\Li^L(\chi_{B_0}\Lambda Q)|\\
        &\lesssim \left(\int\frac{\M|\Hi\eps|^2}{1+y^{4m-4}}\right)^{\frac{1}{2}}\left(\int(1+y^{4m-4})\M|\Li^L(\chi_{B_0}\Lambda Q)|^2\right)^{\frac{1}{2}}\\
        &\qquad+\left(\int\frac{\M|\Ni\eps|^2}{1+y^{4m}}\right)^{\frac{1}{2}}\left(\int(1+y^{4m})\M|\Li^L(\chi_{B_0}\Lambda Q)|^2\right)^{\frac{1}{2}}\\
        &\lesssim B_0^{\frac{d}{2}-\gamma+2h-1}\sqrt{\E_{2m}}+b_1B_0^2B_0^{\frac{d}{2}-\gamma+2h-1}\sqrt{\E_{2m}}\lesssim B_0^{\frac{d}{2}-\gamma+2h}\sqrt{\E_{2m}}.
    \end{align}
    Dividing \eqref{equation:proof-refine-bL-innerproduct} by
$(-1)^L\scl{\M\Lambda Q}{\chi_{B_0}\Lambda Q}$
and collecting the above estimates, we obtain
\eqref{estimate:modulation-bound-improve}.
\end{proof}

\section{Energy control}\label{sec:energy-control}
In this section, we derive the monotonicity formula needed to close the bootstrap argument. 
This energy estimate is the core of our analysis and also the most technical part of the paper. 
Our approach is inspired by the ideas developed in \cite{Raphael2014,RaphaelSchweyer2014,MerleRaphaelRodnianski2015,GhoulIbrahimNguyen2019}. 
The main technical difficulty lies in the treatment of the nonlinear terms, which possess additional structure and require the most subtle estimates in our computation.
\begin{proposition}[Lyapunov monotonicity of the weighted energy]\label{proposition:energy-control}
    There holds
\begin{equation}\label{equation:energy-control-1}
\begin{aligned}
    &\frac{d}{dt}\brc{\frac{\E_{2m}}{\lam^{4m+2-d}}[1+O(b_1^{\eta(1-\delta)})]}\\
    &\leq\frac{b_1}{\lam^{4m+4-d}}\left[\left(2+\frac{C}{M^{2\delta}}\right)\E_{2m}+C\left(b_1^{L+(1-\delta)(1+\eta)}\sqrt{\E_{2m}}+b_1^{2L+2(1-\delta)(1+\eta)}\right)\right].
\end{aligned}
\end{equation}
Furthermore for all $h+1\leq k\leq m-1$ we have
\begin{equation}\label{equation:energy-control-2}
\begin{aligned}
    &\frac{d}{dt}\brc{\frac{\E_{2k}}{\lam^{4k+2-d}}[1+O(b_1)]}\\
    &\leq\frac{b_1}{\lam^{4k+4-d}}\left[2\E_{2k}+C\left(b_1^{k-h-1+(1-\delta)-C\eta}\sqrt{\E_{2k}}+b_1^{2(k-h-1)+2(1-\delta)-C\eta}\right)\right].
\end{aligned}
\end{equation}
\end{proposition}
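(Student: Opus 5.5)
We will work in the original variables, following \cite{GhoulIbrahimNguyen2019,Raphael2014}. Set $w_{2k}:=\Li_\lam^k w$, where $w$ is the perturbation defined in \eqref{defeq:w-eps-original-variable}. By scaling,
\[
\int \M_\lam |w_{2k}|^2\,dr=\frac{\E_{2k}}{\lam^{4k+2-d}},\qquad \M_\lam(r)=\M(r/\lam).
\]
Working with $w$ rather than $\eps$ removes the transport term $-\frac{\lam_s}{\lam}\Lambda\eps$. Commuting \eqref{equation:dynamic-w-origin} with $\Li_\lam^k$ gives
\[
\partial_t w_{2k}+\Li_\lam w_{2k}=\frac{1}{\lam^{2k+2}}(\Li^k\F)_\lam+[\partial_t,\Li_\lam^k]w .
\]
The commutator is explicit: $\partial_t\Li_\lam=-\frac{\lam_t}{\lam}\big(rV^1_\lam$-type terms$\big)$, because $\partial_t V_\lam=-\frac{\lam_t}{\lam}(y\partial_yV)_\lam$. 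Its coefficients $\Lambda V^1,\Lambda V^2$ decay two powers faster than $V^1,V^2$ by Lemma~\ref{lemma:asymp-coeficients}.

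We then compute $\frac{d}{dt}\frac12\int\M_\lam|w_{2k}|^2$. Four contributions arise.
\begin{enumerate}
\item[(a)] The linear term. By the symmetry \eqref{equation:adjoint-L} and the factorization $\Li=\B^*\A$, it contributes the dissipation $-\int\M_\lam|\A_\lam w_{2k}|^2\le0$.
\item[(b)] The time derivative of the weight, $\frac12\int\partial_t\M_\lam|w_{2k}|^2=-\frac{\lam_t}{2\lam}\int(y\partial_y\M)_\lam|w_{2k}|^2$. Since $y\partial_y\M/\M\to4$ at infinity and $\frac{\lam_t}{\lam}=-\frac{b_1}{\lam^2}+O(\ldots)$, this produces $\frac{b_1}{\lam^2}\cdot 2\int\M_\lam |w_{2k}|^2$ plus a remainder. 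That remainder is sign-indefinite and is controlled by $(y\partial_y\M-4\M)/\M=O((1+y^2)^{-1})$, which is \eqref{equation:cancellation-J}.
\item[(c)] The commutator terms.
\item[(d)] The source $\scl{\M\Li^k\F}{\Li^k\eps}$.
\end{enumerate}
The remainders in (b) and the commutators in (c) are localized, with weights $(1+y^2)^{-1}$. They are absorbed by combining the dissipation $\int\M|\A\Li^k\eps|^2$, coercivity (Lemma~\ref{lemma:coercive-bound-eps}) and the orthogonality \eqref{relation:orthogonality-condition}, up to an $O(M^{-2\delta})$ loss. The portion that cannot be absorbed directly has the form $b_1\frac{d}{dt}(\text{lower order quadratic in }\eps)$. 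It is written as a time derivative and moved to the left-hand side, which produces the factor $1+O(b_1^{\eta(1-\delta)})$ (respectively $1+O(b_1)$).

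The source terms are then estimated one at a time after rescaling back.
\begin{enumerate}
\item[(1)] For $\tPsi_b$, Cauchy--Schwarz together with \eqref{estimate:cut-psib-bound-2} gives $b_1^{L+(1-\delta)(1+\eta)}\sqrt{\E_{2m}}$ at top order. At lower orders, \eqref{estimate:cut-psib-bound-1} gives $b_1^{k-h-1+(1-\delta)-C\eta}\sqrt{\E_{2k}}$.
\item[(2)] For $\HMod$, the bounds \eqref{estimate:modulation-bound-L-1} apply except in the $b_L$ direction. There $\Li^m(\chi_{B_1}T_L)$ is supported in $B_1\le y\le2B_1$, since $\Li^{L}T_L\propto \Lambda Q$ and $\Li\Lambda Q=0$. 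Combining this with \eqref{estimate:modulation-bound-L-rough} and Lemma~\ref{lemma:modulation-bound-improve} gives a contribution of size $\big(\sqrt{\E_{2m}}/M^{2\delta}+b_1^{L+(1-\delta)(1+\eta)}\big)\sqrt{\E_{2m}}$. This is where the $C M^{-2\delta}\E_{2m}$ term comes from.
\item[(3)] $\Hi(\eps)$ is a small linear term. Its coefficients are $O(b_1 y^{-\gamma})$-small, and it is handled by interpolation on the weighted norms.
\end{enumerate}

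The main obstacle is the nonlinear term $\Ni(\eps)=\eps(y\partial_y\eps+d\eps)$ at top order. The term $\Li^m\Ni$ contains $\eps\, y\partial_y\Li^m\eps$, which has one derivative more than $\E_{2m}$ controls. For this piece we integrate by parts: $\int\M\,\eps\, y\partial_y(\Li^m\eps)\Li^m\eps=-\frac12\int\partial_y(y^{d}\M\eps)y^{1-d}|\Li^m\eps|^2$, so it is bounded by $\|y\partial_y\eps\|_{L^\infty}+\|\eps\|_{L^\infty}$ times $\E_{2m}$. The remaining Leibniz terms are split between $y\le1$ and $y\ge1$. They are controlled through the interpolation bounds near the origin and at infinity (Lemmas~\ref{lemma:inter-bound-origin}, \ref{lemma:inter-bound-far}) together with the bootstrap decay of the lower energies. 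Each such term is then bounded by $b_1^{1+}\E_{2m}$, which is negligible relative to the main terms. The lower-order estimate \eqref{equation:energy-control-2} follows by the same scheme. The cruder bound \eqref{estimate:cut-psib-bound-1} suffices there, and no improved $b_L$ law is needed, because $\Li^k T_L$ with $k<m$ is absorbed in the $b_1^{2(k-h-1)+2(1-\delta)-C\eta}$ error.
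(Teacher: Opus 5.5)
\paragraph{Gap: the top-order $b_L$ modulation term.}
Your treatment of the $b_L$ direction at top order has a genuine gap, and this is exactly where the paper's proof does its main extra work. You correctly note that $\Li^m\tT_L$ is supported in $B_1\le y\le 2B_1$. That support gives $\int\M|\Li^m\tT_L|^2\lesssim b_1^{2(1-\delta)(1+\eta)}$. With only the rough law \eqref{estimate:modulation-bound-L-rough}, the term $[(b_L)_s+(2L-\gamma)b_1b_L]\,\scl{\M\Li^m\eps}{\Li^m\tT_L}$ is then of size $b_1^{(1-\delta)(1+\eta)}M^{-2\delta}\E_{2m}$. This is much larger than $b_1\E_{2m}$ and destroys the coefficient $2+CM^{-2\delta}$.

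Lemma~\ref{lemma:modulation-bound-improve} supplies the missing factor $B_0^{-2\delta}=b_1^{\delta}$, but only for the combination containing $\frac{dc}{ds}$, where $c(s)=\scl{\M\Li^L\eps}{\chi_{B_0}\Lambda Q}/\scl{\M\Lambda Q}{\chi_{B_0}\Lambda Q}$. The derivative $\frac{dc}{ds}$ is itself comparable to $(b_L)_s+(2L-\gamma)b_1b_L$, so the two lemmas cannot simply be ``combined'' pointwise in time.

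What is missing is an integration by parts in time. Set $\xi_L=c(s)\tT_L$ and write $\HMod=\TMod-\partial_s\xi_L$. Then use
\[
\scl{\M\Li^m\eps}{\Li^m\partial_s\xi_L}=\frac{d}{ds}\Big[\scl{\M\Li^m\eps}{\Li^m\xi_L}-\tfrac12\int\M|\Li^m\xi_L|^2\Big]-\scl{\M\Li^m(\partial_s\eps-\partial_s\xi_L)}{\Li^m\xi_L}.
\]
Since $|c|\lesssim b_1^{-(1-\delta)}\sqrt{\E_{2m}}$, the bracket is $O(b_1^{\eta(1-\delta)}\E_{2m})$. This bracket is the actual origin of the factor $1+O(b_1^{\eta(1-\delta)})$. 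Your attribution of that factor to a $b_1\frac{d}{dt}(\text{lower-order quadratic})$ correction from the weight or the commutators cannot be right, since such corrections only produce $O(b_1)$.

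The last term must then be estimated by re-inserting the equation for $\eps$. This uses three bounds:
\begin{itemize}
\item $\int\M|\Li^{m+1}\xi_L|^2\lesssim b_1^{2+2\eta(1-\delta)}\E_{2m}$;
\item a weighted bound on $\Li^{2m}\xi_L$, for the transport term;
\item the bound \eqref{estimate:claim-bound-mod} on $\Li^m\TMod$, which now contains the improved $b_L$ law.
\end{itemize}

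\paragraph{Smaller points.}
Your normalization is off. With $\M_\lam(r)=\M(r/\lam)$ one has $\int\M_\lam|w_{2k}|^2=\E_{2k}/\lam^{4k+4-d}$, whereas the paper's weight is $\J_\lam^{-1}=\lam^2\M(r/\lam)$. Your weight-derivative computation therefore gives $\frac{d}{dt}\int\M_\lam|w_{2k}|^2\le\frac{4b_1}{\lam^2}\int\M_\lam|w_{2k}|^2+\dots$. The constant $2$ in the statement appears only after passing to $\E_{2k}/\lam^{4k+2-d}$, which subtracts $2b_1$. Since this exact constant is what cancels when the bootstrap is closed, it must be tracked correctly.

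On the positive side, $y\partial_y\M/\M=2+y^2Q$ with $0<y^2Q<2$. Hence your remainder in (b) is nonpositive, so no absorption is needed there. In particular, none of the paper's $w_{2m}w_{2m-2}$ and $w_{2m-1}^2$ corrections are needed.

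Also, $\Hi(\eps)$ contains the transport piece $\tilde\Theta_b\,y\partial_y\eps$, which loses a derivative at top order just as $\Ni$ does, and ``interpolation'' does not handle it. Integrating by parts as you do for $\Ni$ leaves a term of size $b_1\E_{2m}$ with an $O(1)$ constant, because $\|\tilde\Theta_b\|_{L^\infty}\sim b_1$. That term has to be absorbed into the dissipation, using coercivity of $\A$ and the decay $|\tilde\Theta_b|\lesssim b_1y^{-\gamma}$. Alternatively, handle it as the paper does, by pairing $\A\Li^{m-1}\Hi(\eps)$ against $\A\Li^m\eps$.

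Your integration by parts for the top-order piece of $\Ni$ is a legitimate alternative to the paper's dissipation route. It requires you to check $\|\eps\|_{L^\infty}+\|y\partial_y\eps\|_{L^\infty}=o(b_1)$ from the bootstrap via Lemma~\ref{lemma:inter-bound-far}; this has the same scaling as the paper's bound on $\|y\eps\|_{L^\infty}^2$.
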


\begin{proof}
We only prove \eqref{equation:energy-control-1}, since
\eqref{equation:energy-control-2} follows from the same argument. The only
additional difficulty arises at the top order. Indeed, for
$h+1\leq k\leq m-1$, the rough modulation estimates of
Lemma~\ref{lemma:modulation-bound-rough} are sufficient to control all the
modulation terms. At the top order $k=m$, however, these estimates are too
weak to close the bootstrap argument. We therefore introduce a suitable
time-dependent correction and use the refined modulation estimate from
Lemma~\ref{lemma:modulation-bound-improve}. This produces an additional
contribution $C\E_{2m}/M^{2\delta}$ in \eqref{equation:energy-control-1}, which can be absorbed by choosing $M$
sufficiently large.

The proof of \eqref{equation:energy-control-1} is divided into the following nine steps.

\step{1} Identity for weighted energy. We consider the following derivatives for perturbation $\eps=\eps(y,s)$ satisfying equation \eqref{equation:dynamic-eps} for any $k\in\N$:
    \begin{equation} 
    \begin{gathered}
        \eps_{2k+1}=\A\epsilon_{2k},\quad \eps_{2k+2}=\Li\eps_{2k},\quad w_{2k+1}=\A_{\lam}w_{2k},\quad w_{2k+2}=\Li_{\lam}w_{2k},\\
        \eps_0=\eps,\quad w_0=w,
    \end{gathered}
\end{equation}
here $w=w(r,t)$ is defined in \eqref{defeq:w-eps-original-variable} and the operators $\Li,\Li_{\lam},\A,\A_{\lam}$ are defined in \eqref{def:operator-A-B}, \eqref{def:factorize-L} and \eqref{defeq:linear-operator-lambda}. It follows directly that
\begin{equation}
    \Li=\B^*\A,\quad\Li_{\lam}=\B_{\lam}^*\A_{\lam}.
\end{equation}
To derive the energy identity, we recall the dynamic rescaling equation \eqref{equation:dynamic-w-origin} and commute with $\Li_{\lam}^{m-1}$ and $\A_{\lam}\Li_{\lam}^{m-1}$ to obtain
\begin{align}
\label{equation:commute-energy}
\begin{gathered}
\partial_tw_{2m}=-\mathcal{L}_{\lambda}w_{2m}+\mathcal{L}_{\lambda}^m\left(\frac{1}{\lambda^2}\mathcal{F}_{\lambda}\right)+[\partial_t,\mathcal{L}_{\lambda}^m]w,\\
\partial_tw_{2m-1}=-\mathcal{A}_{\lambda}w_{2m}+\mathcal{A_{\lambda}}\mathcal{L}^{m-1}_{\lambda}\left(\frac{1}{\lambda^2}\mathcal{F}_{\lambda}\right)+[\partial_t,\mathcal{A}_{\lambda}\mathcal{L}^{m-1}_{\lambda}]w.
\end{gathered}
\end{align}
Now we note the scaling
\begin{equation}
    \int\J^{-1}_{\lam}|w_{2m}|^2=\frac{1}{\lam^{4m+2-d}}\int\J^{-1}|\eps_{2m}|^2=\frac{1}{\lam^{4m+2-d}}\E_{2m}
\end{equation}
with the definition of $\J$ in \eqref{def:coeef-M-J}.
Hence it is sufficient to compute the energy identity for $w_{2m}$ and in particular we have:
\begin{align}
    \frac{1}{2}\frac{d}{dt}(\J_{\lam}^{-1}w_{2m},w_{2m})=&(\partial_t w_{2m},\J_{\lam}^{-1}w_{2m})+\frac{1}{2}\int\partial_t(\J_{\lam}^{-1})w_{2m}^2\\
        =&-\int\J_{\lam}^{-1}|\A_{\lam}w_{2m}|^2-\frac{1}{2}\int\frac{b_1(\Lambda\J)_{\lam}}{\lam^2\J_{\lam}^{2}}w_{2m}^2+\frac{1}{2}\int\frac{(\Lambda\J)_{\lam}}{\lam^2\J_{\lam}^{2}}\left(\frac{\lam_s}{\lam}+b_1\right)w_{2m}^2\\
        &+(\J_{\lam}^{-1}w_{2m},\frac{1}{\lam^2}\Li_{\lam}^m\F_{\lam})+(\J_{\lam}^{-1}w_{2m},[\partial_t,\Li_{\lam}^m]w).\label{equation:derive-energy}
\end{align}
In \eqref{equation:derive-energy} the second quadratic term has no definite sign and has the worst bound. The key technical point is to introduce a suitable oscillation in time and to exploit the dissipative structure. Recalling \eqref{equation:commute-energy} we proceed as follows:
\begin{align}
    \frac{d}{dt}\left(\int\frac{b_1(\Lambda\J)_{\lam}}{\lam^2\J_{\lam}^{2}}w_{2m}w_{2m-2}\right)=&-\int\frac{b_1(\Lambda\J)_{\lam}}{\lam^2\J_{\lam}^{2}}w_{2m}^2+\int\frac{b_1(\Lambda\J)_{\lam}}{\lam^2\J_{\lam}^{2}}w_{2m}\left[\Li_{\lam}^{m-1}\left(\frac{1}{\lam^2}\F_{\lam}\right)+[\partial_t,\Li^{m-1}_{\lam}]w\right]\\
    &+\int\frac{b_1(\Lambda\J)_{\lam}}{\lam^2\J_{\lam}^{2}}w_{2m-2}\left[\Li_{\lam}^{m}\left(\frac{1}{\lam^2}\F_{\lam}\right)+[\partial_t,\Li^{m}_{\lam}]w\right]\\
    &-\int\frac{b_1(\Lambda\J)_{\lam}}{\lam^2\J_{\lam}^{2}}(\Li_{\lam}w_{2m})w_{2m-2}+\int w_{2m}w_{2m-2}\frac{d}{dt}\left(\frac{b_1(\Lambda\J)_{\lam}}{\lam^2\J_{\lam}^2}\right),\label{estimate:time-oscillate-1}
\end{align}
and the following holds
\begin{equation}\label{estimate:time-oscillate-mid}
    \begin{aligned}
    &-\int\frac{b_1(\Lambda\J)_{\lam}}{\lam^2\J_{\lam}^{2}}(\Li_{\lam}w_{2m})w_{2m-2}\\
    &=\frac{2b_1}{\lam^2}(\J_{\lambda}^{-1}w_{2m},w_{2m})-\frac{b_1}{\lambda^2}\int \mathcal{J}_{\lambda}^{-1}\mathcal{A}_{\lambda}w_{2m}\mathcal{A}_{\lambda}\left(\frac{(\Lambda \mathcal{J})_{\lambda}+2\mathcal{J}_{\lambda}}{\mathcal{J}_{\lambda}}w_{2m-2}\right).
\end{aligned}
\end{equation}
We also compute
\begin{equation}\label{estimate:time-oscillate-2}
    \begin{aligned}
    \frac{d}{dt}\left(\int\frac{b_1}{\lam^2\J_{\lambda}}w_{2m-1}^2\right)=&\int w_{2m-1}^2\frac{d}{dt}\left(\frac{b_1}{\lam^2\J_{\lam}}\right)-2\frac{b_1}{\lam^2}\int \J_{\lam}^{-1}w_{2m-1}\A_{\lam}w_{2m}\\
    &+\frac{2b_1}{\lam^2}\int \J_{\lam}^{-1}w_{2m-1}\left[\A_{\lam}\Li^{m-1}_{\lam}\left(\frac{1}{\lam^2}\F_{\lam}\right)+[\partial_t,\A_{\lam}\Li^{m-1}_{\lam}]w\right].
\end{aligned}
\end{equation}
Now we compute the leading order quadratic terms in \eqref{estimate:time-oscillate-1} and \eqref{estimate:time-oscillate-2} as
\begin{align}
    \frac{d}{dt}\left(\frac{b_1(\Lambda\J)_{\lam}}{\lam^2\J_{\lam}^2}\right)&=\frac{d}{dt}\left(b_1\frac{\Lambda \J}{\J^2}(\frac{r}{\lam(t)})\right)\\
    &=\frac{1}{\lam^2}\left[
    b_{1s}\frac{\Lambda \J}{\J^2}-b_1\frac{\lam_s}{\lam}y\partial_y\left(\frac{\Lambda \J}{\J^2}\right)\right](y)\\
    &=\frac{1}{\lam^2\J}\left[-2
    b_{1s}+8b_1\frac{\lam_s}{\lam}+O\left(\frac{|b_{1s}|+b_1|\frac{\lam_s}{\lam}|}{1+y^2}\right)\right],\label{estimate:dt-leading-1}
\end{align}
and
\begin{equation}\label{estimate:dt-leading-2}
    \begin{aligned}
        \frac{d}{dt}\left(\frac{b_1}{\lam^2\J_{\lam}}\right)&=\frac{d}{dt}\left(\frac{b_1}{\J(r/\lam)}\right)=\frac{1}{\lam^2}\left[\frac{b_{1s}}{\J}+b_1\frac{\frac{\lam_s}{\lam}y\partial_y\J}{\J^2}\right]\\
    &=\frac{1}{\lam^2\J}\left[
    b_{1s}-4b_1\frac{\lam_s}{\lam}+O\left(\frac{|b_{1s}|+b_1|\frac{\lam_s}{\lam}|}{1+y^2}\right)\right].
    \end{aligned}
\end{equation}
Combining \eqref{estimate:time-oscillate-1},\eqref{estimate:time-oscillate-mid},\eqref{estimate:time-oscillate-2},\eqref{estimate:dt-leading-1} and \eqref{estimate:dt-leading-2}, together with the fact that the operators commute in the weighted norm:
\begin{equation}\label{estimate:commutative-property-energy}
    \int\J_{\lam}^{-1}w_{2m}w_{2m-2}=\int\J_{\lam}^{-1}w_{2m-1}^2,\quad\int\J_{\lam}^{-1}w_{2m}^2=\int\J_{\lam}^{-1}w_{2m-1}\A_{\lam}w_{2m},
\end{equation}
we obtain the following energy identity:
\begin{align}
    &\frac{1}{2}\frac{d}{dt}\left[(\J_{\lam}^{-1}w_{2m},w_{2m})-\int\frac{b_1(\Lambda\J)_{\lam}}{\lam^2\J_{\lam}^2}w_{2m}w_{2m-2}-2\int\frac{b_1}{\lam^2\J_{\lam}}w_{2m-1}^2\right]\\
         &=-\int\J_{\lam}^{-1}|\A_{\lam}w_{2m}|^2+\frac{b_1}{\lam^2}(\J_{\lam}^{-1}w_{2m},w_{2m})+\frac{1}{2}\int\frac{(\Lambda\J)_{\lam}}{\lam^2\J_{\lam}^{2}}\left(\frac{\lam_s}{\lam}+b_1\right)w_{2m}^2\\
         &\quad+\frac{1}{2}\frac{b_1}{\lam^2}\int \J_{\lam}^{-1}\A_{\lam}w_{2m}\A_{\lam}\left(\frac{(\Lambda \J)_{\lam}+2\J_{\lam}}{\J_{\lam}}w_{2m-2}\right)+(\J_{\lam}^{-1}w_{2m},\frac{1}{\lam^2}\Li_{\lam}^m\F_{\lam})+(\J_{\lam}^{-1}w_{2m},[\partial_t,\Li_{\lam}^m]w)\\
         &\quad-\frac{1}{2}\int\frac{b_1(\Lambda\J)_{\lam}}{\lam^2\J_{\lam}^2}w_{2m}\left[\Li_{\lam}^{m-1}\left(\frac{1}{\lam^2}\F_{\lam}\right)+[\partial_t,\Li_{\lam}^{m-1}]w\right]\\
         &\quad-\frac{1}{2}\int\frac{b_1(\Lambda\J)_{\lam}}{\lam^2\J_{\lam}^2}w_{2m-2}\left[\Li_{\lam}^{m}\left(\frac{1}{\lam^2}\F_{\lam}\right)+[\partial_t,\Li_{\lam}^{m}]w\right]\\
         &\quad-2\frac{b_1}{\lam^2}\int \J_{\lam}^{-1}w_{2m-1}\left[\A_{\lam}\Li^{m-1}_{\lam}\left(\frac{1}{\lam^2}\F_{\lam}\right)+[\partial_t,\A_{\lam}\Li^{m-1}_{\lam}]w\right]\\
         &\quad+O\left(\int\frac{|\eps_{2m}\eps_{2m-
    2}|+|\eps_{2m-1}|^2}{\lam^{4m+4-d}\J(1+y^2)}(|(b_1)_s|+b_1|\frac{\lam_s}{\lam}|)\right).\label{equation:energy-identity-final}
\end{align}
In the following steps, we estimate all the terms in \eqref{equation:energy-identity-final}. Throughout the argument, we repeatedly use the coercivity estimate from Lemma~\ref{lemma:coercive-bound-eps} together with the improved decay \eqref{equation:cancellation-J}.

\step{2} Quadratic terms with dissipation. Recalling the modulation estimates \eqref{estimate:modulation-bound-L-1} and the coercivity bound in  Lemma~\ref{lemma:coercive-bound-eps}, we obtain
\begin{equation}
    \begin{aligned}
        &\int\frac{|\eps_{2m}\eps_{2m-
    2}|+|\eps_{2m-1}|^2}{\lam^{4m+4-d}\J(1+y^2)}(|b_{1s}|+b_1|\frac{\lam_s}{\lam}|)\\
    \lesssim& \frac{b_1^2}{\lam^{4m+4-d}}\left(\left(\int\M\eps_{2m}^2\right)^{\frac{1}{2}}\left(\int\M\frac{\eps_{2m-2}^2}{1+y^4}\right)^{\frac{1}{2}}+\int\M\frac{\eps_{2m-1}^2}{1+y^2}\right)\lesssim\frac{b_1^2}{\lam^{4m+4-d}}\E_{2m}.
    \end{aligned}
\end{equation}
Then the degeneracy \eqref{equation:cancellation-J} gives the estimate
\begin{align}
    &\frac{b_1}{\lam^2}\left|\int \J_{\lam}^{-1}\A_{\lam}w_{2m}\A_{\lam}\left(\frac{(\Lambda \J)_{\lam}+2\J_{\lam}}{\J_{\lam}}w_{2m-2}\right)\right|\\
        \leq&\frac{1}{100}\int\J_{\lam}^{-1}|\A_{\lam}w_{2m}|^2+C\frac{b_1^2}{\lam^4}\int\J_{\lam}^{-1}\left|\A_{\lam}\left(\frac{(\Lambda \J)_{\lam}+2\J_{\lam}}{\J_{\lam}}w_{2m-2}\right)\right|^2\\
        \leq&\frac{1}{100}\int\J_{\lam}^{-1}|\A_{\lam}w_{2m}|^2+C\frac{b_1^2}{\lam^{4m+4-d}}\int\M\left|\A\left(\frac{\Lambda \J+2\J}{\J}\eps_{2m-2}\right)\right|^2\\
        \leq&\frac{1}{100}\int\J_{\lam}^{-1}|\A_{\lam}w_{2m}|^2+C\frac{b_1^2}{\lam^{4m+4-d}}\int\M\left(\frac{1}{1+y^4}\eps_{2m-1}^2+\frac{1}{(1+y^6)}\eps_{2m-2}^2\right)\\
        \leq&\frac{1}{100}\int\J_{\lam}^{-1}|\A_{\lam}w_{2m}|^2+C\frac{b_1^2}{\lam^{4m+4-d}}\E_{2m}.
\end{align}
Here we use the Leibniz rule in Lemma~\ref{lemma:lebniz-rule} by choosing $\phi=(\Lambda\J+2\J)/\J$. Therefore the desired bound follows from our choice of initial data, which ensures that
\begin{equation}
    b_1(s) \ll \frac{1}{M^{2\delta}} .
\end{equation}

\step{3} Quadratic terms with commutators. We first claim the following bound:
\begin{equation}\label{estimate:bound-on-commutator-general}
    \begin{gathered}
        \lam^2\int\J_{\lam}^{-1}(1+y^4)|[\partial_t,\Li^m_{\lam}]w|^2+\int\J_{\lam}^{-1}(1+y^2)|[\partial_t,\A_{\lam}\Li^{m-1}_{\lam}]w|^2+\frac{1}{\lam^2}\int\J_{\lam}^{-1}|[\partial_t,\Li^{m-1}_{\lam}]w|^2\\
        \lesssim\frac{b_1^2}{\lam^{4m+4-d}}\E_{2m}.
    \end{gathered}
\end{equation}
We only deal with the first term in \eqref{estimate:bound-on-commutator-general} since the second and the third term are estimated similarly. By induction and the definition \eqref{defeq:linear-operator-lambda} \eqref{equation:commute-energy}, we obtain
\begin{equation}
    [\partial_t,\Li_{\lam}^m]w=\sum_{k=0}^{m-1}\Li_{\lam}^k([\partial_t,\Li_{\lam}]\Li_{\lam}^{m-1-k}w)=\sum_{k=0}^{m-1}\Li_{\lam}^k\left(\left(r\partial_tV^1_{\lam}\partial_r+\partial_tV^2_{\lam}\right)\Li^{m-1-k}_{\lam}w\right).
\end{equation}
Then direct computation shows
\begin{equation}
    \partial_tV_{\lam}^1(r)=-\frac{\lam_s}{\lam}\frac{\Lambda V^1(y)
    }{\lam^4},\quad \partial_tV_{\lam}^2(r)=-\frac{\lam_s}{\lam}\frac{\Lambda V^2(y)
    }{\lam^4}.
\end{equation}
We now proceed by making the change of variables and using the modulation estimate \eqref{estimate:modulation-bound-L-1}. Therefore we have
\begin{equation}
    \begin{aligned}
        \lam^2\int\J_{\lam}^{-1}(1+y^4)|[\partial_t,\Li^m_{\lam}]w|^2&=\left|\frac{\lam_s}{\lam}\right|^2\frac{1}{\lam^{4m+4-d}}\int\M(1+y^4)\left|\sum_{k=0}^{m-1}\Li^k(y\Lambda V^1\partial_y+\Lambda V^2)\Li^{m-1-k}\eps\right|^2\\
        &\lesssim\frac{b_1^2}{\lam^{4m+4-d}}\sum_{k=0}^{m-1}\int\M(1+y^4)\left|\Li^k(y\Lambda V^1\partial_y+\Lambda V^2)\Li^{m-1-k}\eps\right|^2.
    \end{aligned}
\end{equation}
We recall \eqref{property:decay-V} together with the asymptotic behavior in Lemma~\ref{lemma:asymp-coeficients}. This leads to the following estimate
\begin{equation}
    |\Lambda V^1(y)|+|\Lambda V^2(y)|\lesssim\frac{1}{1+y^4}.
\end{equation}
Using the Leibniz rule again in Lemma~\ref{lemma:lebniz-rule} and coercivity Lemma~\ref{lemma:coercive-bound-eps}, we get
\begin{equation}
    \begin{aligned}
        \sum_{k=0}^{m-1}\int\M(1+y^4)\left|\Li^k(y\Lambda V^1\partial_y+\Lambda V^2)\Li^{m-1-k}\eps\right|^2\lesssim\sum_{k=0}^{2m-1}\int\M\frac{\eps_{2m-1-k}^2}{1+y^{2k+2}}\lesssim\E_{2m}.
    \end{aligned}
\end{equation}
And this completes the proof of \eqref{estimate:bound-on-commutator-general}.

With \eqref{estimate:bound-on-commutator-general} we are able to estimate the following term as
\begin{align}
    \left|\int\frac{b_1(\Lambda\J)_{\lam}}{\lam^2\J_{\lam}^2}w_{2m-2}[\partial_t,\Li_{\lam}^{m}]w\right|&\lesssim\lam^2\int\J_{\lam}^{-1}(1+y^4)|[\partial_t,\Li^m_{\lam}]w|^2+\frac{b_1^2}{\lam^6}\int\frac{(\Lambda\J)_{\lam}^2}{\J_{\lam}^3(1+y^4)}w_{2m-2}^2\\
    &\lesssim\frac{b_1^2}{\lam^{4m+4-d}}\E_{2m}+\frac{b_1^2}{\lam^{4m+4-d}}\int\M\frac{\eps_{2m-2}^2}{1+y^4}\\
    &\lesssim\frac{b_1^2}{\lam^{4m+4-d}}\E_{2m},
\end{align}
and similarly we have
\begin{equation}
    \begin{aligned}
        \left|\int\frac{b_1(\Lambda\J)_{\lam}}{\lam^2\J_{\lam}^2}w_{2m}[\partial_t,\Li_{\lam}^{m-1}]w\right|+\left|\int\frac{b_1}{\lam^2}\J_{\lam}^{-1}w_{2m-1}[\partial_t,\A_
        {\lam}\Li_{\lam}^{m-1}]w\right|\lesssim\frac{b_1^2}{\lam^{4m+4-d}}\E_{2m}.
    \end{aligned}
\end{equation}
It remains to estimate one final term
\begin{equation}
    (\J_{\lam}^{-1}w_{2m},[\partial_t,\Li_{\lam}^m]w).
\end{equation}
To this end, we introduce the following notation
\begin{equation}
    \eps_{2m-2}^*(y)=\frac{1}{y^{d-1}\T_Q(y)}\int_0^yy_0^{d-1}\T_Q(y_0)y_0\Lambda V^B(y_0)\eps_{2m-1}(y_0)dy_0,\quad w^*_{2m-2}=\frac{1}{\lam^{2m-2}}(\eps^*_{2m-2})_{\lam}.
\end{equation}
Then it follows from \eqref{def:operator-A-B} that
\begin{equation}
    \B^*\eps_{2m-2}^*=y\Lambda V^B\eps_{2m-1},\quad \B_{\lam}^*w_{2m-2}^*=\lam r(\Lambda V^B)_{\lam}w_{2m-1}.
\end{equation}
Therefore we can compute
\begin{align}
    (\J_{\lam}^{-1}w_{2m},[\partial_t,\Li_{\lam}^m]w)&=(\J_{\lam}^{-1}w_{2m},[\partial_t,\B^*_{\lam}]w_{2m-1})+(\J_{\lam}^{-1}w_{2m},\B_{\lam}^*[\partial_t,\A_{\lam}\Li_{\lam}^{m-1}]w)\\
        &=\int\J_{\lam}^{-1}w_{2m}r\partial_tV_{\lam}^Bw_{2m-1}+(\J_{\lam}^{-1}\A_{\lam}w_{2m},[\partial_t,\A_{\lam}\Li_{\lam}^{m-1}]w)\\
        &=-\frac{\lam_s}{\lam}\frac{1}{\lam^2}\int\J_{\lam}^{-1}w_{2m}r(\Lambda V^B)_{\lam}w_{2m-1}+(\J_{\lam}^{-1}\A_{\lam}w_{2m},[\partial_t,\A_{\lam}\Li_{\lam}^{m-1}]w)\\
        &=-\frac{\lam_s}{\lam}\frac{1}{\lam^3}\int\J_{\lam}^{-1}\A_{\lam}w_{2m}w^*_{2m-2}+(\J_{\lam}^{-1}\A_{\lam}w_{2m},[\partial_t,\A_{\lam}\Li_{\lam}^{m-1}]w).
\end{align}
For the first term we have
\begin{align}
    \left|
        \frac{\lam_s}{\lam}\frac{1}{\lam^3}
        \int \J_{\lam}^{-1}\A_{\lam}w_{2m}w^*_{2m-2}
        \right|
        &\leq \frac{1}{100}\int \J_{\lam}^{-1}|\A_{\lam}w_{2m}|^2
        + C\frac{b_1^2}{\lam^6}
        \int \J_{\lam}^{-1}(w^*_{2m-2})^2  \\
        &= \frac{1}{100}\int \J_{\lam}^{-1}|\A_{\lam}w_{2m}|^2
        + C\frac{b_1^2}{\lam^{4m+4-d}}
        \int \J^{-1}(\eps^*_{2m-2})^2 \\
        &\leq \frac{1}{100}\int \J_{\lam}^{-1}|\A_{\lam}w_{2m}|^2
        + C\frac{b_1^2}{\lam^{4m+4-d}}\E_{2m}.
\end{align}
Here, in the last inequality we used the following by noting the full coercivity of $\B^*$ in Lemma~\ref{lemma:coercivity-Bstar}
\begin{equation}
    \begin{aligned}
        \int \J^{-1}(\eps^*_{2m-2})^2
    \lesssim
    \int \J^{-1}(1+y^2)(\B^*\eps^*_{2m-2})^2
    &=
    \int \J^{-1}(1+y^2)(y\Lambda  V^B)^2\eps_{2m-2}^2\\
    &\lesssim\int\J^{-1}\frac{1}{1+y^4}\eps_{2m-2}^2\lesssim \E_{2m}.
    \end{aligned}
\end{equation}
For the second term we simply recall the estimate \eqref{estimate:bound-on-commutator-general} to get
\begin{equation}
    \begin{aligned}
        |(\J_{\lam}^{-1}\A_{\lam}w_{2m},[\partial_t,\A_{\lam}\Li_{\lam}^{m-1}]w)|&\leq\frac{1}{100}\int \J_{\lam}^{-1}|\A_{\lam}w_{2m}|^2
        + C\int\J_{\lam}^{-1}|[\partial_t,\A_
        {\lam}\Li_{\lam}^{m-1}]w|^2\\
        &\leq\frac{1}{100}\int \J_{\lam}^{-1}|\A_{\lam}w_{2m}|^2+C\frac{b_1^2}{\lam^{4m+4-d}}\E_{2m}.
    \end{aligned}
\end{equation}
Therefore we conclude the estimates of all quadratic terms.

\step{4} Further use of dissipation on error terms. What we left are those terms with error:
\begin{equation}
    \begin{gathered}
        (\J_{\lam}^{-1}w_{2m},\frac{1}{\lam^2}\Li_{\lam}^m\F_{\lam}) -\frac{1}{2}\int\frac{b_1(\Lambda\J)_{\lam}}{\lam^2\J_{\lam}^2}w_{2m}\Li_{\lam}^{m-1}\left(\frac{1}{\lam^2}\F_{\lam}\right)\\
    -\frac{1}{2}\int\frac{b_1(\Lambda\J)_{\lam}}{\lam^2\J_{\lam}^2}w_{2m-2}\Li_{\lam}^{m}\left(\frac{1}{\lam^2}\F_{\lam}\right) -2\frac{b_1}{\lam^2}\int \J_{\lam}^{-1}w_{2m-1}\A_{\lam}\Li^{m-1}_{\lam}\left(\frac{1}{\lam^2}\F_{\lam}\right).
    \end{gathered}
\end{equation}
And we can use the following identity
\begin{equation}
    \int \J_{\lam}^{-1}w_{2m-1}\A_{\lam}\Li^{m-1}_{\lam}\left(\frac{1}{\lam^2}\F_{\lam}\right)=\int \J_{\lam}^{-1}w_{2m-2}\Li^{m}_{\lam}\left(\frac{1}{\lam^2}\F_{\lam}\right)=\int \J_{\lam}^{-1}w_{2m}\Li^{m-1}_{\lam}\left(\frac{1}{\lam^2}\F_{\lam}\right)
\end{equation}
to rewrite the error terms as
\begin{equation}\label{equation:left-error-terms}
    \begin{gathered}
        (\J_{\lam}^{-1}w_{2m},\frac{1}{\lam^2}\Li_{\lam}^m\F_{\lam})\\-\frac{1}{2}\int\frac{b_1(\Lambda\J+2\J)_{\lam}}{\lam^2\J_{\lam}^2}w_{2m}\Li_{\lam}^{m-1}\left(\frac{1}{\lam^2}\F_{\lam}\right)
    -\frac{1}{2}\int\frac{b_1(\Lambda\J+2\J)_{\lam}}{\lam^2\J_{\lam}^2}w_{2m-2}\Li_{\lam}^{m}\left(\frac{1}{\lam^2}\F_{\lam}\right).
    \end{gathered}
\end{equation}
Now our goal turns to the proof of \eqref{equation:left-error-terms}. We first introduce the decomposition of error $\F$
\begin{equation}\label{defeq:decompose_error}
    \F=\partial_s\xi_L+\F_0+\F_1,\quad \F_0=-\tilde{\Psi}_b-\widetilde{\text{Mod}},\quad \F_1=-\Hi(\eps)+\Ni(\eps),
\end{equation}
with the function defined as
\begin{equation}\label{defeq:xi_l_and_tilde_Mod}
    \xi_L=\frac{\langle\M\Li^L\eps,\chi_{B_0}\Lambda Q\rangle}{\langle\M\Lambda Q,\chi_{B_0}\Lambda Q\rangle}\tilde{T}_L,\quad\widetilde{\text{Mod}}=\widehat{\text{Mod}}+\partial_s\xi_L,
\end{equation}
here $\tilde{\Psi}_b,\widehat{\text{Mod}},\Hi(\eps)$ and $\Ni(\eps)$ are defined in \eqref{equation:profile-equation-approximate-local}, \eqref{defeq:small-linear-H}, \eqref{defeq:nonlinear-N} and \eqref{equation:def-tMod}. Therefore we can compute the first line in \eqref{equation:left-error-terms} as
\begin{align}
    (\J_{\lam}^{-1}w_{2m},\frac{1}{\lam^2}\Li_{\lam}^m\F_{\lam})=&\frac{1}{\lam^{4m+4-d}}(\J^{-1}\eps_{2m},\Li^m(\partial_s\xi_L))+\frac{1}{\lam^{2}}(\J_{\lam}^{-1}w_{2m},\Li_{\lam}^m(\F_0)_{\lam}+\Li_{\lam}^m(\F_1)_{\lam})\\
        \leq&\frac{1}{\lam^{4m+4-d}}(\J^{-1}\eps_{2m},\Li^m(\partial_s\xi_L))+\frac{\sqrt{\E_{2m}}}{\lam^{4m+4-d}}\left(\int\M|\Li^m\F_0|^2\right)^{\frac{1}{2}}\\
        &+\frac{1}{100}\int\J_{\lam}^{-1}|\A_{\lam}w_{2m}|^2+C\frac{1}{\lam^{4m+4-d}}\int\M|\A\Li^{m-1}\F_1|^2.\label{equation:left-error-high-order}
\end{align}
And we can estimate the second line in \eqref{equation:left-error-terms} with degeneracy property \eqref{equation:cancellation-J} and coercivity Lemma~\ref{lemma:coercive-bound-eps} to get
\begin{align}
    &-\frac{1}{2}\int\frac{b_1(\Lambda\J+2\J)_{\lam}}{\lam^2\J_{\lam}^2}w_{2m}\Li_{\lam}^{m-1}\left(\frac{1}{\lam^2}\F_{\lam}\right)
    -\frac{1}{2}\int\frac{b_1(\Lambda\J+2\J)_{\lam}}{\lam^2\J_{\lam}^2}w_{2m-2}\Li_{\lam}^{m}\left(\frac{1}{\lam^2}\F_{\lam}\right)\\
    =&\quad-\frac{1}{2}\frac{b_1}{\lam^{4m+4-d}}\int\frac{\Lambda\J+2\J}{\J^2}(\eps_{2m}\Li^{m-1}\partial_s\xi_L+\eps_{2m-2}\Li^m\partial_s\xi_L)\\
    &-\frac{1}{2}\frac{b_1}{\lam^{4m+4-d}}\int\frac{\Lambda\J+2\J}{\J^2}(\eps_{2m}\Li^{m-1}(\F_0+\F_1)+\eps_{2m-2}\Li^{m}(\F_0+\F_1))\\
    \leq&\quad-\frac{1}{2}\frac{b_1}{\lam^{4m+4-d}}\int\frac{\Lambda\J+2\J}{\J^2}(\eps_{2m}\Li^{m-1}\partial_s\xi_L+\eps_{2m-2}\Li^m\partial_s\xi_L)\\
    &+C\frac{b_1}{\lam^{4m+4-d}}\sqrt{\E_{2m}}\left(\int\frac{\M(|\Li^{m-1}\F_0|^2+|\Li^{m-1}\F_1|^2)}{1+y^4}+\int\M|\Li^{m}\F_0|^2\right)^{\frac{1}{2}}\\
    &-\frac{1}{2}\frac{b_1}{\lambda^{4m+4-d}}\int\J^{-1}\mathcal{A}(\frac{\Lambda\J+2\J}{\J}\eps_{2m-2})\mathcal{A}\Li^{m-1}\F_1\\
    \leq&\quad-\frac{1}{2}\frac{b_1}{\lam^{4m+4-d}}\int\frac{\Lambda\J+2\J}{\J^2}(\eps_{2m}\Li^{m-1}\partial_s\xi_L+\eps_{2m-2}\Li^m\partial_s\xi_L)\label{equation:left-error-low-order}\\
    &+C\frac{b_1}{\lam^{4m+4-d}}\sqrt{\E_{2m}}\left(\int\frac{\M(|\Li^{m-1}\F_0|^2+|\Li^{m-1}\F_1|^2)}{1+y^4}+\int\M|\Li^{m}\F_0|^2+\int\M|\A\Li^{m-1}\F_1|^2\right)^{\frac{1}{2}}.
\end{align}
We now decompose the error terms as in \eqref{defeq:decompose_error}. In the remaining steps, we estimate all the contributions appearing in \eqref{equation:left-error-high-order} and \eqref{equation:left-error-low-order}.

\step{5} $\Psi_b$ terms. It is straightforward from Proposition~\ref{proposition:localize-Qb} that we have the estimate:
\begin{equation}\label{estimate:claim-bound-psi}
    \left(\int\M|\Li^m\tilde{\Psi}_b|^2\right)^{\frac{1}{2}}+\left(\int\frac{\M|\Li^{m-1}\tilde{\Psi}_b|^2}{1+y^4}\right)^{\frac{1}{2}}\lesssim b_1^{L+1+(1-\delta)(1+\eta)}.
\end{equation}

\step{6} Mod terms. We claim the following bound:
\begin{equation}\label{estimate:claim-bound-mod}
    \left(\int\M|\Li^m\TMod|^2\right)^{\frac{1}{2}}+\left(\int\frac{\M|\Li^{m-1}\TMod|^2}{1+y^4}\right)^{\frac{1}{2}}\lesssim b_1\left(\frac{\sqrt{\E_{2m}}}{M^{2\delta}}+b_1^{\eta(1-\delta)}\sqrt{\E_{2m}}+b_1^{L+1+(1-\delta)(1+\eta)}\right).
\end{equation}
As before we only deal with the first term and the second term can be estimated similarly. Let us write
\begin{align}
    \TMod=&-\left(\frac{\lam_s}{\lam}+b_1\right)\Lambda\tilde{Q}_b+\sum_{i=1}^{L-1}[(b_i)_s+(2i-\gamma)b_1b_i-b_{i+1}]\tT_i\\
        &+\sum_{i=1}^{L}[(b_i)_s+(2i-\gamma)b_1b_i-b_{i+1}]\chi_{B_1}\sum_{j=i+1}^{L+2}\frac{\partial S_j}{\partial b_i}\\
        &+\left[(b_L)_s+(2L-\gamma)b_1b_L+\frac{d}{ds}\left\{\frac{\langle\M\Li^L\eps,\chi_{B_0}\Lambda Q\rangle}{\langle\M\Lambda Q,\chi_{B_0}\Lambda Q\rangle}\right\}\right]\tT_L+\frac{\langle\M\Li^L\eps,\chi_{B_0}\Lambda Q\rangle}{\langle\M\Lambda Q,\chi_{B_0}\Lambda Q\rangle}\partial_s\tT_L\label{equation:write-tmod}
\end{align}
by recalling \eqref{equation:Mod-term-1}. Since we have the asymptotic behavior in Lemma~\ref{lemma:construction-Tk}, part (ii) of Proposition~\ref{proposition:approximate-profile} and $|b_j|\lesssim b_1^j,\Li(\Lambda Q)=0$, we estimate
\begin{align}
    &\int\M|\Li^m\Lambda\tilde{Q}_b|^2\\
        &\lesssim\sum_{i=1}^Lb_i^2\int\M|\Li^m\Lambda \tT_i|^2+\sum_{i=2}^{L+2}\int\M|\Li^m\Lambda\tS_i|^2\\
        &\lesssim \sum_{i=1}^Lb_1^{2i}\int_{y\leq2B_1}\frac{(1+y^{d-1+4})dy}{1+y^{4(m-i+1)+2\gamma}}+\sum_{i=2}^{L+1}\int_{y\leq2B_1}\frac{(1+y^{d-1+4})dy}{1+y^{4(m-i+2)+2\gamma}}+b_1^{2L+4}\int_{y\leq2B_1}\frac{(1+y^{d-1+4})dy}{1+y^{4h+2\gamma+4}}\\
        &\lesssim b_1^2
\end{align}
where we used the algebra $4(m-L+1)+2\gamma-d+1-4=5-4\delta>1$. Again we note that $\Li^mT_i=0$ for $1\leq i\leq L$ and the asymptotic behavior in Lemma~\ref{lemma:construction-Tk}, we obtain
\begin{equation}
    \sum_{i=1}^{L-1}\int\M|\Li^m\tT_i|^2\lesssim\sum_{i=1}^{L-1}\int_{B_1\leq y\leq2B_1}y^{4(i-m-1)-2\gamma+d-1
    +4}dy\lesssim b_1^{(2+2(1-\delta))(1+\eta)}\lesssim b_1^2,
\end{equation}
and
\begin{equation}\label{estimate:T_L-energy}
    \int\M|\Li^m\tT_L|^2\lesssim\int_{B_1\leq y\leq2B_1}y^{4(L-m-1)-2\gamma+d-1+4}dy\lesssim b_1^{2(1-\delta)(1+\eta)}.
\end{equation}
Then the homogeneity of $S_j$ shows us
\begin{equation}
    \sum_{j=i+1}^{L+2}\int\M\left|\Li^m\left(\chi_{B_1}\frac{\partial S_j}{\partial b_i}\right)\right|^2\lesssim\sum_{j=i+1}^{L+2}b_1^{2(j-i)}\int_{B_1\leq y\leq2B_1}y^{4(j-2-m)-2\gamma+d-1+4}dy\lesssim b_1^2
\end{equation}
provided that $\eta<\frac{1}{\delta}-1$. We recall from \eqref{estimate:M-LambdaQ-LambdaQ} and \eqref{estimate:M-Lieps-LambdaQ} to get
\begin{equation}\label{estimate:xi_L-coefficient-energy}
    \left|\frac{\langle\M\Li^L\eps,\chi_{B_0}\Lambda Q\rangle}{\langle\M\Lambda Q,\chi_{B_0}\Lambda Q\rangle}\right|\lesssim B_0^{2(1-\delta)}\sqrt{\E_{2m}}=b_1^{-(1-\delta)}\sqrt{\E_{2m}}.
\end{equation}
We also have the following
\begin{equation}
    \int\M|\Li^m(\partial_s\tT_L)|^2\lesssim b_1^2\int_{B_1\leq y\leq2 B_1}\frac{y^{d-1+4}}{y^{4(m-L+1)+2\gamma}}\lesssim b_1^2b_1^{2(1-\delta)(1+\eta)}.
\end{equation}
Now we collect all the above bounds together with the refined estimate in Lemma~\ref{lemma:modulation-bound-improve} to conclude as
\begin{align}
    \left(\int\M|\Li^m\TMod|^2\right)^{\frac{1}{2}}&\lesssim b_1\left(\frac{\sqrt{\E_{2m}}}{M^{2\delta}}+b_1^{L+1+(1-\delta)(1+\eta)}\right)\\
        &\quad+b_1^{(1-\delta)(1+\eta)}b_1^{\delta}\left(C(M)\sqrt{\E_{2m}}+b_1^{L+1+(1-\delta)(1+\eta)}\right)\\
        &\quad+b_1^{-(1-\delta)}\sqrt{\E_{2m}}b_1b_1^{(1-\delta)(1+\eta)}\\
        &\lesssim b_1\left(\frac{\sqrt{\E_{2m}}}{M^{2\delta}}+b_1^{\eta(1-\delta)}\sqrt{\E_{2m}}+b_1^{L+1+(1-\delta)(1+\eta)}\right).
\end{align}

\step{7} Small linear term $\Hi(\eps)$. We claim
\begin{equation}\label{estimate:claim-bound-small-linear}
    \int\M|\A\Li^{m-1}\Hi(\eps)|^2+\int\frac{\M|\Li^{m-1}\Hi(\eps)|^2}{1+y^4}\lesssim b_1^2\E_{2m}.
\end{equation}
We only deal with the first part. Let us split the small linear term into two parts:
\begin{equation}
    \begin{gathered}
        \Hi_1(\eps):=-(y\partial_y\tilde{\Theta}_b+2d\tilde{\Theta}_b)\eps,\quad\Hi_2(\eps):=-\tilde{\Theta}_by\partial_y\eps
    \end{gathered}
\end{equation}
where we use
\begin{equation}
    \tilde{\Theta}_b=\sum_{i=1}^Lb_i\tT_i+\sum_{i=2}^{L+2}\tS_i(b,y).
\end{equation}
Next we observe from \eqref{defeq:linear-operator-in-V} to obtain
\begin{equation}
    y\partial_y\eps=-y\A\eps+y^2V^A\eps.
\end{equation}
Using the admissibility of $T_i$ in Lemma~\ref{lemma:construction-Tk} and the homogeneity of $S_i$ in Proposition~\ref{proposition:approximate-profile}, we apply the Leibniz rule in Lemma~\ref{lemma:lebniz-rule} to write
\begin{equation}
    \begin{gathered}
        \A\Li^{m-1}\Hi_1(\eps)=\sum_{i=0}^{m-1}[\eps_{2i+1}U_{1,2i+1}+\eps_{2i}U_{1,2i}],\\
        \A\Li^{m-1}\Hi_2(\eps)=\sum_{i=1}^{m}[\eps_{2i}U_{2,2i}+\eps_{2i-1}U_{2,2i-1}]+\eps_0U_{2,0}
    \end{gathered}
\end{equation}
with the decay estimate
\begin{equation}
    |U_{1,k}|+|U_{2,k}|\lesssim \frac{b_1}{1+y^{m-k-1+\gamma}},\quad 0\leq k\leq m.
\end{equation}
Hence we estimate with the coercivity in Lemma~\ref{lemma:coercive-bound-eps} to observe
\begin{equation}
    \int\M|\eps_{k}|^2|U_{i,k}|^2\lesssim\int\M\frac{b_1^2|\eps_{k}|^2}{1+y^{2(m-k-1+\gamma)}}\lesssim b_1^2\E_{2m},\quad i=1,2,\quad0\leq k\leq m.
\end{equation}
and hence reach the desired result.

\step{8} Nonlinear term $\mathcal{N}(\eps)$. We first decompose the nonlinear error as
\begin{equation}\label{defeq:split-nonlinear}
    \begin{gathered}
        \Ni(\eps)=\Ni_1(\eps)+d\Ni_2(\eps),\\
        \Ni_1(\eps)=y\eps\partial_y\eps,\quad \Ni_2(\eps)=\eps^2.
    \end{gathered}
\end{equation}
We now claim the following
\begin{equation}\label{estimate:claim-bound-nonlinear}
    \int\M|\A\Li^{m-1}\Ni(\eps)|^2+\int\frac{\M|\Li^{m-1}\Ni(\eps)|^2}{1+y^4}\lesssim  C(K)b_1^{2L+1+2(1-\delta)(1+\eta)+C\eta}\lesssim b_1^{2L+1+2(1-\delta)(1+\eta)}
\end{equation}
given the smallness of $b_1$ chosen with respect to $K$. We deal with the first part and the second part can follow the exact same path. It suffices to prove
\begin{equation}\label{estimate:claim-nonlinear}
    \int\M|\A\Li^{m-1}\Ni_1(\eps)|^2+\int\M|\A\Li^{m-1}\Ni_2(\eps)|^2\lesssim b_1^{2L+1+2(1-\delta)(1+\eta)}.
\end{equation}
We note that the first nonlinear term $\Ni_1$ contains one derivative, and therefore requires a more careful treatment in the estimates below. To begin we have the following decomposition
\begin{equation}\label{defeq:decompose-N1}
\begin{gathered}
    \A\Li^{m-1}\Ni_1(\eps)=y\eps\Li^m\eps+\Ni_1^m(\eps),\\
    \Ni_1^m(\eps):=\A\Li^{m-1}(y\eps\partial_y\eps)-y\eps \Li^m\eps.
\end{gathered}
\end{equation}
\indent\textit{I. Estimate for $y<1$}: Recall the decomposition for $\Ni_1$ in \eqref{defeq:decompose-N1} and the coercivity we can get
\begin{equation}
    \int_{y<1}\M |y\eps|^2|\Li^m\eps|^2\lesssim \E_{2m}^2.
\end{equation}
To treat $\Ni_1^m$, we note in Lemma~\ref{lemma:inter-bound-origin} we have the asymptotic behavior near $y=0$ for $\eps$, and this gives
\begin{equation}
    \begin{gathered}
        \mathcal{N}_1=\left(\sum_{i=0}^{m-1} c_iT_i(y)+r_{\eps}(y)\right)\left(\sum_{i=0}^{m-1} c_iy\partial_yT_i(y)+y\partial_yr_{\eps}(y)\right)\\
    =\sum_{i=0}^{m-1}\tilde{c}_iy^{2i}+\tilde{r}_{\eps}+\eps y\partial_y r_{\eps},
    \end{gathered}
\end{equation}
with
\begin{equation}\label{estimate:pointw-ci-treps}
    |\tilde{c}_i|\lesssim\E_{2m},\quad |\partial_y^j\tilde{r}_{\eps}(y)|\lesssim|\ln(y)|^m y^{2m-1-\frac{d}{2}-j}\E_{2m},\quad  0\leq j\leq2m-1.
\end{equation}
Therefore we have
\begin{equation}
    \Ni_1^m=\A\Li^{m-1}\left(\sum_{i=0}^{m-1}\tilde{c}_iy^{2i}+\tilde{r}_{\eps}\right)+\left(\A\Li^{m-1}(\eps y\partial_y r_{\eps})-\eps y\Li^mr_{\eps}\right)=\Ni_{1,1}^m+\Ni_{1,2}^m.
\end{equation}
Using the structure of linear operator, together with the pointwise estimate of $\eps,r_{\eps}$ and  $\tilde{r}_{\eps}$ in Lemma~\ref{lemma:inter-bound-origin} and \eqref{estimate:pointw-ci-treps}, we obtain
\begin{equation}
\begin{gathered}
    |\Ni_{1,1}^m|\lesssim\sum_{i=0}^{m-1}\tilde{c}_i+\sum_{i=0}^{2m-1}\frac{|\partial_y^i\tilde{r}_{\eps}|}{y^{2m-1-i}}\lesssim\E_{2m}+\E_{2m}|\ln(y)|^my^{-\frac{d}{2}}\lesssim\E_{2m}|\ln(y)|^my^{-\frac{d}{2}},\\
    |\Ni_{1,2}^m|\lesssim\sum_{i+j\leq2m-1}\frac{|\partial_y^i\eps||\partial_y^j r_{\eps}|}{y^{2m-1-i-j}}\lesssim\sum_{i+j\leq2m-1}\frac{\max\{1,y^{2m-1-\frac{d}{2}-i}\}y^{2m-1-\frac{d}{2}-j}}{y^{2m-1-i-j}}\E_{2m}\lesssim\E_{2m}|\ln(y)|^my^{-\frac{d}{2}}
\end{gathered}
\end{equation} 
Here we use the fact that $\A(1)=O(y)$ for $y<1$ by recalling asymptotic behavior \eqref{property:decay-V}. Therefore we can conclude that
\begin{equation}
    \int_{y<1}\M|\Ni_1^m|^2\lesssim\int_{y<1}|\ln(y)|^{2m}y^2y^{-d}y^{d-1}dy\E_{2m}^2\lesssim\E_{2m}^2
\end{equation}
which shows
\begin{equation}
    \int_{y<1}\M|\A\Li^{m-1}\Ni_1(\eps)|^2\lesssim\E_{2m}^2.
\end{equation}
The same estimate holds for $\Ni_2$ if we write
\begin{equation}       \Ni_2=\left(\sum_{i=0}^{m-1} c_iT_i(y)+r_{\eps}(y)\right)\left(\sum_{i=0}^{m-1} c_iT_i(y)+r_{\eps}(y)\right)
    =\sum_{i=0}^{m-1}\hat{c}_iy^{2i}+\hat{r}_{\eps}
\end{equation}
with the same bounds
\begin{equation}
    |\hat{c}_i|\lesssim\E_{2m},\quad |\partial_y^j\hat{r}_{\eps}(y)|\lesssim y^{2m-1-\frac{d}{2}-j}|\ln(y)|^m\E_{2m},\quad  0\leq j\leq2m-1,
\end{equation}
and then follow the steps for $\Ni_1$. We close the case for $y<1$ by applying the bootstrap assumption to get
\begin{equation}
    \E_{2m}^2\lesssim C(K)b_1^{4L+4(1-\delta)(1+\eta)}\lesssim b_1^{2L+2(1-\delta)(1+\eta)+1}.
\end{equation}
\indent\textit{II. Estimate for $y>1$}: We are able to write the following estimate by using the structure of linear operators in \eqref{defeq:linear-operator-in-V} again:
\begin{equation}\label{estimate:nonlinear-yge1-firststep}
    \int_{y>1}\M|\A\Li^{m-1}\mathcal{N}(\eps)|^2\lesssim\int_{y>1}\M|y\eps|^2|\Li^m\eps|^2+\sum_{i+j\leq 2m-1}\int_{y>1}\frac{\M|\partial_y^i\eps|^2|\partial_y^j\eps|^2}{y^{4m-2-2i-2j}}.
\end{equation}
For the first term on the right-hand side of \eqref{estimate:nonlinear-yge1-firststep}, we use the pointwise bound in Lemma~\ref{lemma:inter-bound-far} to obtain the following for $y>1$
\begin{equation}\label{estimate:pointwise-yeps}
    |y\eps|^2=\frac{y^{d+2}\eps^2}{y^d}\lesssim\left\{
    \begin{aligned}
         & \E_{2h+2}^{\frac{1}{4}}\E_{2h+4}^{\frac{3}{4}}, && d=4h+5,\\
       & \E_{2h+4}, && d=4h+6,\\
        & \E_{2h+4}^
        {\frac{3}{4}}\E_{2h+6}^{\frac{1}{4}}, && d=4h+7,\\
         &\E_{2h+4}^
        {\frac{1}{2}}\E_{2h+6}^{\frac{1}{2}}, && d=4h+8,\\
        & \E_{2h+4}^{\frac{1}{4}}\E_{2h+6}^{\frac{3}{4}}, && d=4h+9.
    \end{aligned}
    \right.
\end{equation}
We note that \(d = 4h + 4\delta + 2\gamma\). In the case \(d = 11\), we have \(h = 1\) and \(d = 4h + 7\). For \(d \geq 12\), we have \(2 < \gamma < 3\) and \(\delta < 1\), which implies \(d \leq 4h + 9\). Hence, the cases categorized in \eqref{estimate:pointwise-yeps} cover all values of \(d\). When $l\geq3$, we apply the bootstrap assumption to show that in all cases in \eqref{estimate:pointwise-yeps} we have
\begin{equation}
    |y\eps|^2\lesssim C(K)b_1^{\frac{l}{2l-\gamma}(d+2-d)}=C(K)b_1^{\frac{2l}{2l-\gamma}}.
\end{equation}
This shows
\begin{equation}\label{estimate:nonlinear-y-rps-Li-eps}
    \int_{y>1}\M|y\eps|^2|\Li^m\eps|^2\lesssim  C(K)b_1^{\frac{2l}{2l-\gamma}}\E_{2m}\lesssim C(K)b_1^{\frac{2l}{2l-\gamma}}b_1^{2L+2(1-\delta)(1+\eta)}\lesssim b_1^{2L+2(1-\delta)(1+\eta)+1}.
\end{equation}
For $l=2$, put $S=d-4h=4\delta+2\gamma$, $a=2/(4-\gamma)$,
$q=6-2\delta$ and $P=2L+2(1-\delta)(1+\eta)$.
The low-energy range stops at $k=h+2$:
\begin{equation}\label{estimate:critical-pointwise-1}
    \mathcal{E}_{2h+4}\lesssim_K b_1^{a(8-S)},
\qquad
\mathcal{E}_{2h+6}\lesssim b_1^{q-K\eta}.
\end{equation}
Substitution in the unchanged \eqref{estimate:pointwise-yeps} gives
$\|y\varepsilon\|_{L^\infty(y>1)}^2\lesssim_K b_1^{e_S-CK\eta}$,
where
\begin{equation}\label{estimate:critical-pointwise-2}
    \begin{array}{c|cccc}
S   & 5,6 & 7 & 8 & 9 \\ \hline
e_S & 2a
    & \frac{3}{4}a+\frac{1}{4}q
    & \frac{1}{2}q
    & -\frac{1}{4}a+\frac{3}{4}q
\end{array}.
\end{equation}
Note all $e_S>1$, thus the final bound in \eqref{estimate:nonlinear-y-rps-Li-eps} holds.

For the second term in \eqref{estimate:nonlinear-yge1-firststep}, we can assume $i\geq j$ and split it into three cases.

\case
{I} $l\geq 3$ and $4\delta+2\gamma=5,7,9$. Thus $4\delta+2\gamma=2\mu+1$, $\mu\in\N$. We use again the asymptotic behavior of $\M$ to derive the following estimate for $i+j\leq2m-1$
\begin{align}
    \int_{y>1}\frac{\M|\partial_y^i\eps|^2|\partial_y^j\eps|^2}{y^{4m-2-2i-2j}}&\lesssim\int_{y>1}\frac{y^4|\partial_y^i\eps|^2|\partial_y^j\eps|^2y^{d-1}}{y^{4m-2-2i-2j}}dy\lesssim \int_{y>1}\frac{\left(y^{d+2}|\partial_y^i\eps|^2\right)\left(y^{d+2}|\partial_y^j\eps|^2\right)}{y^{4m-2i-2j+4h+4\delta+2\gamma-1}}dy\\
        &\lesssim\left\|\left(\frac{y^{d+2}|\partial_y^i\eps|^2}{y^{2I_1-2i}}\right)\left(\frac{y^{d+2}|\partial_y^j\eps|^2}{y^{2J_1-2j}}\right)\right\|_{L^{\infty}(y\geq1)}\int_{1<y<b_1^{-\alpha}}y^{9-4\delta-2\gamma}dy\\
        &\quad+\left\|\left(\frac{y^{d+2}|\partial_y^i\eps|^2}{y^{2I_2-2i}}\right)\left(\frac{y^{d+2}|\partial_y^j\eps|^2}{y^{2J_2-2j}}\right)\right\|_{L^{\infty}(y\geq1)}\int_{y>b_1^{-\alpha}}\frac{1}{y^2}dy\label{estimate:spatial-decomposition}\\
        &\lesssim B_{i,j,I_1,J_1}b_1^{\alpha(4\delta+2\gamma-10)}+B_{i,j,I_2,J_2}b_1^{\alpha}\label{estimate:nonlinear-pointwise-yge1}
\end{align}
where we use the following
\begin{equation}
    \begin{gathered}
        I_1+J_1=2m+2h+4,\quad I_1\geq i,\quad J_1\geq j,\\
        I_2+J_2=2m+2h+\mu-1,\quad I_2\geq i,\quad J_2\geq j,
    \end{gathered}
\end{equation}
and the truncation value $b_1^{-\alpha}$ will be chosen later. 

We first estimate $B_{i,j,I_1,J_1}$.

\noindent $\bullet$ If $j\leq2h+5$, then we choose
\begin{equation}
    I_1=2m-1,\quad J_1=2h+5.
\end{equation}
Now we apply the pointwise bound in Lemma~\ref{lemma:inter-bound-far} and the bootstrap assumptions to obtain
\begin{align}
    B_{i,j,I_1,J_1}&\lesssim \E_{2m}\E_{2h+6}\lesssim C(K)b_1^{2L+2(1-\delta)(1+\eta)+\frac{l}{2l-\gamma}(4h+12-d)}\\
    &=C(K)b_1^{2L+2(1-\delta)(1+\eta)+1+\frac{1}{2}(10-4\delta-2\gamma)+\frac{\gamma}{2(2l-\gamma)}(12-4\delta-2\gamma)}\\
    &\lesssim C(K)b_1^{2L+2(1-\delta)(1+\eta)+1+\frac{1}{2}(10-4\delta-2\gamma)+8C\eta}\label{estimate:I1J1-1}
\end{align}
if we choose sufficiently small $\eta$.

\noindent $\bullet$ If $j\geq2h+6$, then we choose
\begin{equation}
    I_1=2m+2h+4-J_1,\quad J_1=j.
\end{equation}
This leads to
\begin{equation}
\label{estimate:I1J1-mid}
    B_{i,j,I_1,J_1}\lesssim\left\{
    \begin{aligned}
         & \E_{2m+2h+5-J_1}\E_{J_1+1}, && \text{if $J_1$ is  odd}\\
       & \sqrt{\E_{2m+2h+4-J_1}\E_{2m+2h+6-J_1}}\sqrt{\E_{J_1}\E_{J_1+2}}, && \text{if $J_1$ is  even}.
    \end{aligned}
    \right. 
\end{equation}
We now apply the bootstrap assumptions to \eqref{estimate:I1J1-mid}. If $J_1\leq 2h+2l-1$, then
\begin{align}
    B_{i,j,I_1,J_1}&\lesssim C(K)b_1^{(2m+2h+5-J_1-2h-2)+2(1-\delta)-C\eta+\frac{l}{2l-\gamma}(2J_1+2-d)}\\
        &=C(K)b_1^{2L+2(1-\delta)(1+\eta)+1+\frac{1}{2}(10-4\delta-2\gamma)+\frac{\gamma}{2(2l-\gamma)}(2J_1+2-d)-C\eta}\\
        &\lesssim C(K)b_1^{2L+2(1-\delta)(1+\eta)+1+\frac{1}{2}(10-4\delta-2\gamma)+8C\eta}\label{estimate:I1J1-2}
\end{align}
where in the last step we used $J_1\geq 2h+6$, which gives
\begin{equation}
    \frac{\gamma}{2(2l-\gamma)}(2J_1+2-d)\geq10C\eta.
\end{equation}
On the other hand, if $J_1\geq 2h+2l+1$, then
\begin{align}
    B_{i,j,I_1,J_1}&\lesssim C(K)b_1^{(2m+2h+5-J_1-2h-2)+2(1-\delta)-C\eta+(J_1+1-2h-2)+2(1-\delta)-C\eta}\\
        &=C(K)b_1^{2L+2(1-\delta)(1+\eta)+1+\frac{1}{2}(10-4\delta-2\gamma)+\gamma-2C\eta}\\
        &\lesssim C(K)b_1^{2L+2(1-\delta)(1+\eta)+1+\frac{1}{2}(10-4\delta-2\gamma)+8C\eta}.\label{estimate:I1J1-3}
\end{align}
Finally if $J_1=2h+2l$, we already have the estimate for $\E_{2m+6-2l}\E_{2h+2l}$ and $\E_{2m+4-2l}\E_{2h+2l+2}$ in the above two cases, so we can conclude
\begin{equation}\label{estimate:I1J1-4}
    B_{i,j,I_1,J_1}\lesssim C(K)b_1^{2L+2(1-\delta)(1+\eta)+1+\frac{1}{2}(10-4\delta-2\gamma)+8C\eta}.
\end{equation}

We then estimate $B_{i,j,I_2,J_2}$ following the same steps.

\noindent $\bullet$ If $j\leq2h+\mu$, we choose
\begin{equation}
    I_2=2m-1,\quad J_2=2h+\mu.
\end{equation}
Then it follows
\begin{equation}
    B_{i,j,I_2,J_2}\lesssim\left\{
    \begin{aligned}
         & \E_{2m}\E_{2h+\mu+1}, && \text{if $\mu$ is  odd}\\
       & \E_{2m}\sqrt{\E_{2h+\mu}\E_{2h+\mu+2}}, && \text{if $\mu$ is  even}.
    \end{aligned}
    \right. 
\end{equation}
Therefore
\begin{equation}\label{estimate:I2J2-1}
    \begin{aligned}
        B_{i,j,I_2,J_2}\lesssim C(K)b_1^{2L+2(1-\delta)(1+\eta)+\frac{l}{2l-\gamma}(4h+2\mu+2-d)}&=C(K)b_1^{2L+2(1-\delta)(1+\eta)+1-\frac{1}{2}+\frac{\gamma}{2(2l-\gamma)}}\\
        &\lesssim C(K)b_1^{2L+2(1-\delta)(1+\eta)+1-\frac{1}{2}+C\eta}.
    \end{aligned}
\end{equation}

\noindent $\bullet$ If $j\geq2h+\mu+1$, we choose
\begin{equation}
    I_2=2m+2h+\mu-1-J_2,\quad J_2=j.
\end{equation}
This leads to
\begin{equation}
    B_{i,j,I_2,J_2}\lesssim\left\{
    \begin{aligned}
         & \E_{I_2+1}\E_{J_2+1}, && \text{if $I_2$ odd, $J_2$ odd},\\
       & \E_{I_2+1}\sqrt{\E_{J_2}\E_{J_2+2}}, && \text{if $I_2$ odd, $J_2$ even},\\
       & \sqrt{\E_{I_2}\E_{I_2+2}}\E_{J_2+1}, && \text{if $I_2$ even, $J_2$ odd},\\
       &\sqrt{\E_{I_2}\E_{I_2+2}}\sqrt{\E_{J_2}\E_{J_2+2}},&&\text{if $I_2$ even, $J_2$ even}.
    \end{aligned}
    \right. 
\end{equation}
Thus in the case $J_2\leq2h+2l-1$ we obtain
\begin{align}
    B_{i,j,I_2,J_2}&\lesssim C(K)b_1^{(I_2+1-2h-2)+2(1-\delta)-C\eta+\frac{l}{2l-\gamma}(2J_2+2-d)}\\
        &=C(K)b_1^{2L+2(1-\delta)(1+\eta)+1-\frac{1}{2}+\frac{\gamma}{2(2l-\gamma)}(2J_2+2-d)-C\eta}\\
        &\lesssim C(K)b_1^{2L+2(1-\delta)(1+\eta)+1-\frac{1}{2}+C\eta}.\label{estimate:I2J2-2}
\end{align}
Here we use
\begin{equation}
    2J_2+2-d\geq4h+2\mu+2+2-d=3>0.
\end{equation}
Then in the case $J_2\geq 2h+2l+1$ we have
\begin{equation}\label{estimate:I2J2-3}
    \begin{aligned}
        B_{i,j,I_2,J_2}&\lesssim C(K)b_1^{(I_2+1-2h-2)+2(1-\delta)-C\eta+(J_2+1-2h-2)+2(1-\delta)-C\eta}\\
        &=C(K)b_1^{2L+2(1-\delta)(1+\eta)+1-\frac{1}{2}+\gamma-2C\eta}\lesssim C(K)b_1^{2L+2(1-\delta)(1+\eta)+1-\frac{1}{2}+C\eta}.
    \end{aligned}
\end{equation}
And for the case $J_2=2h+2l$ the same bound holds.

Combining the estimates in all cases \eqref{estimate:I1J1-1}, \eqref{estimate:I1J1-2}, \eqref{estimate:I1J1-3}, \eqref{estimate:I1J1-4}, \eqref{estimate:I2J2-1}, \eqref{estimate:I2J2-2} and \eqref{estimate:I2J2-3} we can bound \eqref{estimate:nonlinear-pointwise-yge1} by choosing $\alpha=\frac{1}{2}$ as
\begin{equation}
    B_{i,j,I_1,J_1}b_1^{\frac{1}{2}(4\delta+2\gamma-10)}+B_{i,j,I_2,J_2}b_1^{\frac{1}{2}}\lesssim C(K)b_1^{2L+2(1-\delta)(1+\eta)+1+C\eta}\lesssim b_1^{2L+2(1-\delta)(1+\eta)+1}
\end{equation}
which is the desired result.

\case{II} $l\geq 3$ and $4\delta+2\gamma=6,8$. Thus $4\delta+2\gamma=2\mu$, $\mu\in\N$.  We proceed as  in case I to obtain
\begin{align}
    \int_{y>1}\frac{\M|\partial_y^i\eps|^2|\partial_y^j\eps|^2}{y^{4m-2-2i-2j}}&\lesssim \int_{y>1}\frac{\left(y^{d+2}|\partial_y^i\eps|^2\right)\left(y^{d+2}|\partial_y^j\eps|^2\right)}{y^{4m-2i-2j+4h+4\delta+2\gamma-1}}dy\\
        &\lesssim\left\|\left(\frac{y^{d+2}|\partial_y^i\eps|^2}{y^{2I_3-2i}}\right)\left(\frac{y^{d+2}|\partial_y^j\eps|^2}{y^{2J_3-2j}}\right)\right\|_{L^{\infty}(y\geq1)}\int_{1<y<b_1^{-\alpha}}y^{9-4\delta-2\gamma}dy\\
        &\quad+\left\|\left(\frac{y^{d+2}|\partial_y^i\eps|^2}{y^{2I_4-2i}}\right)\left(\frac{y^{d+2}|\partial_y^j\eps|^2}{y^{2J_4-2j}}\right)\right\|_{L^{\infty}(y\geq1)}\int_{y>b_1^{-\alpha}}\frac{1}{y^3}dy\\
        &\lesssim B_{i,j,I_3,J_3}b_1^{\alpha(4\delta+2\gamma-10)}+B_{i,j,I_4,J_4}b_1^{2\alpha}.\label{estimate:nonlinear-pointwise-yge1-caseII}
\end{align}
where
\begin{equation}
    \begin{gathered}
        I_3+J_3=2m+2h+4,\quad I_3\geq i,\quad J_3\geq j,\\
        I_4+J_4=2m+2h+\mu-2,\quad I_4\geq i,\quad J_4\geq j.
    \end{gathered}
\end{equation}

We choose $I_3=I_1,J_3=J_1$ and claim the following bound for $B_{i,j,I_3,J_3}$
\begin{equation}\label{estimate:I3J3}
    B_{i,j,I_3,J_3}\lesssim C(K)b_1^{2L+2(1-\delta)(1+\eta)+1+\frac{l}{2l-\gamma}(10-4\delta-2\gamma)+C\eta}.
\end{equation}
In fact, we can prove \eqref{estimate:I3J3} by revising the estimate in the last step of \eqref{estimate:I1J1-1}, \eqref{estimate:I1J1-2} and \eqref{estimate:I1J1-3}. In \eqref{estimate:I1J1-1} we use
\begin{equation}
    \frac{\gamma}{2(2l-\gamma)}(12-4\delta-2\gamma)\geq\frac{\gamma}{2(2l-\gamma)}(10-4\delta-2\gamma)+C\eta.
\end{equation}
And in \eqref{estimate:I1J1-2} we have
\begin{equation}
\begin{aligned}
    \frac{\gamma}{2(2l-\gamma)}(2J_1+2-d)-C\eta&\geq \frac{\gamma}{2(2l-\gamma)}(2(2h+6)+2-d)-C\eta\\&=\frac{\gamma}{2(2l-\gamma)}(14-4\delta-2\gamma)-C\eta
    \geq\frac{\gamma}{2(2l-\gamma)}(10-4\delta-2\gamma)+C\eta.
\end{aligned}
\end{equation}
Finally in \eqref{estimate:I1J1-3} we have
\begin{equation}
    \begin{aligned}
        \gamma-2C\eta&=\frac{\gamma}{2(2l-\gamma)}(10-4\delta-2\gamma)+\frac{\gamma}{2(2l-\gamma)}(4l+4\delta-10)-2C\eta\\
        &\geq\frac{\gamma}{2(2l-\gamma)}(10-4\delta-2\gamma)+C\eta,
    \end{aligned}
\end{equation}
whree we use $l\geq3$ to give the estimate.

We now estimate $B_{i,j,I_4,J_4}$.

\noindent $\bullet$ If $j\leq2h+\mu-1$, we choose
\begin{equation}
    I_4=2m-1,\quad J_4=2h+\mu-1.
\end{equation}
Then 
\begin{equation}
    B_{i,j,I_4,J_4}\lesssim\left\{
    \begin{aligned}
         & \E_{2m}\E_{2h+\mu}, && \text{if $\mu$ is  even}\\
       & \E_{2m}\sqrt{\E_{2h+\mu-1}\E_{2h+\mu+1}}, && \text{if $\mu$ is  odd}.
    \end{aligned}
    \right. 
\end{equation}
Thus
\begin{equation}\label{estimate:I4J4-1}
    \begin{aligned}
        B_{i,j,I_4,J_4}\lesssim C(K)b_1^{2L+2(1-\delta)(1+\eta)+\frac{l}{2l-\gamma}(4h+2\mu-d)}&=C(K)b_1^{2L+2(1-\delta)(1+\eta)}.
    \end{aligned}
\end{equation}

\noindent $\bullet$ If $j\geq2h+\mu$, we choose
\begin{equation}
    I_4=2m+2h+\mu-2-J_4,\quad J_4=j.
\end{equation}
This leads to
\begin{equation}
    B_{i,j,I_4,J_4}\lesssim\left\{
    \begin{aligned}
         & \E_{I_4+1}\E_{J_4+1}, && \text{if $I_4$ odd, $J_4$ odd},\\
       & \E_{I_4+1}\sqrt{\E_{J_4}\E_{J_4+2}}, && \text{if $I_4$ odd, $J_4$ even},\\
       & \sqrt{\E_{I_4}\E_{I_4+2}}\E_{J_4+1}, && \text{if $I_4$ even, $J_4$ odd},\\
       &\sqrt{\E_{I_4}\E_{I_4+2}}\sqrt{\E_{J_4}\E_{J_4+2}},&&\text{if $I_4$ even, $J_4$ even}.
    \end{aligned}
    \right. 
\end{equation}
Then we can estimate in the case $J_4\leq2h+2l-1$:
\begin{align}
    B_{i,j,I_4,J_4}&\lesssim C(K)b_1^{(I_4+1-2h-2)+2(1-\delta)-C\eta+\frac{l}{2l-\gamma}(2J_4+2-d)}\\
        &=C(K)b_1^{2L+2(1-\delta)(1+\eta)+\frac{\gamma}{2(2l-\gamma)}(2J_4+2-d)-C\eta}\\
        &\lesssim C(K)b_1^{2L+2(1-\delta)(1+\eta)}.\label{estimate:I4J4-2}
\end{align}
Here we use
\begin{equation}
    \frac{\gamma}{2(2l-\gamma)}(2J_4+2-d)-C\eta\geq \frac{\gamma}{2(2l-\gamma)}(4h+2\mu+2-d)-C\eta=\frac{\gamma}{2l-\gamma}-C\eta>0.
\end{equation}
In the case $J_4\geq2h+2l+1$ we obtain
\begin{equation}\label{estimate:I4J4-3}
    \begin{aligned}
        B_{i,j,I_4,J_4}&\lesssim C(K)b_1^{(I_4+1-2h-2)+2(1-\delta)-C\eta+(J_4+1-2h-2)+2(1-\delta)-C\eta}\\
        &=C(K)b_1^{2L+2(1-\delta)(1+\eta)+\gamma-2C\eta}\lesssim C(K)b_1^{2L+2(1-\delta)(1+\eta)}.
    \end{aligned}
\end{equation}
And for the case $J_4=2h+2l$ the same estimate holds.

Now we combine \eqref{estimate:I3J3}, \eqref{estimate:I4J4-1}, \eqref{estimate:I4J4-2} and \eqref{estimate:I4J4-3} together and choose $\alpha=\frac{l}{2l-\gamma}$ to obtain
\begin{equation}
    \begin{aligned}
        B_{i,j,I_3,J_3}b_1^{\alpha(4\delta+2\gamma-10)}+B_{i,j,I_4,J_4}b_1^{2\alpha}&\lesssim C(K)b_1^{2L+2(1-\delta)(1+\eta)+1+C\eta}+C(K)b_1^{2L+2(1-\delta)(1+\eta)+\frac{2l}{2l-\gamma}}\\
        &\lesssim b_1^{2L+2(1-\delta)(1+\eta)+1}.
    \end{aligned}
\end{equation}

\case{III} $l=2$. As shown in \eqref{estimate:critical-pointwise-1} and \eqref{estimate:critical-pointwise-2}, for the $l=2$ case we need further computations given $\E_{2h+6}$ is no longer available for low-energy bound. 
Use again
\[
S=d-4h=4\delta+2\gamma,
\qquad
a=\frac{2}{4-\gamma},
\qquad
q=6-2\delta,
\qquad
P=2L+2(1-\delta)(1+\eta),
\]
and we take
\[
\alpha=\frac12
\quad\text{for } S=5,7,9,
\qquad
\alpha=\frac12+\kappa,
\quad 0<\kappa<\frac14,
\quad\text{for } S=6,8.
\]
Lemma~\ref{lemma:inter-bound-far} bounds each pointwise factor by \(\mathcal E_{J+1}\) when
\(J\) is odd, and by
\[
(\mathcal E_J\mathcal E_{J+2})^{1/2}
\]
when \(J\) is even. At the transition \(J=2h+4\), we recall the estimate in \eqref{estimate:critical-pointwise-1} and \eqref{estimate:critical-pointwise-2} to get
\[
(\mathcal E_{2h+4}\mathcal E_{2h+6})^{1/2}
\lesssim C(K)
b_1^{[a(8-S)+q]/2-CK\eta}.
\]
For \(L\) sufficiently large, both the fixed and variable near-field
allocations give
\[
B_{i,j,I,J}
=
B_{i,j,J_1,J_2}
\lesssim C(K)
b_1^{P+q-CK\eta}.
\]
Multiplying by the near-field weight \(b_1^{\alpha(S-10)}\), and
performing the analogous calculation in the far field with
\(b_1^{1/2}\) for odd \(S\) and \(b_1^{1+2\kappa}\) for even \(S\),
gives the following lower bounds for the powers beyond \(P-CK\eta\):
\[
\begin{array}{c|cccc}
& S=5,7 & S=9 & S=6 & S=8 \\ \hline
\text{near field}
& 1+\gamma
& 1+\gamma
& 1+\gamma-4\kappa
& 1+\gamma-2\kappa \\[1mm]
\text{far field, fixed}
& a+\frac12
& \frac{q-a+1}{2}
& 1+2\kappa
& 1+2\kappa \\[2mm]
\text{far field, variable}
& a+\frac12
& 1+\gamma
& 2a+2\kappa
& \frac q2+2\kappa
\end{array}
\]

The variable cases use \(J_2=j\) or \(J_4=j\), as above, including
the admissible indices \(2h+3\) and \(2h+4\). In particular, for
\(S=6\), the transition gain satisfies
\[
a+\frac q2-1+2\kappa
\ge 2a+2\kappa,
\]
while for \(S=8\) it is
\[
\frac q2+2\kappa.
\]
Thus every gain displayed above is strictly larger than one. Thus we can choose $\eta$ small enough to control \eqref{estimate:nonlinear-yge1-firststep} by
\[
b_1^{P+1}.
\]
This concludes the estimate of the nonlinear
terms.

\step{9} Time oscillations. In this step, we treat all the terms with time derivatives. First we bound the left hand side of \eqref{equation:energy-identity-final}. It is straightforward to estimate using dissipation and coercivity:
\begin{align}
    \int\frac{b_1(\Lambda\J)_{\lam}}{\lam^2\J_{\lam}^2}w_{2m}w_{2m-2}+2\int\frac{b_1}{\lam^2\J_{\lam}}w_{2m-1}^2&=\frac{b_1}{\lam^2}\int\frac{(\Lambda\J)_{\lam}+2\J_{\lam}}{\J_{\lam}^2}w_{2m}w_{2m-2}\\
        &\lesssim\frac{b_1}{\lam^{4m+2-d}}\left(\int\M w_{2m}^2\right)^{\frac{1}{2}}\left(\frac{\M w_{2m-2}^2}{1+y^4}\right)^{\frac{1}{2}}\\
        &\lesssim\frac{b_1}{\lam^{4m+2-d}}\E_{2m}.
\end{align}
Then we need to estimate the time oscillations with $\xi_L$ in \eqref{equation:left-error-high-order} and \eqref{equation:left-error-low-order}. Let us write the terms in \eqref{equation:left-error-high-order} including $\xi_L$ as
\begin{align}
    \frac{1}{\lam^{4m+4-d}}(\J^{-1}\eps_{2m},\Li^m(\partial_s\xi_L))=&\frac{d}{ds}\left\{\frac{1}{\lam^{4m+4-d}}\left[\int\M\Li^m\eps\Li^m\xi_L-\frac{1}{2}\int\M|\Li^m\xi_L|^2\right]\right\}\\
    &+\frac{4m+4-d}{\lam^{4m+4-d}}\frac{\lam_s}{\lam}\left[\int\M\Li^m\eps\Li^m\xi_L-\frac{1}{2}\int\M|\Li^m\xi_L|^2\right]\\
    &-\frac{1}{\lam^{4m+4-d}}\int\M\Li^m(\partial_s\eps-\partial_s\xi_L)\Li^m\xi_L.\label{equation:rewrite-time-oscillation-1}
\end{align}
From \eqref{estimate:T_L-energy} and \eqref{estimate:xi_L-coefficient-energy} we have
\begin{equation}
    \int\M|\Li^m\xi_L|^2\leq b_1^{2\eta(1-\delta)}\E_{2m}.
\end{equation}
This implies
\begin{equation}
    \left|\int\M\Li^m\eps\Li^m\xi_L\right|\lesssim b_1^{\eta(1-\delta)}\E_{2m}.
\end{equation}
Since $dt/ds=\lam^2$, we then obtain the following estimate
\begin{equation}\label{estimate:time-oscillation-1}
\begin{aligned}
    \frac{d}{ds}\left\{\frac{1}{\lam^{4m+4-d}}\left[\int\M\Li^m\eps\Li^m\xi_L-\frac{1}{2}\int\M|\Li^m\xi_L|^2\right]\right\}=&\frac{d}{dt}\left(\frac{\E_{2m}}{\lam^{4m+2-d}}O(b_1^{\eta(1-\delta)})\right)\\
    -2&\frac{\lam_s}{\lam}\frac{1}{\lam^{4m+4-d}}\left[\int\M\Li^m\eps\Li^m\xi_L-\frac{1}{2}\int\M|\Li^m\xi_L|^2\right].
\end{aligned}
\end{equation}
We now note that $|\lam_s/\lam|\lesssim b_1$ to get
\begin{equation}\label{estimate:time-oscillation-2}
    \left|\frac{\lam_s}{\lam}\left[\int\M\Li^m\eps\Li^m\xi_L-\frac{1}{2}\int\M|\Li^m\xi_L|^2\right]\right|\lesssim b_1b_1^{\eta(1-\delta)}\E_{2m}.
\end{equation}
Using this estimate we can give the bound for the second term in the right hand side of \eqref{equation:left-error-high-order} and the last term in \eqref{estimate:time-oscillation-1}.
For the last term in \eqref{equation:rewrite-time-oscillation-1} we recall the decomposition \eqref{defeq:decompose_error} to get
\begin{equation}\label{equation:rewrite-time-estimate-2}
    \begin{aligned}
        &\int\M\Li^m(\partial_s\eps-\partial_s\xi_L)\Li^m\xi_L\\
        &=-\int\M\Li^m\eps\Li^{m+1}\xi_L-\frac{\lam_s}{\lam}\int\M\Lambda\eps\Li^{2m}\xi_L+\int\M\Li^m(-\tilde{\Psi}_b-\TMod-\Hi\eps+\Ni\eps)\Li^m\xi_L.
    \end{aligned}
\end{equation}
Using \eqref{estimate:xi_L-coefficient-energy} and the decay rate of $T_L$, we have the estimate
\begin{align}
    \int\M|\Li^{m+1}\xi_L|^2&\lesssim \left|\frac{\langle\M\Li^L\eps,\chi_{B_0}\Lambda Q\rangle}{\langle\M\Lambda Q,\chi_{B_0}\Lambda Q\rangle}\right|^2\int\M|\Li^{m+1}(1-\chi_{B_1})T_L|^2\\
        &\lesssim b_1^{-2(1-\delta)}\E_{2m}\int_{y\geq B_1}y^4y^{2(2(L-1)-\gamma-2(m+1))}y^{d-1}dy\\
        &\lesssim b_1^{-2(1-\delta)}\E_{2m}b_1^{(4-2\delta)(1+\eta)}\lesssim b_1^2b_1^{2\eta(1-\delta)}\E_{2m}
\end{align}
from which we obtain
\begin{equation}
    \left|\int\M\Li^m\eps\Li^{m+1}\xi_L\right|\lesssim b_1b_1^{\eta(1-\delta)}\E_{2m}.
\end{equation}
Similarly we have 
\begin{equation}
    \int(1+y^{4m})\M|\Li^{2m}\xi_L|^2\lesssim b_1^{-2(1-\delta)}\E_{2m}\int_{y\geq B_1}y^{4m+4}y^{2(2(L-1)-\gamma-4m)}y^{d-1}dy\lesssim b_1^{2\eta(1-\delta)}\E_{2m},
\end{equation}
hence from coercivity we obtain
\begin{equation}
\begin{aligned}
    \left|\frac{\lam_s}{\lam}\int\M\Lambda\eps\Li^{2m}\xi_L\right|\lesssim b_1\left(\int\M\frac{|\partial_y\eps|^2}{1+y^{4m-2}}+\int\M\frac{|\eps|^2}{1+y^{4m}}\right)^{\frac{1}{2}}\left(\int(1+y^{4m})\M|\Li^{2m}\xi_L|^2\right)^{\frac{1}{2}}\\
    \lesssim b_1b_1^{\eta(1-\delta)}\E_{2m}.
\end{aligned}
\end{equation}
Now we use \eqref{estimate:claim-bound-psi} and \eqref{estimate:claim-bound-mod} to have
\begin{equation}
    \begin{aligned}
        \left|\int\M\Li^m(\tilde{\Psi}_b+\TMod)\Li^m\xi_L\right|&\lesssim\left(\int\M|\Li^m\xi_L|^2\right)^{\frac{1}{2}}\left(\int\M|\Li^m(\tilde{\Psi}_b+\TMod)|^2\right)^{\frac{1}{2}}\\
        &\lesssim b_1b_1^{\eta(1-\delta)}\E_{2m}+b_1b_1^{L+(1-\delta)(1+\eta)}\sqrt{\E_{2m}}.
    \end{aligned}
\end{equation}
We also have
\[
    \int \mathcal{M}\mathcal{L}^{m-1}(\Hi(\eps)-\Ni(\eps))\,
    \mathcal{L}^{m+1}\xi_L
    =
    \int \mathcal{M}A\mathcal{L}^{m-1}(\Hi(\eps)-\Ni(\eps))\,
    A\mathcal{L}^{m}\xi_L.
\]
Since $\mathcal{L}^mT_L=0$, $A\mathcal{L}^m\widetilde{T}_L$ is supported on
$B_1\leq y\leq 2B_1$. Admissibility and \eqref{estimate:xi_L-coefficient-energy} yield
\[
    \int \mathcal{M}\left|A\mathcal{L}^{m}\widetilde{T}_L\right|^2
    \lesssim B_1^{4\delta-6},
    \qquad
    \int \mathcal{M}\left|A\mathcal{L}^{m}\xi_L\right|^2
    \lesssim
    b_1^{1+(3-2\delta)\eta}\mathcal{E}_{2m}.
\]
Thus \eqref{estimate:claim-bound-small-linear} and \eqref{estimate:claim-bound-nonlinear} imply
\begin{equation}
    \begin{aligned}
        \left|
        \int \mathcal{M}\mathcal{L}^{m-1}(\Hi(\eps)-\Ni(\eps))\,
        \mathcal{L}^{m+1}\xi_L
        \right|
        &\lesssim
        \left(
            b_1\sqrt{\mathcal{E}_{2m}}
            +b_1^{L+(1-\delta)(1+\eta)+\frac{1}{2}}
        \right)
        b_1^{\frac12+\frac{(3-2\delta)\eta}{2}}
        \sqrt{\mathcal{E}_{2m}}
        \\
        &\lesssim
        b_1^{1+\eta(1-\delta)}\mathcal{E}_{2m}
        +b_1^{L+(1-\delta)(1+\eta)+1}\sqrt{\mathcal{E}_{2m}}.
    \end{aligned}
\end{equation}
Collecting the above bounds yields
\begin{equation}\label{estimate:time-oscillation-3}
    \left|\int\M\Li^m(\partial_s\eps-\partial_s\xi_L)\Li^m\xi_L\right|\lesssim b_1b_1^{\eta(1-\delta)}\E_{2m}+b_1b_1^{L+(1-\delta)(1+\eta)}\sqrt{\E_{2m}}.
\end{equation}
Applying \eqref{estimate:time-oscillation-1}, \eqref{estimate:time-oscillation-2} and \eqref{estimate:time-oscillation-3} we conclude the estimate for \eqref{equation:rewrite-time-oscillation-1}.

Finally, for the term including $\xi_L$ in \eqref{equation:left-error-low-order} we can write
\begin{align}
    &\frac{b_1}{\lam^{4m+4-d}}\int\frac{\Lambda\J+2\J}{\J^2}(\eps_{2m}\Li^{m-1}\partial_s\xi_L+\eps_{2m-2}\Li^m\partial_s\xi_L)\\
        &=\frac{d}{ds}\left\{\frac{b_1}{\lam^{4m+4-d}}\left[\int\frac{\Lambda\J+2\J}{\J^2}(\Li^m\eps\Li^{m-1}\xi_L+\Li^{m-1}\eps\Li^{m}\xi_L-\Li^{m-1}\xi_L\Li^{m}\xi_L)\right]\right\}\\
        &\quad-\frac{d}{ds}\left(\frac{b_1}{\lam^{4m+4-d}}\right)\left[\int\frac{\Lambda\J+2\J}{\J^2}(\Li^m\eps\Li^{m-1}\xi_L+\Li^{m-1}\eps\Li^{m}\xi_L-\Li^{m-1}\xi_L\Li^{m}\xi_L)\right]\\
        &\quad-\frac{b_1}{\lam^{4m+4-d}}\int\frac{\Lambda\J+2\J}{\J^2}(\Li^m(\partial_s\eps-\partial_s\xi_L)\Li^{m-1}\xi_L+\Li^{m-1}(\partial_s\eps-\partial_s\xi_L)\Li^{m}\xi_L)\label{equation:rewrite-time-oscillation-2}
\end{align}
Following the same argument as in the estimate of \eqref{equation:rewrite-time-oscillation-1} above, we can give the exact same bound. Therefore we conclude the proof of Proposition \ref{proposition:energy-control}.
\end{proof}

\section{Proof of the main theorem}\label{sec:proof-theorem}
In this section, we complete the proof of Theorem~\ref{theorem:main-theorem}. We first close the bootstrap argument by means of a topological argument, showing that there exists a choice of initial data for which the corresponding solution remains in the bootstrap regime throughout its evolution. We begin with the following proposition, which reduces the closure of the bootstrap to a finite-dimensional problem for the unstable modes.
\begin{proposition}\label{proposition:reduce-finite-dimension}
    There exists a sufficiently large
constant  $K$ such that: Suppose the solution $(b(s),\eps(s))$ remains in $\Si_K(s)$ for all $s\in[s_0,s_1]$ with large enough $s_0$, namely, that the bootstrap assumptions in  Definition~\ref{definition:bootstrap-assumption} hold, and assume that $(b(s_1),\eps(s_1))\in\partial\Si_K(s_1)$. Then the following holds:
    \begin{enumerate}
        \item [(i)](Unstable modes exit)
        \begin{equation}\label{equation:unstable-mode-exist}
            (\V_2(s_1),\dots,\V_l(s_1))\in\partial\left[-\frac{100}{s_1^{\frac{\eta}{2}(1-\delta)}},\frac{100}{s_1^{\frac{\eta}{2}(1-\delta)}}\right]^{l-1}.
        \end{equation}
        \item [(ii)](Transverse crossing) For $2\leq k\leq l$, if at time $s^*$
        \begin{equation}
            |(s^*)^{\frac{\eta}{2}(1-\delta)}\V_k(s^*)|\geq1.
        \end{equation}
        Then we have
        \begin{equation}\label{equation:transverse-crossing}
            \left.\frac{d}{ds}\left(|s^{\frac{\eta}{2}(1-\delta)}\V_k(s)|^2\right)\right|_{s=s^*}>0.
        \end{equation}
    \end{enumerate}
\end{proposition}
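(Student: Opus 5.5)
The plan is to prove that at the exit time $s_1$ every bootstrap bound of Definition~\ref{definition:bootstrap-assumption} except the unstable-mode bounds is strictly improved. Then $(b(s_1),\eps(s_1))\in\partial\Si_K(s_1)$ can only occur through $\V_2,\dots,\V_l$, which gives (i). Part (ii) is a direct computation from the linearized system \eqref{equation:linearized-dynamic-system} and the diagonalization \eqref{defeq:diag-Al}.

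\textbf{Energies.} Integrate Proposition~\ref{proposition:energy-control} in time. Use $-\lambda_s/\lambda=b_1+O(b_1^{L+1+\dots})$ and $b_1=c_1/s+O(s^{-1-\frac\eta2(1-\delta)})$, so that $\lambda(s)\sim s^{-c_1}$ with $c_1=l/(2l-\gamma)$. Rewrite \eqref{equation:energy-control-1} in $s$ as
\[
\frac{d}{ds}\Big(\frac{\E_{2m}}{\lambda^{4m+2-d}}(1+o(1))\Big)\le \frac{b_1}{\lambda^{4m+2-d}}\Big[(2+CM^{-2\delta})\E_{2m}+C b_1^{L+(1-\delta)(1+\eta)}\sqrt{\E_{2m}}+Cb_1^{2L+2(1-\delta)(1+\eta)}\Big].
\]
Plug in the bootstrap bound $\E_{2m}\le K s^{-P}$ with $P=2L+2(1-\delta)(1+\eta)$ and integrate from $s_0$ to $s$. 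The weight $\lambda^{-(4m+2-d)}\sim s^{c_1(4m+2-d)}$ grows like a large power, because $L$ is large. After integration the $K$-term therefore gains the factor $\frac{2+CM^{-2\delta}}{c_1(4m+2-d)-P+1}$, which is $<1/2$ for $L$ large. The source terms give $C$ independent of $K$, and the initial term is negligible by Definition~\ref{definition:initial-data}(ii). Hence $\E_{2m}(s_1)\le (K/2+C)s_1^{-P}<Ks_1^{-P}$ once $K$ is large.

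The same integration of \eqref{equation:energy-control-2} handles the lower energies $l+h+1\le k\le m-1$. Here the slack $s^{K\eta}$ versus $s^{C\eta}$ gives strict improvement. For $h+1\le k\le l+h$, the growth term $2b_1\E_{2k}$ is comparable to the weight exponent. To get $Ks^{-c_1(4k-d)}$ I would instead use interpolation between $\E_{2h+2}$-type control and $\E_{2m}$, via the coercivity of Lemma~\ref{lemma:coercive-bound-eps}. This is the point I expect to be most delicate, since the exponent has no room.

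\textbf{Stable modes.} For $l+1\le k\le L$, use Lemma~\ref{lemma:modulation-bound-rough} for $k<L$ and Lemma~\ref{lemma:modulation-bound-improve} for $k=L$. In the latter, absorb the flux correction, which is bounded by $b_1^{-(1-\delta)}\sqrt{\E_{2m}}\ll s^{-L}$. Write $(b_k)_s+(2k-\gamma)b_1b_k=b_{k+1}+O(s^{-k-1-\kappa})$ and integrate with the factor $s^{(2k-\gamma)c_1}$. Since $(2k-\gamma)c_1>k$ for $k>l$, the map is a contraction downward from $k=L$, giving $|b_k|\le \frac12 s^{-k}$. For $\V_1$, the stable eigenvalue $-1$ in $D_l$ gives
\[
s(\V_1)_s=-\V_1+O(s^{-\frac\eta2(1-\delta)\cdot2})+O(s^{-\kappa}),
\]
whose integration yields $|\V_1|\le 50\,s^{-\frac\eta2(1-\delta)}$. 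The error sources are the $O(|\U|^2)$ terms, the coupling through $b_{l+1}$, and the modulation errors. So the only saturable bounds are $\V_k$ for $2\le k\le l$, which proves (i).

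\textbf{Transverse crossing (ii).} From \eqref{equation:linearized-dynamic-system} and the modulation bounds, $s(\V_k)_s=\frac{k\gamma}{2l-\gamma}\V_k+O(s^{-\eta(1-\delta)})+O(s^{-\kappa})$, where the second error has some $\kappa>\frac\eta2(1-\delta)$ for $\eta$ small. Then
\[
\frac{d}{ds}\big|s^{\frac\eta2(1-\delta)}\V_k\big|^2=\frac{2}{s}\Big(\frac{k\gamma}{2l-\gamma}+\tfrac\eta2(1-\delta)\Big)\big|s^{\frac\eta2(1-\delta)}\V_k\big|^2+o\Big(\frac1s\Big).
\]
This is strictly positive once $|s^{\frac\eta2(1-\delta)}\V_k|\ge1$ and $s_0$ is large.
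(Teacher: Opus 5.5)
Your treatment of $\E_{2m}$, the stable modes, $\V_1$ and the transverse crossing is sound. For $\E_{2m}$, your large-$L$ gain is a legitimate substitute for the paper's argument: the factor is really $\frac{c_1(2+CM^{-2\delta})}{c_1(4m+2-d)-2L-2(1-\delta)(1+\eta)}=O\bigl(\frac{l}{\gamma L}\bigr)$.

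However, there is a genuine gap in the range $h+1\le k\le l+h$, exactly where you expected trouble. Insert $\E_{2k}\le Ks^{-c_1(4k-d)}$ into the term $2b_1\E_{2k}$ of \eqref{equation:energy-control-2} and integrate against $\lambda^{-(4k+2-d)}\sim s^{c_1(4k+2-d)}$. This returns $\frac{2c_1}{c_1(4k+2-d)-c_1(4k-d)}Ks^{-c_1(4k-d)}=Ks^{-c_1(4k-d)}$ before the positive source and initial contributions are even added. So no strict improvement is possible this way.

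Your fallback does not repair this, for two reasons. First, $\E_{2h+2}$ has no independent control: it sits in the same critical range, and its target bound $Ks^{c_1(2\gamma+4\delta-4)}$ even grows. Second, even granting such control, interpolating with exponent $\theta=\frac{k-h-1}{m-h-1}$ against $\E_{2m}\lesssim s^{-2L-O(1)}$ yields a decay rate of about $c_1(4h+4-d)+2(k-h-1)$. This is strictly weaker than the required $c_1(4h+4-d)+4c_1(k-h-1)$, since $4c_1=\frac{4l}{2l-\gamma}>2$.

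The missing idea is that the coefficient $2$ in front of $\E_{2k}$ is not an error to be paid for with the bootstrap; it is exactly the scaling rate. Since $-\lambda_s/\lambda=b_1+O(b_1^{L+1+(1-\delta)(1+\eta)})$, you have $\frac{d}{ds}\log\lambda^{-2}=2b_1+O(b_1^{L+1})$. So write the inequality for $\E_{2k}/\lambda^{4k-d}$ rather than $\E_{2k}/\lambda^{4k+2-d}$. Equivalently, apply Gronwall with $\exp\bigl(\int_{s_0}^s 2b_1\bigr)=\bigl(\lambda(s_0)/\lambda(s)\bigr)^2(1+o(1))$ instead of substituting the bootstrap bound into the linear term. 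This cancels $2b_1\E_{2k}$ exactly and leaves at most $O(b_1^{1+\eta(1-\delta)})\E_{2k}$ plus the sources.

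After integration, the linear remainder contributes $CKs_0^{-\eta(1-\delta)}s^{-c_1(4k-d)}$. In this range the weighted source integrands have exponents at most $-1-\frac{\gamma\delta}{2l-\gamma}+C\eta<-1$, so they contribute $C(\sqrt K+1)s^{-c_1(4k-d)}$. Choosing $K$ and then $s_0$ large gives $\E_{2k}\le\frac K2 s^{-c_1(4k-d)}$. This is how the paper closes all the energy bounds, including $\E_{2m}$, where only $CM^{-2\delta}\E_{2m}$ survives the cancellation.

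Two smaller points. For $l+h+1\le k\le m-1$ your direct integration does close, but the gain on the linear term does not come from the $s^{K\eta}$ versus $s^{C\eta}$ slack, which only handles the sources. It comes from the positive gap $g_k:=c_1(4k-d)-2(k-h-1)-2(1-\delta)\ge\frac{2\gamma(1-\delta)}{2l-\gamma}$, which makes the factor $\frac{2c_1}{2c_1+g_k+K\eta}<1$. Also, your descending induction for $b_k$ must carry the $s^{-\eta(1-\delta)}$ gain down from $\tilde b_L$. With only $|b_{k+1}|\le s^{-k-1}$, the factor at $k=l+1$ is $\frac{2l-\gamma}{\gamma}$, which need not be below $1$.
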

\begin{proof}
    To give the proof of \eqref{equation:unstable-mode-exist}, we follow the traditional bootstrap procedure. In particular, we claim that
    \begin{align}
        &|\V_1(s)|\leq s^{-\frac{\eta}{2}(1-\delta)}, \label{estimate:bootstrap-improve-V1}\\
        &|b_k(s)|\lesssim s^{-(k+\eta(1-\delta))},&& l+1\leq k\leq L, \label{estimate:bootstrap-improve-bk}\\
        &\E_{2m}(s)
    \leq \frac{K}{2} s^{-(2L+2(1-\delta)(1+\eta))} \label{estimate:bootstrap-improve-E2m}\\
        &\E_{2k}(s)
    \leq
    \left\{
    \begin{aligned}
        &\frac{K}{2}s^{-\frac{l}{2l-\gamma}(4k-d)},\\
        &\frac{1}{2}s^{-2(k-h-1)-2(1-\delta)+K\eta},
    \end{aligned}
    \right.
    &&
    \begin{aligned}
        &h+1\leq k\leq l+h,\\
        &l+h+1\leq k\leq m-1
    \end{aligned} \label{estimate:bootstrap-improve-E2k}
    \end{align}
    hold for all $s\in[s_0,s_1]$. Recalling Definition~\ref{definition:bootstrap-assumption}, the estimates above show that all the bootstrap bounds, except those associated with the unstable modes, are strictly improved. Consequently, the solution can reach the boundary of the bootstrap region only through the unstable modes, which proves \eqref{equation:unstable-mode-exist}. We now turn to the proof of \eqref{estimate:bootstrap-improve-V1}, \eqref{estimate:bootstrap-improve-bk}, \eqref{estimate:bootstrap-improve-E2m} and \eqref{estimate:bootstrap-improve-E2k}.

    We first estimate the parameter $\lam$. The bootstrap assumption shows
    \begin{equation}\label{estimate:b-bound-inbootstrap}
        b_1(s)=\frac{c_1}{s}+\frac{\U_1}{s}=\frac{l}{(2l-\gamma)s}+O\left(\frac{1}{s^{1+c\eta}}\right).
    \end{equation}
    Recalling \eqref{estimate:modulation-bound-L-1} we get
    \begin{equation}\label{estimate:lam-s-bound-inbootstrap}
        -\frac{\lam_s}{\lam}=\frac{l}{(2l-\gamma)s}+O(\frac{1}{s^{1+c\eta}}).
    \end{equation}
    This implies
    \begin{equation}\label{estimate:lam-bound-inbootstrap}
        s_0^{-\frac{l}{2l-\gamma}}\lesssim \frac{s^{-\frac{l}{2l-\gamma}}}{\lam(s)}\lesssim s_0^{-\frac{l}{2l-\gamma}}.
    \end{equation}
    \textit{Proof of \eqref{estimate:bootstrap-improve-E2m}}:
        We recall the energy control \eqref{equation:energy-control-1}, multiply the time derivative term with $\lam^2$. Then we use the control for $\lambda$ in \eqref{estimate:b-bound-inbootstrap}, \eqref{estimate:lam-s-bound-inbootstrap} and \eqref{estimate:lam-bound-inbootstrap} to obtain
        \begin{align}
            &\frac{d}{dt}\left\{\frac{\E_{2m}}{\lambda^{4m-d}}[1+O(b_1^{\eta(1-\delta)})]\right\}\\
            &=\lam^2\frac{d}{dt}\left\{\frac{\E_{2m}}{\lambda^{4m-d+2}}[1+O(b_1^{\eta(1-\delta)})]\right\}+2\frac{\lam_s}{\lam}\frac{\E_{2m}}{\lambda^{4m-d+2}}[1+O(b_1^{\eta(1-\delta)})]\\
            &=\lam^2\frac{d}{dt}\left\{\frac{\E_{2m}}{\lambda^{4m-d+2}}[1+O(b_1^{\eta(1-\delta)})]\right\}-2(b_1+O(b_1^{1+\eta(1-\delta)}))\frac{\E_{2m}}{\lambda^{4m-d+2}}[1+O(b_1^{\eta(1-\delta)})]\\
            &\lesssim \frac{b_1}{\lambda^{4m+2-d}}[(b_1^{\eta(1-\delta)}+1/M^{2\delta})\E_{2m}+b_1^{L+(1-\delta)(1+\eta)}\sqrt{\E_{2m}}+b_1^{2L+2(1-\delta)(1+\eta)}].
        \end{align}
        Then we integrate in time by using the bootstrap assumption in Definition~\ref{definition:bootstrap-assumption} and $\lam(s_0)=1$ to obtain
        \begin{equation}
            \E_{2m}(s)\leq C\lambda(s)^{4m-d}\left[\E_{2m}(s_0)+\left(\frac{K}{s_0^{\eta(1-\delta)}}+\frac{K}{M^{2\delta}}+\sqrt{K}+1\right)\int_{s_0}^s\frac{\tau^{-1-2(L+(1-\delta)(1+\eta))}}{\lambda(\tau)^{4m-d}}d\tau\right].
        \end{equation}
        Thus we can estimate the term inside the integral by applying the bound for $\lam$ in \eqref{estimate:lam-bound-inbootstrap}:
        \begin{equation}
        \begin{aligned}
             \lam(s)^{4m-d}\int_{s_0}^s\frac{\tau^{-1-2(L+(1-\delta)(1+\eta))}}{\lambda(\tau)^{4m-d}}d\tau&\lesssim s^{-\frac{l(4m-d)}{2l-\gamma}}\int_{s_0}^s\tau^{\frac{l(4m-d)}{2l-\gamma}-1-2(L+(1-\delta)(1+\eta))}d\tau\\
             &\lesssim s^{-2(L+(1-\delta)(1+\eta))},             
        \end{aligned}
        \end{equation}
        where we use the fact that the exponent inside the integral admits
        \begin{equation}
            \frac{l(4m-d)}{2l-\gamma}-[2L+1+2(1-\delta)(1+\eta)]=\frac{2\gamma L}{2l-\gamma}+O(1)\gg-1
        \end{equation}
       for $L\gg1$ sufficiently large. Using again \eqref{estimate:lam-bound-inbootstrap} we get
        \begin{equation}
            \lam(s)^{4m-d}\E_{2m}(s_0)\lesssim \left(\frac{s_0}{s}\right)^{\frac{l(4m-d)}{2l-\gamma}}s_0^{-\frac{100Ll}{2l-\gamma}}\lesssim s^{-2(L+(1-\delta)(1+\eta))}
        \end{equation}
        for large enough $L$. Thus we can conclude there exists large enough $K$
        \begin{equation}
            \E_{2m}(s)\leq C\left(\frac{K}{s_0^{\eta(1-\delta)}}+\frac{K}{M^{2\delta}}+\sqrt{K}+1\right)s^{-2(L+(1-\delta)(1+\eta))}\leq\frac{K}{2}s^{-2(L+(1-\delta)(1+\eta))}
        \end{equation}
        by choosing the initial data $s_0$ and $M$ large enough.

        \medskip\noindent\textit{proof of \eqref{estimate:bootstrap-improve-E2k}}: We consider two cases for the control of $\E_{2k}$:
        
        \case{1} $h+1\leq k\leq l+h$. Following the same procedure as in the proof of \eqref{estimate:bootstrap-improve-E2m}, we can use the energy control \eqref{equation:energy-control-2} to arrive at
        \begin{equation}
            \frac{d}{dt}\left\{\frac{\E_{2k}}{\lambda^{4k-d}}[1+O(b_1)]\right\}\lesssim \frac{b_1}{\lambda^{4k+2-d}}\left[b_1^{\eta(1-\delta)}\E_{2k}+b_1^{k-h-1+(1-\delta)-C\eta}\sqrt{\E_{2k}}+b_1^{2(k-h-1)+2(1-\delta)-C\eta}\right].
        \end{equation}
        Integrate this we obtain
        \begin{equation}
            \begin{aligned}
                 \E_{2k}(s)\leq C\lambda(s)^{4k-d}
    \Bigg[&\E_{2k}(s_0)+K\int_{s_0}^s\frac{\tau^{-1-\eta(1-\delta)-\frac{l}{2l-\gamma}(4k-d)}}{\lambda(\tau)^{4k-d}}d\tau\\&+\sqrt{K}\int_{s_0}^s\frac{\tau^{-\frac{l}{2l-\gamma}(2k-\frac{d}{2})-(k-h+1-\delta-C\eta)}}{\lambda(\tau)^{4k-d}}d\tau+\int_{s_0}^s\frac{\tau^{-(2k-2h-1+2(1-\delta)-C\eta)}}{\lambda(\tau)^{4k-d}}d\tau\Bigg].
            \end{aligned}
        \end{equation}
        The choice of the initial data and \eqref{estimate:lam-bound-inbootstrap} shows
        \begin{equation}
            \lam(s)^{4k-d}\E_{2k}(s_0)\lesssim s^{-\frac{l}{2l-\gamma}(4k-d)},
        \end{equation}
        and
        \begin{equation}
        \begin{aligned}
            \lam(s)^{4k-d}K\int_{s_0}^s\frac{\tau^{-1-\eta(1-\delta)-\frac{l}{2l-\gamma}(4k-d)}}{\lambda(\tau)^{4k-d}}d\tau&\lesssim Ks^{-\frac{l(4k-d)}{2l-\gamma}}\int_{s_0}^s
\tau^{-1-\eta(1-\delta)}d\tau\\
&\lesssim \frac{K}{s_0^{\eta(1-\delta)}}s^{-\frac{l(4k-d)}{2l-\gamma}}.
\end{aligned}
        \end{equation}
        For the remaining two integral parts, we compute the exponent of $\tau$ inside the integral. The first exponent is
        \begin{equation}
            \begin{aligned}
                \frac{l}{2l-\gamma}(2k-\frac{d}{2})-(k-h+1-\delta-C\eta)&=-\frac{\gamma}{2}-1+C\eta+\frac{\gamma}{2l-\gamma}(k-h-\delta-\frac{\gamma}{2})\\
    &\leq-1-\frac{\gamma\delta}{2l-\gamma}+C\eta<-1,
            \end{aligned}
        \end{equation}
        and the second exponent is
        \begin{equation}
            \begin{aligned}
                \frac{l}{2l-\gamma}(4k-d)-(2k-2h-1+2(1-\delta)-C\eta)&=-\gamma-1+C\eta+\frac{\gamma}{2l-\gamma}(2k-2h-2\delta-\gamma)\\
    &\leq -1-\frac{2\gamma\delta}{2l-\gamma}+C\eta<-1.
            \end{aligned}
        \end{equation}
        This shows that these two integrals are bounded. Therefore we obtain
        \begin{equation}
            \E_{2k}(s)\leq C\left(\frac{K}{s_0^{\eta(1-\delta)}}+\sqrt{K}+1\right)s^{-\frac{l}{2l-\gamma}(4k-d)}\leq\frac{K}{2}s^{-\frac{l}{2l-\gamma}(4k-d)}.
        \end{equation}
        for $K$ large enough.

        \case{2} $l+h+1\leq k\leq m-1$.  Proceeding as in the previous cases we get
        \begin{equation}
        \begin{aligned}
            \E_{2k}(s)\leq C\lam(s)^{4k-d}\Bigg[\E_{2k}(s_0)+\int_{s_0}^s\frac{\tau^{-1-2(k-h-1+1-\delta)+\left(C+\frac{K}{2}\right)\eta-\eta(1-\delta)}}{\lambda(\tau)^{4k-d}}d\tau\\
            +\int_{s_0}^s\frac{\tau^{-1-2(k-h-1+1-\delta)+\left(C+\frac{K}{2}\right)\eta}}{\lambda(\tau)^{4k-d}}d\tau\Bigg].
        \end{aligned}
        \end{equation}
        The first part inside the integral is strictly smaller than the second part. From the identity
        \begin{equation}
        \begin{aligned}
            &\frac{l}{2l-\gamma}(4k-d)-(2k-2h-1+2(1-\delta))+\left(C+\frac{K}{2}\right)\eta\\
            &=-\gamma-1+\frac{\gamma}{2l-\gamma}(2k-2h-2\delta-\gamma)+\left(C+\frac{K}{2}\right)\eta\\
    &\geq-1+\frac{2\gamma(1-\delta)}{2l-\gamma}+\left(C+\frac{K}{2}\right)\eta>-1,
        \end{aligned}
        \end{equation}
        we obtain
        \begin{align}
            \lam(s)^{4k-d}\int_{s_0}^s\frac{\tau^{-1-2(k-h-1+1-\delta)+\left(C+\frac{K}{2}\right)\eta}}{\lambda(\tau)^{4k-d}}d\tau&\lesssim s^{-\frac{l(4k-d)}{2l-\gamma}}\int_{s_0}^s\tau^{\frac{l}{2l-\gamma}(4k-d)-(2k-2h-1+2(1-\delta))+\left(C+\frac{K}{2}\right)\eta}d\tau\\
            &\lesssim s^{-(2k-2h-1+2(1-\delta))+\left(C+\frac{K}{2}\right)\eta+1}\\&\leq\frac{1}{4}s^{-(2k-2h-2+2(1-\delta))+K\eta}.
        \end{align}
        Using the choice of initial data we get
        \begin{equation}
            C\lam(s)^{4k-d}\E_{2k}(s_0)\lesssim s^{-\frac{l(4k-d)}{2l-\gamma}}\lesssim s^{-(2k-2h-2+2(1-\delta))+\left(C+\frac{K}{2}\right)\eta}\leq \frac{1}{4}s^{-(2k-2h-2+2(1-\delta))+K\eta}.
        \end{equation}
        This concludes the proof of \eqref{estimate:bootstrap-improve-E2k}.

        \medskip\noindent\textit{Proof of \eqref{estimate:bootstrap-improve-bk}}: We close the improved bound for the stable modes $(b_{l+1},\dots,b_L)$. It is equivalent to prove the bound
        \begin{equation}\label{estimate:induction-bound-bk}
            |b_k|\lesssim b_1^{k+\eta(1-\delta)}
        \end{equation}
        for $l+1\leq k\leq L$. We argue by descending induction. For the base case $k=L$, we set
        \begin{equation}
            \tilde{b}_L=b_L+\frac{\langle\M\Li^L\eps,\chi_{B_0}\Lambda Q\rangle}{\langle\M\Lambda Q,\chi_{B_0}\Lambda Q\rangle}.
        \end{equation}
        Recalling the estimate \eqref{estimate:xi_L-coefficient-energy} and \eqref{estimate:bootstrap-improve-E2m} we obtain
        \begin{equation}
            |\tilde{b}_L-b_L|\lesssim b_1^{-(1-\delta)}\sqrt{\E_{2m}}\lesssim b_1^{L+\eta(1-\delta)}.
        \end{equation}
        Using again the improved modulation bound \eqref{estimate:modulation-bound-improve} we get
        \begin{equation}
            |(\tilde{b}_L)_s+(2L-\gamma)b_1\tilde{b}_L|\lesssim b_1|\tilde{b}_L-b_L|+\frac{1}{B_0^{2\delta}}[C(M)\sqrt{\E_{2m}}+b_1^{L+1+(1-\delta)-C\eta}]\lesssim b_1^{L+1+\eta(1-\delta)},
        \end{equation}
        and this implies
        \begin{equation}
            \left|\frac{d}{ds}\left\{\frac{\tilde{b}_L}{\lam^{2L-\gamma}}\right\}\right|\lesssim \frac{b_1^{L+1+\eta(1-\delta)}}{\lam^{2L-\gamma}}.
        \end{equation}
        Integrating this from $s_0$ we obtain
        \begin{equation}
            \tilde{b}_L(s)\lesssim \lam(s)^{2L-\gamma}\left(\tilde{b}_L(s_0)+\int_{s_0}^s\frac{b_1^{L+1+\eta(1-\delta)}(\tau)}{\lam^{2L-\gamma}(\tau)}d\tau\right).
        \end{equation}
        We note the following estimate
        \begin{equation}
            \begin{gathered}
                \lam(s)^{2L-\gamma}\tilde{b}_L(s_0)\lesssim s^{-\frac{l(2L-\gamma)}{2l-\gamma}}\lesssim s^{-L-\eta(1-\delta)},\\
                \lam(s)^{2L-\gamma}\int_{s_0}^s\frac{b_1^{L+1+\eta(1-\delta)}(\tau)}{\lam^{2L-\gamma}(\tau)}d\tau\lesssim s^{-\frac{l(2L-\gamma)}{2l-\gamma}}\int_{s_0}^s\tau^{\frac{l(2L-\gamma)}{2l-\gamma}-L-1-\eta(1-\delta)}d\tau\lesssim s^{-L-\eta(1-\delta)},
            \end{gathered}
        \end{equation}
        and hence we have
        \begin{equation}
            |b_L(s)|\lesssim |\tilde{b}_L(s)|+|b_L(s)-\tilde{b}_L(s)|\lesssim s^{-L-\eta(1-\delta)}\lesssim b_1^{L+\eta(1-\delta)}.
        \end{equation}
        
        Now we assume $l+1\leq k\leq L-1$ and \eqref{estimate:induction-bound-bk} holds for $k+1$. The modulation bound \eqref{estimate:modulation-bound-L-1} implies
        \begin{equation}
            \left|(b_k)_s-(2k-\gamma)\frac{\lam_s}{\lam}b_k\right|\lesssim b_1^{L+1}+|b_{k+1}|\lesssim b_1^{k+1+\eta(1-\delta)},
        \end{equation}
        and
        \begin{equation}
            \left|\frac{d}{ds}\left\{\frac{b_k}{\lam^{2k-\gamma}}\right\}\right|\lesssim \frac{b_1^{k+1+\eta(1-\delta)}}{\lam^{2k-\gamma}}.
        \end{equation}
        Then integrating this by using the relation $k\geq l+1$, we obtain
        \begin{equation}
            b_k(s)\lesssim \lam(s)^{2k-\gamma}\left(b_k(s_0)+\int_{s_0}^s\frac{b_1^{k+1+\eta(1-\delta)}(\tau)}{\lam^{2k-\gamma}(\tau)}d\tau\right)\lesssim s^{-k-\eta(1-\delta)}.
        \end{equation}
        This concludes the proof of \eqref{estimate:bootstrap-improve-bk}.

        \medskip\noindent\textit{Proof of \eqref{estimate:bootstrap-improve-V1}}: Recall from \eqref{equation:perturb-bk} and \eqref{defeq:definition-Vl} we have
        \begin{equation}
            b_k = b_k^e+\frac{\U_k}{s^k},\quad 1\leq k\leq l,\quad \V=P_l\U,
        \end{equation}
        and $P_l$ diagonalizes the matrix $A_l$ with spectrum \eqref{defeq:diag-Al}. Then we use the relation \eqref{equation:linearized-dynamic-system}, the modulation bound \eqref{estimate:modulation-bound-L-1}, the improved bound \eqref{estimate:bootstrap-improve-bk} and the priori bound in Definition~\ref{definition:bootstrap-assumption} to get for $1\leq k\leq l-1$,
        \begin{equation}
            |s(\U_k)_s-(A_l\U)_k|\lesssim s^{k+1}|(b_k)_s+(2k-\gamma)b_1b_k-b_{k+1}|+|\U|^2\lesssim s^{-L+k}+|\U|^2\lesssim s^{-\eta(1-\delta)},
        \end{equation}
        and
        \begin{equation}
            |s(\U_l)_s-(A_l\U)_l|\lesssim s^{l+1}(|(b_l)_s+(2l-\gamma)b_1b_l-b_{l+1}|+|b_{l+1}|)+|\U|^2\lesssim s^{-\eta(1-\delta)}+|\U|^2\lesssim s^{-\eta(1-\delta)}.
        \end{equation}
        This implies
        \begin{equation}\label{equation:relation-sVs}
            s\V_s=D_l\V+O(s^{-\eta(1-\delta)}).
        \end{equation}
        Since the first eigenvalue of $D_l$ is exactly $-1$, we obtain the control for the stable mode $\V_1$:
        \begin{equation}
            |(s\V_1)_s|\lesssim s^{-\eta(1-\delta)},
        \end{equation}
        and this implies
        \begin{equation}
            |s^{\eta(1-\delta)}\V_1(s)|\leq\left(\frac{s_0}{s}\right)^{1-\eta(1-\delta)}s_0^{\eta(1-\delta)}\V_1(s_0)+1\lesssim 1.
        \end{equation}
        Therefore \eqref{estimate:bootstrap-improve-V1} holds for large enough $s_0$.

        \medskip We now prove \eqref{equation:transverse-crossing} to end the proof of Proposition~\ref{proposition:reduce-finite-dimension}. Using \eqref{equation:relation-sVs} and \eqref{defeq:diag-Al}, we compute at the time $s=s^*$:
        \begin{align}
            \left.\frac{1}{2}\frac{d}{ds}\left(|s^{\frac{\eta}{2}(1-\delta)}\V_k(s)|^2\right)\right|_{s=s^*}&=\left.\left(s^{\eta(1-\delta)-1}\left[\frac{\eta}{2}(1-\delta)\V_k^2(s)+s\V_k(\V_k)_s\right]\right)\right|_{s=s^*}\\
            &=\left.\left(s^{\eta(1-\delta)-1}\left[\left[\frac{\eta}{2}(1-\delta)+\frac{k\gamma}{2l-\gamma}\right]\V_k^2(s)+O\left(\frac{1}{s^{\frac{3}{2}\eta(1-\delta)}}\right)\right]\right)\right|_{s=s^*}\\
            &\geq \frac{1}{s^*}\left[c(d,l)|(s^*)^{\frac{\eta}{2}(1-\delta)}\V_k(s^*)|^2+O\left(\frac{1}{(s^*)^{\frac{\eta}{2}(1-\delta)}}\right)\right]\\
            &\geq\frac{1}{s^*}\left[c(d,l)+O\left(\frac{1}{(s^*)^{\frac{\eta}{2}(1-\delta)}}\right)\right]>0.
        \end{align}
        Therefore we conclude the proof of Proposition~\ref{proposition:reduce-finite-dimension}.
\end{proof}
We are now ready to prove that there exists a solution trapped in $\Si_K(s)$ and close the proof of Theorem~\ref{theorem:main-theorem}.
\begin{proposition}[Existence of the solution through topological argument]\label{proposition:exist-solution-trapped}
    There exist suffciently large
constants $K,s_0$ and $M$, such that there exists initial data for the unstable modes
    \begin{equation}
        (\V_2(s_0),\dots,\V_{l}(s_0))\in[-s_0^{-\frac{\eta}{2}(1-\delta)},s_0^{-\frac{\eta}{2}(1-\delta)}]^{l-1}
    \end{equation}
    such that the corresponding solution $(b(s),\eps(s))$ is trapped in $\Si_K(s)$ for all $s\geq s_0$.
\end{proposition}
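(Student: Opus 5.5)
The plan is the standard Brouwer argument, run by contradiction. Fix $K$, $M$, $s_0$ as in Proposition~\ref{proposition:reduce-finite-dimension}. Fix also every other component of the initial datum in Definition~\ref{definition:initial-data}: the radiation $\eps_0$, the stable coordinate $\V_1(s_0)$, and the stable parameters $b_{l+1}(s_0),\dots,b_L(s_0)$.

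We parametrize the unstable coordinates by
\begin{equation}
\xi=(\xi_2,\dots,\xi_l)\in C:=[-1,1]^{l-1},\qquad \V_k(s_0)=s_0^{-\frac{\eta}{2}(1-\delta)}\xi_k .
\end{equation}
For each $\xi$ the datum lies in $\Oi$ after a harmless adjustment on the boundary of $C$. By the implicit function theorem decomposition \eqref{equation:decompose-u-smalltime}, it is also continuous in $\xi$. Since the initial bounds are strictly inside $\Si_K(s_0)$ (the constants $1$ versus $100$, and the small energies), the solution starts in the interior of the bootstrap region. Assume, for contradiction, that for every $\xi\in C$ the solution leaves $\Si_K$ at some finite time. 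Define the exit time
\begin{equation}
s^*(\xi)=\sup\{s\geq s_0:\ (b,\eps)(\tau)\in\Si_K(\tau)\ \forall\tau\in[s_0,s]\}<\infty .
\end{equation}
By Proposition~\ref{proposition:reduce-finite-dimension}(i), at $s^*(\xi)$ only the unstable modes can saturate, so
\begin{equation}
\Upsilon(\xi):=\frac{(s^*)^{\frac{\eta}{2}(1-\delta)}}{100}\big(\V_2,\dots,\V_l\big)(s^*(\xi))\in\partial C .
\end{equation}

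The key step is the continuity of $\xi\mapsto s^*(\xi)$. It rests on the transverse crossing \eqref{equation:transverse-crossing}. I would first check that the quantity $|s^{\frac{\eta}{2}(1-\delta)}\V_k|^2$ is strictly increasing whenever it is at least $1$. The exit threshold is $100^2$, which is above $1$, so the crossing of level $100^2$ is strictly outward. The trajectory therefore cannot touch the boundary and re-enter.

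Combining this strict monotonicity with continuous dependence of the flow on the data (local well-posedness in $X$ and in the $\E_{2k}$ norms, as discussed before Definition~\ref{definition:initial-data}) gives continuity of $s^*$ by the usual argument. Take a nearby datum $\xi'$. Before $s^*(\xi)-\tau$ the trajectory for $\xi$ is strictly inside, so the one for $\xi'$ stays inside. Just after $s^*(\xi)$ the trajectory for $\xi$ is strictly outside, so the one for $\xi'$ has exited. Hence $\Upsilon:C\to\partial C$ is continuous.

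Next, for $\xi\in\partial C$ some $|\xi_k|=1$, i.e.\ $|s_0^{\frac{\eta}{2}(1-\delta)}\V_k(s_0)|=1$. By transversality this coordinate grows strictly from time $s_0$ on. I would verify that its sign is preserved until exit, which holds because $\V_k$ cannot pass through zero while its modulus is increasing. So the $k$-th component of $\Upsilon(\xi)$ has the same sign as $\xi_k$, which gives $\Upsilon(\xi)\neq-\xi$ on $\partial C$. (Indeed $\Upsilon(\xi)=\xi$ there, provided the exit happens immediately at $s_0$ when the threshold is taken to be $1$. Rescaling the box to the threshold $100$ in the definition of $C$, if needed, is a cosmetic choice.)

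Now $-\Upsilon:C\to C$ is continuous, so by Brouwer it has a fixed point $\xi_*=-\Upsilon(\xi_*)$. This point lies in $\partial C$ and satisfies $\Upsilon(\xi_*)=-\xi_*$, contradicting the sign persistence. Equivalently, $\Upsilon$ would be a retraction of $C$ onto $\partial C$ homotopic to the identity on the boundary, which is impossible. Hence some $\xi$ gives a solution trapped in $\Si_K(s)$ for all $s\geq s_0$.

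The main obstacle is the continuity of the exit time. This requires the flow map to be continuous into the full bootstrap topology: the energies $\E_{2k}$ up to $k=m$, together with the modulation parameters, which depend on the solution through the orthogonality conditions \eqref{relation:orthogonality-condition}. I would handle it by combining the implicit function theorem for $(\lambda,b)$ with the weighted energy continuity from the local theory. I would also keep the non-unstable bounds strictly improved (Proposition~\ref{proposition:reduce-finite-dimension}) so that small perturbations cannot cause an exit through them.
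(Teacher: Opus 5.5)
Your proposal is correct and is essentially the paper's own argument: the parametrization $Z(s_0;\xi)=\xi$ of the unstable modes on $[-1,1]^{l-1}$; the first-exit map $\Upsilon(\xi)=\frac{1}{100}Z(s^*(\xi);\xi)\in\partial C$, made continuous by continuous dependence plus the strict outward crossing of Proposition~\ref{proposition:reduce-finite-dimension}(ii); the sign persistence $\xi_k Z_k(s;\xi)\geq 1$ on the faces, giving $\Upsilon(\xi)\neq-\xi$; and Brouwer applied to $-\Upsilon$. Your parenthetical claim that $\Upsilon(\xi)=\xi$ on $\partial C$ is not needed, and it does not hold with the initial box at scale $1$ and exit at $100$; the sign persistence alone closes the argument, exactly as in the paper.
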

\begin{proof}
    Suppose any choice of the initial data will lead to the exit in $\Si_K(s)$. We define
    \begin{equation}
        \Z_k(s)=s^{\frac{\eta}{2}(1-\delta)}\V_k(s),\quad \Z(s)=(\Z_2(s),\dots,\Z_l(s)).
    \end{equation}
    Fix the other initial parameters and parameterize the family by
$\xi \in \mathcal{C}=[-1,1]^{l-1}$ with $Z(s_0;\xi)=\xi$. Define
\[
    s^*(\xi)
    =
    \sup\left\{
        s\geq s_0:
        (b(\sigma;\xi),\epsilon(\sigma;\xi))
        \in \mathcal{S}_K(\sigma)
        \text{ for every } \sigma\in[s_0,s]
    \right\},
\]
\[
    \Upsilon(\xi)
    =
    \frac{1}{100}Z(s^*(\xi);\xi)
    \in \partial\mathcal{C}.
\]
The local flow in Section~\ref{sec:bootstrap} and Proposition~\ref{proposition:reduce-finite-dimension} make this first-exit map
continuous: continuous dependence keeps nearby trajectories inside before exit,
and strict outward crossing on any active face forces exit soon afterward.
This also applies at corners, so $s^*$ and $\Upsilon$ are continuous.

    We claim that
    \begin{equation}
        \Upsilon(\xi)\neq-\xi,\quad \forall \xi\in\partial[-1,1]^{l-1}.
    \end{equation}
    For $\xi\in\partial\mathcal{C}$, choose $k$ with $|\xi_k|=1$. By Proposition~\ref{proposition:reduce-finite-dimension}, $\xi_k=1$ implies $Z_k(s;\xi)\geq 1$, whereas $\xi_k=-1$ implies $Z_k(s;\xi)\leq -1$, for $s_0\leq s\leq s^*(\xi)$. Hence
    \begin{equation}
        \xi_k\Z_k(s;\xi)\geq 1,\quad s_0\leq s\leq s_*.
    \end{equation}
    Therefore
    \begin{equation}
        \xi_k\Upsilon_k(\xi)=\frac{1}{100}\xi_k\Z_k(s_*;\xi)\geq\frac{1}{100},
    \end{equation}
    which is a contradiction to 
    \begin{equation}
       \xi_k\Upsilon_k(\xi)=\xi_k(-\xi_k)=-1. 
    \end{equation}

     We now define
     \begin{equation}
         \overline{\Upsilon}(\xi)=-\Upsilon(\xi).
     \end{equation}
     Then $\overline{\Upsilon}:[-1,1]^{l-1}\rightarrow[-1,1]^{l-1}$ is continuous. By using Brouwer fixed point theorem, there exist $\xi_*$ such that $\xi_*=\overline{\Upsilon}(\xi_*)$. Since $\overline{\Upsilon}([-1,1]^{l-1})\subset\partial[-1,1]^{l-1}$, we have $\xi_*\in\partial[-1,1]^{l-1}$. This  leads to
     \begin{equation}
         \Upsilon(\xi_*)=-\xi_*,
     \end{equation}
     which is a contradiction to our claim. Thus we concludes the proof of Proposition~\ref{proposition:exist-solution-trapped}.
\end{proof}
\begin{proof}[Proof of Theorem~\ref{theorem:main-theorem}:]
    Given the solution in Proposition~\ref{proposition:exist-solution-trapped}, we estimate from \eqref{estimate:lam-s-bound-inbootstrap}:
    \begin{equation}
        -\lam\lam_t=c(u_0)\lam^{\frac{2l-\gamma}{l}}(1+o(1)),
    \end{equation}
    which implies
    \begin{equation}
        \lam(t)=c(u_0)(1+o(1))(T-t)^{\frac{l}{\gamma}}.
    \end{equation}

    To control the local $L^{\infty}$ norm, we apply part (i) of Lemma~\ref{lemma:inter-bound-origin} and the asymptotic behaviour of $T_k$  in Lemma~\ref{lemma:construction-Tk} to get
    \begin{equation}
        |\eps(y)| = \left|\sum_{i=1}^{m}c_iT_{m-i}(y)+r_{\eps}(y)\right|\lesssim \sqrt{\E_{2m}}\rightarrow0,\quad \text{as $s\rightarrow+\infty$},\quad \forall y<1.
    \end{equation}
    Recalling part (ii) of Lemma~\ref{lemma:inter-bound-far}, we can obtain the following with a fixed truncation $B>1$:
    \begin{equation}
        |\eps(y)|\lesssim y^m\sqrt{\E_{2m}}\rightarrow0,\quad \text{as $s\rightarrow+\infty$},\quad \forall1\leq y\leq B.
    \end{equation}

    For $h+2\leq k\leq m-1$, put $\tilde{k}=2k$. Applying \eqref{equation:appendix-weight-bound-1} with derivative order $i$ and weight index $j=\tilde{k}-i+2$ gives
\[
    \int_{y>1}\left|\partial_y^{2k}\varepsilon\right|^2
    \lesssim
    \sum_{i=1}^{\tilde{k}} \int_{y>1} y^{-2(\tilde{k}-i)}
    \left|\partial_y^i\varepsilon\right|^2
    \lesssim \mathcal{E}_{2k+2},
\]
which controls all terms in $\nabla^{\tilde{k}}\varepsilon$ outside the unit ball. Near zero, Lemma~\ref{lemma:inter-bound-origin} separates smooth radial profiles from a remainder satisfying
\[
    \left|\nabla^{\tilde{k}} r_\varepsilon\right|
    \lesssim
    \sqrt{\mathcal{E}_{2m}}
    (1+|\log y|)^m
    y^{2m-1-d/2-\tilde{k}}.
\]
Its squared radial integral has power $y^{4m-3-2\tilde{k}}$, integrable for $\tilde{k}\leq 2m-2$; the profiles and the remaining compact annulus are also controlled by $\mathcal{E}_{2m}$. Thus
\[
    \|\varepsilon\|_{\dot H^{2k}(\mathbb{R}^d)}^2
    \lesssim
    \mathcal{E}_{2m}+\mathcal{E}_{2k+2}
    \longrightarrow 0.
\]
These energy levels decay: in the low range $4(k+1)-d\geq 12-(d-4h)>0$, and the high-range exponents are positive for small $\eta$. Since $2h+4<d/2+2$, interpolation covers $\sigma\in[d/2+2,2m-2]$.

    Note the total remainder can be written as
    \begin{equation}
        \eps_{tot}(t,y)=\eps(s(t),y)+\tQ_{b(t)}(y)-Q(y).
    \end{equation}
Using Lemma~\ref{lemma:construction-Tk} and Proposition~\ref{proposition:approximate-profile}, for every integer
$\lfloor d/2+2\rfloor\le j\le2m-2$,
\begin{equation}
    \|\tQ_b-Q\|_{\dot H^j}^2
\lesssim
b_1^2+b_1^{j+\gamma+2-\frac{d}{2}-C_{L,j}\eta}
\longrightarrow0,
\end{equation}
provided $\eta$ is sufficiently small.
Moreover, $\tQ_b-Q\to0$ uniformly on every fixed
compact set. Combining these bounds with the preceding
estimates for $\epsilon$ proves the asserted convergence
of $\epsilon_{\mathrm{tot}}$.
\end{proof}

\noindent
{\bf Acknowledgments}. 
This research was supported in part by the National Science Foundation under Grant No. DMS-2508463. The project was initiated during Zirui Wang’s Summer Undergraduate Research Fellowship (SURF) at Caltech in the summer of 2025. The authors also gratefully acknowledge the generous support of the Choi Family Gift Fund and the Mike Yan Gift Fund.

\appendix

\section{Existence of the steady state solution}
\label{app:steady-state}
We now state the asymptotic behavior of the steady state, which plays a crucial role in our estimates.
\begin{lemma}[Existence and asymptotic behaviour of $Q$]\label{lemma:appendix-profile-Q}
    Let $d\geq10$. Then there exists a unique solution $Q$ to \eqref{equation:steady-state} satisfying
    \begin{equation}\label{condition:boundary-Q}
        Q(0)=1, \quad y^2Q(y)\rightarrow2,\quad y\rightarrow+\infty.
    \end{equation}
    Every profile is obtained from $Q$ by the scaling symmetry
    \begin{equation}
        Q_A(y)=AQ(\sqrt{A}y),\quad A>0.
    \end{equation}
    Moreover we have the following asymptotic behaviour.
    \begin{enumerate}
        \item [(i)] Asymptotic behaviour at the origin:
        \begin{equation}
            Q_A(y)=A-\frac{dA^2}{2(d+2)}y^2+O(y^4).
        \end{equation}
        \item [(ii)] Asymptotic behaviour at infinity: We distinguish several cases according to dimension $d$.
        \begin{enumerate}
            \item If $d>11$, then there exists a constant $c_{A,d}>0$ depending on $A$ and $d$ such that
            \begin{equation}
                Q_A(y)=y^{-2}(2-c_{A,d}\,y^{-\gamma}+O(y^{-2\gamma})),\quad y\rightarrow+\infty.
            \end{equation}
            Here $\gamma,\Gamma$ are defined in \eqref{defeq:gamma-Gamma}.
            \item If $d=11$, then $\gamma=3,\Gamma=6$, and there exists $c_A>0$ such that
            \begin{equation}
                Q_A(y)=y^{-2}(2-c_{A,d}\,y^{-3}+2c_{A,d}^2(\log y)y^{-6}+O(y^{-6})),\quad y\rightarrow+\infty.
            \end{equation}
            \item If $d=10$, then $\gamma=\Gamma=4$, and there exists $c_A>0$ such that
            \begin{equation}
                Q_A(y)=y^{-2}(2-c_{A,d}(\log y)y^{-4}+O(y^{-4})),\quad y\rightarrow+\infty.
            \end{equation}
        \end{enumerate}
    \end{enumerate}
\end{lemma}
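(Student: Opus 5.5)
The plan is to reduce \eqref{equation:steady-state} to an autonomous planar system via the Emden--Fowler change of variables, and then to read off everything from phase-plane analysis near the singular steady state $Q_*(y)=2y^{-2}$.

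\textbf{Step 1: local solution and scaling.} Write the equation as $Q''+\frac{d+1}{y}Q'+Q\,y^{1-d}(y^dQ)'=0$. The origin is a regular singular point with indicial roots $0$ and $-d$. A standard contraction argument for the integral form
\[
Q(y)=A-\int_0^y x^{-d-1}\int_0^x z^{d+1}Q\,(zQ'+dQ)\,dz\,dx
\]
gives a unique smooth even solution with $Q(0)=A$, and its Taylor expansion is $A-\frac{dA^2}{2(d+2)}y^2+O(y^4)$. Uniqueness of the branch that is regular at $0$ shows that every regular solution is $Q_A(y)=AQ(\sqrt A y)$, which proves the scaling statement and (i).

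\textbf{Step 2: the autonomous system.} Set $t=\log y$ and $Q=y^{-2}(2+\phi(t))$. A direct computation gives
\[
\phi''+(d-2)\phi'+2(d-2)\phi+N(\phi,\phi')=0,
\]
with $N$ quadratic (of the form $\phi\phi'+(d-2)\phi^2$ up to constants). The system has two equilibria:
\begin{itemize}
\item $\phi=-2$ (that is, $Q\equiv0$), whose one-dimensional unstable manifold is the regular branch from Step 1;
\item $\phi=0$ (that is, $Q_*$), whose linearization has characteristic polynomial $x^2+(d-2)x+2(d-2)$, with roots $-\gamma,-\Gamma$.
\end{itemize}
For $d\ge10$ these roots are real and negative, so $\phi=0$ is a stable node. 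The scaling $Q\mapsto Q_A$ is exactly the time shift $t\mapsto t+\tfrac12\log A$, so there is a single orbit to study.

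\textbf{Step 3: global behaviour.} I would show the orbit stays in the region $\{-2<\phi<0,\ \phi'>0\}$. Positivity of $Q$ and $\Lambda Q=y^{-2}\phi'(t)$ should follow from a first-exit argument: at a first exit either $\phi'=0$ with $\phi''>0$ forced by the equation, or $\phi=0$ is reached with $\phi'>0$. The second possibility is the crossing $Q=Q_*$; I would exclude it by the linear damping $d-2>0$ together with the non-oscillatory node structure when $d\ge10$. This is the classical no-intersection property of $Q$ and $Q_*$ above the Joseph--Lundgren threshold, and here it is exactly $(d-2)(d-10)\ge0$. With $\phi$ monotone and bounded, $\phi\to0$ as $t\to\infty$, which gives $y^2Q\to2$ and $\Lambda Q>0$.

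\textbf{Step 4: asymptotics at infinity.} Since the orbit converges to a node, Hartman--Grobman (or a direct variation-of-constants expansion) gives $\phi=-c\,e^{-\gamma t}+O(e^{-\Gamma t})+O(e^{-2\gamma t})$.
\begin{itemize}
\item For $d>11$ the expansion is nonresonant, and the quadratic term produces the $O(y^{-2\gamma})$ correction.
\item For $d=11$ we have $\Gamma=2\gamma=6$. The forcing $\phi^2\sim c^2e^{-6t}$ is then resonant with the fast mode and produces the $t\,e^{-6t}$, i.e.\ $\log y\,y^{-6}$, term. Its coefficient is computed by substituting into the equation.
\item For $d=10$ the double root $\gamma=\Gamma=4$ gives a Jordan block, hence the leading behaviour $t\,e^{-4t}$.
\end{itemize}
In all cases $c\ge0$ follows from $\phi<0$.

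\textbf{Main obstacle: $c>0$.} The hard part is ruling out that the unique orbit lies on the one-dimensional strong stable manifold, which would mean $c=0$. I would first try an invariant-sector argument. The region between the slow eigenline $\phi'=-\gamma\phi$ and the fast eigenline $\phi'=-\Gamma\phi$ in the quadrant $\phi<0<\phi'$ should be shown to be positively invariant for the nonlinear flow near $0$. The fast eigendirection lies on its boundary, and the orbit from Step 3 enters the sector strictly inside it. Trajectories strictly inside the sector approach the node tangent to the slow direction, which forces $c>0$. If the sign of $N$ on the sector boundary is not favourable, the fallback is a comparison with the explicit super- and subsolutions $2y^{-2}-\varepsilon y^{-2-\gamma}$, giving a lower bound $2-y^2Q\gtrsim y^{-\gamma}$.
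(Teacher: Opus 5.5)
Your Steps 1--2 are fine, and your fixed-point construction at $y=0$ is more careful than the paper, which simply posits the expansion of $q$ near $x=-\infty$. The two steps that carry the real content, however, are not established, and the second is set up on the wrong side of the slow eigenline.

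\textbf{Step 3.} The only nontrivial exit is $\phi$ reaching $0$ with $\phi'>0$. ``Linear damping plus the non-oscillatory node'' does not rule this out, because both are local properties at the equilibrium. Even a linearized node solution $c_1e^{-\gamma t}+c_2e^{-\Gamma t}$ with $c_1>0>c_2$ crosses $\phi=0$ once with $\phi'>0$ and then decays from above. A global phase-plane barrier is needed.

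\textbf{The sign of $c$.} Your sector between the slow and fast eigenlines is not invariant. On $\phi'=-\gamma\phi$, using $\gamma^2-(d-2)\gamma+2(d-2)=0$, one gets
\[
\frac{d}{dt}\bigl(\phi'+\gamma\phi\bigr)=\bigl(\gamma-(d-2)\bigr)\phi^2<0 .
\]
So the flow exits $\{-\gamma\phi<\phi'<-\Gamma\phi\}$ into $\{\phi'<-\gamma\phi\}$. The regular orbit starts near $(\phi,\phi')=(-2,0)$, stays in the latter region for all time, and never enters your sector. For $d=10$ the sector is empty, so $a\neq0$ in $\phi=(at+b)e^{-4t}+\dots$ is not addressed at all.

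\textbf{The missing idea} is to use the other side of the slow eigenline. The paper proves that $\mathcal{S}=\{0<q<2,\ q(2-q)<q'<\frac{\gamma}{2}q(2-q)\}$, with $q=2+\phi$, is positively invariant. It does so by computing $\frac{d}{dx}\bigl(p-\alpha q(2-q)\bigr)$ on the curve $p=\alpha q(2-q)$ for $\alpha=1$ and $\alpha=\gamma/2$. Its upper curve $\phi'=-\frac{\gamma}{2}\phi(2+\phi)$ lies below $\phi'=-\gamma\phi$. The line itself would also serve, by the display above, combined with your $\phi'>0$ barrier.

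Such a barrier does both jobs at once.
\begin{itemize}
\item At $\phi=0$ it would force $0<\phi'<0$, so $Q_*$ is never crossed.
\item It gives $\frac{d}{dt}\bigl(e^{\gamma t}(-\phi)\bigr)=-e^{\gamma t}(\phi'+\gamma\phi)>0$. Hence $e^{\gamma t}(-\phi)$ is positive and increasing, and $c>0$ for $d\geq11$.
\end{itemize}

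For $d=10$, monotonicity alone does not exclude $a=0$, since an increasing function can have a finite positive limit. Here the paper shows, with $\xi=-\phi$, that the invariant curve carrying the $a=0$ solutions is $\phi'=\psi(\xi)$ with $\psi(\xi)=4\xi+\xi^2+O(\xi^3)$. This curve violates the barrier $\phi'<4\xi-2\xi^2$, so $a\neq0$.
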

\begin{proof}
    To begin, we make the change of variables. Let
    \begin{equation}
        x=\log y,\quad q(x)=y^2Q(y)=e^{2x}Q(e^x).
    \end{equation}
    The profile equation for $q$ can be written as
    \begin{equation}\label{equation:profile-q}
        q''+(d-4+q)q'+(d-2)q(q-2)=0.
    \end{equation}

    We now proceed in two steps. First, we prove the existence and uniqueness of the solution to \eqref{equation:profile-q} based on an analysis of the phase portrait in the $(q,q')$-plane. This phase-plane approach has also been used in \cite{Biernat2015}.

    \step{1} Solution to \eqref{equation:profile-q}: Recalling \eqref{condition:boundary-Q}, we can rewrite the boundary conditions for \eqref{equation:profile-q} as
    \begin{equation}
        q(-\infty)=0,\quad q(+\infty)=2.
    \end{equation}
    We also note here that in \eqref{equation:profile-q}, the scaling symmetry of
the original equation becomes translation in $x$:
\begin{equation}
    \lam^{-2}Q(x/\lam)\quad\iff\quad q(x-\log\lam).
\end{equation}
This shows in $(q,q')$-plane the solution orbit of $(q,q')$ does not change. Thus, the uniqueness of the heteroclinic orbit connecting $(0,0)$ to $(2,0)$ corresponds precisely to the uniqueness of the original profile $Q$ up to the scaling symmetry.

We now define the vector field
\begin{equation}\label{def:intergral-curve-F}
    F(q,q')=(q',-(d-4+q)q'+(d-2)q(2-q)),
\end{equation}
and a trapping region
\begin{equation}
    \mathcal{S}=\left\{(q,p):0<q<2,\quad q(2-q)<p<\frac{\gamma}{2}q(2-q)\right\},
\end{equation}
which includes critical points $(0,0)$ and $(2,0)$. We need to show no integral curve of $F$ starting in $\mathcal{S}$ will leave $\mathcal{S}$.

Indeed, for $\alpha>0$ we set
\begin{equation}
    G_{\alpha}(q,p)=p-\alpha q(2-q).
\end{equation}
Therefore we can compute on the integral curve of $F$ as defined in \eqref{def:intergral-curve-F},
\begin{equation}\label{equation:compute-integral-curve}
    \begin{aligned}
        \frac{d}{dx}G_{\alpha}(q,p)&=p'-\alpha q'(2-2q)\\
        &=q(2-q)\left[d-2-\alpha(d-4+q)-2\alpha^2(1-q)\right].
    \end{aligned}
\end{equation}
Then for the lower boundary $\alpha=1$, we have
\begin{equation}
    \frac{d}{dx}G_{1}(q,p)=q^2(2-q)>0
\end{equation}
for $q\in(0,2)$. Therefore the vector field points into the region $p>q(2-q)$. For the upper boundary $\alpha=\gamma/2$, we use the identity $\gamma^2-(d-2)\gamma+2(d-2)=0$ and compute
\begin{equation}
    \frac{d}{dx}G_{\gamma/2}(q,p)=-\frac{\gamma(\gamma-1)}{2}q(2-q)^2<0.
\end{equation}
This shows the vector field points into the region $p<(\gamma/2)q(2-q)$. This concludes the claim that every integral curve of \eqref{def:intergral-curve-F} enters $\mathcal{S}$ remains in $\mathcal{S}$.

Finally we observe that near the origin, we have
\begin{equation}
    q(x)=Ae^{2x}-\frac{dA^2}{2(d+2)}e^{4x}+O(e^{6x}).
\end{equation}
Thus we obtain
\begin{equation}
    p-q(2-q)=\frac{2A^2}{d+2}e^{4x}+O(e^{6x})>0,\quad x\ll-1 ,
\end{equation}
and
\begin{equation}
    \frac{\gamma}{2}q(2-q)-p=(\gamma-2)Ae^{2x}+O(e^{4x})>0.
\end{equation}
This implies that the integral curve starts at $(0,0)$ cannot escape $\mathcal{S}$, which completes the proof.

\step{2} Asymptotics at infinity: Now we compute the asymptotic behaviour of the solution as $x\rightarrow+\infty$. Let
\begin{equation}
    \tilde{q}(x)=q(x)-2.
\end{equation}
Then $\tilde{q}<0$ for all $x$, $\tilde{q}(x)\rightarrow0$ as $x\rightarrow+\infty$, and the equation of $\tilde{q}$ becomes
\begin{equation}\label{equation:tilde-q}
    \tilde{q}''+(d-2)\tilde{q}'+2(d-2)\tilde{q}=-\tilde{q}\tilde{q}'-(d-2)\tilde{q}^2.
\end{equation}
The linear characteristic polynomial for \eqref{equation:tilde-q} is
\begin{equation}
    P(r)=r^2+(d-2)r+2(d-2),
\end{equation}
with roots $-\gamma$ and $-\Gamma$ when $d>10$, and a double root $-4$ when $d=10$.

\case{I} $d>11$: In this case $\Gamma>2\gamma$, therefore classical asymptotic integration for analytic autonomous systems near a hyperbolic equilibrium
gives
\begin{equation}
    \tilde{q}(x)=ae^{-\gamma x}+O(e^{-2\gamma x}),\quad x\rightarrow+\infty.
\end{equation}
We need to show $a\neq0$. Indeed from the upper boundary of $\mathcal{S}$,
\begin{equation}\label{equation:relation-p-q}
    p=q'<\frac{\gamma}{2}q(2-q)
\end{equation}
Since $0<q<2$, the trapping inequality gives
$p<\gamma(2-q)$. Hence
\begin{equation}
    \frac{d}{dx}\bigl[e^{\gamma x}(2-q(x))\bigr]
=e^{\gamma x}\bigl[\gamma(2-q)-p\bigr]>0.
\end{equation}
Thus $e^{\gamma x}(2-q(x))$ is positive and increasing.
Its limit is $-a$, so $a<0$. Setting $c_A=-a>0$ we have
\begin{equation}
    Q_A(y)=y^{-2}(2-c_Ay^{-\gamma}+O(y^{-2\gamma})),\quad y\rightarrow+\infty.
\end{equation}

\case{II} $d=11$: In this case $\Gamma=2\gamma=6$, the resonance of $\Gamma=2\gamma$ leads to the existence of logarithmic factors. In particular, we have
\begin{equation}
    \tilde{q}(x)=ae^{-3x}+(Bx+b)e^{-6x}+O(x^2e^{-9x}).
\end{equation}
The same monotonicity argument as in Case I yields $a<0$. Substituting the above ansatz into \eqref{equation:tilde-q} we obtain
\begin{equation}
    B=2a^2.
\end{equation}
This shows
\begin{equation}
    Q_A(y)=y^{-2}(2-c_Ay^{-3}+2c_A^2(\log y)y^{-6}+O(y^{-6})),\quad y\rightarrow+\infty.
\end{equation}

\case{III} $d=10$: In this case $\gamma=\Gamma=4$, thus we have
\begin{equation}
    \tilde{q}(x)=(ax+b)e^{-4x}+O(x^2e^{-8x}).
\end{equation}
We claim $a\neq0$. If $a=0$, then the orbit lies on the one-dimensional invariant eigencurve tangent to $p=4\xi$ at $(\xi,p) = (0,0)$,
where $\xi=2-q$ and $p=q'$. Writing this curve as $p=\psi(\xi)$, applying \eqref{equation:profile-q} we obtain
\begin{equation}
    \frac{d\psi}{d\xi}=8-\xi-\frac{8(2-\xi)\xi}{\psi}.
\end{equation}
This gives the expansion
\begin{equation}
    \psi(\xi)=4\xi+\xi^2+O(\xi^3).
\end{equation}
However in dimension $10$ the upper boundary of the trapping region is
\begin{equation}
    p<\frac{\gamma}{2}(2-q)q=4\xi-2\xi^2,
\end{equation}
which is incompatible with the previous expansion for all sufficiently small $\xi>0$. Therefore $a\neq0$ and we have
\begin{equation}
     Q_A(y)=y^{-2}(2-c_A(\log y)y^{-4}+O(y^{-4})),\quad y\rightarrow+\infty.
\end{equation}
\end{proof}

\section{Coercivity of operators}
\label{app:coercivity}
In this section we derive the coercivity estimate of the linear operator $\Li$ under orthogonality condition \eqref{relation:orthogonality-condition}, which is used repeatedly in the preceding
estimates. Throughout this section, we let $d\geq11$ and $f\in\mathcal{D}_{\text{rad}}$ where
    \begin{equation}
        \mathcal{D}_{\text{rad}}=\{\text{$f\in C_c^{\infty}(\R^d)$ with radial symmetry}\}.
    \end{equation}
    We first recall the following standard Hardy-type estimate.
\begin{lemma}[Hardy-type inequalities]\label{lemma:Hardy-inequality}   
    We adopt the simplified notation for the integral in $\R^d$ introduced in \eqref{defeq;integrate-f}. Then we have: 
    \begin{enumerate}
        \item [(i)] Hardy inequality near the origin:
        \begin{equation}
            \int_{y\leq 1}\frac{|\partial_y f|^2}{y^{2i}}\geq\frac{(d-2-2i)^2}{4}\int_{y\leq 1}\frac{f^2}{y^{2+2i}}-C(d)f^2(1),\quad i=0,1,2.
        \end{equation}
        \item [(ii)] Hardy away from the origin: Let $\alpha>0,\alpha\neq\frac{1}{2}(d-2)$, then
        \begin{equation}
            \int_{y\geq 1}\frac{|\partial_y f|^2}{y^{2\alpha}}\geq\left(\frac{d-(2\alpha+2)}{2}\right)^2\int_{y\geq 1}\frac{f^2}{y^{2+2\alpha}}-C(\alpha,d)f^2(1).
        \end{equation}
    \end{enumerate}
\end{lemma}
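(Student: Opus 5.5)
The plan is to prove both inequalities by the standard one-dimensional integration-by-parts argument in the radial measure $y^{d-1}dy$, applied to $f\in\mathcal{D}_{\text{rad}}$ on each of the intervals $(0,1]$ and $[1,\infty)$. For a general power weight $y^{-2\beta}$, I will compute the derivative of $y^{d-2-2\beta}f^2$ (with $\beta=i$ near the origin and $\beta=\alpha$ at infinity):
\begin{equation}
    \partial_y\left(y^{d-2-2\beta}f^2\right)=(d-2-2\beta)y^{d-3-2\beta}f^2+2y^{d-2-2\beta}f\partial_yf .
\end{equation}
I then integrate over the relevant interval, keep the boundary contribution at $y=1$ (which is exactly $f^2(1)$), and use that the contribution at the other endpoint vanishes. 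That is, I need $y^{d-2-2\beta}f^2\to0$ as $y\to0$ and as $y\to\infty$.

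\textbf{Near the origin.} Smoothness makes $f$ bounded there, and $d-2-2i\geq d-6>0$ for $i\le2$ and $d\ge 11$, so the term at $0$ vanishes. This gives
\begin{equation}
    (d-2-2i)\int_{y\le1}\frac{f^2}{y^{2+2i}}=f^2(1)-2\int_{y\le1}\frac{f\partial_yf}{y^{1+2i}} .
\end{equation}
I bound the last term by Cauchy--Schwarz, $2ab\le \epsilon a^2+\epsilon^{-1}b^2$, choosing $\epsilon=(d-2-2i)/2$, and absorb the $f^2$ term into the left-hand side. This yields
\begin{equation}
    \int_{y\le1}\frac{|\partial_yf|^2}{y^{2i}}\ge\frac{(d-2-2i)^2}{4}\int_{y\le1}\frac{f^2}{y^{2+2i}}-\frac{d-2-2i}{2}f^2(1).
\end{equation}
Absorbing is legitimate because the integral on the left is finite: the weight $y^{d-3-2i}$ is integrable.

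\textbf{Away from the origin.} Compact support kills the term at infinity, so the same identity holds with constant $d-2-2\alpha\neq0$, and the boundary term at $y=1$ now enters with a minus sign. I split into cases by the sign of $d-2-2\alpha$.
\begin{itemize}
    \item If $d-2-2\alpha>0$, the boundary term enters as $-f^2(1)$ on the good side.
    \item If $d-2-2\alpha<0$, I multiply through by $-1$, so the constant becomes $|d-2-2\alpha|$, and absorb in the same way.
\end{itemize}
In either case the same Cauchy--Schwarz step with $\epsilon=|d-2-2\alpha|/2$ gives the constant $\left(\frac{d-2-2\alpha}{2}\right)^2$, which equals the stated $\left(\frac{d-(2\alpha+2)}{2}\right)^2$, with $C(\alpha,d)=|d-2-2\alpha|/2$. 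The hypothesis $\alpha\neq\frac{1}{2}(d-2)$ is exactly what keeps this constant nonzero.

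\textbf{Main obstacle.} The only delicate points are the boundary bookkeeping and the signs, in particular checking that the $f^2(1)$ term always lands with coefficient bounded by $C$ on the correct side in both sign cases. Since the inequality is stated with $-C f^2(1)$, any sign of this term is acceptable after taking absolute values. There is no genuine obstacle beyond that.
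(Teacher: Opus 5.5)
Your proof is correct: integrating $\partial_y\bigl(y^{d-2-2\beta}f^2\bigr)$ in the measure $y^{d-1}dy$, discarding the vanishing endpoint term, and applying Cauchy--Schwarz with $\epsilon=|d-2-2\beta|/2$ gives exactly the stated constants $\bigl(\frac{d-2-2\beta}{2}\bigr)^2$, with $C=|d-2-2\beta|/2$. The paper does not prove this lemma itself but defers to Merle--Rapha\"el--Rodnianski, where it is obtained by essentially this same integration-by-parts and Cauchy--Schwarz argument.
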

\begin{proof}
    The proof can be found in \cite{MerleRaphaelRodnianski2015}.
\end{proof}
We then prove the coercivity property of $\B^*$ from the Hardy-type inequalities.
\begin{lemma}[Full coercivity of $B^*$]\label{lemma:coercivity-Bstar}
   Suppose $\alpha>0$. Then there exists $c_{\alpha}>0$ such that
   \begin{equation}\label{estimate:coercivity-Bstar}
       \int\frac{\M|\mathcal{B}^*f|^2}{y^{2i}(1+y^{2\alpha})}\geq c_{\alpha}\left(\int\frac{\M|\partial_yf|^2}{y^{2i}(1+y^{2\alpha})}+\int\frac{\M f^2}{y^{2i+2}(1+y^{2\alpha})}\right),\quad i=0,1,2.
   \end{equation}
\end{lemma}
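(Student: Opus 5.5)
The plan is to reduce \eqref{estimate:coercivity-Bstar} to the Hardy inequalities of Lemma~\ref{lemma:Hardy-inequality} via a conjugation of $\B^*$. Since $\B^*f=\frac{1}{y^{d-1}\T_Q}\partial_y(y^{d-1}\T_Q f)$, we write $g=y^{d-1}\T_Q f$, so that $\B^*f=\partial_y g/(y^{d-1}\T_Q)$. Alternatively, we work directly with $\B^*f=\partial_yf+yV^Bf$ and use the asymptotics \eqref{property:decay-V}: near the origin $yV^B\sim (d+1)/y$, and at infinity $yV^B\sim (d+1-\gamma)/y$. In both regimes $\B^*$ behaves like $\partial_y+\frac{\kappa}{y}$, i.e. $y^{-\kappa}\partial_y(y^{\kappa}\cdot)$, with $\kappa=d+1$ near $0$ and $\kappa=d+1-\gamma$ near $\infty$. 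Because $\M\sim y^2$ at $0$ and $\M\sim y^4$ at $\infty$ (Lemma~\ref{lemma:asymp-coeficients}), each weighted integral is, up to constants, a power-weighted integral on $(0,1)$ and on $(1,\infty)$.

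\textbf{Step 1: the region $y\le 1$.} Set $g=y^{\kappa}f$ with $\kappa=d+1$. The weighted integral $\int_{y\le1} y^{2-2i}|\B^*f|^2$ becomes $\int_0^1 y^{d+1-2i-2\kappa}|\partial_y g|^2dy$. A one-dimensional Hardy inequality on $(0,1)$ with power weight then bounds $\int_0^1 y^{d-1-2i-2\kappa}g^2$, up to a boundary term at $y=1$. Because the weight exponent is very negative, $g$ vanishes to high order at $0$ for smooth $f$, and the Hardy constant $\bigl((d+1-2i-2\kappa+1)/2\bigr)^2$ is nonzero. This gives control of $\int_{y\le1}\M f^2/y^{2i+2}$. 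The term $\partial_y f$ is then recovered from $\partial_y f=\B^*f-yV^Bf$ together with $|yV^B|\lesssim 1/y$.

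\textbf{Step 2: the region $y\ge1$.} Take $\kappa=d+1-\gamma$. The weight becomes $y^{4+d-1-2i-2\alpha}y^{-2\kappa}$, and Lemma~\ref{lemma:Hardy-inequality}(ii), or its one-dimensional analogue, requires the nondegeneracy condition that the effective exponent avoids the critical value. Writing $n=d+3-2i-2\alpha-2\kappa$, the Hardy constant is $\bigl((n+1)/2\bigr)^2$, which vanishes only when $n=-1$, i.e. $2\alpha=2\gamma-d+4-2i-\dots$. Since $\kappa$ is large, $n$ is strongly negative. In fact $n=2\gamma-d+1-2i-2\alpha<-1$ for all $\alpha>0$, using $2\gamma\le 6<d$. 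The constant is therefore positive uniformly for $\alpha>0$, and in this regime no orthogonality condition is needed. This step is where the exact constants in \eqref{property:decay-V} matter, so it is the main check.

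\textbf{Step 3: removing the boundary terms.} The Hardy inequalities on $(0,1)$ and $(1,\infty)$ each produce a term $f^2(1)$. Rather than absorbing it, we avoid it by integrating on the full half-line. For $y\le1$ we integrate from $0$, where $g$ vanishes; for $y\ge1$ we integrate the identity $\partial_y(y^{n+1}g^2)$ from $\infty$, where $g$ decays. The cross boundary term at $y=1$ then carries a favorable sign, or is controlled by a local trace bound $f^2(1)\lesssim\int_{1/2\le y\le 2}(|\B^*f|^2+\dots)$. If a sign problem appears, we fall back on a global argument: $\B^*$ has kernel spanned by $1/(y^{d-1}\T_Q)$, which is non-integrable at $0$ against the weight and so excluded from the function class. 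A compactness/contradiction argument then upgrades the local control to the full inequality. Finally, we combine the two regions, using comparability of $1+y^{2\alpha}$ with $1$ or $y^{2\alpha}$ on each piece, to conclude \eqref{estimate:coercivity-Bstar}.
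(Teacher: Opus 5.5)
Your Steps 1 and 2 treat $\B^*$ as exactly the model operator $y^{-\kappa}\partial_y(y^{\kappa}\,\cdot)$ in each region. In fact $\B^*f=y^{-\kappa}\partial_y(y^{\kappa}f)+Ef$ with $E=yV^B-\kappa/y$, and \eqref{property:decay-V} only gives $E=O(y)$ on $(0,1]$ and $E=O(y^{-3})$ on $[1,\infty)$.

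These remainders are small only as $y\to0$ or $y\to\infty$. Near $y=1$ they are of order one, comparable to $\kappa/y$ itself. After squaring, they contribute terms like $\int_{1/2\le y\le 2}\M f^2$. Such terms are of the same order as the quantity $\int \M f^2/(y^{2i+2}(1+y^{2\alpha}))$ that Hardy is supposed to control, and their constants depend on $Q$ in a way you have no quantitative handle on.

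So your claim that the weighted integral ``becomes'' $\int_0^1 y^{-d-1-2i}|\partial_y g|^2\,dy$ is false. Steps 1--3 as written only give a subcoercive bound modulo $f^2(1)$ and compact lower-order terms. Closing that bound requires exactly the compactness/contradiction argument you list as a fallback, with the kernel $\beta/(y^{d-1}\T_Q)$ excluded by its non-integrability at the origin. That is the paper's proof: subcoercivity via Hardy in each region, then compactness. The fallback is therefore not a contingency for a sign issue at $y=1$; it is the step that absorbs these interior errors.

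If you want a genuinely direct proof, use the exact conjugation from your first sentence rather than the approximate one. With $W=y^{d-1}\T_Q$ and $g=Wf$ one has exactly $\B^*f=W^{-1}\partial_y g$, and
\[
\int\frac{\M|\B^*f|^2}{y^{2i}(1+y^{2\alpha})}=\int_0^\infty \omega\,|\partial_y g|^2\,dy,\qquad
\int\frac{\M f^2}{y^{2i+2}(1+y^{2\alpha})}=\int_0^\infty \frac{\omega}{y^2}\,g^2\,dy,
\]
with $\omega=\bigl(\Lambda Q\,y^{d-1}\T_Q\,y^{2i}(1+y^{2\alpha})\bigr)^{-1}$. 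By Lemma~\ref{lemma:asymp-coeficients}, $\omega$ is comparable to $y^{-d-1-2i}$ on $(0,1]$ and to $y^{2\gamma-d+1-2i-2\alpha}$ on $[1,\infty)$, and both exponents are $<1$.

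The exact power-weight Hardy identities then apply after comparing weights: on $(0,1)$ integrate from $0$, where $g=O(y^{d+1})$, and on $(1,\infty)$ use compact support. The $g(1)^2$ produced with a good sign on the inner piece controls the one produced with a bad sign on the outer piece. The $\partial_y f$ term follows from $|yV^B|\lesssim 1/y$. This gives a compactness-free proof with an explicit constant, which the paper's argument does not provide.

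A minor slip: for $\int y^n|g'|^2\ge c\int y^{n-2}g^2$ the constant is $((n-1)/2)^2$, not $((n+1)/2)^2$. This is harmless here since $n<-4$ in both regions.
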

\begin{proof}
    The proof follows the standard two-step argument:
    
    \step{1} Subcoercive estimate for $\B^*$: for $i=0,1,2$ and $\alpha\geq0$ we claim   \begin{equation}\label{estimate:subcoercivity-Bstar}
        \int\frac{\M|\mathcal{B}^*f|^2}{y^{2i}(1+y^{2\alpha})}\gtrsim\int\frac{\M|\partial_yf|^2}{y^{2i}(1+y^{2\alpha})}+\int\frac{\M f^2}{y^{2i+2}(1+y^{2\alpha})}-f^2(1)-\int\frac{\M f^2}{1+y^{2i+2\alpha+4}}.
    \end{equation}
    Indeed the structure of $\B^*$ in \eqref{defeq:linear-operator-in-V} and \eqref{property:decay-V} leads to the following by integrating by parts
    \begin{align}
        \int_{y\leq1}\frac{\M|\mathcal{B}^*f|^2}{y^{2i}(1+y^{2\alpha})}&\gtrsim\int_{y\leq1}\frac{y^2}{y^{2i}}\left|\partial_yf+\frac{d+1}{y}f+O(|yf|)\right|^2\\
            &\gtrsim \int_{y\leq1}\frac{|\partial_yf|^2}{y^{2i-2}}+(d+1)\int_{y\leq1}\frac{\partial_y(f^2)}{y^{2i-1}}+(d+1)^2\int_{y\leq1}\frac{f^2}{y^{2i}}-O\left(\int_{y\leq1}\frac{f^2}{y^{2i-4}}\right)\\
            &\gtrsim\int_{y\leq1}\frac{|\partial_yf|^2}{y^{2i-2}}+2i(d+1)\int_{y\leq1}\frac{f^2}{y^{2i}}+(d+1)f^2(1)-O\left(\int_{y\leq1}\frac{f^2}{y^{2i-4}}\right)\\
            &\gtrsim\int_{y\leq1}\frac{\M|\partial_yf|^2}{y^{2i}}+\int_{y\leq1}\frac{\M|f|^2}{y^{2i+2}}-O\left(\int_{y\leq1}y^2\M f^2\right).
    \end{align}
    Then for $y$ away from the origin, we use again \eqref{property:decay-V} to obtain
    \begin{equation}
        \int_{y\geq1}\frac{\M|\mathcal{B}^*f|^2}{y^{2i}(1+y^{2\alpha})}\gtrsim\int_{y\geq1}\frac{1}{y^{2i+2\alpha-4}}\left(\partial_yf+\frac{d+1-\gamma}{y}f\right)^2-\int_{y\geq1}\frac{f^2}{y^{2i+2\alpha}}.
    \end{equation}
    Applying Lemma \ref{lemma:Hardy-inequality} we compute
    \begin{equation}
        \begin{aligned}
            \int_{y\geq1}\frac{1}{y^{2i+2\alpha-4}}\left(\partial_yf+\frac{d+1-\gamma}{y}f\right)^2&=\int_{y\geq1}\frac{|\partial_y(y^{d+1-\gamma}f)|^2}{y^{2i+2\alpha-4+2(d+1-\gamma)}}=\int_{y\geq1}\frac{|\partial_yg|^2}{y^{2i+2\alpha-4+2(d+1-\gamma)}}\\
            &\gtrsim\int_{y\geq1}\frac{g^2}{y^{2i+2\alpha+2d-2\gamma}}-g^2(1)\gtrsim\int_{y\geq1}\frac{\M f^2}{y^{2i+2\alpha+2}}-f^2(1),
        \end{aligned}
    \end{equation}
    where we use $g=y^{d+1-\gamma}f$. Gathering the above bounds yields \eqref{estimate:subcoercivity-Bstar}.

    \step{2} Coercivity of $\B^*$: We now argue by contradiction to show the coercivity of $\B^*$. Suppose that \eqref{estimate:coercivity-Bstar} fails. Then there exists a sequence $f_n\in\mathcal{D}_{\text{rad}}$ such that
    \begin{equation}\label{estimate:sequence-Bstar-1}
        \int\frac{\M|\partial_yf_n|^2}{y^{2i}(1+y^{2\alpha})}+\int\frac{\M f_n^2}{y^{2i+2}(1+y^{2\alpha})}=1,\quad  \int\frac{\M|\mathcal{B}^*f_n|^2}{y^{2i}(1+y^{2\alpha})}\leq\frac{1}{n}.
    \end{equation}
    Recalling \eqref{estimate:subcoercivity-Bstar} we get
    \begin{equation}\label{estimate:sequence-Bstar-2}
        f_n^2(1)+\int\frac{\M f_n^2}{1+y^{2i+2\alpha+4}}\gtrsim1.
    \end{equation}
    We note from \eqref{estimate:sequence-Bstar-1} the sequence $f_n$ is bounded in  $H_{\text{loc}}^1$.  Therefore we use a standard diagonal extraction argument to obtain that there exists $f_{\infty}\in H_{\text{loc}}^1$ such that up to a subsequence,
    \begin{equation}
        f_n\rightharpoonup f_{\infty} \quad \text{in $H_{\text{loc}}^1$},
    \end{equation}
    and from the local compactness of one-dimensional Sobolev embeddings
    \begin{equation}
        f_n\rightarrow f_{\infty} \quad\text{in $L_{\text{loc}}^2$},\quad f_n(1)\rightarrow f_{\infty}(1).
    \end{equation}
    Using again \eqref{estimate:sequence-Bstar-1} and \eqref{estimate:sequence-Bstar-2} we show
    \begin{equation}\label{estimate:sequence-Bstar-3}
        f_{\infty}^2(1)+\int\frac{\M f_{\infty}^2}{1+y^{2i+2\alpha+4}}\gtrsim1\quad\text{and}\quad\int\frac{\M f_{\infty}^2}{y^{2i+2}(1+y^{2\alpha})}\lesssim1,
    \end{equation}
    which means $f_{\infty}\neq0$. On the other hand, from \eqref{estimate:sequence-Bstar-1} and the lower semicontinuity of norms for the
weak topology, we have
\begin{equation}
    \B^*f_{\infty}=0.
\end{equation}
Now we need to recall the structure of $\B^*$ in \eqref{relation:kernel-of-A-B}, this shows that there exists $\beta\neq0$ such that
\begin{equation}
    f_{\infty}=\frac{\beta}{y^{d-1}\T_Q},
\end{equation}
with asymptotic behaviour $\T_Q\sim y^2$ near the origin. Therefore
\begin{equation}
    \int_{y\leq1}\frac{\M f_{\infty}^2}{y^{2i+2}}\gtrsim\int_{y\leq1}\frac{y^4y^{-2d-2}}{y^{2i+2}}dy=+\infty,
\end{equation}
which contradicts \eqref{estimate:sequence-Bstar-3}.
\end{proof}
Now we present the coercivity of $\A$ under orthogonality conditions.
\begin{lemma}[Coercivity of $\A$]\label{lemma:coercivity-A}
    Let $p\geq0$ and $i=0,1,2$. Further assume that
    \begin{equation}
        \scl{\M f}{\Phi_M}=0\quad \text{when}\quad 2i+2p>d-2\gamma-2,
    \end{equation}
    where $\Phi_M$ is defined in \eqref{defeq:PhiM}. Then we have the coercivity of $\A$
    \begin{equation}\label{estimate:coercivity-A}
        \int\frac{\M|\A  f|^2}{y^{2i}(1+y^{2p})}\gtrsim\int\frac{\M|\partial_yf|^2}{y^{2i}(1+y^{2p})}+\int\frac{\M f^2}{y^{2i}(1+y^{2p+2})}.
    \end{equation}
\end{lemma}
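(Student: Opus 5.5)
We prove the coercivity of $\A$ by the same two-step scheme as Lemma~\ref{lemma:coercivity-Bstar}: first a subcoercive bound from the explicit form $\A f=-\partial_y f+yV^A f$ and Hardy inequalities, then a compactness argument in which the kernel element $\Lambda Q$ is excluded either by non-integrability or by the orthogonality to $\Phi_M$.

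\textbf{Step 1: subcoercive estimate.} We aim for
\begin{equation}
\int\frac{\M|\A f|^2}{y^{2i}(1+y^{2p})}\gtrsim\int\frac{\M|\partial_yf|^2}{y^{2i}(1+y^{2p})}+\int\frac{\M f^2}{y^{2i}(1+y^{2p+2})}-C\int\frac{\M f^2}{1+y^{2i+2p+4}}.
\end{equation}
Near the origin, $V^A=c_1'+O(y^2)$ and $\M\sim y^2$, so $\A f=-\partial_y f+O(y|f|)$. Expanding the square and using the Hardy inequality of Lemma~\ref{lemma:Hardy-inequality}(i) with weight $y^{2-2i}$ controls $\int_{y\le1}y^{2-2i}f^2$ (the target only needs $f^2/y^{2i}$ times $y^2$ from $\M$). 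The $O(y f)$ error is absorbed by the lower-order term.

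For $y\ge1$, write $\A f=-\Lambda Q\,\partial_y(f/\Lambda Q)$ with $\Lambda Q\sim y^{-2-\gamma}$, and set $g=f/\Lambda Q$. The left side becomes $\int_{y\ge1} y^{d+3-2i-2p-4-2\gamma}|\partial_y g|^2$. Lemma~\ref{lemma:Hardy-inequality}(ii) then gives $\int_{y\ge1} y^{d+1-2i-2p-2-2\gamma-4}g^2$, which is equivalent to $\int_{y\ge1}\M f^2/y^{2i+2p+2}$, up to a boundary term $g^2(1)$.

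The Hardy constant is $\left(\frac{d-2-2\gamma-2i-2p}{2}\right)^2$ after converting the exponent. It is positive unless $2i+2p=d-2-2\gamma$, and only this borderline exponent needs an $\epsilon$-loss or log adjustment. The boundary term $f^2(1)$ is controlled by the local part via one-dimensional trace inequalities. The derivative term $\int\M|\partial_y f|^2/\cdots$ is recovered from $|\partial_y f|\le|\A f|+y|V^A||f|$ with $|yV^A|\lesssim 1/y$.

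\textbf{Step 2: contradiction and compactness.} Suppose the estimate fails. Take normalized $f_n$ with right-hand side $1$ and $\int\M|\A f_n|^2/(y^{2i}(1+y^{2p}))\to0$. Step 1 gives a lower bound on the compact term $\int\M f_n^2/(1+y^{2i+2p+4})$.

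Extract $f_n\rightharpoonup f_\infty$ in $H^1_{\rm loc}$ with strong $L^2_{\rm loc}$ convergence. The compact weighted term passes to the limit because the tails at infinity are uniformly small: the weight gains $y^{-2}$ relative to the normalized norm. Hence $f_\infty\ne0$ and $\A f_\infty=0$, so $f_\infty=\beta\Lambda Q$ by \eqref{relation:kernel-of-A-B}.

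\textbf{Step 3: excluding $f_\infty=\beta\Lambda Q$.} There are two cases.

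If $2i+2p\le d-2\gamma-2$ (more precisely $<$), then $\int\M(\Lambda Q)^2/(y^{2i}(1+y^{2p+2}))$ behaves like $\int^\infty y^{d+3-2\gamma-4-2i-2p-2}\,dy$. This diverges exactly when $2i+2p\le d-2\gamma-2$, contradicting the finiteness of the limit norm via Fatou; so $\beta=0$.

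If $2i+2p>d-2\gamma-2$, the assumed orthogonality applies. Since $\Phi_M$ is compactly supported, $\langle\M f_n,\Phi_M\rangle=0$ passes to the limit by local strong convergence. This gives $\beta\langle\M\Lambda Q,\Phi_M\rangle=0$, and the nondegeneracy $\langle\M\Phi_M,\Lambda Q\rangle\gtrsim M^{d-2\gamma}>0$ forces $\beta=0$.

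Either way $f_\infty=0$, a contradiction.

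\textbf{Main obstacle.} The delicate point is the interplay in Step 1 between the Hardy constant and the threshold exponent $d-2\gamma-2$. At the borderline $2i+2p=d-2\gamma-2$ the constant vanishes and $\Lambda Q$ fails logarithmically to lie in the space. There I would rely on the divergence argument of Step 3 rather than Hardy, keeping a small loss in Step 1 via the compact remainder, and checking that the uniform tail control in Step 2 still holds.
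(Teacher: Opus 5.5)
Your argument is the same as the paper's: a subcoercive bound from Hardy inequalities, compactness giving $f_\infty=\beta\Lambda Q$, and exclusion of $\beta\neq0$ either by non-integrability of $\Lambda Q$ or by orthogonality to $\Phi_M$. Your far-field step through the exact identity $\A f=-\Lambda Q\,\partial_y(f/\Lambda Q)$ is tidier than the paper's replacement of $V^A$ by its asymptotics. There is one slip: after Hardy the weight is $y^{d-3-2i-2p-2\gamma}g^2$, not $y^{d-5-2i-2p-2\gamma}g^2$. Your identification with $\int_{y\geq1}\M f^2/y^{2i+2p+2}$ is correct for the former.

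The point to correct is your plan for the borderline $2i+2p=d-2\gamma-2$. No $\epsilon$-loss or appeal to the divergence of $\Lambda Q$ can rescue it, because the inequality is false there when no orthogonality is imposed.

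\emph{Counterexample.} Take $f=\Lambda Q\,g_R$ with $g_R=1$ on $[0,R]$, $g_R=1-\log(y/R)/\log R$ on $[R,R^2]$, and $g_R=0$ beyond, smoothed at the corners. Then $\A f=-\Lambda Q\,g_R'$ is supported in $[R,R^2]$. The left side is $\asymp(\log R)^{-2}\int_R^{R^2}y^{-1}dy=(\log R)^{-1}$. The right side is $\gtrsim\int_1^R y^{-1}dy=\log R$.

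In your compactness step this is exactly the case where the normalized $f_n$ spread logarithmically to infinity and converge weakly to $0$. So Step 3 has nothing to contradict.

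The paper's proof has the same blind spot. Its Hardy inequality away from the origin requires $\alpha\neq\frac12(d-2)$, which is precisely this exponent. Yet its Step 2 groups the equality case with the divergent one.

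The fix is to exclude the borderline, or to restrict to integer $p$, and this costs nothing. Indeed $d-2\gamma-2=\sqrt{(d-2)(d-10)}$ equals $3$ for $d=11$ and is irrational for $d\geq12$. The lemma is only used with $2i+2p$ an even integer, namely $2$ and $4k+6$ in the coercivity of $\Li^k$. With that restriction your proof is complete.
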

\begin{proof}
   We follow the same two-step argument as in Lemma \ref{lemma:coercivity-Bstar}.

    \step{1} Subcoercivity of $\A$: We claim:
    \begin{equation}\label{estimate:subcoercivity-A}
        \int\frac{\M|\A f|^2}{y^{2i}(1+y^{2p})}\gtrsim\int\frac{\M|\partial_yf|^2}{y^{2i}(1+y^{2p})}+\int\frac{\M f^2}{y^{2i}(1+y^{2p+2})}-f^2(1)-\int\frac{\M f^2}{1+y^{2i+2p+4}}.
    \end{equation}
    Recall \eqref{defeq:linear-operator-in-V} and \eqref{property:decay-V} again we compute near the origin as
    \begin{align}
        \int_{y\leq1}\frac{\M|\A f|^2}{y^{2i}(1+y^{2p})}&\gtrsim\int_{y\leq1}\frac{y^2}{y^{2i}}\left|-\partial_yf+O(|yf
        |)\right|^2\\
        &\gtrsim\int_{y\leq1}\frac{|\partial_yf|^2}{y^{2i-2}}+\int_{y\leq1}\frac{f^2}{y^{2i-2}}-\int_{y\leq1}\left(\frac{1}{y^{2i-2}}+\frac{1}{y^{2i-4}}\right)f^2\\
        &\gtrsim \int_{y\leq1}\frac{\M|\partial_yf|^2}{y^{2i}}+\int_{y\leq1}\frac{\M |f|^2}{y^{2i}}-\int_{y\leq1}\M|f|^2.
    \end{align}
    Away from the origin, we have the following estimate by recalling \eqref{property:decay-V}
    \begin{align}
        \int_{y\geq1}\frac{\M|\A f|^2}{y^{2i}(1+y^{2p})}&\gtrsim\int_{y\geq1}\frac{1}{y^{2i+2p-4}}\left(\partial_yf+\frac{\gamma+2}{y}f\right)^2-\int_{y\geq1}\frac{f^2}{y^{2i+2p}}\\
            &=\int_{y\geq1}\frac{|\partial_y(y^{\gamma}f)|^2}{y^{2i+2p-4+2\gamma}}-\int_{y\geq1}\frac{f^2}{y^{2i+2p}}\\
            &\gtrsim\int_{y\geq1}\frac{f^2}{y^{2i+2p-2}}-f^2(1)-\int_{y\geq1}\frac{f^2}{y^{2i+2p}}\\
            &\gtrsim \int\frac{\M f^2}{y^{2i}(1+y^{2p+2})}-f^2(1)-\int\frac{\M f^2}{1+y^{2i+2p+4}},
    \end{align}
    where we apply Lemma \ref{lemma:Hardy-inequality} again. This shows \eqref{estimate:subcoercivity-A}.

    \step{2} We now follow the steps in the proof of \eqref{estimate:coercivity-Bstar}. This implies the existence of $f_{\infty}\neq0$ such that
    \begin{equation}
        \int\frac{\M f_{\infty}^2}{y^{2i}(1+y^{2p+2})}\lesssim1,\quad \A f_{\infty}=0.
    \end{equation}
    Then it follows from \eqref{relation:kernel-of-A-B} that
    \begin{equation}
        f_{\infty}=\beta \Lambda Q,\quad \beta\neq0.
    \end{equation}
    If $2i+2p>d-2\gamma-2$ we apply the orthogonality condition to obtain
    \begin{equation}
        0=\langle\M f_{\infty},\Phi_M\rangle=\beta M^{d-2\gamma}>0,
    \end{equation}
    which is a contradiction. Otherwise if $2i+2p\leq d-2\gamma-2$ we integrate over the region away from the origin to get
    \begin{equation}
        \int_{y\geq1}\frac{\M f_{\infty}^2}{y^{2i}(1+y^{2p+2})}\gtrsim\beta^2\int_{y\geq1}\frac{y^4y^{d-1}y^{-2\gamma-4}}{y^{2i+2p+2}}dy\gtrsim\int_{y\geq1}y^{-1}dy=+\infty,
    \end{equation}
    which is also a contradiction.
\end{proof}

We are now ready to provide the coercivity of the iterate of $\Li$.
\begin{lemma}[Coercivity of $\Li^k$]\label{lemma:coercivity-L}
    Let $k\in\N$. Assume we have the orthogonality condition
    \begin{equation}
        \scl{\M f}{\Li^i\Phi_M}=0,\quad0\leq i\leq k-h,
    \end{equation}
    where $h$ is defined in \eqref{defeq:h-delta}. Then it follows
    \begin{equation}
    \begin{aligned}
        \E_{2k+2}(f)&=\int\M |\Li^{k+1}f|^2\\
        &\gtrsim\int\frac{\M|\A\Li^kf|^2}{y^2}+\sum_{i=0}^k\int\frac{\M|\Li^if|^2}{y^2(1+y^{4(k-i)+2})}+\sum_{i=0}^{k-1}\int\frac{\M|\A\Li^if|^2}{y^4(1+y^{4(k-i)-2})}.
    \end{aligned}
    \end{equation}
\end{lemma}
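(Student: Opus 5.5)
We argue by induction on $k$, using the factorization $\Li=\B^*\A$ to write $\Li^{k+1}f=\B^*(\A\Li^kf)$. This lets us peel off one first-order operator at a time with Lemma~\ref{lemma:coercivity-Bstar} and Lemma~\ref{lemma:coercivity-A}.

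\emph{Base case.} For $k=0$ we have $\int\M|\Li f|^2=\int\M|\B^*\A f|^2$. Lemma~\ref{lemma:coercivity-Bstar} with $g=\A f$, $i=0$ and any small $\alpha>0$ gives
\[
\int\M|\Li f|^2\gtrsim \int \frac{\M|\A f|^2}{y^2(1+y^{2\alpha})}.
\]
To recover the unweighted $\int \M|\A f|^2/y^2$ at infinity we rerun the Hardy argument in the proof of Lemma~\ref{lemma:coercivity-Bstar} with $\alpha=0$. The Hardy constant there involves $d-2\gamma+2\neq 0$, so the estimate is nondegenerate, and $\ker\B^*$ is excluded as before by its singularity at the origin. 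We then apply Lemma~\ref{lemma:coercivity-A} to $f$ with $i=1$ and $p=0$. This yields $\int\M f^2/(y^2(1+y^2))$. No orthogonality is needed exactly when $2\le d-2\gamma-2$, that is, for the range $2i+2p\le d-2\gamma-2$. When orthogonality is needed we use $\scl{\M f}{\Phi_M}=0$ (case $i=0$). The terms are then matched to the weights $y^{-2}(1+y^{2})^{-1}$ in the statement.

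\emph{Induction step.} Assume the bound at level $k-1$ holds for $g:=\Li f$. The orthogonality conditions $\scl{\M f}{\Li^i\Phi_M}=0$ for $i\le k-h$ imply, by the self-adjointness \eqref{equation:adjoint-L}, that $\scl{\M g}{\Li^{i}\Phi_M}=0$ for $i\le k-1-h$. The hypothesis therefore applies to $g$ and controls $\int\M|\A\Li^kf|^2/y^2$, together with the weighted norms of $\Li^{i}f$ for $1\le i\le k$ with weights $y^{-2}(1+y^{4(k-i)+2})^{-1}$. It remains to bound $f$ and $\A f$ with weights of size $y^{-4k-4}$ at infinity, using the already-controlled $\int\M|\Li f|^2/(y^2(1+y^{4k-2}))$. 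We do this in two steps.
\begin{enumerate}
\item Apply Lemma~\ref{lemma:coercivity-Bstar} to $\A f$ with a weight of order $y^{4k}$. This gives $\int\M|\A f|^2/(y^4(1+y^{4k-2}))$.
\item Apply Lemma~\ref{lemma:coercivity-A} with total weight exponent $2i+2p\approx 4k+2$. This gives $\int\M f^2/(y^2(1+y^{4k+2}))$.
\end{enumerate}
Lemma~\ref{lemma:coercivity-A} needs $\scl{\M f}{\Phi_M}=0$ precisely when $2i+2p>d-2\gamma-2=4h+4\delta-2$. So the threshold index is governed by $h$, and only $k-h+1$ conditions are ever used. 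Every time $\A$ is inverted on some $\Li^jf$ at a weight beyond the critical threshold, the missing direction $\Lambda Q$ (equivalently, $\Li^j f$ picking up a $T_j$-component) is removed by the matching condition $\scl{\M f}{\Li^j\Phi_M}=0$.

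Lemmas~\ref{lemma:coercivity-Bstar} and \ref{lemma:coercivity-A} are stated only for $i\le 2$ and with the specific weight structure $y^{2i}(1+y^{2p})$. Large weights at infinity are carried by $p$ (respectively $\alpha$), and the small-$y$ weights $y^{-2},y^{-4}$ by $i\in\{1,2\}$, so each application stays within the stated range.

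\emph{Main obstacle.} The delicate step is the bookkeeping of which orthogonality is required at which level. For each $j$ we must check that the required condition is one of $\scl{\M f}{\Li^i\Phi_M}=0$ with $i\le k-h$, and that at critical exponents the Hardy constants do not vanish. The latter holds because $\delta\in(0,1)$ makes $d-2\gamma-4j-2\notin\{0\}$ for the relevant $j$, and this is where the definition of $h$ enters. In the contradiction step of Lemma~\ref{lemma:coercivity-A}, where $2i+2p>d-2\gamma-2$, $f_\infty$ must be identified with $\Lambda Q$ at the corresponding level, and this has to be compatible with $\Phi_M$ through the cancellation identity \eqref{equation:cancellation-identity}, i.e.\ $\scl{\M\Li^iT_k}{\Phi_M}\propto\delta_{ik}$. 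I would verify this compatibility first.
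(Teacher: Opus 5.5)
Your proposal is correct and follows essentially the same route as the paper. You induct via $g=\Li f$, shift the orthogonality conditions using the self-adjointness \eqref{equation:adjoint-L}, and apply Lemma~\ref{lemma:coercivity-Bstar} and Lemma~\ref{lemma:coercivity-A} to recover the weighted bounds on $\A f$ and $f$, invoking $\langle \M f,\Phi_M\rangle=0$ exactly when $k\geq h$ (the paper's ``$k\geq h-1$'' in its $k\to k+1$ indexing). The compatibility check with the cancellation identity \eqref{equation:cancellation-identity} that you flag is unnecessary: the contradiction step of Lemma~\ref{lemma:coercivity-A} applied to $\Li^j f$ only uses $\langle\M\Lambda Q,\Phi_M\rangle\neq0$, since $\langle \M\Li^jf,\Phi_M\rangle=\langle\M f,\Li^j\Phi_M\rangle=0$.
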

\begin{proof}
    We prove this by induction on $k$. First when $k=0$, apply Lemma \ref{lemma:coercivity-Bstar} and Lemma \ref{lemma:coercivity-A} to obtain
    \begin{equation}
        \int\M|\Li f|^2=\int\M|\B^*\A f|^2\gtrsim\int\frac{\M|\A f|^2}{y^2}\gtrsim\int\frac{\M|\A f|^2}{y^2}+\int\frac{\M f^2}{y^2(1+y^2)}.
    \end{equation}
    We now assume the inequality holds for $k\geq0$ and consider the case $k+1$. We have the coercivity condition
    \begin{equation}
        \scl{\M f}{\Li^i\Phi_M}=0,\quad0\leq i\leq k+1-h.
    \end{equation}
    Take $g=\Li f$, then
    \begin{equation}
        \scl{\M g}{\Li^i\Phi_M}=0,\quad0\leq i\leq k-h.
    \end{equation}
    Using the induction hypothesis we get
    \begin{align}
        \int\M|\Li^{k+2}f|^2&=\int\M|\Li^{k+1}g|^2\\
            & \gtrsim\int\frac{\M|\A\Li^kg|^2}{y^2}+\sum_{i=0}^k\int\frac{\M|\Li^ig|^2}{y^2(1+y^{4(k-i)+2})}+\sum_{i=0}^{k-1}\int\frac{\M|\A\Li^ig|^2}{y^4(1+y^{4(k-i)-2})}\\
            &=\int\frac{\M|\A\Li^{k+1}f|^2}{y^2}+\sum_{i=1}^{k+1}\int\frac{\M|\Li^if|^2}{y^2(1+y^{4(k+1-i)+2})}+\sum_{i=1}^{k}\int\frac{\M|\A\Li^if|^2}{y^4(1+y^{4(k-i)+2})}.
    \end{align}
    We then observe if $k\geq h-1$, the orthogonality condition $\scl{\M f}{\Phi_M}$ still holds. When $k\leq h-2$ we have
    \begin{equation}
        4k+6\leq 4h-2\leq4\left(\frac{d}{4}-\frac{\gamma}{2}-\delta\right)-2\leq d-2\gamma-2.
    \end{equation}
    Therefore we can apply Lemma \ref{lemma:coercivity-Bstar} and Lemma \ref{lemma:coercivity-A} to obtain
    \begin{equation}
        \int\frac{\M|\Li f|^2}{y^2(1+y^{4k+2})}\gtrsim \int\frac{\M|\A f|^2}{y^4(1+y^{4k+2})}\gtrsim  \int\frac{\M|\A f|^2}{y^4(1+y^{4k+2})}+\int\frac{\M|f|^2}{y^4(1+y^{4k+4})}.
    \end{equation}
    which concludes the proof.
\end{proof}
Finally we provide the coercivity estimate for $\eps$, which follows directly from Lemma \ref{lemma:coercivity-L}.
\begin{lemma}\label{lemma:coercive-bound-eps}
    We have the following weighted bounds for $\eps$: for $1\leq k\leq m$,
    \begin{equation}
        \int\M|\eps_{2k}|^2+\sum_{i=0}^{2k-1}\int\frac{\M|\eps_{i}|^2}{y^2(1+y^{4k-2i-2})}\leq C(M)\E_{2k}.
    \end{equation}
\end{lemma}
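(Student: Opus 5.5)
The plan is to derive Lemma~\ref{lemma:coercive-bound-eps} from Lemma~\ref{lemma:coercivity-L}, applied with $f=\eps$. The point is that the orthogonality conditions \eqref{relation:orthogonality-condition}, $\langle\eps,\M\Li^i\Phi_M\rangle=0$ for $0\le i\le L$, cover every index the lemma requires. For $1\le k\le m$ I apply Lemma~\ref{lemma:coercivity-L} at level $k-1$, so that its left-hand side is $\int\M|\Li^k\eps|^2=\E_{2k}$. This needs orthogonality for $0\le i\le k-1-h$. Since $k-1-h\le m-1-h=L$, these conditions hold by \eqref{relation:orthogonality-condition}. For $k\le h$ the range is empty, and the argument then reduces to the non-orthogonal branch of Lemma~\ref{lemma:coercivity-A}, which is allowed precisely when $2i+2p\le d-2\gamma-2$. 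This is the same dichotomy already used at the end of the proof of Lemma~\ref{lemma:coercivity-L}.

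Next I identify the terms. We have $\eps_{2j}=\Li^j\eps$ and $\eps_{2j+1}=\A\Li^j\eps$. Lemma~\ref{lemma:coercivity-L} at level $k-1$ bounds the following quantities by $\E_{2k}$:
\[
\int\frac{\M|\eps_{2k-1}|^2}{y^2},\qquad \int\frac{\M|\eps_{2j}|^2}{y^2(1+y^{4(k-1-j)+2})}\ (0\le j\le k-1),\qquad \int\frac{\M|\eps_{2j+1}|^2}{y^4(1+y^{4(k-1-j)-2})}\ (0\le j\le k-2).
\]
I then compare weights with the target weight $y^{-2}(1+y^{4k-2i-2})^{-1}$. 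For the even index $i=2j$ the target is $y^{-2}(1+y^{4k-4j-2})^{-1}$, which matches exactly. For the odd index $i=2j+1$ the target is $y^{-2}(1+y^{4k-4j-4})^{-1}$. The available weight $y^{-4}(1+y^{4k-4j-6})^{-1}$ is comparable at infinity and larger near the origin, since $y^{-4}\ge y^{-2}$ for $y\le1$. The top odd term $i=2k-1$ has weight $y^{-2}(1+y^{0})^{-1}\lesssim y^{-2}$, so it is also covered. The term $\int\M|\eps_{2k}|^2$ is $\E_{2k}$ itself.

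The one real issue is the constant and its dependence on $M$. The proof of Lemma~\ref{lemma:coercivity-A} is by contradiction, and the contradiction comes from $\langle\M\Lambda Q,\Phi_M\rangle\gtrsim M^{d-2\gamma}\neq0$, so the coercivity constant depends on $M$. This is why the statement is written with $C(M)$. I would make this explicit by fixing $M$ first and then running the compactness argument with $\Phi_M$ fixed. The $\Phi_M$ in the orthogonality differs from $\chi_M\Lambda Q$ by the $\Li^j$ corrections. For the limit $f_\infty=\beta\Lambda Q$, however, $\langle\M\Lambda Q,\Li^j(\chi_M\Lambda Q)\rangle=\langle\M\Li^j\Lambda Q,\chi_M\Lambda Q\rangle=0$ for $j\ge1$ by the symmetry \eqref{equation:adjoint-L} and $\Li\Lambda Q=0$. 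So only the $j=0$ term survives, and nondegeneracy holds.

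Finally, the argument needs $\eps$ to be approximable by elements of $\mathcal D_{\rm rad}$ in the weighted norms. I expect this to be routine, using the finiteness of $\E_{2k}$ and a cutoff argument, but I would note it as a step to check.
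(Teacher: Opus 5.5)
Your proposal is correct and follows the paper's route. The paper simply says the bound follows directly from Lemma~\ref{lemma:coercivity-L}, and your version supplies the bookkeeping it leaves implicit: applying that lemma at level $k-1$ (orthogonality indices $0\le i\le k-1-h\le L$), comparing weights term by term for even and odd $\eps_i$, noting the $M$-dependence of the constant and the nondegeneracy $\langle\M\Phi_M,\Lambda Q\rangle=\langle\chi_M\M\Lambda Q,\Lambda Q\rangle$, and reducing to $\mathcal{D}_{\text{rad}}$ by density.
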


\section{Interpolation bounds}
\label{app:interpolation}
In this section we provide the interpolation bounds for $\eps$. We first give the bounds near the origin.
\begin{lemma}[Bounds near the origin]\label{lemma:inter-bound-origin}
    \begin{enumerate}
        \item [(i)] Expansion near the origin: We can write
        \begin{equation}
            \eps=\sum_{i=1}^mc_i T_{m-i}+r_{\eps}
        \end{equation}
        with the bounds
        \begin{equation}
            \begin{gathered}
                |c_i|\lesssim\sqrt{\E_{2m}}\\
            |\partial_y^j r_{\eps}|\lesssim y^{2m-1-\frac{d}{2}-j}|\ln(y)|^m\sqrt{\E_{2m}},\quad 0\leq j\leq 2m-1,\quad y<1.
            \end{gathered}
        \end{equation}

        \item [(ii)] Pointwise bounds: For $y\leq 1$ we have
        \begin{equation}
            \begin{gathered}
                |\eps_{2i}|+|\partial_y^{2i}\eps|\lesssim |\ln(y)|^my^{-\frac{d}{2}+1}\sqrt{\E_{2m}},\quad  0\leq i\leq m-1\\
            |\eps_{2i-1}|+|\partial_y^{2i-1}\eps|\lesssim|\ln(y)|^m y^{-\frac{d}{2}}\sqrt{\E_{2m}},\quad  1\leq i\leq m.
            \end{gathered}
        \end{equation}
    \end{enumerate}
\end{lemma}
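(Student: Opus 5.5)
We have to prove Lemma~\ref{lemma:inter-bound-origin}: a near-origin expansion of $\eps$ by finitely many generalized-kernel profiles plus a remainder, with pointwise bounds controlled by $\sqrt{\E_{2m}}$. The plan is to peel off $\Li$ one power at a time, using the explicit inversion formula of Lemma~\ref{lemma:inversion-L} together with the kernel structure \eqref{relation:kernel-Li}.

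\textbf{Base step.} Start from $\eps_{2m}=\Li^m\eps$, which lies in the weighted space $\int\M|\eps_{2m}|^2=\E_{2m}$ with $\M\sim y^2$ near $0$. Since $\eps_{2m-1}=\A\eps_{2m-2}$ satisfies $\B^*\eps_{2m-1}=\eps_{2m}$, we integrate
\[
\eps_{2m-1}=\frac{1}{y^{d-1}\T_Q}\int_0^y \eps_{2m}\T_Q x^{d-1}dx .
\]
The homogeneous term $\beta/(y^{d-1}\T_Q)$ is excluded because it is not locally square integrable against $\M/y^2$; this is exactly the argument in Lemma~\ref{lemma:coercivity-Bstar}. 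Cauchy--Schwarz with $\T_Q\sim y^2$ gives
\[
|\eps_{2m-1}|\lesssim y^{-d/2}\sqrt{\E_{2m}} .
\]

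\textbf{Inverting $\A$.} Next we invert $\A$:
\[
\eps_{2m-2}=\Lambda Q\Bigl(c-\int_0^y \frac{\eps_{2m-1}}{\Lambda Q}\,dx\Bigr)
\]
when the integral converges, and otherwise we write it as $\int_y^1$ plus a constant. The constant multiplies $\Lambda Q=T_0$, and its size is bounded by the coercivity norms of Lemma~\ref{lemma:coercive-bound-eps}, for instance by evaluating on $1\le y\le2$ where $\eps_{2m-2}$ is controlled in $L^2$. The integral part gains one power of $y$, or a logarithm at the critical exponent, giving
\[
|\eps_{2m-2}-c_1\Lambda Q|\lesssim y^{1-d/2}|\ln y|\sqrt{\E_{2m}} .
\]

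\textbf{Iteration.} Repeat the two inversions $m$ times. At each stage the new kernel constant multiplies $\Lambda Q$, and the previously generated profiles are carried along by $\Li^{-1}$. This turns $\Lambda Q=T_0$ into $(-1)^j T_j$ by the definition \eqref{defeq:definition-Tk}, which produces $\eps=\sum_i c_iT_{m-i}+r_\eps$. The singular second kernel $\Gamma_Q\sim y^{-d}$ never appears, because $\eps$ is smooth at the origin. Each step gains two powers of $y$ and at most one logarithm, so
\[
|r_\eps|\lesssim y^{2m-1-d/2}|\ln y|^m\sqrt{\E_{2m}} .
\]
Derivative bounds for $r_\eps$ follow from the first-order structure $\A=-\partial_y+yV^A$ and $\B^*=\partial_y+yV^B$: each $\partial_y$ costs one power of $y$, using the smooth admissible coefficients of Lemma~\ref{lemma:asymp-coeficients}.

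\textbf{Part (ii).} Apply the same bookkeeping stopped at intermediate levels. The profiles $T_j$ are smooth and hence bounded by $\sqrt{\E_{2m}}$ near the origin, while the remainder contributions are dominated by $|\ln y|^m y^{1-d/2}$ at even levels and $|\ln y|^m y^{-d/2}$ at odd levels.

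\textbf{Main obstacle.} The delicate point is controlling the kernel constants $c_i$ by $\sqrt{\E_{2m}}$. Since $\E_{2m}$ alone does not see these constants, we plan to use the full family of weighted bounds in Lemma~\ref{lemma:coercive-bound-eps} on the annulus $1\le y\le 2$, where $\Lambda Q$ and the $T_j$ are linearly independent from the remainder. A second point needing care is tracking exactly when the exponent $2k-d/2$ crosses $-1$, since that is where the logarithmic losses arise.
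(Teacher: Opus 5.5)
Your proposal is correct and follows essentially the same route as the paper. You invert $\B^*$ from the origin (Cauchy--Schwarz with $\T_Q\sim y^2$ gives the $y^{-d/2}$ bound) and invert $\A$ from a base point near $y=1$, iterating $m$ times so that the accumulated $\Lambda Q$ coefficients are carried by $\Li^{-1}$ into the profiles $T_j$. You exclude $\Gamma_Q$ by its $y^{-d}$ singularity, and you fix each new constant by evaluating at a point where the coercivity bounds give $|\eps_{2m-2k}|\lesssim\sqrt{\E_{2m}}$.

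The differences are cosmetic. The paper picks that evaluation point in $(1/2,1)$ rather than $[1,2]$. Also, exactly one new constant (the coefficient of $\Lambda Q$) appears at each stage, so positivity of $\Lambda Q$ at the evaluation point is all you need, not a linear-independence argument.
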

\begin{proof}
    \smallcase{i} We claim the following for $1\leq k\leq m$
    \begin{equation}\label{equation:taylor-expand-at-origin-eps}
        \eps_{2m-2k}=\sum_{i=1}^kc_{i,k}T_{k-i}+r_{2k}
    \end{equation}
    together with the bounds
    \begin{equation}
        \begin{gathered}
            |c_{i,k}|\lesssim\sqrt{\E_{2m}}\\
        |\partial_y^jr_{2k}|\lesssim y^{2k-1-\frac{d}{2}-j}|\ln(y)|^k\sqrt{\E_{2m}},\quad 0\leq j\leq2k-1,\quad y<1.
        \end{gathered}
    \end{equation}
    
    We prove this by induction in $k$. For $k=1$ we write
    \begin{equation}
        r_1(y)=\eps_{2m-1}(y)=\frac{1}{y^{d-1}\T_Q}\int_0^y\eps_{2m}\T_Qx^{d-1}dx.
    \end{equation}
    Using Lemma~\ref{lemma:asymp-coeficients} that $\T_Q\sim y^{2},\M_Q\sim y^2$ as $y\rightarrow0$, we obtain
    \begin{equation}
        |r_1(y)|\leq\frac{1}{y^{d+1}}\left(\int\M|\eps_{2m}|^2x^{d-1}dx\right)^{\frac{1}{2}} \left(\int x^{-2}x^{4}x^{d-1}dx\right)^{\frac{1}{2}}\lesssim y^{-\frac{d}{2}}\sqrt{\E_{2m}},\quad y<1. 
    \end{equation}
    So there exists $a\in(\frac{1}{2},1)$ such that
    \begin{equation}
        |\eps_{2m-1}(a)|^2\lesssim\int_{\frac{1}{2}\leq y\leq 1}\M|\eps_{2m-1}|^2\lesssim \E_{2m}.
    \end{equation}
    Next we define
    \begin{equation}
        r_2(y)=-\Lambda Q\int_a^y\frac{r_1}{\Lambda Q}dx,
    \end{equation}
    therefore
    \begin{equation}
        |r_2(y)|\lesssim 1\cdot y^{-\frac{d}{2}}\sqrt{\E_{2m}}\int_a^y1dx\lesssim y^{-\frac{d}{2}+1}\sqrt{\E_{2m}},\quad y<1.
    \end{equation}
    It is straightforward to get
    \begin{equation}
        \A r_2=r_1=\eps_{2m-1},\quad \Li r_2=\B^*\eps_{2m-1}=\eps_{2m}=\Li \eps_{2m-2},
    \end{equation}
    and we recall from \eqref{relation:kernel-Li} that $\ker(\Li)=\text{Span}\{\Lambda Q,\Gamma_Q\}$. $\Gamma_Q$ is singular with $y^{-d}$ at the origin, this shows there exists $c_2\in\R$ such that
    \begin{equation}
        \eps_{2m-2}=c_2\Lambda Q+r_2.
    \end{equation}
    Moreover there exists $a_1\in(1/2,1)$ such that
    \begin{equation}
        |\eps_{2m-2}(a_1)|^2\lesssim\E_{2m},
    \end{equation}
    which implies
    \begin{equation}
        |c_2|\lesssim\sqrt{\E_{2m}},\quad |\partial_yr_2|\lesssim|r_1|+\frac{r_2}{y}\lesssim y^{-\frac{d}{2}}|\ln(y)|\sqrt{\E_{2m}}
    \end{equation}
    and complete the case for $k=1$.

    We then assume \eqref{equation:taylor-expand-at-origin-eps} holds for $k\geq1$ and prove for $k+1$. We define
    \begin{equation}
        r_{2k+1}(y)=\frac{1}{y^{d-1}\T_Q}\int_{a_2}^yr_{2k}\T_Qx^{d-1}dx,\quad r_{2k+2}(y)=-\Lambda Q\int_{a_3}^y\frac{r_{2k+1}}{\Lambda Q}dx.
    \end{equation}
    Here $a_2,a_3$ are chosen to be $0$ or $1$ depending on whether the term inside the integral is integrable at the origin. In fact, the above choice of $a_2,a_3$ together with induction hypothesis lead to the following estimate
    \begin{equation}
        \begin{gathered}
            |r_{2k+1}|\lesssim \frac{1}{y^{d+1}}\int_{a_2}^y|\ln(x)|^kx^{2k-1-\frac{d}{2}}x^2x^{d-1}dx\sqrt{\E_{2m}}
        \lesssim |\ln(y)|^{k+\delta_0}y^{2k-\frac{d}{2}}\sqrt{\E_{2m}},\\
        |r_{2k+2}|\lesssim \int_{a_3}^y|\ln(x)|^{k+\delta_0}x^{2k-\frac{d}{2}}dx\sqrt{\E_{2m}}
        \lesssim |\ln(y)|^{k+\delta_0+\delta_1}y^{2k+1-\frac{d}{2}}\sqrt{\E_{2m}},
        \end{gathered}
    \end{equation}
    here $\delta_0=1$ if $2k-d/2=0$ and $\delta_0=0$ otherwise, $\delta_1=1$ if $2k+1-d/2=0$ and $\delta_1=0$ otherwise. Therefore in all the cases we obtain
    \begin{equation}
        |r_{2k+2}|\lesssim|\ln(y)|^{k+1}y^{2k+1-\frac{d}{2}}\sqrt{\E_{2m}}.
    \end{equation}
    Then direct computation shows
    \begin{equation}
        \Li\eps_{2m-2(k+1)}=\eps_{2m-2k}=\sum_{i=1}^kc_{i,k}T_{k-i}+r_{2k}=-\sum_{i=1}^kc_{i,k}\Li T_{k+1-i}+\Li r_{2k+2},
    \end{equation}
    which implies
    \begin{equation}
         \eps_{2m-2(k+1)}=-\sum_{i=1}^kc_{i,k} T_{k+1-i}+c_{2k+2}\Lambda Q+ r_{2k+2}.
    \end{equation}
    Same as the case for $k=1$ we can choose $a_4\in(\frac{1}{2},1)$ such that
    \begin{equation}
        |\eps_{2m-2(k+1)}(a_4)|^2\lesssim\E_{2m}.
    \end{equation}
    Thus
    \begin{equation}
        |c_{2k+2}|\lesssim\sqrt{\E_{2m}}
    \end{equation}
    and this completes the proof.

    \smallcase{ii} The proof follows directly from \eqref{equation:taylor-expand-at-origin-eps}.
\end{proof}
\begin{lemma}[Bounds away from the origin]\label{lemma:inter-bound-far}
    \begin{enumerate}
        \item [(i)] Weighted bounds: We have for $i+j\leq2m$
        \begin{equation}\label{equation:appendix-weight-bound-1}
            \int\frac{\M|\partial_y^i\eps|^2}{1+y^{2j}}\lesssim\left\{
    \begin{aligned}
         & \E_{2k}, && \text{if $i+j=2k$, $1\leq k\leq m$},\\
       & \sqrt{\E_{2k}}\sqrt{\E_{2k+2}}, && \text{if $i+j=2k+1$, $1\leq k\leq m-1$},
    \end{aligned}
    \right. 
        \end{equation}
        and
        \begin{equation}\label{equation:appendix-weight-bound-2}
            \int\frac{\M|\partial_y^i\eps|^2}{1+y^{2j+1}}\lesssim\left\{
    \begin{aligned}
         & \E_{2k}^{\frac{3}{4}}\E_{2k+2}^{\frac{1}{4}}, && \text{if $i+j=2k$, $1\leq k\leq m-1$},\\
       & \E_{2k}^{\frac{1}{4}}\E_{2k+2}^{\frac{3}{4}}, && \text{if $i+j=2k+1$, $1\leq k\leq m-1$}.
    \end{aligned}
    \right. 
        \end{equation}
        \item[(ii)] Pointwise bound at infinity: We set $1\leq i+j\leq 2m-1$. For $y>1$ the following estimate holds
        \begin{equation}
            \frac{|\partial_y^i\eps|^2}{y^{2j}}\lesssim\left\{
    \begin{aligned}
         & \frac{1}{y^{d+2}}\E_{2k}, && \text{if $i+j+1=2k$, $1\leq k\leq m$},\\
       & \frac{1}{y^{d+2}}\sqrt{\E_{2k}}\sqrt{\E_{2k+2}}, && \text{if $i+j=2k$, $1\leq k\leq m-1$}.
    \end{aligned}
    \right. 
        \end{equation}
        Also if we set $1\leq i+j\leq 2m-2$, we have
        \begin{equation}
            \frac{|\partial_y^i\eps|^2}{y^{2j+1}}\lesssim\left\{
    \begin{aligned}
         & \frac{1}{y^{d+2}}\E_{2k}^{\frac{3}{4}}\E_{2k+2}^{\frac{1}{4}}, && \text{if $i+j+1=2k$, $1\leq k\leq m-1$},\\
       & \frac{1}{y^{d+2}}\E_{2k}^{\frac{1}{4}}\E_{2k+2}^{\frac{3}{4}}, && \text{if $i+j=2k$, $1\leq k\leq m-1$}.
    \end{aligned}
    \right. 
        \end{equation}
    \end{enumerate}
\end{lemma}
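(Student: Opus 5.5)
The plan is to reduce everything to one-dimensional weighted Hardy inequalities on $y\geq1$, combined with the coercivity of Lemma~\ref{lemma:coercive-bound-eps}. Throughout we use $\M\sim y^4$ for $y\geq1$ (Lemma~\ref{lemma:asymp-coeficients}).

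\textbf{Part (i), equation \eqref{equation:appendix-weight-bound-1} with $i+j=2k$ even.} The first step is to express $\partial_y^i\eps$ in terms of the iterates $\eps_\ell$, $\ell\leq i$. From $\A=-\partial_y+yV^A$ and $\B^*=\partial_y+yV^B$, where $yV^A,yV^B=O(y^{-1})$ at infinity, an induction gives
\[
\partial_y^i\eps=\sum_{\ell\leq i}W_{i,\ell}\,\eps_\ell,\qquad |W_{i,\ell}|\lesssim y^{-(i-\ell)},\quad y\geq1.
\]
Hence
\[
\int_{y\geq1}\frac{\M|\partial_y^i\eps|^2}{y^{2j}}\lesssim\sum_{\ell\leq i}\int_{y\geq1}\frac{\M|\eps_\ell|^2}{y^{2(2k-\ell)}}.
\]
Each term on the right is bounded by $\E_{2k}$ through Lemma~\ref{lemma:coercive-bound-eps}. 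That lemma gives weight $y^{-2}(1+y^{4k-2\ell-2})^{-1}\sim y^{-(4k-2\ell)}$, which is exactly the weight needed.

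For the region $y\leq1$, the weight $(1+y^{2j})^{-1}$ is harmless. There we would bound $\partial_y^i\eps$ through the same operator expansion; alternatively, $\M\sim y^2$ and the origin expansion of Lemma~\ref{lemma:inter-bound-origin} apply with $\E_{2k}$ in place of $\E_{2m}$. The singular coefficients $\tfrac{d+1}{y}$ in $\B^*$ are absorbed by the $y^{-2}$ factor in the coercivity weights.

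\textbf{Part (i), interpolated bounds.} For $i+j=2k+1$, Cauchy--Schwarz splits the weight $y^{-2j}$ as
\[
\bigl(y^{-(2j-1)}\bigr)^{1/2}\bigl(y^{-(2j+1)}\bigr)^{1/2}.
\]
One side is controlled at level $2k$ and the other at level $2k+2$. In the odd-level case we integrate by parts once, moving one derivative from $\partial_y^i\eps$ onto its partner. This yields $\sqrt{\E_{2k}\E_{2k+2}}$. Equation \eqref{equation:appendix-weight-bound-2} follows by one further application of the same Cauchy--Schwarz splitting, between the even and the odd bounds just proven. The exponents $\tfrac34,\tfrac14$ come from geometric averaging of $\E_{2k}$ and $\sqrt{\E_{2k}\E_{2k+2}}$.

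\textbf{Part (ii), pointwise bounds.} We use the fundamental theorem of calculus from infinity, which is allowed since $\eps$ decays. Set $g=y^{-j}\partial_y^i\eps$ and $N=i+j$. Then
\[
|g(y)|^2\leq\int_y^\infty|\partial_y(g^2)|
\lesssim\int_y^\infty\Bigl(\frac{|\partial_y^i\eps|^2}{x^{2j+1}}+\frac{|\partial_y^i\eps||\partial_y^{i+1}\eps|}{x^{2j}}\Bigr)dx.
\]
Next we insert $x^{d+3}\M^{-1}\lesssim x^{d-1}$ and use $x\geq y$ to extract $y^{-(d+2)}$. The remaining integrals have the form of part (i), with total index $2N+1$ after Cauchy--Schwarz; that is, they are products of two weighted norms with indices $N$ and $N+1$. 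When $N+1=2k$, this produces $\E_{2k}$. When $N=2k$, it produces $\sqrt{\E_{2k}\E_{2k+2}}$. The $y^{2j+1}$ variants follow in the same way from \eqref{equation:appendix-weight-bound-2}.

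\textbf{Main obstacle.} The delicate step is the bookkeeping of weights at infinity. We must verify that the power counting exactly matches the coercivity weights $y^{-(4k-2\ell)}$ of Lemma~\ref{lemma:coercive-bound-eps}. We must also verify that no index falls outside the admissible range of Hardy's inequality (Lemma~\ref{lemma:Hardy-inequality}(ii)), i.e.\ the exceptional exponent $\alpha=\tfrac{d-2}{2}$. That case is already excluded inside the coercivity lemmas, so we only need to check consistency of exponents, case by case in the parity of $i+j$.
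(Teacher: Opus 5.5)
Your proposal follows the paper's proof. You expand $\partial_y^i\eps$ in the $\eps_\ell$ with coefficients $O(y^{-(i-\ell)})$ at infinity and apply Lemma~\ref{lemma:coercive-bound-eps} at even levels. You interpolate by Cauchy--Schwarz at odd levels and for the weights $y^{2j+1}$. You get (ii) from the fundamental theorem of calculus from infinity together with $\M\sim y^4$.

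Two pieces of the bookkeeping you flagged as delicate are off, although the conclusions you state are right.
\begin{itemize}
\item For $i+j=2k+1$ the weight split has to be $y^{-2j}=(y^{-(2j-2)})^{1/2}(y^{-(2j+2)})^{1/2}$. This pairs $(i,j-1)$ at level $2k$ with $(i,j+1)$ at level $2k+2$. The split with exponents $2j\pm1$ that you wrote lands at levels $2k+\frac12$ and $2k+\frac32$. Those are the quantities of \eqref{equation:appendix-weight-bound-2}, which are themselves derived from this case, so the argument would be circular.
\item In (ii), extracting $y^{-(d+2)}$ from $x^{-(d+3)}$ leaves a factor $x^{-1}$. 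What remains is $\int_y^\infty \M x^{d-1}\bigl(x^{-2j-2}|\partial_x^i\eps|^2+x^{-2j-1}|\partial_x^i\eps||\partial_x^{i+1}\eps|\bigr)\,dx$. Its total index is $2N+2$, not $2N+1$, and both Cauchy--Schwarz factors sit at level $N+1$. With your indices $N$ and $N+1$, the case $N=2k$ would give $\E_{2k}^{3/4}\E_{2k+2}^{1/4}$ instead of $\sqrt{\E_{2k}\E_{2k+2}}$.
\end{itemize}

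Your odd-case integration by parts moves one derivative onto the partner. It turns $\int\M|\partial_y^i\eps|^2y^{-2j}$ into $\int\M\,\partial_y^{i-1}\eps\,\partial_y^{i+1}\eps\,y^{-2j}$ plus a term from differentiating the weight. This step is actually required when $j=0$, i.e.\ for $\int\M|\partial_y^{2k+1}\eps|^2\lesssim\sqrt{\E_{2k}\E_{2k+2}}$, where no weight split exists. The paper uses only the weight split. For $j=0$ its first factor carries the weight $(1+y^{-2})^{-1}$ on $\partial_y^{2k+1}\eps$, which $\E_{2k}$ does not control. Yet this case is exactly what the proof of (ii) needs for $i=2k$, $j=0$. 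On this point your version is more complete than the paper's.

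One caveat concerns the region near the origin. There the singular coefficients compound; for instance $W_{3,1}=-(d+1)(d+2)y^{-2}+O(1)$. So the single factor $y^{-2}$ in the coercivity weights does not absorb them. A cleaner route is local elliptic regularity for $\Li=-\Delta_{\R^{d+2}}+(\text{smooth lower order})$, since $\M y^{d-1}\sim y^{d+1}$ is the radial measure of $\R^{d+2}$. The paper also passes over this step.
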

\begin{proof}
    \smallcase{i} Near the origin, recalling $\mathcal{M}(y)y^{d-1}\simeq y^{d+1}$, we apply Lemma~\ref{lemma:coercive-bound-eps} to get
\begin{equation}
    \sum_{i=0}^{2k}\int_{y<1}\mathcal M|\partial_y^i\epsilon|^2\lesssim\sum_{i=0}^{2k}\int_{y<1}\mathcal M\frac{|\partial_y^i\epsilon|^2}{1+y^{4k-2i}}
\lesssim \mathcal E_{2k}.
\end{equation}
    We observe for $i+j=2k$ with $1\leq k\leq m$, we have
    \begin{equation}
        \begin{aligned}
            \sum_{i=0}^{2k}\int\frac{\M|\partial_y^i\eps|^2}{1+y^{4k-2i}}&\lesssim\E_{2k}+\sum_{i=0}^{2k-1}\int_{y<1}y^2|\partial_y^i\eps|^2+\sum_{i=0}^{2k-1}\int_{y>1}\frac{\M|\partial_y^i\eps|^2}{y^{4k-2i}}\\
        &\lesssim \E_{2k}+\E_{2k}+\sum_{i=0}^{2k-1}\sum_{j=0}^i\int_{y>1}\frac{\M|\eps_j|^2}{y^{4k-2j}}\lesssim\E_{2k}.
        \end{aligned}
    \end{equation}
    Then for $i+j=2k+1$ we use the Cauchy-Schwarz inequality to obtain
    \begin{equation}
        \int\frac{\M|\partial_y^i\eps|^2}{1+y^{2j}}\lesssim\left(\int\frac{\M|\partial_y^i\eps|^2}{1+y^{4k-2i}}\right)^{\frac{1}{2}}\left(\frac{\M|\partial_y^i\eps|^2}{1+y^{4k-2i+4}}\right)^{\frac{1}{2}}\lesssim\sqrt{\E_{2k}}\sqrt{\E_{2k+2}},
    \end{equation}
    which implies \eqref{equation:appendix-weight-bound-1}. The proof of \eqref{equation:appendix-weight-bound-2} can be obtained in the same way by means of the Cauchy-Schwarz inequality.
    
    \smallcase{ii} We apply the weighted bounds \eqref{equation:appendix-weight-bound-1} and \eqref{equation:appendix-weight-bound-2} to obtain
    \begin{equation}
        \begin{aligned}
            \left|\frac{\partial^i_y\eps}{y^j}\right|^2&\lesssim\left|\int_y^{+\infty}\partial_x\left(\frac{(\partial_x^i\eps)^2}{x^{2j}}\right)dx\right|\lesssim\frac{1}{y^{d-1+4-1}}\left(\int_y^{+\infty}\frac{\M|\partial_x^i\eps|^2}{x^{2j+2}}dx+\int_{y}^{+\infty}\frac{\M|\partial_x^{i+1}\eps|^2}{x^{2j}}dx\right)\\
        &\lesssim\frac{1}{y^{d+2}}\left\{
    \begin{aligned}
         & \E_{2k}, && \text{if $i+j+1=2k$, $1\leq k\leq m$},\\
       & \sqrt{\E_{2k}}\sqrt{\E_{2k+2}}, && \text{if $i+j=2k$, $1\leq k\leq m-1$},
    \end{aligned}
    \right. 
        \end{aligned}
    \end{equation}
    where we use the Cauchy-Schwarz inequality. We also have
    \begin{equation}
        \begin{aligned}
            \frac{|\partial^i_y\eps|^2}{y^{2j+1}}&\lesssim\left|\int_y^{+\infty}\partial_x\left(\frac{(\partial_x^i\eps)^2}{x^{2j+1}}\right)dx\right|\lesssim\frac{1}{y^{d-1+4-1}}\left(\int_y^{+\infty}\frac{\M|\partial_x^i\eps|^2}{x^{2j+3}}dx+\int_{y}^{+\infty}\frac{\M|\partial_x^{i+1}\eps|^2}{x^{2j+1}}dx\right)\\
        &\lesssim\frac{1}{y^{d+2}}\left\{
    \begin{aligned}
         & \E_{2k}^{\frac{3}{4}}\E_{2k+2}^{\frac{1}{4}}, && \text{if $i+j+1=2k$, $1\leq k\leq m-1$},\\
       & \E_{2k}^{\frac{1}{4}}\E_{2k+2}^{\frac{3}{4}}, && \text{if $i+j=2k$, $1\leq k\leq m-1$},
    \end{aligned}
    \right. 
        \end{aligned}
    \end{equation}
    which completes the proof.
\end{proof}

We conclude the appendix by stating the Leibniz rule for $\Li^k$ as below.
\begin{lemma}[Leibniz rule for $\Li^k$]\label{lemma:lebniz-rule}
    Let $\phi$ be a smooth function and $k\in\N$, we have the following
    \begin{equation}\label{equation:lebniz-relation}
        \begin{gathered}
            \Li^{k+1}(\phi f)=\sum_{i=0}^{k+1}f_{2i}\phi_{2k+2,2i}+\sum_{i=0}^kf_{2i+1}\phi_{2k+2,2i+1},\\
            \A\Li^k(\phi f)=\sum_{i=0}^{k}f_{2i+1}\phi_{2k+1,2i+1}+\sum_{i=0}^kf_{2i}\phi_{2k+1,2i},
        \end{gathered}
    \end{equation}
    where
    \begin{equation}
        f_0=f,\quad f_{2i}=\Li^{i}f,\quad f_{2i+1}=\A\Li^{i}f.
    \end{equation}
    And we have the following recurrence relations: for $k=0$,
    \begin{gather}
        \phi_{1,0}=-\partial_y\phi,\quad\phi_{1,1}=\phi,\\
            \phi_{2,0}=-\partial_{yy}\phi-y(V^A+V^B)\partial_y\phi,\quad\phi_{2,1}=2\partial_y\phi,\quad \phi_{2,2}=\phi,
    \end{gather}
    and for $k\geq1$,
    \begin{align}
        &\phi_{2k+1,0}=-\partial_y\phi_{2k,0},\quad \phi_{2k+1,2i}=-\partial_y\phi_{2k,2i}-\phi_{2k,2i-1},\quad1\leq i\leq k,\\
            &\phi_{2k+1,2i+1}=\phi_{2k,2i}+y(V^A+V^B)\phi_{2k,2i+1}-\partial_y\phi_{2k,2i+1},\quad 0\leq i\leq k-1,\\
            &\phi_{2k+2,2k+2}=\phi_{2k+1,2k+1}=\phi_{2k,2k}=\phi,\quad\phi_{2k+2,0}=\partial_y\phi_{2k+1,0}+y(V^A+V^B)\phi_{2k+1,0},\\
            &\phi_{2k+2,2i}=\phi_{2k+1,2i-1}+\partial_y\phi_{2k+1,2i}+y(V^A+V^B)\phi_{2k+1,2i},\quad1\leq i\leq k,\\
            &\phi_{2k+2,2i+1}=-\phi_{2k+1,2i}+\partial_y\phi_{2k+1,2i+1},\quad 0\leq i\leq k.
    \end{align}
\end{lemma}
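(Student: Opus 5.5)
We argue by induction on $k$, alternating the application of $\A$ and $\B^*$, since $\Li=\B^*\A$ and, by definition, $f_{2i+1}=\A f_{2i}$ and $f_{2i+2}=\B^* f_{2i+1}$. Everything reduces to four elementary product rules. Write $W:=y(V^A+V^B)$ and use the first-order forms $\A=-\partial_y+yV^A$ and $\B^*=\partial_y+yV^B$ from \eqref{defeq:linear-operator-in-V}. The key observation is that each operator applied to the ``wrong'' parity level can be re-expressed through the other operator plus a multiplication by $W$:
\begin{equation}
    \B^* g=-\A g+Wg,\qquad \A g=-\B^* g+Wg .
\end{equation}
For a smooth coefficient $\psi$ this gives
\begin{equation}
    \begin{gathered}
        \A(\psi f_{2i})=\psi f_{2i+1}-\psi' f_{2i},\qquad
        \A(\psi f_{2i+1})=-\psi f_{2i+2}+(W\psi-\psi')f_{2i+1},\\
        \B^*(\psi f_{2i+1})=\psi f_{2i+2}+\psi' f_{2i+1},\qquad
        \B^*(\psi f_{2i})=-\psi f_{2i+1}+(\psi'+W\psi)f_{2i}.
    \end{gathered}
\end{equation}

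\textbf{Base case.} Apply the first identity with $i=0$ and $\psi=\phi$. This gives $\A(\phi f)=\phi f_1-\phi' f$, i.e. $\phi_{1,1}=\phi$ and $\phi_{1,0}=-\partial_y\phi$. Next apply $\B^*$ to $\A(\phi f)$ using the third identity (with $\psi=\phi$) and the fourth identity (with $\psi=-\phi'$). This yields
\begin{equation}
    \Li(\phi f)=\phi f_2+2\phi' f_1-(\phi''+W\phi')f,
\end{equation}
which is exactly $\phi_{2,2},\phi_{2,1},\phi_{2,0}$.

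\textbf{Inductive step.} Assume the expansion of $\Li^k(\phi f)$ holds with coefficients $\phi_{2k,j}$. Apply $\A$ termwise, using the first two identities. The even term $\phi_{2k,2i}f_{2i}$ contributes $\phi_{2k,2i}$ to the $f_{2i+1}$ coefficient and $-\partial_y\phi_{2k,2i}$ to the $f_{2i}$ coefficient. The odd term $\phi_{2k,2i+1}f_{2i+1}$ contributes $(W-\partial_y)\phi_{2k,2i+1}$ to the $f_{2i+1}$ coefficient and $-\phi_{2k,2i+1}$ to the $f_{2i+2}$ coefficient. Collecting by index gives the stated formulas for $\phi_{2k+1,\cdot}$, including the top coefficient $\phi_{2k+1,2k+1}=\phi_{2k,2k}=\phi$. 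Then apply $\B^*$ to the $\A\Li^k$ expansion, using the last two identities, and collect in the same way to obtain $\phi_{2k+2,\cdot}$, with top coefficient $\phi_{2k+2,2k+2}=\phi$.

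The only point requiring care is bookkeeping. Each application of $\A$ or $\B^*$ shifts some contributions one level up, with the sign flip coming from $\A=-\B^*+W$ and $\B^*=-\A+W$. We therefore have to check that the boundary indices, namely $i=0$, where there is no $f_{-1}$ term, and the top index, are handled consistently. We also have to confirm the exact signs and the placement of $\partial_y$ versus $W$ in the recurrences as printed, by comparing with the four product identities above. No analytic input is needed: the lemma is a purely algebraic identity for smooth $f$ and $\phi$. In the applications, the decay of $\phi_{2k,j}$ then follows from the admissibility of $V^A$ and $V^B$ in Lemma~\ref{lemma:asymp-coeficients}.
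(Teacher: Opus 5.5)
Your proposal is correct and follows essentially the same route as the paper. The paper also uses the product rules $\A(\phi f)=\phi\A f-\partial_y\phi\, f$ and $\B^*(\phi f)=\phi\B^* f+\partial_y\phi\, f$ together with $\A+\B^*=y(V^A+V^B)$, checks $k=0$ directly, and then applies $\A$ and $\B^*$ alternately to the inductive expansion, collecting coefficients by index. Your four identities reproduce every printed recurrence, including the boundary indices $i=0$ and the top coefficient.
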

\begin{proof}
    It is clear from \eqref{defeq:linear-operator-in-V} that
    \begin{equation}
        \begin{gathered}
            \A(\phi f)=\phi\A f-\partial_y\phi f,\quad \B^*(\phi f)=\phi\B^*f+\partial_y\phi f,\\
            \A f+\B^*f = y(V^A+V^B)f.
        \end{gathered}
    \end{equation}
    Therefore we can compute
    \begin{equation}
        \begin{aligned}
            &\A(\phi f)=f_1\phi+f(-\partial_y\phi)\\
            &\Li(\phi f)=\B^*(f_1\phi+f(-\partial_y\phi))=f_2\phi+f_1(2\partial_y\phi)+f\left(-\partial_{yy}\phi-y(V^A+V^B)\partial_y\phi\right),
        \end{aligned}
    \end{equation}
    which concludes the case for $k=0$. Now we assume \eqref{equation:lebniz-relation} holds for $k$ and prove it for $k+1$. Using induction we can write
    \begin{align}
        \A\Li^{k+1}(\phi f)&=\sum_{i=0}^{k+1}\A(f_{2i}\phi_{2k+2,2i})+\sum_{i=0}^k[-\B^*+y(V^A+V^B)]f_{2i+1}\phi_{2k+2,2i+1}\\
            &=\sum_{i=0}^{k+1}[f_{2i+1}\phi_{2k+2,2i}+f_{2i}(-\partial_y\phi_{2k+2,2i})]\\
            &\quad+\sum_{i=0}^k[f_{2i+2}(-\phi_{2k+2,2i+1})+f_{2i+1}(-\partial_y\phi_{2k+2,2i+1})+f_{2i+1}y(V^A+V^B)\phi_{2k+2,2i+1}]\\
            &=\sum_{i=0}^kf_{2i+1}\left(\phi_{2k+2,2i}-\partial_y\phi_{2k+2,2i+1}+y(V^A+V^B)\phi_{2k+2,2i+1}\right)\\
            &\quad+\sum_{i=1}^kf_{2i}(-\partial_y\phi_{2k+2,2i}-\phi_{2k+2,2i-1})+f_{2k+3}\phi_{2k+2,2k+2}+f(-\partial_y\phi_{2k+2,0}),
    \end{align}
    which yields the recurrence relation for $\phi_{2k+3,j}$. Similarly we can write $\Li^{k+2}(\phi f)=\B^*(\A\Li^{k+1}(\phi f))$ and derive the recurrence relation for $\phi_{2k+4,j}$.
\end{proof}

\bibliographystyle{plain}
\bibliography{reference} 

\end{document}